\newtheorem{theorem}{Theorem}[section]
\newtheorem{rem}[theorem]{Remark}
\newtheorem{hypothesis}[theorem]{Hypothesis}
\newtheorem{notation}[theorem]{Notation}
\newtheorem{proposition}[theorem]{Proposition}
\newtheorem{corollary}[theorem]{Corollary}
\theoremstyle{remark}
\numberwithin{equation}{section}
\begin{document}

\subjclass[2010]{Primary 11R39, 11F80, 22E55}

\keywords{Automorphic representations, Galois representations}

\newcommand{\OP}[1]{\operatorname{#1}}
\newcommand{\GO}{\OP{GO}}
\newcommand{\GT}{\OP{GT}}
\newcommand{\AI}{\OP{AI}}
\newcommand{\Asai}{\OP{Asai}}
\newcommand{\leftexp}[2]{{\vphantom{#2}}^{#1}{#2}}
\newcommand{\cusp}{\OP{cusp}}
\newcommand{\Res}{\OP{Res}}
\newcommand{\tr}{\OP{tr}}
\newcommand{\PGSp}{\OP{PGSp}}
\newcommand{\Sp}{\OP{Sp}}
\newcommand{\Nrd}{\OP{Nrd}}
\newcommand{\JL}{\OP{JL}}
\newcommand{\BC}{\OP{BC}}
\newcommand{\Hom}{\OP{Hom}}
\newcommand{\Sym}{\OP{Sym}}
\newcommand{\GSp}{\OP{GSp}}
\newcommand{\GL}{\OP{GL}}
\newcommand{\GSO}{\OP{GSO}}
\newcommand{\sen}{\OP{sen}}
\newcommand{\art}{\OP{art}}
\newcommand{\bdd}{\OP{bdd}}
\newcommand{\sph}{\OP{sph}}
\newcommand{\Aut}{\OP{Aut}}
\newcommand{\disc}{\OP{disc}}
\newcommand{\temp}{\OP{temp}}
\newcommand{\CM}{\OP{CM}}
\newcommand{\cris}{\OP{cris}}
\newcommand{\End}{\OP{End}}
\newcommand{\barQ}{\OP{\overline{\mathbf{Q}}}}
\newcommand{\barQp}{\OP{\overline{\mathbf{Q}}_{\it p}}}
\newcommand{\Gal}{\OP{Gal}}
\newcommand{\WD}{\OP{WD}}
\newcommand{\Ind}{\OP{Ind}}
\newcommand{\St}{\OP{St}}
\newcommand{\rec}{\OP{rec}}
\newcommand{\SL}{\OP{SL}}
\newcommand{\Frob}{\OP{Frob}}
\newcommand{\Id}{\OP{Id}}
\newcommand{\GSpin}{\OP{GSpin}}
\newcommand{\Norm}{\OP{Norm}}

\title[Galois representations ...]{Galois representations attached to automorphic forms on $GL_2$ over $CM$ fields}

\author{Chung Pang Mok}

\address{Hamilton Hall, McMaster University, Hamilton, Ontario, L8S 4K1}

\email{cpmok@math.mcmaster.ca}

\begin{abstract}
In this paper we generalize the work of Harris-Soudry-Taylor and construct the compatible systems of two-dimensional Galois representations attached to cuspidal automorphic representations of cohomological type on $\GL_2$ over a CM field with a suitable condition on their central characters. We also prove a local-global compatibility statement, up to semisimplification.
\end{abstract}

\maketitle

\section{Introduction}

In the works \cite{HST} of Harris-Soudry-Taylor and \cite{T2} of Taylor, the authors construct the compatible system of two-dimensional $p$-adic Galois representations attached to a cuspidal automorphic representation of cohomological type on $\GL_2$ over an imaginary quadratic field, whose central character satisfies a suitable invariance condition. The improvement \cite{BH} of Berger-Harcos  shows that the the Galois representations constructed by \cite{HST,T2} satisfies the correct equality of Frobenius and Hecke polynomials, outside an explicit finite set of primes. The aim of the present work is to generalize the results of \cite{T2,BH} to a general $\CM$ field, and to prove a local-global compatibility statement, up to semi-simplification, at primes not dividing $p$.

 To state the main result of this paper, we set up some notations. Let $E$ be a $\CM$ field, with $F$ its maximal totally real subfield. Denote by $\tau$ the Galois conjugation of $E$ over $F$. Suppose that $\pi$ is a cuspidal automorphic representation on $\GL_2(\mathbf{A}_E)$ (where $\mathbf{A}_E$ is the ring of adeles of $E$). We assume that $\pi$ is of cohomological type. Denote by $\omega_{\pi}$ the central character of $\pi$. In this paper, we are interested in such cohomological $\pi$, whose central character $\omega_{\pi}$ satisfies the following confition:

\bigskip

\noindent (\textbf{Char}) There exists an algebraic idele class character $\widetilde{\omega}$ of $\mathbf{A}_F^{\times}$ such that
\[
\omega_{\pi} = \widetilde{\omega} \circ \Norm_{E/F}
\]
(here $\Norm_{E/F}$ is the norm map from $\mathbf{A}_E^{\times}$ to $\mathbf{A}_F^{\times}$), and $\widetilde{\omega} =\bigotimes_{v} \widetilde{\omega}_v$ satisfies in addition the following: the sign $\widetilde{\omega}_v(-1)$ takes the same value for all archimedean places of $F$.

\bigskip

Condition (\textbf{Char}) in particular implies that $\omega_{\pi}$ is invariant under $\tau$, i.e. that $\omega_{\pi} \circ \tau = \omega_{\pi}$. In the case where $E$ is an imaginary quadratic field, i.e. $F=\mathbf{Q}$ as considered in \cite{HST,T2}, condition (\textbf{Char}) is equivalent to requiring that $\omega_{\tau}$ is invariant under $\tau$. But for $F \neq \mathbf{Q}$ it is stronger than the invariance.

Now fix an embedding $ \barQ \hookrightarrow \mathbf{C}$. Similarly, for each prime $p$, fix an embedding $ \barQ \hookrightarrow \barQp$. Fix an isomorphism $\iota_p:  \barQp \cong \mathbf{C}$ compatible with these two embeddings of $\barQ$. 

As usual, we denote by $G_L$ the absolute Galois group of any field $L$.

\begin{theorem}
Suppose that the central character of $\pi$ satisfies condition (\textbf{Char}). Then for each prime $p$, there exists a continuous irreducible $p$-adic Galois representation:
\[
\rho_p : G_E \rightarrow \GL_2(\barQp)
\]
such that, for each finite prime $w$ of $E$ not dividing $p$, we have the local-global compatibility statement, up to semi-simplification:
\begin{eqnarray*}
\iota_p  \WD(\rho_p|_{G_{E_w}})^{ss} \cong    \mathcal{L}_{E_w}(\pi_w \otimes | \det |_w^{-1/2})^{ss}.
\end{eqnarray*}
Here $ \WD$ is the Weil-Deligne functor, and $\mathcal{L}_{E_w}$ is the local Langlands correspondence for $GL_2(E_w)$. Furthermore, if $\pi_w$ is not of the form $\St_{E_w} \otimes \chi$, where $\St_{E_w}$ is the Steinberg representation of $\GL_2(E_w)$, and $\chi$ is a character of $E_w^{\times}$, then we have the full-local-global compatibility (as usual up to Frobenius semi-simplification):
\[
\iota_p  \WD(\rho_p|_{G_{E_w}})^{F-ss} \cong  \mathcal{L}_{E_w}(\pi_w \otimes | \det |_w^{-1/2}).
\]
\end{theorem}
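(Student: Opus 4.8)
The plan is to use condition (\textbf{Char}) to realize $\pi$ inside a regular algebraic, cuspidal, essentially self-dual automorphic representation of $\GL_4$ over the totally real field $F$, to attach a four-dimensional Galois representation to the latter by known methods, and then to recover $\rho_p$ by descending to $G_E$. First, if $\pi\cong\pi\circ\tau$ then $\pi=\BC_{E/F}(\pi_0)$ for a cohomological cuspidal $\pi_0$ on $\GL_2(\mathbf{A}_F)$ (Langlands' base-change criterion) and the theorem reduces to the known case of Hilbert modular forms over $F$; so assume $\pi\not\cong\pi\circ\tau$. Since $E/F$ is cyclic, the automorphic induction $\Pi:=\AI_{E/F}(\pi)$ exists (Arthur--Clozel), is cuspidal, and its would-be Galois representation is $\Ind_{G_E}^{G_F}\rho_p$. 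The role of (\textbf{Char}) is exactly this: writing $\omega_\pi=\widetilde\omega\circ\Norm_{E/F}$ and combining the projection formula $\AI_{E/F}(\pi\otimes(\mu\circ\Norm_{E/F}))\cong\AI_{E/F}(\pi)\otimes\mu$ with $\pi^\vee\cong\pi\otimes\omega_\pi^{-1}$ gives $\Pi^\vee\cong\Pi\otimes\widetilde\omega^{-1}$, so $\Pi$ is essentially self-dual; and the hypothesis that $\widetilde\omega_v(-1)$ is independent of the archimedean place $v$ is what makes the whole construction coherent at every infinite place of $F$ simultaneously --- concretely, it is the condition under which the archimedean theta lift of $\pi$ for the dual pair $(\GSp_4,\GSO_4)$ attached to the quadratic space of discriminant $E$ is the cohomological (holomorphic) discrete series at every real place at once, so that $\Pi$ is cut out by a single cohomological form on $\GSp_4$ over $F$.

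The obstruction is that $\Pi_\infty$ is not regular algebraic: automorphic induction of a cohomological representation of $\GL_2(\mathbf{C})$ has non-regular infinitesimal character on $\GL_4(\mathbf{R})$. To remedy this one twists $\pi$ by an auxiliary algebraic Hecke character $\psi$ of $\mathbf{A}_E^\times$ with $\psi\circ\tau\neq\psi$ and sufficiently regular infinity type, chosen so that $\pi\otimes\psi\not\cong(\pi\otimes\psi)\circ\tau$ and so that $\psi\cdot(\psi\circ\tau)$ is again of the form $\eta\circ\Norm_{E/F}$, compatibly with the archimedean sign condition; such $\psi$ exist by the standard existence theorems for Hecke characters of CM fields. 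Then $\Pi_\psi:=\AI_{E/F}(\pi\otimes\psi)$ is regular algebraic, cuspidal, and essentially self-dual (of symplectic type) by the same computation applied to $\pi\otimes\psi$.

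Now I invoke the construction of Galois representations attached to regular algebraic, essentially self-dual, cuspidal automorphic representations of $\GL_4$ over a totally real field --- via the cohomology of Shimura varieties of unitary similitude groups over an auxiliary CM extension, after solvable base change (Clozel--Harris--Labesse, Shin, Chenevier--Harris, Barnet-Lamb--Gee--Geraghty--Taylor; or Harris--Lan--Taylor--Thorne and Scholze without the self-duality hypothesis) --- to get a continuous $R_p : G_F\to\GL_4(\barQp)$ with $\iota_p\WD(R_p|_{G_{F_v}})^{ss}\cong\mathcal{L}_{F_v}(\Pi_{\psi,v}\otimes|\det|_v^{-1/2})^{ss}$ for every finite $v\nmid p$. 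Because $\AI_{E/F}$ is invariant under twisting by the quadratic character $\eta_{E/F}$ of $\mathbf{A}_F^\times$ attached to $E/F$, we have $R_p\otimes\eta_{E/F}\cong R_p$; comparing Frobenii at the split and unramified places (using the compatibility of $\AI$ with the local Langlands correspondence there) together with the hypothesis $\pi\otimes\psi\not\cong(\pi\otimes\psi)\circ\tau$, one finds $R_p^{ss}|_{G_E}\cong\sigma\oplus(\sigma\circ\tau)$ for a two-dimensional semisimple $\sigma$ uniquely determined by $\iota_p\WD(\sigma|_{G_{E_w}})^{ss}\cong\mathcal{L}_{E_w}((\pi\otimes\psi)_w\otimes|\det|_w^{-1/2})^{ss}$ at $w\nmid p$. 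Setting $\rho_p:=\sigma\otimes\chi_\psi^{-1}$, where $\chi_\psi$ is the $p$-adic Galois character corresponding to $\iota_p^{-1}\psi$, yields a continuous $\rho_p : G_E\to\GL_2(\barQp)$ with the asserted semisimplified local-global compatibility at every $w\nmid p$; it is irreducible because $\pi$ is cuspidal (a reducible $\rho_p$ would, via that compatibility and Chebotarev together with strong multiplicity one, exhibit $\pi$ as an isobaric sum of Hecke characters). Finally, when $\pi_w$ is not of the form $\St_{E_w}\otimes\chi$, the Weil--Deligne representation $\mathcal{L}_{E_w}(\pi_w\otimes|\det|_w^{-1/2})$ carries no nontrivial monodromy, and a two-dimensional Weil--Deligne representation is determined by its Frobenius-semisimplification together with the only nilpotent operator possible on the underlying Weil-group representation; so in this case the semisimplified compatibility upgrades to the full Frobenius-semisimplified statement.

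The main obstacle is the deep input cited in the third paragraph --- the existence of $R_p$ with local-global compatibility up to semisimplification --- together with the archimedean analysis underlying the first two paragraphs: one must verify, simultaneously at every real place of $F$, that (\textbf{Char}) places $\Pi$ (hence $\Pi_\psi$) in the cohomological discrete-series packet at infinity for which that construction applies, which is precisely the content of the sign condition in (\textbf{Char}). The combinatorial construction of the auxiliary character $\psi$ in the second paragraph --- meeting all the required local conditions at once, including at ramified and archimedean places --- is routine but must be carried out with care.
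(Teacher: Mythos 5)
Your proposal founders at its central step: the claim that one can choose an algebraic Hecke character $\psi$ of $\mathbf{A}_E^{\times}$ so that $\Pi_\psi=\AI_{E/F}(\pi\otimes\psi)$ is simultaneously regular algebraic and essentially self-dual. These two requirements are mutually exclusive here. Essential self-duality of $\AI_{E/F}(\sigma)$, i.e.\ $\AI_{E/F}(\sigma)^{\vee}\otimes\chi\cong\AI_{E/F}(\sigma)$, forces (by the injectivity of $\AI$ up to $\tau$-conjugation) either (A) $\sigma^{\vee}\otimes(\chi\circ\Norm_{E/F})\cong\sigma$, which for $\sigma=\pi\otimes\psi$ amounts to $\omega_{\pi}\psi^{2}=\chi\circ\Norm_{E/F}$, hence to $\psi^{2}$ factoring through the norm; or (B) $\sigma^{\vee}\otimes(\chi\circ\Norm_{E/F})\cong\sigma^{\tau}$, which unwinds to $\pi\otimes(\nu\circ\Norm_{E/F})\cong\pi^{\tau}$ for some Hecke character $\nu$ of $\mathbf{A}_F^{\times}$ --- an extra hypothesis on $\pi$ not implied by (\textbf{Char}) and false in general. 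Now $\psi^{2}$ factors through $\Norm_{E/F}$ exactly when $\psi/\psi^{\tau}$ is quadratic, and at a complex place $w$ where $\psi_w(z)=|z|^{-a}(z/\overline{z})^{b/2}$ one has $(\psi/\psi^{\tau})_w(z)=(z/\overline{z})^{b}$, so this forces $b=0$ at every archimedean place. But $b\neq 0$ is precisely what is needed to split the doubled exponents $\{\pm n_w/2,\pm n_w/2\}$ of $\Ind_{W_{E_w}}^{W_{F_v}}\phi_{\mu_0,n_w}$ into four distinct ones. So any regularizing twist destroys the self-duality, and any self-duality-preserving twist leaves the parameter irregular. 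This is exactly the fundamental obstruction the paper is built to circumvent: one cannot reach the cohomology of unitary-group Shimura varieties by a character twist, which is why the paper descends $\AI_{E/F}(\pi\otimes|\det|_{\mathbf{A}_E})$ to a cuspidal $\Pi$ on $\GSp_4(\mathbf{A}_F)$ whose archimedean components are holomorphic \emph{limits} of discrete series, constructs $R_p$ by $p$-adic deformation of $\Pi$ inside a family of Siegel--Hilbert forms (pseudo-characters specialized from a Zariski-dense set of cohomological classical points), proves local-global compatibility up to semisimplification via Bernstein centres, and only then extracts $\rho_p$ from $R_p=\Ind_E^F\rho_p$ by the quadratic-twist argument of Berger--Harcos.

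Your parenthetical fallback to Harris--Lan--Taylor--Thorne and Scholze is a genuinely different theorem (one would then apply it directly to the regular algebraic $\pi$ on $\GL_2(\mathbf{A}_E)$, with no automorphic induction needed), but it is outside the framework of this paper and, as those constructions stood, does not by itself deliver the local-global compatibility at all $w\nmid p$ that the theorem asserts. Two smaller points: your derivation of irreducibility of $\rho_p$ from cuspidality of $\pi$ is too quick, since one only has compatibility up to semisimplification and no Ramanujan bound, so reducibility does not immediately exhibit $\pi$ as an isobaric sum of characters (the paper invokes Taylor's argument from \cite{T2}, \S 3, which is delicate); and the separation of $\rho_p$ from $\rho_p^{\tau}$ inside $R_p|_{G_E}$ genuinely requires the family of quadratic twists $\pi\otimes\eta$ and the comparison of the resulting $\rho_p^{\eta}$ (propositions 5.9--5.11 of the paper), not just a comparison of Frobenii at split places.
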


According to general conjectures, since $\pi$ is cohomological, the Galois representation $\rho_p$ should be {\it geometric} in the sense of Fontaine-Mazur, i.e. $\rho_p$ should be deRham (in the sense of Fontaine) at places of $E$ above $p$. We are only able to show that $\rho_p$ is Hodge-Tate in general. More precisely we have:

\begin{theorem}
For each finite prime $w$ of $E$ above $p$, the representation $\rho_p$ is Hodge-Tate at $w$, with the correct Hodge-Tate weights (determined by the archimedean components of $\pi$; see theorem 5.17 for the precise statement).

Furthermore, suppose that $w$ is a place of $E$ above $p$, inert over $F$, and such that $\pi_w$ is spherical, with distinct Satake parameters $\alpha_w \neq \beta_w$. Then $\rho_p$ is crystalline at $w$.

Suppose that $w$ is a prime above $p$ that splits over $F$, with conjugate prime $w^{\tau}$, such that $\pi_w$ and $\pi_{w^{\tau}}$ are spherical, with Satake parameters $\alpha_w,\beta_w$ for $\pi_w$, and $\alpha_{w^{\tau}},\beta_{w^{\tau}}$ for $\pi_{w^{\tau}}$ respectively. Suppose that the elements $\{\alpha_w,\beta_w,\alpha_{w^{\tau}} ,\beta_{w^{\tau}}\}$ are all distinct. Then $\rho_p$ is crystalline at w (and also at $w^{\tau}$).
\end{theorem}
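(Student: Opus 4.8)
The plan builds on the construction of $\rho_p$ from the proof of Theorem 1.1. Recall that, following the method of \cite{HST,T2}, one uses hypothesis (\textbf{Char}) to theta-lift $\pi$ --- after a solvable base change that we may take unramified at any prescribed finite set of primes --- to a cohomological automorphic representation $\Pi$ of $\GSp_4$ over the totally real field, realizes the relevant $2$-dimensional constituent $\rho_p$ of its spin Galois representation in the $p$-adic \'etale (intersection) cohomology of a fixed Hilbert--Siegel variety with coefficients in a fixed algebraic local system, and --- since $\Pi$ need not be of the stable cohomological tempered type required --- produces $\rho_p$ as a $p$-adic limit $\rho_p=\lim_i\rho_{p,i}$ along a congruence argument, each $\rho_{p,i}$ being the analogous constituent for a nearby $\pi_i$ whose theta lift $\Pi_i$ is of the good type and hence occurs genuinely in the cohomology. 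Consequently each $\rho_{p,i}$ is de Rham at the places above $p$, is crystalline at the places of good reduction, and has Hodge--Tate weights independent of $i$; since $\pi$ is cohomological the two Hodge--Tate weights of $\rho_{p,i}|_{G_{E_w}}$ are distinct integers, namely those identified with the infinitesimal character of $\pi_\infty$ in Theorem 5.17.

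For the Hodge--Tate statement I would use Sen's theory: the generalized Hodge--Tate weights of $\rho_p|_{G_{E_w}}$ are the eigenvalues of its Sen operator $\Theta_w$, and $\rho_p|_{G_{E_w}}$ is Hodge--Tate exactly when $\Theta_w$ is semisimple. The Sen operator is continuous along $p$-adic limits of Galois representations, and every $\rho_{p,i}|_{G_{E_w}}$ is de Rham with the same pair $\{a,b\}$ of distinct integer Hodge--Tate weights; hence $\Theta_w$ has characteristic polynomial $(X-a)(X-b)$ with $a\neq b$, so $\Theta_w$, being a $2\times 2$ matrix with distinct eigenvalues, is semisimple. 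Thus $\rho_p|_{G_{E_w}}$ is Hodge--Tate with the weights of Theorem 5.17.

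For the crystalline statements one cannot just pass to the limit, and the distinctness of the Satake parameters is what compensates. Both cases fit one pattern. Let $v$ be the prime of $F$ below $w$; arrange the congruences so that $\pi_i$ is moreover unramified at $w$ (and at $w^\tau$ in the split case), so that $\Pi_i$ is unramified at $v$, the Hilbert--Siegel variety has good reduction at $v$, and the $4$-dimensional local representation $\rho_{\Pi_i,p}|_{G_{F_v}}$ (restriction of the spin representation of $\Pi_i$ to the decomposition group at $v$) is crystalline, with Hodge--Tate weights independent of $i$ and with crystalline Frobenius eigenvalues pinned down by local--global compatibility in terms of the Satake parameters of $\pi_{i,w}$ (resp.\ of $\pi_{i,w}$ and $\pi_{i,w^\tau}$): up to a common twist these are, essentially, $\pm\alpha_{i,w}^{1/2},\pm\beta_{i,w}^{1/2}$ when $v$ is inert in $E$ and $\alpha_{i,w},\beta_{i,w},\alpha_{i,w^\tau},\beta_{i,w^\tau}$ when $v$ splits. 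The hypotheses of the theorem say exactly that these four numbers are distinct, and they stay distinct in the limit. A $4$-dimensional crystalline representation with four distinct crystalline Frobenius eigenvalues is trianguline, a choice of ordering of the eigenvalues giving a triangulation whose rank-one graded pieces are explicit crystalline characters built from the eigenvalues and the (fixed, distinct) Hodge--Tate weights; these triangulations converge, so the limit $\rho_{\Pi,p}|_{G_{F_v}}$ --- which contains $\rho_p|_{G_{E_w}}$ as a $G_{E_w}$-direct summand, via $\rho_{\Pi,p}|_{G_E}\cong\rho_p\oplus\rho_p^\tau$ up to twist --- is trianguline with triangulation parameters the limiting crystalline characters, pairwise distinct and in generic non-critical position thanks to the four eigenvalues being distinct and the Hodge--Tate weights regular. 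By the classification of trianguline representations (Colmez, Nakamura) such a representation is crystalline, hence so is its direct summand $\rho_p|_{G_{E_w}}$ (and similarly $\rho_p|_{G_{E_{w^\tau}}}$).

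\emph{The main obstacle} is this last step --- that a $p$-adic limit of $4$-dimensional crystalline representations with fixed regular Hodge--Tate weights and crystalline Frobenius eigenvalues converging to four distinct values is crystalline. Making it precise requires controlling how triangulations vary in $p$-adic families (a compactness statement for rank-one sub-$(\varphi,\Gamma)$-modules with bounded parameters, guaranteeing that the triangulations of the $\rho_{\Pi_i,p}|_{G_{F_v}}$ accumulate to a triangulation of the limit) and then excluding the degenerate configurations --- non-crystalline extensions of characters, or representations that are de Rham or semistable but not crystalline --- which are ruled out precisely because the four Frobenius eigenvalues remain distinct and the Hodge--Tate weights regular. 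The Hodge--Tate statement, by comparison, is soft, resting only on continuity of the Sen operator and the distinctness of the two Hodge--Tate weights.
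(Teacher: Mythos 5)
Your overall strategy (produce $\rho_p$ as a $p$-adic limit of geometric representations, read off Hodge--Tate weights via Sen theory, and use the distinctness of Frobenius eigenvalues to propagate crystallinity to the limit) is the right circle of ideas, but two steps fail as written. First, the approximating objects are not what you assume. There is no family of ``nearby $\pi_i$'' on $\GL_2(\mathbf{A}_E)$ with $2$-dimensional de Rham representations $\rho_{p,i}$ of $G_E$ converging to $\rho_p$ --- the absence of such a family is the fundamental obstruction the whole construction is designed to circumvent. What exists is a one-parameter $p$-adic family of cuspidal automorphic representations of $\GSp_4(\mathbf{A}_F)$ at Iwahori level at $p$ (after a solvable base change), whose classical members are cohomological but are generically \emph{not} lifts from $\GL_2$ over $E$; the limit is the $4$-dimensional $R_p$ of $G_F$, and $\rho_p$ is only extracted afterwards from $R_p=\Ind_E^F\rho_p$. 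Sen theory applied to the family therefore gives the weight multiset $\{a,a,b,b\}$ for $R_p|_{G_{E_w}}\cong\rho_p\oplus\rho_p^{\tau}$, and you must still exclude the splitting in which $\rho_p$ has weights $\{a,a\}$ and $\rho_p^{\tau}$ has $\{b,b\}$. The paper rules this out using $\det\rho_p=\det\rho_p^{\tau}$, which follows from $\omega=\omega^{\tau}$; your argument silently assumes each $2$-dimensional piece already has weights $\{a,b\}$.

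Second, the step you yourself flag as the main obstacle is where the argument breaks: being trianguline with pairwise distinct ``crystalline-looking'' parameters and regular Hodge--Tate weights does \emph{not} imply crystalline. By the Colmez--Nakamura classification, for a fixed such set of parameters there exist non-de Rham trianguline representations alongside the crystalline one (e.g.\ generic extensions corresponding to a critical ordering of the eigenvalues relative to the weights), and distinctness of the four Frobenius eigenvalues does not exclude them. A single triangulation, i.e.\ a single refinement, cannot suffice. What the paper does (Proposition 4.16, following Jorza via Kisin's analytic continuation of crystalline periods as generalized by Nakamura and Tan, plus base change to reach Iwahori level) is to run the period-propagation argument once for \emph{each} refinement of $\Pi_v$ lying over $\Pi$ in the eigenvariety: each produces a nonzero $\varphi$-eigenvector in $D_{\cris}(R_p|_{G_{F_v}})$ with eigenvalue the corresponding inverse root of $Q_{\Pi,v}$, and the hypothesis that these four roots are distinct makes the resulting vectors independent, so $\dim D_{\cris}=4$ and $R_p|_{G_{F_v}}$ is crystalline; $\rho_p|_{G_{E_w}}$ is then a direct summand of its restriction. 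Note also that the classical points of the family carry Iwahori level and a fixed refinement at $p$, so you cannot ``arrange the congruences so that $\Pi_i$ is unramified at $v$''; the crystallinity of the classical members comes from small-slope classicality, not from good reduction of a Hilbert--Siegel variety.
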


\begin{rem}
\end{rem} 
\noindent The Galois representations $\rho_p$ is constructed using $p$-adic limit process from Galois representations which are geometric. However, taking $p$-adic limit process does not in general preserve the deRham property, which is the reason that we are not able to show that $\rho_p$ is deRham at primes above $p$ in general. For exactly the same reason, the existence of $\rho_p$ alone does not imply the Ramanujan conjecture for $\pi$. 

We remark that the crystalline assertions in theorem 1.2, under the stated conditions, was proved by A. Jorza [J] in the case where $E$ is imaginary quadratic.

\bigskip

\subsection*{About the proof}

As in the case of \cite{T2,BH}, the fundamental difficulty in the construction of the Galois representation is that the group $\Res_{E/\mathbf{Q}} \GL_{2/E}$ (Weil restriction of scalars) does not admit a Shimura variety. The basic idea of Harris-Soudry-Taylor \cite{HST} is that when the cuspidal automorphic representation $\pi$ on $\GL_2(\mathbf{A}_E)$ satisfies condition (\textbf{Char}), the representation $\pi$ admits a lifting $\Pi$ as a cuspidal automorphic representation on the group $\GSp_4(\mathbf{A}_F)$. One can choose the lifting $\Pi$ so that it is represented as a holomorphic Siegel-Hilbert modular form. The argument of \cite{T2,BH}, involving consideration of quadratic twists of $\pi$, allows one to extract the two dimensional Galois representation associated to $\pi$ from the four dimensional Galois representation associated to $\Pi$, if the latter can be constructed.

One of the difficulty in the construction of the Galois representation associated to $\Pi$ is that it is not of cohomological type. More precisely, its archimedean components belong to holomorphic limit of discrete series. To circumvent this difficulty, we draw on the results of \cite{MT} (which is a generalization of the results of Kisin-Lai \cite{KL} and A. Jorza \cite{J} to the Siegel-Hilbert case). Suppose first that the cuspidal automorphic representation $\Pi$ admits Iwahori fixed vectors at primes of $F$ above $p$. Then the main result of \cite{MT} gives a one-parameter family of $p$-adic deformation of $\Pi$, with the property that there is a Zariski dense set of so-called classical points in the family, corresponding to cuspidal automorphic representations of $\GSp_4(\mathbf{A}_F)$ of cohomologcal type (actually having archimedean components in holomorphic discrete series), and these points accumulate to $\Pi$. A standard argument using the theory of pseudo-character allows one to construct the Galois representation attached to $\Pi$, if one can construct the four dimensional Galois representations attached to a cuspidal automorphic representations on $\GSp_4(\mathbf{A}_F)$ (say $\Pi^{\prime}$) that is of cohomological type.

When $F=\mathbf{Q}$, the Galois representation associated to such a $\Pi^{\prime}$ can be constructed from the $p$-adic etale cohomology of a suitable Siegel three-fold (work of Laumon \cite{L} and Weissauer \cite{W}). However, when $F \neq \mathbf{Q}$, the Galois representation obtained by using the $p$-adic etale cohomology of a similar Siegel-Hilbert modular variety would be of dimension $4^{[F:\mathbf{Q}]}$ (same phenomenon as in the construction of Galois representation associated to Hilbert modular forms), and is no longer appropriate. Instead, if one has a lifting of $\Pi^{\prime}$ to an automorphic representation $\widetilde{\Pi}^{\prime}$ on $\GL_4(\mathbf{A}_F)$, then the latter would be of cohomological and self-dual type, and the arguments of Sorensen \cite{So} and Chenevier-Harris \cite{ChH} allow one to construct the Galois representation associated to $\widetilde{\Pi}^{\prime}$, using Galois representations that ultimately come from etale cohomology of Shimura varieties of unitary groups \cite{Sh}. 

If $\Pi^{\prime}$ is globally generic, then the lifting to $\GL_4(\mathbf{A}_F)$ follows from the work of Jacquet-Piateski-Shapiro-Shalika (unpublished, see for example \cite{GRS}) and of Asgari-Shahidi \cite{AS}. However, in our case, the members corresponding to the classical points in the $p$-adic family are not generic at the archimedean primes, in particular not globally generic, and their result does not apply. Instead, we use the result of Arthur \cite{A1,A2} on endoscopic classification of the discrete automorphic spectrum of $\GSp_4(\mathbf{A}_F)$, which in our case gives the required lifting to $\GL_4(\mathbf{A}_F)$ (alternatively we can use the result of Arthur \cite{A2} on the generic packet conjecture, but in any case this is proved using the main results of \cite{A2} on endoscopic classification).

The above strategy works in the case when $\Pi$ has Iwahori-fixed vector at primes of $F$ above $p$. To treat the general case, we use the patching lemma of \cite{So}, together with the result that we can do base change for automorphic representation on $\GSp_4$ over solvable extensions. This latter result again relies on the works of Arthur \cite{A1,A2}, combined with the solvable base change theorems of Arthur-Clozel \cite{AC} for general linear groups, and the result of Shahidi \cite{S2} on the non-vanishing of twisted exterior square $L$-function at $s=1$. 

The method of using $p$-adic deformation is the main difference with the method of \cite{T1}, where the method of considering $p$-power congruences between $\Pi$ and forms of cohomological type is used instead. Even though the method of using $p$-adic deformation is less elementary, it has an advantage, namely one can prove a local-global compatibility statement for primes of $F$ not dividing $p$, up to semi-simplification, for the Galois representation associated to $\Pi$ (which is not of cohomological type), using the corresponding local-global compatibility statement for the Galois representations associated to cuspidal automorphic representations on $\GSp_4(\mathbf{A}_F)$ that are of cohomological type. The argument is a generalization of Chenevier \cite{Ch}, and we rely on the work of Gan-Takeda \cite{GT1,GT2}, which gives a fairly explicit description of the local Langlands correspondence for $\GSp_4$, and allows us to control the semi-simple part of the local Langlands parameter at primes not dividing $p$ in a $p$-adic family via Bernstein components. Here we are also using the results of Chan-Gan \cite{CG} to ensure that the local Langlands correspondence for $\GSp_4$ constructed by Gan-Takeda is compatible with the trace formula lifting from $\GSp_4$ to $\GL_4$ in the framework of \cite{A1,A2}. We remark that it seems difficult to obtain the full local-global compatibility statement (i.e. including the monodromy part) using $p$-adic deformation technique.

Once we have a local-global compatibility (up to semi-simplification) statement for the Galois representation associated to $\Pi$, we can use a generalization of the argument of \cite{BH}, again involving quadratic twists, to obtain the local-global compatibility statement, up to semi-simplification, for the Galois representation associated to the cuspidal automorphic representation on $\GL_2(\mathbf{A}_E)$ that we started with. Since base change arguments is used in the proof of local-global compatibility (up to semi-simplification), it is crucial that we are able to do the construction for a general CM field. Finally, the part of theorem 1.2 on the crystalline property can be proved as in the work of A. Jorza \cite{J}, using the results of K. Nakamura \cite{N} and F.C. Tan \cite{Tan} generalizing results of Kisin to arbitraty $p$-adic local fields on analytic continuation of crystalline periods. \footnote{The first draft of this paper was written in 2011; in the intervening years there has been significant progress in the construction of Galois representations attached to cohomological automorphic forms on general linear groups over totally real and $\CM$ fields. For the most up to date results the reader should refer to the works of \cite{HLTT} and \cite{Scho}.}

\bigskip

\noindent {\it On the lifting from $\GL_{2/E}$ to $\GSp_{4/F}$}
\bigskip

To be precise, the relevant group is the algebraic group $H$ over $F$, whose group of $F$-points is given by
\[
H(F) = \GL_2(E) \times F^{\times} \big/ \{    (z \Id_2, \Norm_{E/F} z^{-1} ), z \in E^{\times}    \}.
\]
We have $H \cong \GSO(3,1)_{/F}$ (the identity component of $\GO(3,1)_{/F}$). As before, $\pi$ is a cuspidal automorphic representation on $\GL_2(\mathbf{A}_E)$, whose central character $\omega_{\pi}$ satisfies condition (\textbf{Char}). Thus $\omega_{\pi}$ factors through the norm map $\Norm_{E/F}$, and there are exactly two choices of Hecke characters $\widetilde{\omega}$ of $\mathbf{A}_F^{\times}$ through which $\omega_{\pi}$ factors, corresponding to extensions $\widetilde{\pi}$ of $\pi$ to $H(\mathbf{A}_F) \cong\GSO(3,1)(\mathbf{A}_F)$. As in \cite{HST}, one can extend $\widetilde{\pi}$ to $\GO(3,1)(\mathbf{A}_F)$ by considering auxiliary data (choice of a suitable sign for each place of $F$). One can then consider the orthogonal-symplectic dual reductive pair $(\GO(3,1)_{/F}, \GSp_{4/F})$, and hence construct the theta lifting of $\widetilde{\pi}$ (more precisely an extension of $\widetilde{\pi}$ to $\GO(3,1)(\mathbf{A}_F)$) to $\GSp_4(\mathbf{A}_F)$. It is proved by S. Takeda \cite{Ta} (building on the work of Roberts \cite{R}) that the theta lifting is non-zero (when \cite{HST} was written, it was only known that given $\pi$, there is a sufficiently ample infinite set of quadratic Hecke characters $\eta$ of $\mathbf{A}_E^{\times}$ such that the theta lifting construction applied to $\pi \otimes \eta$ is non-zero).

On the other hand $H$ is one of the elliptic $\epsilon$-twisted endoscopic group of $\GSp_{4/F}$ (here $\epsilon$ is the quadratic idele class character of $\mathbf{A}_F^{\times}$ corresponding to the quadratic extension $E/F$, hence a character of $\GSp_4(\mathbf{A}_F)$ via the similitude factor). In the work \cite{C}, the lifting of $\widetilde{\pi}$ to $\GSp_4(\mathbf{A}_F)$ is constructed as a trace formula lifting in the framework of $\epsilon$-twisted endoscopy (at least under some local conditions on $\pi$). In this paper, we will also deduce this lifting as a corollary of Arthur's work on endoscopic classification of automorphic representation on $\GSp_4(\mathbf{A}_F)$.

\bigskip
\noindent {\it On the use of Arthur's results}

\bigskip
In this paper we crucially rely on Arthur's works on endoscopic classification of automorphic representations for the group $\GSp_4$ \cite{A1,A2}. At the time of writing, the results in {\it loc. cit.} are still conditional on the stabilization of the twisted trace formula; however significant progress in this direction has been made by Waldspurger and others. We also remark that in \cite{A2} the results are worked out for symplectic and orthogonal groups, though it is sketched in \cite{A1} how the formalism and results in the setting of \cite{A2} can be extended to cover the case of $\GSp_4$. Thus the results of this paper will be conditional on the results of Arthur in \cite{A1,A2}. 

\subsection*{Organization of the paper}

In section 2, we summarize the results from Arthur's endoscopic classification of the discrete automorphic spectrum \cite{A1,A2} that we need in the sequel. We also record a consequence of Arthur's results on base change of automorphic representation on $\GSp_4$ (which is already known to several authors earlier \cite{S2,Sou2}).

In section 3 we combine Arthur's work together with the result of Sorensen \cite{So}, and Chenevier-Harris \cite{ChH}, to obtain results on existence of Galois representation associated to cuspidal automorphic representation on $\GSp_4(\mathbf{A}_F)$ of cohomological type, together with a local-global compatibility statement. The fact that Arthur's results imply such statements on Galois representation is well-known, but we record the statements here for reference.

Section 4 is the main technical core of the paper. The goal is to construct the Galois representation associated to cuspidal automorphic representation on $\GSp_4(\mathbf{A}_F)$, whose archimdean components are in the holomorphic limit of discrete series. This uses the technique of $p$-adic deformation from \cite{MT}. We also prove a local-global compatibility statement up to semi-simplification for the Galois representation constructed, using the idea of Chenevier \cite{C}, together with the results of Gan-Takeda \cite{GT1,GT2} and Chan-Gan \cite{CG}.

In section 5, we apply the results in section 4 to the construction of the Galois representation associated to a cuspidal automorphic representation on $\GL_2(\mathbf{A}_E)$ of cohomological type, whose central character satisfies condition (\textbf{Char}), by the method of \cite{T2,BH}. We show that the lifting from $\GL_2(\mathbf{A}_E)$ to $\GSp_4(\mathbf{A}_F)$ follows from Arthur's results. The local-global compatibility statement, up to semi-simplification, follows from the results of section 4, together with a generalization of the argument of \cite{BH}. With this comes the proof of theorem 1.1 and 1.2.

\section*{acknowledgement}
The author would like to thank Wee Teck Gan for his help with $\GSp_4$, without which this paper cannot come into existence. He would also like to thank Ping Shun Chan, whose lecture given at the Chinese University of Hong Kong made the author realized that the work of Harris-Soudry-Taylor can be generalized to the $\CM$ case. He is also very grateful to Rapha\"el Beuzart-Plessis and Olivier Ta\"ibi, who pointed out that the correct condition on central character should be given by (\textbf{Char}), which was overlooked in the first version of the paper. Finally he would like to thank the referee for some useful comments.

\begin{notation}
\end{notation}

In general if $F$ is a number field, and $v$ a prime of $F$ (both finite and archimedean), we denote by $F_v$ the completion of $F$ at $v$. We denote by $\art_v: F_v^{\times} \rightarrow W_{F_v}^{ab}$ the local reciprocity isomorphism at $v$, with $W_{F_v}$ being the Weil group of $F_v$. We generally denote by $\Frob_v$ a geometric Frobenius element at $v$, and $\art_v$ is normalized so that $\Frob_v$ corresponds to a uniformizer of $F_v$. For $w \in W_{F_v}$, we denote by $|w|_v$ the absolute value of $w$ induced by the normalized absolute value on $F_v$ under $\art_v$. If $F_v = \mathbf{R}$, then the isomorphism $W_{\mathbf{R}}^{ab} \cong \mathbf{R}^{\times}$ is induced by sending $z \in \mathbf{C}^{\times} \subset W_{\mathbf{R}} = \mathbf{C}^{\times} \cup \mathbf{C}^{\times} j$ to $|z|_{\mathbf{C}}:=|z|^2$ (thus $z$ is the usual absolute value of a complex number), and $j \mapsto -1$.

If $v$ is finite and $w \in W_{F_v}$, then valuation of $w$ is defined to be the integer $r$ such that $|w|_v = Nv^{-r}$, here $Nv$ is the norm of $v$. 

We denote by $\mathcal{L}_v$ the local Langlands correspondence for $\GL_n(F_v)$ associating $n$-dimensional Frobenius semi-simple Weil-Deligne representation of $W_{F_v}$ to irreducible admissible representation of $\GL_n(F_v)$. We use the normalization as in the works of Harris-Taylor and Henniart. In this paper we need to use the local Langlands correspondence for $\GL_n(F_v)$ when $n=1,2,4$. We denote each of these cases by $\mathcal{L}_v$ as no confusion can arise.  

We denote by $\mathbf{A}_F$ the ring of adeles of $F$, and by $|\cdot|_{\mathbf{A}_F}$ the adelic norm.

We denote by $G_L$ the absolute Galois group of any field $L$. If $E/F$ is a Galois extension, with $\tau \in \Gal(E/F)$, then for any representation $\rho$ of $G_E$, we denote by $\rho^{\tau}$ the conjugation of $\rho$ by $\tau$, given by $\rho^{\tau}(g)=\rho(\tau g \tau^{-1})$ for $g \in G_E$.

If on the other hand $E$ and $F$ are local fields, and $\pi$ is an irreducible admissible representation of $\GL_n(E)$, then we define the irreducible admissible representation $\pi^{\tau}: =\pi \circ \tau$. One has similar definition in the adelic context when $E$ and $F$ are number fields.

If $\rho$ is any $p$-adic representation of $G_F$ (with $F$ number field or local field), then for $n \in \mathbf{Z}$ we denote by $\rho(n)$ the Tate twist of $\rho$ by the $n$-th power of the $p$-adic cyclotomic character. We employ the convention that the Hodge-Tate weight of the $p$-adic cyclotomic character is $-1$.

In this paper, the symplectic similitude group $\GSp_4$ is defined with respect to the following skew-symmetric matrix:
\begin{eqnarray*}
\left( \begin{array}{rrrr} & & & 1 \\ & & 1 & \\ & -1 & & \\ -1 & & & \\ \end{array} \right) 
\end{eqnarray*}

\section{Resume on Arthur's results on endoscopic classification of automorphic representation on $\GSp_4$ }

For this section $F$ will denote a general number field. For any prime $v$ of $F$ (both finite and archimedean), we denote by $F_v$ the completion of $F$ at $v$.

\subsection{Local $L$ and $A$-parameters}

We begin by discussing the local $L$ and $A$-parameters which are needed to state Arthur's results. For more details the reader is referred to \cite{A1} or chapter 1 of \cite{A2}.

For each of the local completions $F_v$, we let $L_{F_{v}}$ be the $\SL_2$ form of the the local Langlands group of $F_v$. Thus if $W_{F_v}$ is the Weil group of $F_v$, then
\begin{eqnarray*}
L_{F_v} =  \left \{ \begin{array}{c} W_{F_v}\mbox{ if } v \mbox{ is archimedean } \\   W_{F_v} \times \SL_2(\mathbf{C}) \mbox{
otherwise. }
\end{array} \right.
\end{eqnarray*}
For $\sigma \in L_{F_v}$, we denote by $|\sigma|_v$ the absolute value of the image of $\sigma$ in $W_{F_v}$.

Let $N \geq 1$ be an integer. Since we will be working with similitude groups, we consider the group $\widetilde{G}_N := \GL_N  \times \GL_1$, as an algebraic group over $F$, whose Langlands dual group can be taken as $\widehat{\widetilde{G}}_N = \GL_N(\mathbf{C}) \times \GL_1(\mathbf{C})$. The dual group $\widehat{\widetilde{G}}_N$ is equipped with the automorphism $\widehat{\alpha}$, where 
\[
\widehat{\alpha}((x,y)) = ( x^{*} \cdot y, y) \mbox{ for } (x,y) \in \widehat{\widetilde{G}}_N.
\]
Here $x^{*} = \leftexp{t}{x}^{-1}$ is (up to conjugacy) the unique outer automorphism of $\GL_n(\mathbf{C})$. 

We denote by $\Phi_v(N)$ the set of admissible homomorphisms from $L_{F_v}$ to $\widehat{\widetilde{G}}_N$ up to $\widehat{\widetilde{G}}_N$-conjugacy:
\[
\widetilde{\phi}: L_{F_v} \rightarrow \widehat{\widetilde{G}}_N.
\]
Elements $\widetilde{\phi} \in \Phi_v(N)$ are called $L$-parameters of $\widetilde{G}_N(F_v)$, and can be written in the form:
\[
\widetilde{\phi} = \phi \oplus \chi
\]
where $\phi$ and $\chi$ are respectively $N$ and one dimensional representation of $L_{F_v}$. Thus $\chi$ is a character of $W_{F_v}$, which will consequently be identified as a character of $F_v^{\times}$ via local class field theory, and $\phi$ is an $L$-parameter for $\GL_N(F_v)$ (in the usual sense). We also denote by $\Phi_{v,\bdd}(N) \subset \Phi_v(N)$ the subset of parameters $\widetilde{\phi}$ whose image in $\widehat{\widetilde{G}}_N$ is bounded.

When $v$ is finite, so that $L_{F_v} = W_{F_v} \times \SL_2(\mathbf{C})$, and $\phi:L_{F_v} \rightarrow \GL_N(\mathbf{C})$ as above, we denote by $\phi^{ss}:W_{F_v} \rightarrow \GL_N(\mathbf{C})$, the semi-simple part of $\phi$, as the homomorphism:

\begin{eqnarray}
\phi^{ss}(w) = \phi \Big( w,    \begin{pmatrix}   |w|_v^{1/2}  &  0 \\    0 &     |w|_v^{-1/2}     \\ \end{pmatrix}  \Big) \mbox{ for } w \in W_{F_v}.
\end{eqnarray}

Now to define the $A$-parameters we enlarge the local Langlands group and form the group $L_{F_v} \times \SL_2(\mathbf{C})$ (when $v$ is finite this extra $\SL_2(\mathbf{C})$ factor of Arthur is not to be confused with the $\SL_2(\mathbf{C})$ factor for monodromy action occuring in the definition of $L_{F_v}$). Denote by $\Psi_v(N)$, the set of $A$-parameters of $\widetilde{G}_N(F_v)$, to be the set of admissible homomorphisms (again up to $\widehat{\widetilde{G}}_N$-conjugacy):
\[
\widetilde{\psi} : L_{F_v} \times \SL_2(\mathbf{C}) \rightarrow \widehat{\widetilde{G}}_N
\]
such that $\widetilde{\psi}|_{\SL_2(\mathbf{C})}$ is algebraic, and that $\psi|_{L_{F_v}}$ has bounded image. Again such a $\widetilde{\psi}$ can be written in the form $\widetilde{\psi} = \psi \oplus \chi$, where $\psi$ is an $N$-dimensional representation of $L_{F_v} \times \SL_2(\mathbf{C})$ such that $\psi|_{\SL_2(\mathbf{C})}$ is algebraic, and $\chi$ is a character of $W_{F_v}$. As in \cite{A1,A2} since one do not yet know the generalized Ramanujan conjecture (for cuspidal automorphic representations on general linear groups), one has to work with the larger set $\Psi_v^+(N)$ consisting of $\widetilde{\psi}$ as baove but without the boundedness condition on $\psi|_{L_{F_v}}$. If $\widetilde{\psi} \in \Psi_v(N)$ is trivial on $\SL_2(\mathbf{C})$ (note that this is {\it not} the $\SL_2(\mathbf{C})$ that occurs in the definition of $L_{F_v}$ when $v$ is finite), then by definition $\widetilde{\psi} \in \Phi_{v,\bdd}(N)$.

Given an $A$-parameter $\widetilde{\psi} = \psi \oplus \chi$, we denote by $\phi_{\psi}: L_{F_v} \rightarrow \GL_n(\mathbf{C})$ the homomorphism defined by:
\begin{eqnarray}
\phi_{\psi}(\sigma) = \psi\Big(\sigma,    \begin{pmatrix}   |\sigma|_v^{1/2}  &  0 \\    0 &     |\sigma|_v^{-1/2}     \\ \end{pmatrix}  \Big) \mbox{ for } \sigma \in L_{F_v}.
\end{eqnarray}
For $\widetilde{\psi} = \psi \oplus \chi$ as above, we will refer to $\widetilde{\phi}_{\widetilde{\psi}} : = \phi_{\psi} \oplus \chi$ as the $L$-parameter associated to the $A$-parameter $\widetilde{\psi}$.

We will be interested in the parameters (both $L$ and $A$) that are stable under the automorphism $\widehat{\alpha}$. Thus if $\widetilde{\psi} = \psi \oplus \chi \in \Psi_v(N)$, then $\widetilde{\psi}$ is $\widehat{\alpha}$-stable if $\widehat{\alpha} \circ \widetilde{\psi}$ is conjugate to $\widetilde{\psi}$ under $\widehat{\widetilde{G}}_N$, or equivalently the representation:
\[
\psi^* \otimes \chi : \sigma  \mapsto \psi(\sigma)^{*} \cdot \chi (\sigma) \mbox{ for } \sigma \in L_{F_v} \times \SL_2(\mathbf{C})
\]
is equivalent to $\psi$ itself, and we refer to this as saying that $\psi$ is $\chi$-self dual. Similarly for (the simpler case of) $\Phi_v(N)$. 

Let $\widetilde{\psi} = \psi \oplus \chi$ be an $\widehat{\alpha}$-stable parameter. Thus $\psi$ is $\chi$-self dual and there exists $A \in \GL_n(\mathbf{C})$ such that
\[
\psi^* \otimes \chi = A^{-1} \psi A.
\]
We say that $\widetilde{\psi}$ is symplectic (respectively orthogonal) type, if $A^t=-A$ (respectively $A^t=A$).

\bigskip

We now specialize to the case $N=4$. Put $G : = \GSp_{4}$ the algebraic group over $F$ given by the symplectic similitude group of a four dimensional symplectic vector space over $F$, and we denote by $c$ the similitude character of $\GSp_4$. The Langlands dual group $\widehat{G}$ of $G$ is given by $\widehat{G} = \GSpin_5(\mathbf{C})$, the similitude spin group in five variables. However, in this case we have the exceptional isomorphism:
\[
\GSpin_5(\mathbf{C}) \cong \GSp_4(\mathbf{C})
\] 
such that the standard four dimensional spin representation of $\GSpin_5(\mathbf{C})$ corresponds to the standard representation of $\GSp_4(\mathbf{C})$. We will used this identification in the sequel. We thus regard $\widehat{G}$ as a subgroup of $\GL_4(\mathbf{C})$, and hence we have the embedding 
\begin{eqnarray*}
\widehat{G} & \hookrightarrow & \widehat{\widetilde{G}}_4 = \GL_4(\mathbf{C}) \times \GL_1(\mathbf{C}) \\
 g & \mapsto & (g,c(g)).
\end{eqnarray*}
In the context of \cite{A1,A2} the group $G$ is an elliptic twisted endoscopic group of $\widetilde{G}_4$ with respect to $\widehat{\alpha}$. We note that in the context of \cite{A1,A2} it is ``more correct" to think of $G$ as the similitude spin group $\GSpin_{5/F}$. Indeed for general $N$ the group $\GSpin_{2N+1/F}$ (and not $\GSp_{2N/F}$ when $N >2$) occurs as twisted endoscopic group of $\widetilde{G}_N$ with respect to $\widehat{\alpha}$. However since we have the exceptional isomorphsim $\GSp_{4/F} \cong \GSpin_{5/F}$ it does not matter in this case.

We define $\Phi_v(G)$ to be the $\widehat{G}$-conjugacy classes of admissible homomorphisms $\phi: L_{F_v} \rightarrow \widehat{G}$, and with $\Phi_{v,\bdd}(G)$ to be the subset of parameters with bounded image. Similarly $\Psi_v(G)$ is the set of $\widehat{G}$-conjugacy classes of admissible homomorphisms $\psi: L_{F_v} \times \SL_2(\mathbf{C}) \rightarrow \widehat{G}$ whose restriction to $\SL_2(\mathbf{C})$ is algebraic, and whose restriction $\psi|_{L_{F_v}}$ has bounded image (one also define the set of parameters $\Psi_v^+(G)$ without the boundedness condition on the restriction to $L_{F_v}$). These are the set of local $L$ and $A$-parameters of $G(F_v)$ respectively. Given $\phi \in \Phi_v(G)$, the composite of $\phi$ with the embedding $\widehat{G} \hookrightarrow \widehat{\widetilde{G}}_4$ gives a $\widehat{\alpha}$-stable symplectic type parameter. In fact this identifies the set $\Phi_v(G)$ with the subset of (equivalence classes of) parameters in $\Phi_v(4)$ that are $\widehat{\alpha}$-stable and of symplectic type, {\it c.f.} \cite{GT2} lemma 6.1. Same remarks apply to $\Psi_v(G)$.

Given $\phi \in \Phi_v(G)$, one can define the semi-simple part of $\phi^{ss}$ of $\phi$ as in equation (2.1), and given $\psi \in \Psi_v(G)$ one defines the $L$-parameter $\phi_{\psi} \in \Phi_v(G)$ associated to $\psi$ as in equation (2.2).

Finally for future reference, given $\phi \in \Phi_v(G)$, we denote by $k_{*} \phi$ the $L$-parameter of $\GL_4(F_v)$ obtained by composing $\phi$ with the inclusion $\widehat{G} \hookrightarrow \GL_4(\mathbf{C})$. 

\subsection{The local classification}

Denote by $\Pi_v(G)$ the set of (isomorphism classes) irreducible admissible representations of $G(F_v)$, and $\Pi_{v,\temp}(G)$ the subset of tempered representations. In this subsection we state Arthur's result on the endoscopic classification of $\Pi_v(G)$.

Let $\psi \in \Psi_v(G)$. Put
\begin{eqnarray*}
S_{\psi} = \mbox{ centralizer of the image of } \psi \mbox { in } \widehat{G}.
\end{eqnarray*} 
\begin{eqnarray*}
\mathcal{S}_{\psi} = S_{\psi} / S_{\psi}^0 Z(\widehat{G}).
\end{eqnarray*}
We denote by $\widehat{\mathcal{S}}_{\psi}$ the character group of $\mathcal{S}_{\psi}$.

\begin{theorem} \cite{A1,A2}

Given any $\psi \in \Psi_v(G)$, there is a finite multi-set $\Pi_{\psi}$ (i.e. a set with multiplicities) consisting of irreducible unitary representations of $G(F_v)$, together with a map
\begin{eqnarray*}
\Pi_{\psi} & \rightarrow & \widehat{\mathcal{S}}_{\psi} \\
\pi & \mapsto & \langle \cdot , \pi \rangle
\end{eqnarray*}
which is characterized by (twisted) endoscopic transfer. 

In the particular case where $\psi = \phi \in \Phi_{v,\bdd}(G)$, then $\Pi_{\phi}$ is actually a set (i.e. with no multiplicities), and consists of tempered representations of $G(F_v)$. The map
\[
\Pi_{\phi} \rightarrow \widehat{\mathcal{S}}_{\phi}
\]
is injective, and and bijective when $v$ is finite. We have
\[
\Pi_{v,\temp}(G) = \coprod_{\phi \in \Phi_{v,\bdd}(G)} \Pi_{\phi}.
\]
\end{theorem}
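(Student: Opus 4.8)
\emph{Proof strategy.}
This is Arthur's local theorem, and the plan is to deduce it for $G = \GSp_4$ from the endoscopic classification of the split classical groups in \cite{A2}, following the extension to similitude groups indicated in \cite{A1}. The guiding principle is that $G$ is an elliptic $\widehat{\alpha}$-twisted endoscopic group of $\widetilde{G}_4 = \GL_4 \times \GL_1$, so that the packet $\Pi_\psi$ together with its map to $\widehat{\mathcal{S}}_\psi$ should be pinned down by a twisted endoscopic character identity relating, for a test function $f$ on $G(F_v)$ and a transfer $\widetilde{f}$ to $\widetilde{G}_4(F_v)$, the quantity $\sum_{\pi \in \Pi_\psi}\langle s_\psi, \pi\rangle\,\tr\,\pi(f)$ to the $\widehat{\alpha}$-twisted character of the representation of $\widetilde{G}_4(F_v)$ attached to $\widetilde{\phi}_{\widetilde{\psi}}$, together with the analogous identities for all elliptic endoscopic data of $G$. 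First I would remove the auxiliary $\GL_1$ factor and the similitudes by restriction: composing $\psi$ with the similitude character $c\colon \GSp_4(\mathbf{C}) \to \GL_1(\mathbf{C})$ yields a character $\eta_\psi$ of $L_{F_v}$, while composing with $\GSp_4(\mathbf{C}) \to \PGSp_4(\mathbf{C}) \cong \OP{SO}_5(\mathbf{C}) = \widehat{\Sp}_4$ yields an $A$-parameter $\bar{\psi}$ for $\Sp_4(F_v)$, and the pair $(\eta_\psi,\bar{\psi})$ recovers $\psi$ up to the relevant equivalence. Arthur's theorem for $\Sp_4(F_v)$ from \cite{A2} then supplies the packet $\Pi_{\bar{\psi}}$ and its pairing with $\widehat{\mathcal{S}}_{\bar{\psi}}$, and $\Pi_\psi$ is assembled from $\Pi_{\bar{\psi}}$ by Clifford theory for the inclusion $\Sp_4(F_v) \subset \GSp_4(F_v)$, whose quotient is the abelian group $F_v^{\times}$ of similitudes, with central characters constrained by $\eta_\psi$; this also expresses $\mathcal{S}_\psi$ in terms of $\mathcal{S}_{\bar{\psi}}$ and the contribution of the central datum.

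For the tempered part, take $\psi = \phi \in \Phi_{v,\bdd}(G)$. Boundedness of $\phi$ forces every member of $\Pi_\phi$ to be tempered, via the Langlands classification (a non-tempered irreducible is a proper Langlands quotient of a standard module, whose parameter is not bounded). Multiplicity-freeness of $\Pi_\phi$ follows from the corresponding statement for $\Sp_4$ in \cite{A2}, which rests on the structure of $R$-groups and Arthur's local intertwining relation. Injectivity of $\Pi_\phi \to \widehat{\mathcal{S}}_\phi$ follows from linear independence of the characters of $G(F_v)$ together with the endoscopic character relations, which determine each $\langle\,\cdot\,,\pi\rangle$ uniquely; surjectivity when $v$ is finite reflects the fact that over a $p$-adic field every character of the finite abelian $2$-group $\mathcal{S}_\phi$ is realized (as in \cite{A2} for $\Sp_4$, essentially because the local intertwining relation forces every sign to occur), whereas over $\mathbf{R}$ one retains only injectivity. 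Finally, $\Pi_{v,\temp}(G) = \coprod_{\phi}\Pi_\phi$ follows because every tempered irreducible of $G(F_v)$ is the unique irreducible constituent of a representation unitarily induced from a discrete series of a Levi subgroup, hence has bounded $L$-parameter lying in some $\Phi_{v,\bdd}(G)$; distinct bounded parameters give disjoint packets; and both exhaustion and disjointness hold for $\Sp_4$ by \cite{A2} and are transported to $\GSp_4$ by the restriction mechanism above.

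The main obstacle is twofold. First, \cite{A2} establishes everything for $\Sp_{2n}$, $\OP{SO}_{2n+1}$ and $\OP{SO}_{2n}$ but not for the similitude group; carrying out the extension sketched in \cite{A1} amounts to re-running the comparison between the $\widehat{\alpha}$-twisted trace formula of $\widetilde{G}_4$ and the stable trace formula of $G = \GSp_4$ while keeping track of the extra $\GL_1$-factor, that is of central characters, at every step, both locally and in the global input on which the local theorem ultimately depends. Second, one must verify that the representation-theoretic recipe for $\Pi_\psi$ obtained by restriction from $\Sp_4$ is genuinely the object characterized by twisted endoscopic transfer: the pairing $\langle\,\cdot\,,\pi\rangle$ built from the $\Sp_4$-packet and the central datum has to satisfy the $\GSp_4$-twisted character identities with respect to $\widetilde{G}_4$ and to all elliptic endoscopic groups of $\GSp_4$, and $\mathcal{S}_\psi$ has to be matched with $\mathcal{S}_{\bar{\psi}}$ together with the central contribution, with the correct signs. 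Tracking these component groups and signs is where the bulk of the technical work lies. As emphasized in the text above, the whole argument is conditional on the stabilization of the twisted trace formula used in \cite{A1,A2}.
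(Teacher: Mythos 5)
The paper does not prove this statement: Theorem 2.1 is quoted directly from Arthur \cite{A1,A2} as an input, and the author says explicitly in the introduction that all results of the paper are conditional on Arthur's endoscopic classification for $\GSp_4$ (itself conditional, at the time of writing, on the stabilization of the twisted trace formula). So there is no proof in the paper to compare yours against.

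On its own terms, your outline is a reasonable description of how the $\GSp_4$ case is expected to be deduced from the classical-group case of \cite{A2}. The two mechanisms you describe --- characterizing $\Pi_{\psi}$ and the pairing $\langle\cdot,\pi\rangle$ by the $\widehat{\alpha}$-twisted character identity against $\widetilde{G}_4=\GL_4\times\GL_1$, and relating packets of $\GSp_4(F_v)$ to packets of $\Sp_4(F_v)$ by restriction and Clifford theory through the similitude character --- are precisely those indicated in \cite{A1}, and the tempered statements (temperedness from boundedness via the Langlands classification, injectivity from linear independence of characters, exhaustion from the discrete-series case by unitary induction) are the standard reductions. You are also right to identify where the real work lies: the comparison of the twisted trace formula of $\widetilde{G}_4$ with the stable trace formula of $\GSp_4$ while tracking central characters, and the verification that the restriction recipe yields packets satisfying the $\GSp_4$ endoscopic character identities with the correct component groups $\mathcal{S}_{\psi}$ and signs. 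But everything you list under ``the main obstacle'' is the entire content of the theorem, so what you have is a strategy outline rather than a proof. In the context of this paper the appropriate move is exactly what the author makes: cite \cite{A1,A2} and record the statement as a hypothesis on which the remaining sections depend.
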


For $\phi \in \Phi_{v,\bdd}(G)$, the set $\Pi_{\phi}$ is called the $L$-packet associated to $\phi$. Thus theorem 2.1 gives in particular a classification of $\Pi_{v,\temp}(G)$ by partitioning $\Pi_{v,\temp}(G)$ into $L$-packets corresponding to the set of $L$-parameters $\Phi_{v,\bdd}(G)$, and describes the elements in the $L$-packet corresponding to $\phi \in \Phi_{v,\bdd}(G)$ in terms of the characters of the component group $\mathcal{S}_{\phi}$. In our case with $G = \GSp_{4/F}$, the groups $\mathcal{S}_{\phi}$ are either trivial or isomorphic to $\mathbf{Z}/2 \mathbf{Z}$, so the size fo the $L$-packets $\Pi_{\phi}$ is either one or two. We refer the reader to chapter one of \cite{A2} for the precise meaning that the packets $\Pi_{\phi}$ and the map $\Pi_{\phi} \rightarrow \widehat{\mathcal{S}}_{\phi}$ being characterized by twisted endoscopic transfer. The usual Langlands quotient construction gives the construction of packets $\Pi_{\phi} \subset \Pi_v(G)$ for all $\phi \in \Phi_v(G)$. In the following we refer to this classification of $\Pi_v(G)$ as the local Langlands correspondence for $G(F_v)$, and we denote as:
\[
\rec_v : \Pi_v(G) \rightarrow \Phi_v(G)
\]
the surjective map sending $\pi \in \Pi_{\phi}$ to $\phi$. Under this correspondence, the central character of $\pi$ is equal to the similitude character of the parameter $\rec_v(\pi)$ (as usual identified via local class field theory).

In our particular case with $G= \GSp_4$, another construction of the local Langlands correspondence for $\GSp_4(F_v)$ is given by the works of Gan-Takeda \cite{GT1,GT2}, using the method of theta correspondence. We denote:
\[
\rec_v^{\GT} : \Pi_v(G) \rightarrow \Phi_v(G)
\]
the local Langlands correspondence as constructed in \cite{GT1,GT2}. In Chan-Gan \cite{CG} it is shown that the local Langlands correpondence for $\GSp_4$ constructed by Gan-Takeda coincides with that of Arthur, i.e. $\rec_v=\rec_v^{\GT}$, by showing that the $L$-packets constructed by Gan-Takeda satisfy the local character relations as in the case of the packets constructed by Arthur.   

Finally, for $\psi \in \Psi_v(G)$, the multi-set $\Pi_{\psi}$ (commonly called an $A$-packet in the literature) pertains to the global classification instead of the local classification, which we describe in the next subsection. We remark that the $A$-packet $\Pi_{\psi}$ in general contains representations that belong to different $L$-packets (in the sense above). In the case where $v$ is finite and $\psi$ is an unramified parameter, in the sense $\psi|_{L_{F_v}}: L_{F_v} \rightarrow \widehat{G}$ is unramified (i.e. factors through the projection $L_{F_v} \rightarrow W_{F_v} \rightarrow W_{F_v}/I_{F_v}$), then the fibre of the map $\Pi_{\psi} \rightarrow \widehat{\mathcal{S}}_{\psi}$ over the trivial character is the unique spherical representation in this packet with $L$-parameter $\phi_{\psi}$ (having no multiplicities). We refer to p. 44 of \cite{A2} for the extension of the construction of packets for parameters in the larger set $\Psi_v^+(G)$.

\subsection{Formal global parameters and the global classification}

In the absence of the global automorphic Langlands group, we define a global $A$-parameter formally (\cite{A1,A2}). We return momentarily to the setting of the group $\widetilde{G}_N = \GL_N \times \GL_1$ (over $F$). A global $A$-parameter for $\widetilde{G}_N$ is formal expression $\widetilde{\psi} = \psi \oplus \chi$, where $\chi$ is an idele class character of $\mathbf{A}_F^{\times}$, and $\psi$ is a formal unordered sum:
\[
\psi = \psi_1 \boxplus \cdots \boxplus \psi_s
\] 
where each $\psi_r$ is a formal expression
\[
\psi_r = \mu_r \boxtimes \nu_r
\]
where $\mu_r$ is a cuspidal automorphic representation of $\GL_{m_r}(\mathbf{A}_F)$, and $\nu_r$ is an algebraic representation of $\SL_2(\mathbf{C})$ of dimension $n_r$, such that $N = m_1 n_1 + \cdots m_s n_s$. If all the $\nu_r$ are the trivial representation of $\SL_2(\mathbf{C})$, then we say that $\widetilde{\psi}$ is a generic parameter. If on the other hand $s=1$, i.e. $\psi = \mu \boxtimes \nu $,  then we say that $\widetilde{\psi}$ is a simple parameter.

We say that $\widetilde{\psi} = \psi \oplus \chi$ is $\widehat{\alpha}$-discrete if the $\psi_r$'s are all distinct (that is, the pairs $(\mu_r,\nu_r)$ are distinct), and the cuspidal automorphic representations $\mu_r$ are $\chi$-self dual, i.e.
\[
\mu_r^{*} \otimes (\chi \circ \det) \cong \mu_r
\]
here $\mu_r^*$ denotes the contra-gredient of $\mu_r$ (this is to mimic the definition of $\chi$-self duality for local parameters in the previous subsection; note that the finite dimensional representations $\nu_r$ of $\SL_2(\mathbf{C})$ are all self-dual). We denote the set of such $\widehat{\alpha}$-discrete global $A$-parameters of $\widetilde{G}_N$ as $\Psi_{2,F}(N)$.

In general, given a cuspidal automorphic representation $\mu$ of $\GL_n(\mathbf{A}_F)$ that is $\chi$-self dual, i.e. satisfies $\mu^{*} \otimes (\chi \circ \det) \cong \mu$, we say that $\mu$ is of symplectic type (respectively orthogonal type) with respect to $\chi$, if the twisted exterior square $L$-function $L(s,\mu,\Lambda^2 \otimes \chi^{-1})$ (respectively the twisted symmetric square $L$-function $L(s,\mu,\Sym^2 \otimes \chi^{-1})$) has a pole at $s=1$. Under the condition that $\mu$ is $\chi$-self dual, these two cases are mutually exclusive (here and in the following, by $L$-function we only need to work with the partial $L$-function defined by Euler factors outside the set of primes where the data ramifies). Slightly more generally, for a formal expression $\mu \boxtimes v$, with $\mu$ a $\chi$ self-dual cuspidal automorphic representation of $\GL_m(\mathbf{A}_F)$, and $v$ an algebraic representation of $\SL_2(\mathbf{C})$ of dimension $n$, then we say that $\mu \boxtimes v$ is of symplectic type with respect to $\chi$, if either $n$ is odd and $\mu$ is of symplectic type with respect to $\chi$, or $n$ is even and $\mu$ is of orthogonal type with respect to $\chi$ (noting that an algebraic representation of $\SL_2(\mathbf{C})$ of dimension $n$ is symplectic if $n$ is even and orthogonal if $n$ is odd). One similarly defines the condition for $\mu \boxtimes v$ to be of orthogonal type, namely that either $n$ is odd and $\mu$ is of orthogonal type with respect to $\chi$, or $n$ is even and $\mu$ is of symplectic type with respect to $\chi$.  

Then given $\widetilde{\psi} = \psi \oplus \chi \in \Psi_{2,F}(N)$, with $\psi = \psi_1 \boxplus \cdots \boxplus \psi_s $ as above, we define $\widetilde{\psi}$ to be of symplectic type, if all the $\psi_r$ are of symplectic type with respect to $\chi$. One can similarly define the condition for $\widetilde{\psi}$ to be of orthogonal type. 

We now specialize to the case $N=4$. With $G=\GSp_{4/F}$ as before, we define $\Psi_{2,F}(G)$ to be the subset of $\Psi_{2,F}(4)$ consisting of global $A$-parameters of $\widetilde{G}_4$ of symplectic type, and by $\Psi_{2,F}(G,\chi) \subset \Psi_{2,F}(G)$ the subset of parameters with similitude character $\chi$ (if the context is clear we will then denote a general element of $\Psi_{2,F}(G,\chi)$ as $\psi$ rather than $\widetilde{\psi}$). Given $\widetilde{\psi} = \psi \oplus \chi \in \Psi_{2,F}(G,\chi)$, one can define the component group $\mathcal{S}_{\psi}$ formally, {\it c.f.} section 1.4 of \cite{A2}. If $\psi = \psi_1 \boxplus \cdots \boxplus \psi_s$, then $\mathcal{S}_{\psi} \cong \mathbf{Z}/2^{s-1} \mathbf{Z}$ (again in our case where $G = \GSp_4$ the value of $s$ is actually either $1$ or $2$).

A parameter $\widetilde{\psi} = \psi \oplus \chi \in \Psi_{2,F}(G,\chi)$ has a localization $\widetilde{\psi}_v \in \Psi^+_v(4)$ for each place $v$ of $F$. Indeed if $\psi = \psi_1 \boxplus \cdots \boxplus \psi_s$, with $\psi_r = \mu_r \boxtimes \nu_r$ as above, the local $v$-component $(\mu_r)_v$ of $\mu_r$ is an irreducible admissible representation of $\GL_{m_r}(F_v)$ that is self dual with respect to $\chi_v$. Denote by $\mathcal{L}_v((\mu_r)_v)$ the $L$-parameter corresponding to $(\mu_r)_v$ under the local Langlands correspondence for $\GL_{m_r}(F_v)$. We then define the local parameter $\psi_v$ (as an honest representation of $L_{F_v} \times \SL_2(\mathbf{C})$):
\[
\psi_v = \mathcal{L}_v((\mu_1)_v) \boxtimes \nu_1 \oplus \cdots   \oplus    \mathcal{L}_v((\mu_s)_v)  \boxtimes  \nu_s.
\]
\[
\widetilde{\psi}_v = \psi_v \oplus \chi_v.
\]
One then has $\widetilde{\psi}_v \in \Psi^+_v(4)$. One has $\psi_v \in \Phi_v(4)$ if $\psi$ is generic. 

In \cite{A1,A2} it is shown that if $\widetilde{\psi} \in \Psi_2(G,\chi)$, then $\widetilde{\psi}_v \in \Psi^+_v(G)$. Similarly if $\psi$ is generic then $\psi_v \in \Phi_v(G)$. One can also define localization map on the component groups 
\begin{eqnarray*}
\mathcal{S}_{\psi} & \rightarrow & \mathcal{S}_{\psi_v} \\
s &\mapsto & s_v. 
\end{eqnarray*}

Given $\psi \in \Psi_{2,F}(G,\chi)$, we then define the global packet $\Pi_{\psi}$ associated to $\psi$ as:
\begin{eqnarray*}
 \Pi_{\psi} &=& \otimes_v^{\prime} \Pi_{\psi_v} \\
&=& \{ \pi = \otimes_{v}^{\prime} \pi_v | \,\ \pi_v \in \Pi_{\psi_v} \mbox{ for all } v, \langle \cdot , \pi_v \rangle =1 \mbox{ for almost all } v             \}
\end{eqnarray*}
(as in the local case if $\psi$ is not generic, then $\Pi_{\psi}$ is to be regarded as a multi-set). If $\pi \in \Pi_{\psi}$, then $\pi$ defines the character $\langle \cdot , \pi \rangle$ on $\mathcal{S}_{\psi}$ via $s \mapsto \prod_v \langle s_v,\pi_v \rangle$ for $s \in \mathcal{S}_{\psi}$.

We can now state Arthur's global classification of the discrete automorphic spectrum in the particular case of $G=\GSp_4$. Denote by $L^2_{\disc}(G(F) \backslash G(\mathbf{A}_F),\chi)$ the discrete spectrum of the $L^2$ space of (essentially) square-integrable functions on $G(F) \backslash G(\mathbf{A}_F)$ with central character $\chi$.

\begin{theorem} \cite{A1,A2}
We have a decomposition:
\begin{eqnarray*}
 L^2_{\disc}(G(F) \backslash G(\mathbf{A}_F),\chi) = \bigoplus_{\psi \in \Psi_{2,F}(G,\chi)} \bigoplus_{\begin{subarray}{c} \pi \in \Pi_{\psi} \\ \langle \cdot, \pi \rangle = \epsilon_{\psi} \end{subarray}} \pi
\end{eqnarray*}
here $\epsilon_{\psi}$ is a cerain sign character of $\mathcal{S}_{\psi}$ defined in terms of symplectic root numbers (which is trivial if $\psi$ is a generic parameter).
\end{theorem}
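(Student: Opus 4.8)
The plan is to obtain this decomposition as the specialization to $G=\GSp_4\cong\GSpin_5$ of Arthur's general endoscopic classification, imported from the treatment of $\Sp_4$ and $SO_5$ in \cite{A2} via the $\widehat{\alpha}$-twisted endoscopy of $\widetilde{G}_4=\GL_4\times\GL_1$ indicated in \cite{A1}. The engine is the identity of the $\widehat{\alpha}$-twisted trace formula for $\widetilde{G}_4$ with the stable trace formula for $G$ and the stable trace formulas of its elliptic endoscopic groups, once all of these have been stabilized; the discrete spectral side of this identity, read off along a fixed formal parameter $\widetilde{\psi}$, is what produces the multiplicity statement.

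First I would set up the global input on the $\GL$ side. By Moeglin--Waldspurger together with Jacquet--Shalika, every discrete automorphic representation of $\GL_N(\mathbf{A}_F)$ is an isobaric sum of Speh constituents $\mu_r\boxtimes\nu_r$ with $\mu_r$ cuspidal on $\GL_{m_r}(\mathbf{A}_F)$ and $\nu_r$ an algebraic $\SL_2(\mathbf{C})$-representation; tensoring the extra $\GL_1$-factor with an idele class character $\chi$ gives precisely the formal parameters $\widetilde{\psi}=\psi\oplus\chi$ of Section 2.3. The $\widehat{\alpha}$-twisted trace formula for $\widetilde{G}_4$ decomposes spectrally along the $\widehat{\alpha}$-stable such parameters, and the analytic behaviour at $s=1$ of the twisted exterior and symmetric square $L$-functions $L(s,\mu_r,\Lambda^2\otimes\chi^{-1})$, $L(s,\mu_r,\Sym^2\otimes\chi^{-1})$ sorts them into symplectic and orthogonal type; only the $\widehat{\alpha}$-discrete symplectic ones, i.e.\ the elements of $\Psi_{2,F}(G,\chi)$, survive on the $\GSp_4$-side of the comparison. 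Here one uses globally the identification of $\Phi_v(G)$ with the $\widehat{\alpha}$-stable symplectic subset of $\Phi_v(4)$ and its $A$-parameter analogue (\cite{GT2} Lemma 6.1), together with the fact that the local components $\psi_v$ land in $\Psi_v^+(G)$, to match local parameters on the two sides.

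Next, fix $\psi\in\Psi_{2,F}(G,\chi)$ and build the global packet $\Pi_{\psi}=\otimes_v^{\prime}\Pi_{\psi_v}$ from the local $A$-packets of theorem 2.1, so that each $\pi\in\Pi_{\psi}$ carries the character $\langle\cdot,\pi\rangle: s\mapsto\prod_v\langle s_v,\pi_v\rangle$ on $\mathcal{S}_{\psi}$ via the localization maps $\mathcal{S}_{\psi}\to\mathcal{S}_{\psi_v}$. The multiplicity formula is then extracted by comparing the $\psi$-component of the discrete part of the stable trace formula for $\GSp_4$ with the corresponding contribution transferred from $\widetilde{G}_4$ and from the proper elliptic endoscopic groups of $\GSp_4$: the upshot is that the multiplicity of $\pi$ in $L^2_{\disc}(G(F)\backslash G(\mathbf{A}_F),\chi)$ equals $|\mathcal{S}_{\psi}|^{-1}\sum_{s\in\mathcal{S}_{\psi}}\epsilon_{\psi}(s)\langle s,\pi\rangle$, which is $1$ exactly when $\langle\cdot,\pi\rangle=\epsilon_{\psi}$ and $0$ otherwise. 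The sign character $\epsilon_{\psi}$ is built from the symplectic root numbers attached to the constituents $\psi_r$ of $\psi$ and is trivial when $\psi$ is generic, since then no Arthur $\SL_2$-factor contributes. Summing over $\psi$ yields the asserted decomposition of $L^2_{\disc}(G(F)\backslash G(\mathbf{A}_F),\chi)$.

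The main obstacle is not the bookkeeping above but the two ingredients it rests on. One is the stabilization of the $\widehat{\alpha}$-twisted trace formula for $\GL_4\times\GL_1$, which at the time of writing is only partially established, so the statement is conditional on its completion. The other is passing from Arthur's published results for the linear groups $\Sp_4$, $SO_5$ to the similitude group $\GSp_4=\GSpin_5$: one must carry the similitude character $\chi$ through the parameters and through the component groups --- in particular replacing $Z(\widehat{G})$ in the definition of $\mathcal{S}_{\psi}$ by its $\GSpin_5$ analogue --- and verify that the local $A$-packets, the pairings $\langle\cdot,\pi_v\rangle$, and the sign $\epsilon_{\psi}$ all transport compatibly under the exceptional isomorphism. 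This is exactly the extension sketched in \cite{A1}, and it is the part one takes on trust when citing the theorem in this form.
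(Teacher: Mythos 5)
Your proposal is correct and coincides with the paper's treatment: the paper offers no proof of this theorem, simply citing Arthur \cite{A1,A2}, and your sketch is a faithful outline of Arthur's argument (twisted trace formula comparison for $\widetilde{G}_4$, Moeglin--Waldspurger input, the multiplicity formula $|\mathcal{S}_{\psi}|^{-1}\sum_{s}\epsilon_{\psi}(s)\langle s,\pi\rangle$). The two caveats you flag --- the stabilization of the twisted trace formula and the transport from $\Sp_4/\SO_5$ to $\GSp_4\cong\GSpin_5$ sketched in \cite{A1} --- are exactly the conditions the paper itself states in the introduction, so nothing further is needed.
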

In particular, if $\psi \in \Psi_{2,F}(G,\chi)$ is a simple generic parameter, then both $\mathcal{S}_{\psi}$ and $\epsilon_{\psi}$ are trivial, and so all the elements in the global packet $\Pi_{\psi}$ occur in $L^2_{\disc}(G(F) \backslash G(\mathbf{A}_F),\chi)$. In the case of $G=\GSp_4$, we refer to \cite{A1} for a list of the possible types of parameters $\psi$. The only case where $\epsilon_{\psi}$ is non-trivial is the Saito-Kurokawa type.

For our purpose we will be mainly interested in the cuspidal subspace $L^2_{\cusp}(G(F) \backslash G(\mathbf{A}_F),\chi) $. If $\psi \in \Psi_{2,F}(G,\chi)$ is a simple generic parameter, and if $\pi \in \Pi_{\psi}$, so $\pi$ occurs in $L^2_{\disc}(G(F) \backslash G(\mathbf{A}_F),\chi)$, then it is expected that $\pi$ actually occurs in the cuspidal spectrum \linebreak $L^2_{\cusp}(G(F) \backslash G(\mathbf{A}_F),\chi)$. This statement is not recorded in \cite{A1,A2}. However for our applications, we have the following result of Wallach:

\begin{theorem} (theorem 4.3 of \cite{Wal})
Suppose that $\pi$ occurs in \linebreak $L^2_{\disc}(G(F) \backslash G(\mathbf{A}_F),\chi)$, and up to twist $\pi$ tempered at all the archimedean places of $F$. Then $\pi$ is cuspidal.
\end{theorem}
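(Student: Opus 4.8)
The plan is to deduce this from Langlands' description of the residual spectrum together with Casselman's temperedness criterion, which is in essence Wallach's own argument. First I would twist: replacing $\pi$ by $\pi\otimes(\chi_0\circ c)$ for a suitable idele class character $\chi_0$ of $\mathbf{A}_F^{\times}$ changes the central character to $\chi\chi_0^2$, keeps $\pi$ inside the discrete spectrum, and preserves cuspidality (the similitude character $c$ is trivial on every unipotent radical, so it factors out of every constant term). Hence we may assume $\pi_v$ is tempered for all archimedean $v$. Since $L^2_{\disc}(G(F)\backslash G(\mathbf{A}_F),\chi)=L^2_{\cusp}\oplus L^2_{\mathrm{res}}$, with $L^2_{\mathrm{res}}$ spanned by iterated residues of cuspidal Eisenstein series, it then suffices to rule out $\pi\subset L^2_{\mathrm{res}}$.

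So suppose $\pi$ is residual. By Langlands' theory of Eisenstein series there is a proper parabolic $P=MN$ of $G=\GSp_{4/F}$ — for $\GSp_4$ the Borel (with $M$ the diagonal torus), the Siegel parabolic ($M\cong\GL_2\times\GL_1$), or the Klingen parabolic ($M\cong\GL_1\times\GL_2$) — a unitary cuspidal automorphic representation $\sigma$ of $M(\mathbf{A}_F)$, and a (real) point $\lambda_0\in(\mathfrak{a}_M^{G})^{*}$ with $\pi$ a constituent of the residue $\Res_{\lambda=\lambda_0}E(P,\sigma,\lambda)$. Langlands' square-integrability criterion for such residues (see M\oe{}glin--Waldspurger) forces $\lambda_0$ into the open positive Weyl chamber: $\langle\lambda_0,\varpi_{\alpha}^{\vee}\rangle>0$ for every simple root $\alpha$ of $A_M$ in $N$, and in particular $\lambda_0\neq0$. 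I would then invoke the compatibility of the global and local constant terms to conclude that, for every place $v$, the local component $\pi_v$ is the Langlands quotient of $\Ind_{P(F_v)}^{G(F_v)}(\sigma_v\otimes e^{\lambda_0})$.

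Next I would localize at an archimedean $v$ and reach a contradiction. Because $\sigma$ is unitary, the central character of $\sigma_v$ — hence its restriction to $A_M(F_v)$ — has vanishing real part; so the (normalized) Jacquet module $(\pi_v)_N$ of the Langlands quotient $\pi_v$ carries an exponent whose real part, projected to $(\mathfrak{a}_M^{G})^{*}$, is exactly $\mathrm{Re}(\lambda_0)=\lambda_0$, a strictly dominant vector. Casselman's criterion says that a tempered representation of $G(F_v)$ has all its exponents along $P(F_v)$ with real part in the closed anti-dominant cone; since $\lambda_0$ is strictly dominant and nonzero this contradicts the temperedness of $\pi_v$. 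Hence $\pi$ has no residual part and lies in $L^2_{\cusp}(G(F)\backslash G(\mathbf{A}_F),\chi)$.

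The hard part will be the passage from ``$\pi$ residual'' to ``$\pi_v$ is the Langlands quotient $J_{P(F_v)}(\sigma_v\otimes e^{\lambda_0})$ with $\lambda_0$ strictly dominant at every place'' — that is, establishing that the residue datum $\lambda_0$ is genuinely strictly dominant (not on a wall) and actually appears as a dominant exponent of $\pi_v$. For the Borel this is immediate, since $\sigma_v$ is then a character of a torus and hence tempered. For the Siegel and Klingen parabolics $\sigma_v$ is a local component of a cusp form on $\GL_2$ and need not be tempered (Ramanujan is unknown); the unitarity-of-central-character observation above is precisely what keeps the Casselman-criterion argument going without Ramanujan, but if one instead phrases the contradiction at the level of Langlands parameters one would additionally use the nontrivial bound toward Ramanujan for $\GL_2$ (Jacquet--Shalika, or Kim--Sarnak) to see that the non-tempered directions of $\sigma_v$ cannot cancel the shift by $\lambda_0$ transverse to $M$.
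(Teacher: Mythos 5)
The paper does not actually prove this statement: it is quoted verbatim as Theorem 4.3 of Wallach's paper \cite{Wal}, so there is no internal proof to compare against. Your sketch is, in substance, Wallach's own argument (Langlands' square-integrability criterion for the constant term played off against Casselman's temperedness criterion at an archimedean place, with the unitarity of the inducing cuspidal datum guaranteeing that the nonzero exponent transverse to the Levi survives), and the key observations — the twist reduction, and the remark that unitarity of $\omega_{\sigma_v}$ on $A_M(F_v)$ replaces any appeal to Ramanujan for the $\GL_2$-factor of the Levi — are exactly right.

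The one structural criticism is that you route the argument through the classification of the residual spectrum (``$\pi$ residual $\Rightarrow$ $\pi_v$ is the Langlands quotient $J_{P(F_v)}(\sigma_v\otimes e^{\lambda_0})$ with $\lambda_0$ strictly dominant at every $v$''), and this is precisely the step you flag as hard: for iterated residues the point $\lambda_0$ need not sit in the open chamber of the parabolic you start from, and identifying $\pi_v$ as the full Langlands quotient rather than some other constituent requires extra work (for $\GSp_4$ one could instead cite Kim's determination of the residual spectrum). Wallach's route avoids this entirely: if $\pi$ is not cuspidal, choose $P$ minimal with $\phi_P\neq 0$, so that $\phi_P$ is cuspidal on $M$ with some exponent $\sigma\otimes e^{\lambda}$; M\oe glin--Waldspurger I.4.11 gives $\langle \mathrm{Re}\,\lambda,\varpi_\alpha\rangle<0$ for all $\alpha\in\Delta_P$, in particular $\mathrm{Re}\,\lambda\neq 0$ in $(\mathfrak{a}_M^{G})^{*}$; and since the constant-term map factors through the $N$-coinvariants, $\sigma_v\otimes e^{\lambda}$ is a quotient of the Jacquet module $r_P(\pi_v)$ at each archimedean $v$, so $\mathrm{Re}\,\lambda$ is an exponent of $r_P(\pi_v)$ lying strictly outside the cone allowed by Casselman's criterion for tempered representations. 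This bypasses both the residue analysis and the Langlands-quotient identification. Finally, be careful with sign conventions: the global criterion (strict negativity of constant-term exponents) and the local temperedness criterion point in \emph{opposite} directions once both are written for the same normalized Jacquet module $r_P$ — e.g. the trivial representation of $\GL_2$ has constant-term exponent $-\rho$ (square-integrable globally) while the Steinberg has Jacquet-module exponent $+\rho$ (tempered locally) — and it is this strict opposition, not merely $\lambda_0\neq 0$, that produces the contradiction.
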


We now draw a corollary from Arthur's classification theorem 2.2 which will be needed in section 4, on base change of automorphic representations on $\GSp_4(\mathbf{A}_F)$. Such a result is already noted by Shahidi \cite{S2} and Soudry \cite{Sou2} before.

\begin{proposition}
Let $\psi \in \Psi_{2,F}(G,\chi)$ be a simple generic parameter corresponding to a $\chi$-self dual cuspidal automorphic representation $\mu$ on $\GL_4(\mathbf{A}_F)$. Suppose that $F^{\prime}/F$ is a cyclic extension. Assume that the Arthur-Clozel base change \cite{AC} $\mu^{\prime}$ of $\mu$ to $\GL_4(\mathbf{A}_{F^{\prime}})$ is cuspidal. Then $\mu^{\prime}$ defines a simple generic parameter $\psi^{\prime} \in \Psi_{2,F^{\prime}}(G,\chi^{\prime})$ (here $\chi^{\prime} =\chi \circ N_{F^{\prime}/F}$).
\end{proposition}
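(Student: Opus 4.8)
The plan is to verify that $\psi' := \mu' \oplus \chi'$ meets the definition of a simple generic parameter in $\Psi_{2,F'}(G,\chi')$ from Section 2.3. Since there is only one summand, $\widehat{\alpha}$-discreteness of $\psi'$ amounts to $\mu'$ being $\chi'$-self dual; since $\mu'$ is cuspidal on $\GL_4(\mathbf{A}_{F'})$ by hypothesis, simplicity ($s=1$) is immediate; and since the $\SL_2(\mathbf{C})$-factor $\nu$ of $\psi$ is trivial it stays trivial for $\psi'$, so genericity is inherited. Thus the only two things that require an argument are that $\mu'$ is $\chi'$-self dual and that $\mu'$ is of \emph{symplectic} type with respect to $\chi'$ (as opposed to orthogonal type).

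For self-duality, I would use that the Arthur--Clozel base change $\mu \mapsto \mu'$ is characterized, among isobaric automorphic representations of $\GL_4(\mathbf{A}_{F'})$, by the local condition $\mathcal{L}_w(\mu'_w) = \mathcal{L}_v(\mu_v)|_{W_{F'_w}}$ at every place $w$ of $F'$ above a place $v$ of $F$. Restriction of representations of Weil groups commutes with passage to the contragredient, so base change commutes with the contragredient; and $\chi' = \chi\circ N_{F'/F}$ has local components $\chi'_w = \chi_v|_{W_{F'_w}}$, so base change commutes with twisting by $\chi\circ\det$. Applying base change to the isomorphism $\mu^{*}\otimes(\chi\circ\det)\cong\mu$ that expresses the $\widehat{\alpha}$-discreteness of $\psi$, and invoking strong multiplicity one for isobaric representations to upgrade the resulting everywhere-local isomorphism to a global one, yields $(\mu')^{*}\otimes(\chi'\circ\det)\cong\mu'$.

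For symplectic type, I must show that the partial twisted exterior square $L$-function $L^{S}(s,\mu',\Lambda^2\otimes(\chi')^{-1})$ has a pole at $s=1$. This I would obtain from the standard factorization of the base-change $L$-function — which at each unramified place follows from inductivity of Weil $L$-factors and the decomposition $\Ind_{W_{F'_w}}^{W_{F_v}}\mathbf{1}\cong\bigoplus_{\eta}\eta_v$ — in the form
\[
L^{S}(s,\mu',\Lambda^2\otimes(\chi')^{-1}) \;=\; \prod_{\eta} L^{S}(s,\mu,\Lambda^2\otimes\chi^{-1}\eta),
\]
where $\eta$ runs over the characters of $\Gal(F'/F)$, regarded as idele class characters of $\mathbf{A}_F^{\times}$ trivial on the norms from $F'$. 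The factor for $\eta=\mathbf{1}$ has a simple pole at $s=1$ precisely because $\psi$ is of symplectic type, while each of the remaining factors is a twisted exterior square $L$-function of the cuspidal representation $\mu$ on $\GL_4(\mathbf{A}_F)$, which by Shahidi's non-vanishing theorem \cite{S2} has non-positive order at $s=1$ (it is holomorphic and nonzero there, or contributes a further pole if $\mu$ happens to be of symplectic type with respect to $\chi\eta^{-1}$ as well). Hence the product has a pole at $s=1$, so $\mu'$ is of symplectic type with respect to $\chi'$, and in particular not of orthogonal type. This completes the verification that $\psi'\in\Psi_{2,F'}(G,\chi')$ is a simple generic parameter.

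The crux is the last step: a pole in a single factor of a product is useless without control of the remaining factors at $s=1$, and it is exactly Shahidi's non-vanishing result \cite{S2} that excludes a compensating zero. This cannot be evaded by a parity argument, since $\mu$ may have nontrivial self-twists (e.g.\ when it is an automorphic induction), so the factors with $\eta\neq\mathbf{1}$ need not be nonsingular at $s=1$ a priori. Everything else — compatibility of base change with the contragredient and with character twists, the base-change factorization of the exterior square $L$-function, and the inheritance of simplicity and genericity — is routine.
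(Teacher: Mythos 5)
Your proposal is correct and follows essentially the same route as the paper: self-duality of $\mu'$ via compatibility of base change with local data plus strong multiplicity one, and the symplectic-type condition via the factorization $L(s,\mu',\Lambda^2\otimes(\chi')^{-1})=\prod_{\eta}L(s,\mu,\Lambda^2\otimes\chi^{-1}\eta)$, with the trivial-character factor supplying the pole and Shahidi's non-vanishing theorem \cite{S2} ruling out a compensating zero in the remaining factors. You have also correctly identified the non-vanishing input as the crux of the argument.
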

\begin{proof}
The base change lift $\mu^{\prime}$ of $\mu$ exists by the main result of Arthur-Clozel \cite{AC}, which under assumption is a cuspidal automorphic representation on $\GL_4(\mathbf{A}_{F^{\prime}})$. That $\mu^{\prime}$ is self dual with respect to $\chi^{\prime}$ follows easily from the $\chi$-self-duality of $\mu$ (for example using strong multiplicity one and the relation between the Satake parameters of $\mu$ and $\mu^{\prime}$ at the unramified places). All we need to check is that $\mu^{\prime}$ is of symplectic type with respect to $\chi^{\prime}$, i.e. that the twisted exterior square $L$-function $L(s,\mu^{\prime},\Lambda^2 \otimes (\chi^{\prime})^{-1})$ has a pole at $s=1$. However we have the factorization: denoting $C_{F^{\prime}/F} :=\mathbf{A}_F^{\times}/F^{\times} N_{F^{\prime}/F} \mathbf{A}^{\times}_{F^{\prime}}$, we have
\[
L(s,\mu^{\prime},\Lambda^2 \otimes (\chi^{\prime})^{-1}) = \prod_{\eta \in \widehat{C}_{F^{\prime}/F}} L(s,\mu, \Lambda^2 \otimes (\chi^{-1} \cdot \eta ).
\]
The term with $\eta$ being trivial contributes a pole at $s=1$, while for the other terms, the main result of \cite{S2} asserts that, since $\mu$ is cuspidal, the twisted exterior square $L$-function $L(s,\mu, \Lambda^2 \otimes (\chi^{-1} \cdot \eta )$ is non-zero at $s=1$. This concludes the result.
\end{proof}

\begin{rem}
\end{rem}
By \cite{AC} the condition for $\mu^{\prime}$ to be cuspidal is that
\begin{eqnarray}
\mu \otimes (\eta \circ \det) \ncong \mu
\end{eqnarray}
for any non-trivial character $\eta$ of $C_{F^{\prime}/F} $.

\section{Galois Representations attached to cusp forms on $\GSp_4(\mathbf{A}_F)$ of cohomological type}

For the rest of the paper, $F$ will denote a totally real number field. In this section we state the result on Galois representations attached to cuspidal automorphic representations on $\GSp_4(\mathbf{A}_F)$ of cohomological type that is needed for the constructions in section 4. 

\subsection{Archimedean $L$-parameters}

We begin by defining some archimedean $L$-parameters. Recall that $L_{\mathbf{C}} = W_{\mathbf{C}} = \mathbf{C}^{\times}$, and $L_{\mathbf{R}} = W_{\mathbf{R}} = \mathbf{C}^{\times} \cup \mathbf{C}^{\times} j$, with $j^2=-1$, and $j z j^{-1} = \overline{z}$ for $z \in \mathbf{C}^{\times}$.

We first begin with the case of $\GL_2(\mathbf{R})$ and $\GL_2(\mathbf{C})$. For integers $w,n$ with $n \geq 0$ and $n \equiv w+1 \mod{2}$, define
\begin{eqnarray}
 \phi_{w,n} : W_{\mathbf{R}} & \rightarrow & \GL_2(\mathbf{C})  \nonumber \\
z &\mapsto & |z|^{-w} \cdot \begin{pmatrix} (z/\overline{z})^{n/2} & \\ & (z/\overline{z})^{-n/2} \end{pmatrix}  \mbox{ for } z \in \mathbf{C}^{\times} \\
j  &\mapsto & \begin{pmatrix} & +1 \\  (-1)^n &  \end{pmatrix}. \nonumber
\end{eqnarray}

When $n \geq 1$ The $L$-parameter $\phi_{w,n}$ corresponds to the (essentially) discrete series representation of $\GL_2(\mathbf{R})$ with central character $a \mapsto a^{-w}$, and are cohomological representations. The case $n=0$ corresponds to the limit of discrete series representation (up to twist). 

For $\GL_2(\mathbf{C})$ one defines the parameters $\phi_{w,n} : W_{\mathbf{C}} \rightarrow \GL_2(\mathbf{C})$ (again with $n \equiv w+1 \mod{2}$) by the same formula

\begin{eqnarray*}
\phi_{w,n}: z  \rightarrow |z|^{-w} \cdot  \begin{pmatrix} (z/\overline{z})^{n/2} & \\ & (z/\overline{z})^{-n/2} \end{pmatrix}.
\end{eqnarray*}

When $n \geq 1$, the parameter $\phi_{w,n}$ corresponds to an irreducible admissible represention of $\GL_2(\mathbf{C})$ which is cohomological (there are no discrete series representations for $\GL_2(\mathbf{C})$).

Now we come back to $\GSp_4(\mathbf{R})$. Let $w,m_1,m_2$ be integers such that $m_1 > m_2 \geq 0$ and $m_1 + m_2 \equiv w + 1 \mod{2}$. We denote by $\phi_{(w;m_1,m_2)}$ the $L$-parameter given by 
\begin{eqnarray}
& & \\
& & \phi_{(w;m_1,m_2)} : z \mapsto  \nonumber \\ 
& & |z|^{-w} \cdot  \begin{pmatrix}  (z/\overline{z})^{(m_1+m_2)/2}   &  &  &   \\     &   (z/\overline{z})^{(m_1 - m_2)/2}   &  &     \\   &  & (z/\overline{z})^{-(m_1 -m_2)/2}  &     \\  &  &  & (z/\overline{z})^{-(m_1+m_2)/2}        \end{pmatrix}  \nonumber \\
& & \mbox{ for } z \in \mathbf{C}^{\times} \nonumber
\end{eqnarray}
and
\begin{eqnarray}
& & \\
& & \phi_{(w;m_1,m_2)}(j) =  \begin{pmatrix}  & & &+1 \\ &   & +1   & \\ &  (-1)^{w+1}  & & \\ (-1)^{w+1}  & & &   \end{pmatrix}. \nonumber
\end{eqnarray}

The image of $\phi_{(w;m_1,m_2)}$ lies in $\widehat{G} = \GSp_4(\mathbf{C})$, with similitude character given by the character $z \mapsto |z|^{-w}_{\mathbf{C}}=|z|^{-2 w}$, and $j \mapsto (-1)^{w}$. It thus defines an element of $\Phi_{\mathbf{R}}(G)$. The archimedean $L$-packet of $\GSp_4(\mathbf{R})$ corresponding to $\phi_{(w;m_1,m_2)}$ has two elements
\begin{eqnarray}
\{ \pi^{W}_{(w;m_1,m_2)}, \pi^{H}_{(w;m_1,m_2)}        \}
\end{eqnarray}
whose central character is given by $a \mapsto a^{-w}$ for $a \in \mathbf{R}^{\times}$.

The representation $\pi^{W}_{(w;m_1,m_2)}$ is the generic representation in the packet (with $W$ stands for Whittaker), while $\pi^{H}_{(w;m_1,m_2)}$ is the non-generic one. When $m_2 \geq 1$, both $\pi^{W}_{(w;m_1,m_2)}, \pi^{H}_{(w;m_1,m_2)}$ are up to twist belong to the discrete series of $\GSp_4(\mathbf{R})$ and are cohomological representations, with $\pi^{W}_{(w;m_1,m_2)}$ the generic discrete series and $\pi^{H}_{(w;m_1,m_2)}$ in the holomorphic discrete series (hence the superscript $H$). When $m_2=0$ the representations $\pi^{W}_{(w;m_1,0)} ,\pi^{H}_{(w;m_1,0)}   $ belong to the generic limit of discrete series, and holomorphic limit of discrete series respectively (again up to twist). These constitute up to twist the non-degenerate limit of discrete series of $\GSp_4(\mathbf{R})$.

The representations $\pi^W_{(w;m_1,m_2)}, \pi^H_{(w;m_1,m_2)}$ (including both discrete series and non-degenerate limit of discrete series) are essentially tempered, and tempered when $w=0$.

In the holomorphic case it is common to introduce the Blattner parameter:
\begin{eqnarray}
k_1 = m_1 + 1, \,\ k_2 = m_2 + 2
\end{eqnarray}
thus $k_1 \geq k_2 \geq 2$ and the cohomological condition corresponds to $k_2 \geq 3$. The parameter $(k_1,k_2)$ corresponds to the minimal $K$-type of the representation $\pi^{H}_{(w;m_1,m_2)}$ (here $K=U(2)$ is the maximal compact subgroup of $\GSp_4(\mathbf{R})$), given by the representation
\[
\Sym^{k_1-k_2} \mathbf{C}^2 \otimes (\det)^{\otimes k_2}.
\]  

\subsection{Results on Galois representations}

For the rest of this section $\Pi$ will denote a cuspidal automorphic representation of $\GSp_4(\mathbf{A}_F)$ with central character $\chi$. We make the following hypotheses on $\Pi$: there is an integer $w \in \mathbf{Z}$, such that for all archimedean place $v$ of $F$, the component of $\chi$ at $v$ is given by $\chi_v: a \rightarrow a^{-w}$. Assume that for any such $v$, the component $\Pi_v$ is a (essentially) discrete series belonging to the $L$-packet defined by the parameter $\phi_{(w;m_{1,v},m_{2,v})}$ with $m_{1,v} > m_{2,v} \geq 1$ and $w + 1 \equiv m_{1,v} +m_{2,v} \mod{2}$.

In this subsection we state the result on Galois representation associated to $\Pi$. The key case is when $\Pi$ belongs to a global simple generic parameter, so we begin with this case.

\begin{theorem}
Suppose that $\Pi$ belongs to a simple generic parameter. Then for each prime $p$, there exists a continuous semi-simple Galois representation
\[
R_p: G_F \rightarrow \GL_4(\barQp)
\]
which is unramified outside the primes of $F$ dividing $p$ and where $\Pi$ is not spherical, and satisfy the local-global compatibility condition: for any prime $v$ of $F$, we have
\[
 \iota_p \WD(R_p|_{G_{F_v}})^{F-ss} \cong  k_{*} \rec_v(\Pi_v \otimes |c|_v^{-3/2}).
\]
Furthermore at each place $v$ of $F$ dividing $p$, the representation $R_p$ is deRham. If we identify the embeddings $F \hookrightarrow \barQp$ with the archimedean places of $F$ via $\iota_p : \barQp \cong \mathbf{C}$, then the Hodge-Tate weights at the embedding corresponding to $v | \infty$ are given by 
\begin{eqnarray}
\{ \delta_v , \delta_v + m_{2,v},\delta_v + m_{1,v},\delta_v + m_{1,v} + m_{2,v}  \}
\end{eqnarray}
where $\delta_v = \frac{1}{2}(w +3 -m_{1,v} - m_{2,v})$.
If $\Pi_v$ is spherical at $v|p$ then $R_p$ is crystalline at $v$. Furthermore, denoting by $P^{\cris}_{\Pi,v}(X)$ the inverse characteristic polynomial for the crystalline Frobenius on the crystalline representation $R_p|_{G_{F_v}}$, and by $Q_{\Pi,v}$ the inverse characteristic polynomial of the geometric Frobenius on the unramified Weil-Deligne representation $k_{*}\rec_v(\Pi_v \otimes |c|_v^{-3/2})$, we have the equality
\[
P^{\cris}_{\Pi,v}(X)=Q_{\Pi,v}(X).
\]
\end{theorem}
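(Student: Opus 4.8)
The plan is to deduce Theorem 3.1 from Arthur's classification (Theorem 2.2) together with the existing construction of Galois representations attached to regular algebraic self-dual cuspidal automorphic representations of $\GL_4$. First I would use Theorem 2.2: since $\Pi$ belongs to a simple generic parameter $\psi = \mu \boxplus \chi \in \Psi_{2,F}(G,\chi)$, there is a $\chi$-self-dual cuspidal automorphic representation $\mu$ of $\GL_4(\mathbf{A}_F)$ of symplectic type, and at unramified places the Satake parameters of $\mu_v$ equal $k_*\rec_v(\Pi_v)$ (identifying $\widehat{G} \hookrightarrow \GL_4(\mathbf{C})$). The hypothesis that $\Pi_v$ lies in the discrete series $L$-packet of $\phi_{(w;m_{1,v},m_{2,v})}$ with $m_{1,v} > m_{2,v} \geq 1$ at each archimedean $v$ forces, via the archimedean local Langlands correspondence for $\GL_4$, that $\mu$ is regular algebraic (cohomological) and essentially self-dual; the infinitesimal characters are computed from $\phi_{(w;m_{1,v},m_{2,v})}$ and yield exactly the Hodge-Tate weights in (3.11) after the shift by $|c|_v^{-3/2}$ (equivalently, a Tate twist by $-3/2$ which accounts for $\delta_v = \tfrac12(w+3-m_{1,v}-m_{2,v})$).

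Next I would invoke the construction of Galois representations for such $\mu$: by the work of Shin, Sorensen, Chenevier--Harris (cited in the paper as \cite{Sh,So,ChH}), there is a continuous semisimple $R_p : G_F \to \GL_4(\barQp)$ attached to $\mu \otimes |\det|^{-3/2}$, unramified outside $p$ and the ramified primes of $\mu$, satisfying local-global compatibility at all finite $v \nmid p$ up to Frobenius semisimplification, i.e. $\iota_p \WD(R_p|_{G_{F_v}})^{F\text{-}ss} \cong \mathcal{L}_v(\mu_v \otimes |\det|_v^{-3/2})$. The point then is that $\mathcal{L}_v(\mu_v) = k_*\rec_v(\Pi_v)$ for all finite $v$: at unramified places this is the Satake matching, and in general one uses the compatibility of Arthur's transfer with the local Langlands correspondence for $\GSp_4$ and $\GL_4$ — precisely the role of \cite{GT1,GT2,CG} flagged in the introduction — so that the parameter $\psi$ localizes correctly and $k_*\rec_v(\Pi_v \otimes |c|_v^{-3/2})$ is identified with $\mathcal{L}_v(\mu_v \otimes |\det|_v^{-3/2})$. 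For $v \mid p$, deRham-ness and the Hodge--Tate weights come from the corresponding properties of $R_p$ in the $\GL_4$ construction (these Galois representations are known to be deRham, indeed the relevant ones come from étale cohomology of unitary Shimura varieties), together with the infinitesimal-character computation above.

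The crystalline statement when $\Pi_v$ is spherical at $v \mid p$ I would handle as follows: sphericality of $\Pi_v$ means $\mu_v$ is unramified, hence $R_p|_{G_{F_v}}$ is crystalline — this again is part of the input from the $\GL_4$ construction (unramified-at-$p$ implies crystalline for these Shimura-variety Galois representations), or can be deduced by a base-change/congruence argument reducing to the Fontaine--Mazur--type results used there. The identity $P^{\cris}_{\Pi,v}(X) = Q_{\Pi,v}(X)$ then follows by comparing, on the one hand, the crystalline Frobenius eigenvalues with the Satake parameters — this is exactly the unramified local-global compatibility for crystalline representations attached to $\GL_4$ forms — and on the other hand identifying those Satake parameters of $\mu_v$ with the eigenvalues of $k_*\rec_v(\Pi_v \otimes |c|_v^{-3/2})(\Frob_v)$ via the Satake matching of Arthur's transfer.

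The main obstacle I expect is not the $\GL_4$ Galois representation itself, which is off-the-shelf, but pinning down the precise compatibility between Arthur's packets for $\GSp_4$ and the Gan--Takeda local Langlands correspondence at the bad finite places $v \nmid p$: one must know that the $\GSp_4$-to-$\GL_4$ transfer underlying $\psi \mapsto \mu$ is compatible with $\rec_v^{\GT}$ at every place, so that $\mathcal{L}_v(\mu_v) = k_*\rec_v(\Pi_v)$ holds integrally (not merely up to semisimplification away from a finite set); this is exactly where \cite{CG} and the explicit description in \cite{GT1,GT2} are indispensable. A secondary subtlety is matching normalizations: the twist by $|c|_v^{-3/2}$ on the $\GSp_4$ side versus $|\det|_v^{-3/2}$ on the $\GL_4$ side, and the Tate-twist convention for Hodge--Tate weights, must be tracked carefully to land on the weights $\{\delta_v,\delta_v+m_{2,v},\delta_v+m_{1,v},\delta_v+m_{1,v}+m_{2,v}\}$ exactly as stated.
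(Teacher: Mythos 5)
Your proposal follows essentially the same route as the paper: Arthur's classification supplies the cuspidal $\chi$-self-dual lift $\mu$ of $\Pi$ to $\GL_4(\mathbf{A}_F)$, the Galois representation is the one attached to $\mu\otimes|\det|^{-3/2}$ via \cite{Sh,So,ChH} (with the monodromy part of local-global compatibility completed by \cite{BLGGT,Car1,Car2} and the crystalline Frobenius identity by \cite{Ch}), and the identification $\mathcal{L}_v(\mu_v)=k_*\rec_v(\Pi_v)$ is tautological in Arthur's formalism, with \cite{CG} guaranteeing agreement with the Gan--Takeda correspondence. This matches the paper's proof, which likewise reduces to \cite{So} upgraded by these inputs.
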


In the above we recall that if $\phi$ is a $L$-parameter of $G(F_v)$, then we denote by $k_{*} \phi$ the $L$-parameter of $\GL_4(F_v)$ obtained by composing $\phi$ with the inclusion $\widehat{G} \hookrightarrow \GL_4(\mathbf{C})$.

\begin{rem}
\end{rem}
Here we are identifying an $L$-parameter $\phi:L_{F_v} = W_{F_v} \times \SL_2(\mathbf{C}) \rightarrow \GL_n(\mathbf{C})$ with the corresponding Frobenius semi-simple Weil-Deligne representation in the usual way: corresponding to $\phi$ is the Weil-Deligne representation $(r,N)$, where $r$, the semi-simple part of the Weil-Deligne representation, is given by the semisimple part $\phi^{ss}$ of $\phi$ as in equation (2.1), i.e.
\[
r(g) = \phi^{ss}(g) = \phi\Big(g,   \begin{pmatrix} |g|_v^{1/2} &  0 \\  0 & |g|_v^{-1/2} \end{pmatrix}     \Big)
\]
for $g \in W_{F_v}$, and the monodromy operator $N$ is given by
\[
N = \log \Big( \phi\Big(1,  \left( \begin{array}{rr} 1 &  1 \\  0 & 1 \end{array} \right)   \Big) \Big).
\]

\begin{proof}
This theorem is essentially proved in \cite{So}, except that when \cite{So} was written, the results of Arthur \cite{A2} and that of \cite{ChH}, \cite{BLGGT}, \cite{Car1,Car2} were not yet available, so extraneous hypotheses on $\pi$ was made in {\it loc. cit.} So we just indicate a few main points:

1. In \cite{So} $\Pi$ is assumed to be globally generic in order to apply the results of Jacquet-Piateski-Shapiro-Shalika and Asgari-Shahidi \cite{AS} in order to lift $\Pi$ to an automorphic representation $\Pi^{\prime}$ on $\GL_4(\mathbf{A}_F)$ (assumed to be cuspidal). In our case, in the context of Arthur's global classification theorem, this assumption is not needed, and by definition the lifting $\Pi^{\prime}$ is given by the global parameter classifying $\Pi$.

2. In \cite{So} the local Langlands correspondence for $\GSp_4$ as constructed by Gan-Takeda \cite{GT1,GT2} was used for the local-global compatibility statement, and where the following fact is used \cite{GT2}: when $\Pi$ is globally generic the lifiting $\Pi \rightarrow \Pi^{\prime}$ (with $\Pi^{\prime}$ being cuspidal) is compatible with the local Langlands correspondence for $\GSp_4$ constructed by Gan-Takeda, and the local Langlands correspondence for $\GL_4,$ at all places. Again in our case in the formalism of Arthur's classification this is tautological, provided we use the local Langlands correspondence for $\GSp_4$ as constructed by Arthur in the context of the local classification theorem (of course in any case by the main result of of Chan-Gan \cite{CG} these two gives the same local Langlands correspondence for $\GSp_4$).    

3. The Galois representation $R_p$ associated to $\Pi$ is by definition the Galois representation associated to $\Pi^{\prime}$. The arguments of \cite{So} constructs the latter if we use the result of \cite{ChH} in the arguement of \cite{So}. The part of the local-global compatibility statement concerning the monodromy operator in \cite{ChH} is completed in the works \cite{BLGGT}, \cite{Car1,Car2}, while the assertion on the equality of the crystalline Frobenius polynomial and the Hecke polynomial when $\Pi$ is spherical at $v |p$ is proved in \cite{Ch}
 \end{proof}

\begin{rem}
\end{rem}

\noindent 1. As in \cite{So} a Baire category argument shows that the image of $G_F$ lies in $\GL_4(L)$ for some finite extension $L$ of $\mathbf{Q}_p$.

\noindent 2. For the applications in section 4 we will only use the local-global compatibility statement up to semi-simplification at primes of $F$ not dividing $p$.

\noindent 3. The main theorem of Bellaiche-Chenevier \cite{BC2} gives the result that the image of $\rho_p$ lies in $\GSp_4(\barQp)$. For our purpose we do not need this more precise result.

\noindent 4. The irreducibility of $R_p$ is now known by Calegari-Gee \cite{CaG}.

\bigskip

For completeness we state the result for Galois representations when the parameter classifying $\pi$ is not of simple generic type. This essentially reduces to the Galois representation associated to cuspidal automorphic representations on $\GL_2(\mathbf{A}_F)$ of cohomological type, i.e. to Galois representations associated to Hilbert modular forms.

We first consider the case where the parameter classifying $\Pi$ is still generic but not of simple type (i.e. endoscopic type). In this case the parameter $\widetilde{\psi} = \psi \oplus \chi$ classifying $\Pi$ has the form $\psi = \mu_1 \boxplus \mu_2$, where $\mu_1$, $\mu_2$ are distinct cuspidal automorphic representations of $\GL_2(\mathbf{A}_F)$ with central character $\chi$. In \cite{A1} it is called a parameter of Yoshida type. We can arrange $\mu_1$, $\mu_2$ so that at any archimedean place $v$ of $F$, the $L$-parameter of $\mu_1$ at $v$ is given by $\phi_{(w;m_{1,v}+m_{2,v})}$, while that of $\mu_2$ is given by $\phi_{(w;m_{1,v}-m_{2,v})}$ (both are discrete series parameters since $m_{1,v} > m_{2,v} \geq 1$).  

It follows that both $\mu_1$ and $\mu_2$ are of cohomological type, and hence we can associate Galois representations
\[
\rho_{\mu_i,p} :G_F \rightarrow \GL_2(\barQp)
\]
for $i=1,2$, such that for any prime $v$ of $F$, we have the local-global compatibility statement:
\begin{eqnarray}
\iota_p \WD(\rho_{\mu_i,p}|_{G_{F_v}})^{F-ss} \cong \mathcal{L}_v((\mu_i)_v \otimes |\det|_v^{-1/2}).
\end{eqnarray}
The Galois representations $\rho_{\mu_i,p}$ are deRham at all places of $v$ of $F$ above $p$, and crystalline at $v$ if $\mu_i$ is spherical at $v$ (in this case of Yoshida type packet we have $\Pi_v$ being spherical if and only if both $\mu_1$ and $\mu_2$ are spherical at $v$). The Hodge-Tate weights of $\mu_1$ at an embedding of $F$ into $\barQp$ that corresponds to an archimedean place $v$ of $F$ (under $\iota_p: \barQp \cong \mathbf{C}$) is given by:
\begin{eqnarray}
\{  \gamma_v  ,\gamma_v + m_{1,v}+m_{2,v} \}
\end{eqnarray}
where $\gamma_v = \frac{1}{2}(w +1-( m_{1,v}+m_{2,v}) )$. For $\mu_2$ the Hodge-Tate weights are given by 
\begin{eqnarray}
\{\gamma^{\prime}_v, \gamma^{\prime}_v  + m_{1,v}-m_{2,v}\}
\end{eqnarray}
with $\gamma^{\prime}_v = \frac{1}{2}(w+1 -(m_{1,v}-m_{2,v}) )$. It follows that if we define the Galois representation associated to $\Pi$ to be
\begin{eqnarray}
R_p:=\rho_{\mu_1,p} (-1) \oplus \rho_{\mu_2,p} (-1)
\end{eqnarray}
then all the assertions in theorem 3.1 hold for $R_p$ in this case also (that the Hodge-Tate weights of $R_p$ is also given by (3.6) follows easily from (3.8) and (3.9)).

Finally we treat the case where the parameter $\widetilde{\psi} = \psi \oplus \chi$ is not generic. In our case $\Pi$ is cuspidal. So if $\Pi$ belongs to the packet whose parameter is not generic, then $\Pi$ is a CAP representation, which were classified by Soudry \cite{Sou1}. In our case where we assume that $\Pi$ is of cohomological type, the only possibility that can occur is the case where the parameter classifying $\Pi$ is of Saito-Kurokawa type, where $\psi$ is of the form
\[
\psi = \mu \boxplus \lambda \boxtimes v(2)
\]
here $\mu$ is a cuspidal automorphic representation of $\GL_2(\mathbf{A}_F)$ with central character $\chi$, and $\lambda$ an idele class character of $\mathbf{A}_F^{\times}$ satisfying $\chi=\lambda^2$, and $v(2)$ is the algebraic representation of $\SL_2(\mathbf{C})$ of dimension $2$, {\it c.f.} \cite{A1} p.79. Note that for any finite place $v$ of $F$, the local $L$-parameter $\phi_{\psi_v}$ of $G(F_v)$ associated to $\psi_v$ is given by (as in equation (2.2)):
\begin{eqnarray}
\phi_{\psi_v}(\sigma) = \mathcal{L}_v(\mu_v)(\sigma)  \oplus  \lambda_v(\sigma) |\sigma |_v^{1/2} \oplus \lambda_v(\sigma) |\sigma |_v^{-1/2}
\end{eqnarray}
for $\sigma \in L_{F_v}$.

In our case where $\Pi$ is cohomological the archimedean $L$-parameter has to be in specific form: more precisely one must have
\begin{eqnarray}
m_{1,v}-m_{2,v}=1.
\end{eqnarray}
In particular since we have the condition $w + 1 \equiv m_{1,v}+m_{2,v} \mod{2}$ we see that $w$ has to be even, and hence the idele class character $\lambda$ is an algebraic idele class character, and $\mu$ is of cohomological type, namely that for each archimedean place $v$ of $F$, the $L$-parameer of $\mu_v$ is given by $\phi_{(w; m_{1,v}+m_{2,v})}= \phi_{(w; 2m_{1,v}-1)}$. We define the Galois representation $R_p$ associated to $\Pi$ to be
\begin{eqnarray}
R_p : = \rho_{\mu,p} (-1) \oplus  \lambda_p^{\Gal} (-1)  \oplus \lambda_p^{\Gal} (-2)
\end{eqnarray}
here $\lambda_p^{\Gal}$ is the $p$-adic Galois character of $G_F$ associated to the algebraic idele class character $\lambda$.

In the case where the parameter is not generic, then in general one does not have the full local-global compatibility statement, but only local-global compatibility up to semi-simplification: for any finite prime $v$ of $F$ not dividing $p$, we have
\begin{eqnarray}
\iota_p \WD(R_p|_{G_{F_v}})^{ss} \cong k_{*} \rec_v(\Pi_v \otimes |c|_v^{-3/2})^{ss}.
\end{eqnarray}

In order to establish (3.14) we first recall from section 2.2 that when we have a local $A$-parameter $\psi_v$, then in general the local $A$-packet corresponding to $\psi_v$ will contain representations that belong to different $L$-packets. However, we still have:

\begin{proposition}
Suppose $\psi$ is a global formal parameter of Saito-Kurokawa type. For each finite prime $v$, let $\psi_v$ be the local A-parameter of $\GSp_4(F_v)$ given by the localization of $\psi$ at $v$. Denote as before by $\Pi_{\psi_v}$ the local A-packet classified by $\psi_v$. Then the semi-simple part of the $L$-parameters of the representations in the $A$-packet $\Pi_{\psi_v}$ have the same semi-simple part as that of $\phi_{\psi_v}$. 
\end{proposition}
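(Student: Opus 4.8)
The plan is to reduce the statement to the explicit description of the Saito-Kurokawa $A$-packets. Write $\psi = \mu \boxplus (\lambda \boxtimes v(2))$ as in the discussion preceding the statement, so that for a finite place $v$ the localisation is $\psi_v = \mathcal{L}_v(\mu_v) \oplus (\lambda_v \boxtimes v(2))$, viewed as a representation of $L_{F_v} \times \SL_2(\mathbf{C})$, with associated $L$-parameter $\phi_{\psi_v}$ as written out in equation (3.11). The first step is pure bookkeeping: the characters $|\cdot|_v$ and $\lambda_v$ are inflated from $W_{F_v}$, hence trivial on the monodromy $\SL_2(\mathbf{C})$ inside $L_{F_v}$, so applying the semisimplification recipe (2.1) to $\phi_{\psi_v}$ gives
\[
\phi_{\psi_v}^{ss} \;=\; \mathcal{L}_v(\mu_v)^{ss} \;\oplus\; \lambda_v |\cdot|_v^{1/2} \;\oplus\; \lambda_v |\cdot|_v^{-1/2}
\]
as a semisimple representation of $W_{F_v}$; this is the target, and it depends only on $\mathcal{L}_v(\mu_v)^{ss}$ and $\lambda_v$.

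Next I would bring in the structure of $\Pi_{\psi_v}$. A direct computation of the centralizer shows that the component group $\mathcal{S}_{\psi_v}$ is trivial when $\mu_v$ is an irreducible principal series of $\GL_2(F_v)$ and is isomorphic to $\mathbf{Z}/2\mathbf{Z}$ when $\mu_v$ is essentially square-integrable (these are the only possibilities, since $\mu_v$ is generic); accordingly $\Pi_{\psi_v}$ has one or two members. In every case the member attached to the trivial character of $\mathcal{S}_{\psi_v}$ is the Langlands quotient $\pi_v^{+}$, for which $\rec_v(\pi_v^{+}) = \phi_{\psi_v}$ by construction, so the claim is immediate there. When $\mathcal{S}_{\psi_v} \cong \mathbf{Z}/2\mathbf{Z}$ the second member $\pi_v^{-}$ is, by the explicit construction of Gan-Takeda \cite{GT1,GT2} (using Chan-Gan \cite{CG} to identify their packet with Arthur's, as is needed elsewhere in this paper), a theta lift from a non-split orthogonal group lying in the tempered $L$-packet of the parameter $\mathcal{L}_v(\mu_v) \oplus (\St_{F_v} \otimes \lambda_v)$, obtained from $\psi_v$ by transporting the factor $v(2)$ from Arthur's $\SL_2(\mathbf{C})$ to the monodromy $\SL_2(\mathbf{C})$ of $L_{F_v}$ (so that the generalized Steinberg parameter $\St_{F_v} \otimes \lambda_v$ replaces $\lambda_v |\cdot|_v^{1/2} \oplus \lambda_v |\cdot|_v^{-1/2}$ on the one-dimensional factor). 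The semisimplification of this parameter is again $\mathcal{L}_v(\mu_v)^{ss} \oplus \lambda_v |\cdot|_v^{1/2} \oplus \lambda_v |\cdot|_v^{-1/2}$, so $\rec_v(\pi_v^{-})^{ss} = \phi_{\psi_v}^{ss}$. One can also organise the argument without the case distinction by appealing to the general principle that all members of a local $A$-packet share the same supercuspidal support --- equivalently, their $L$-parameters have the same semisimple part --- which reduces everything to $\pi_v^{+}$.

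The step that needs genuine care --- and the reason the statement holds rather than being a coincidence --- is that in $\psi_v$ the constituent $\mathcal{L}_v(\mu_v)$ is always distinguishable from $\lambda_v \boxtimes v(2)$ by the factor of $L_{F_v} \times \SL_2(\mathbf{C})$ through which it is nontrivial (the monodromy $\SL_2(\mathbf{C})$ of $L_{F_v}$, respectively Arthur's $\SL_2(\mathbf{C})$), so that $\psi_v$ remains multiplicity-free even in the degenerate local situations: for instance when $\mu_v$ is itself a twist of the Steinberg representation, or when $\lambda_v$ is related to the twisting character of $\mu_v$, so that $\psi_v$ is no longer elliptic. Consequently $\mathcal{S}_{\psi_v}$ never exceeds $\mathbf{Z}/2\mathbf{Z}$, no member of $\Pi_{\psi_v}$ can carry an irreducible four-dimensional (and hence supercuspidal) $L$-parameter --- which is exactly what could have violated the assertion --- and the only freedom in the $L$-parameters of the members is the placement of the monodromy operator; every such parameter is built from the common semisimple datum $\mathcal{L}_v(\mu_v)^{ss} \oplus \lambda_v |\cdot|_v^{1/2} \oplus \lambda_v |\cdot|_v^{-1/2} = \phi_{\psi_v}^{ss}$. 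Checking this uniformly across the list of cases in \cite{GT1,GT2} (and using \cite{A2}, p.~44, for the construction of the packet $\Pi_{\psi_v}$ for the possibly non-bounded parameter $\psi_v$) is the main technical chore, but it is routine once the above dichotomy is established.
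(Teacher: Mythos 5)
Your main line of argument is essentially the paper's: the Saito--Kurokawa local $A$-packet is described explicitly (a singleton Langlands quotient when $\mu_v$ is not essentially square-integrable, and otherwise the pair consisting of the Langlands quotient $J_P(\mu_v\otimes\lambda_v^{-1}|\cdot|_v^{1/2},\lambda_v|\cdot|_v^{-1/2})$ with parameter $\phi_{\psi_v}$ and a theta lift from $\GSO(4,0)$ lying in the $L$-packet of $\mathcal{L}_v(\mu_v)\oplus\mathcal{L}_v(\St_{\GL_2}\otimes\lambda_v)$), and the semisimple parts are compared case by case via the Gan--Takeda tables; the paper carries out in the appendix exactly the ``routine chore'' you defer, including the degenerate case $\mu_v=\St_{\GL_2}\otimes\lambda_v$. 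One caution: your proposed shortcut via the ``general principle that all members of a local $A$-packet share the same supercuspidal support --- equivalently, their $L$-parameters have the same semisimple part'' is false on both counts here, since when $\mu_v$ is supercuspidal the second member $\theta^+(\mu_v^{\JL}\boxtimes\lambda_v)$ is itself a supercuspidal representation of $\GSp_4(F_v)$ while the first is a Langlands quotient induced from the Siegel parabolic (so their supercuspidal supports differ), and equality of semisimple parts of $L$-parameters is not equivalent to equality of supercuspidal supports; so the case-by-case verification cannot be bypassed this way.
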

\begin{proof}
See Appendix.
\end{proof}

Hence by (3.11) and proposition 3.4, we have

\begin{eqnarray}
& & k_{*} \rec_v(\Pi_v \otimes |c|_v^{-3/2})^{ss} \\ & \cong & \mathcal{L}_v(\mu_v \otimes |\det|_v^{-3/2})^{ss} \oplus \mathcal{L}_v(\lambda_v \cdot |\det|_v^{-1}) \oplus \mathcal{L}_v(\lambda_v \cdot |\det|_v^{-2}). \nonumber
\end{eqnarray}

From this we see that (3.14) follows from the local-global compatibility statement for $\rho_{\mu,p}$ (as in (3.7)).

If $\Pi$ is spherical at $v$, then both $\mu$ and $\lambda$ are spherical at $v$, so $R_p$ is clearly crystalline at $v$ in this case. Finally one sees again (using (3.12)) that the Hodge-Tate weights are given by (3.6) (noting that the Hodge-Tate weight of $\lambda$ at the embedding of $F$ into $\barQp$ corresponding to any archimedean place $v$ of $F$ is given by $w/2$, while the Hodge-Tate weights of $\rho_{\mu,p}$ is given by the same formula as in (3.8)).

\bigskip

We summarize the discussion of this section as:

\begin{theorem}
Suppose that $\Pi$ is a cuspidal automorphic representation on $\GSp_4(\mathbf{A}_F)$ satisfying the hypotheses in the beginning of section 3.2. Then for each prime $p$, there exists a continuous semi-simple Galois representation
\[
R_p: G_F \rightarrow \GL_4(\barQp)
\]
which is unramified outside the primes of $F$ dividing $p$ and where $\Pi$ is not spherical, and satisfy the local-global compatibility condition, up to semi-simplification: for any prime $v$ of $F$, we have
\[
 \iota_p \WD(R_p|_{G_{F_v}})^{ss} \cong  k_{*} \rec_v(\Pi_v \otimes |c|_v^{-3/2})^{ss},
\]
If in addition $\Pi$ is classified by a global generic parameter (i.e. $\Pi$ is not a CAP representation), then we have the full local-global compatibility:
\[
 \iota_p \WD(R_p|_{G_{F_v}})^{F-ss} \cong  k_{*} \rec_v(\Pi_v \otimes |c|_v^{-3/2}),
\]
Furthermore at each place $v$ of $F$ dividing $p$, the representation $R_p$ is deRham. If we identify the embeddings $F \hookrightarrow \barQp$ with the archimedean places of $F$ via $\iota_p : \barQp \cong \mathbf{C}$, then the Hodge-Tate weights at the embedding corresponding to $v | \infty$ are given by 
\begin{eqnarray}
\{ \delta_v , \delta_v + m_{2,v},\delta_v + m_{1,v},\delta_v + m_{1,v} + m_{2,v}  \}
\end{eqnarray}
where $\delta_v = \frac{1}{2}(w +3 -m_{1,v} - m_{2,v})$.
If $\Pi_v$ is spherical at $v|p$ then $R_p$ is crystalline at $v$. Furthermore, denoting by $P^{\cris}_{\Pi,v}(X)$ the inverse characteristic polynomial for the crystalline Frobenius on the crystalline representation $R_p|_{G_{F_v}}$, and by $Q_{\Pi,v}$ the inverse characteristic polynomial of the geometric Frobenius on the unramified Weil-Deligne representation $k_{*}\rec_v(\Pi_v \otimes |c|_v^{-3/2})$, we have the equality
\[
P^{\cris}_{\Pi,v}(X)=Q_{\Pi,v}(X).
\]
\end{theorem}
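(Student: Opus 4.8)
The plan is to argue by a case analysis on the global $A$-parameter classifying $\Pi$, reducing Theorem 3.6 to the three situations examined in the discussion preceding it. Since $\Pi$ is cuspidal it occurs in $L^2_{\disc}(G(F)\backslash G(\mathbf{A}_F),\chi)$, so by Arthur's Theorem 2.2 it is classified by some $\psi \in \Psi_{2,F}(G,\chi)$. The hypotheses of section 3.2 force $\Pi_v$ to be an essentially discrete series, hence essentially tempered, at each archimedean place $v$. Since the only generic $A$-parameters for $\GSp_4$ are the simple generic and the Yoshida ones, and since Soudry's classification \cite{Sou1} of CAP representations combined with the cohomology hypothesis restricts the non-generic case to the Saito-Kurokawa type, $\psi$ falls into exactly one of: (i) simple generic, $\psi = \mu$ for a $\chi$-self-dual cuspidal $\mu$ on $\GL_4(\mathbf{A}_F)$ of symplectic type; (ii) Yoshida type, $\psi = \mu_1 \boxplus \mu_2$ with $\mu_1\neq\mu_2$ cuspidal on $\GL_2(\mathbf{A}_F)$ of central character $\chi$; (iii) Saito-Kurokawa type, $\psi = \mu \boxplus \lambda\boxtimes\nu(2)$. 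I would treat these in turn, cases (i) and (ii) being the generic ones that also give the full local-global compatibility claimed when $\Pi$ is not CAP.

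Case (i) is exactly Theorem 3.1, and the $F$-semisimplified compatibility asserted in Theorem 3.6 is a weakening of the $F$-ss statement established there. In case (ii) one orders $\mu_1,\mu_2$ so that their archimedean parameters at each $v\mid\infty$ are $\phi_{(w;m_{1,v}+m_{2,v})}$ and $\phi_{(w;m_{1,v}-m_{2,v})}$, both of discrete-series type because $m_{1,v} > m_{2,v}\geq 1$; hence both $\mu_i$ are cohomological Hilbert modular forms and carry Galois representations $\rho_{\mu_i,p}$ satisfying the classical full local-global compatibility (3.7), together with the deRham, crystalline and Hodge-Tate-weight properties recorded in (3.8)--(3.9). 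Setting $R_p := \rho_{\mu_1,p}(-1)\oplus\rho_{\mu_2,p}(-1)$, the Weil-Deligne side $k_*\rec_v(\Pi_v\otimes|c|_v^{-3/2})$ decomposes, by genericity of the parameter, as $\mathcal{L}_v((\mu_1)_v\otimes|\det|_v^{-1/2})\oplus\mathcal{L}_v((\mu_2)_v\otimes|\det|_v^{-1/2})$ after absorbing the Tate twists, so every assertion of Theorem 3.6 in this case follows from those for the $\rho_{\mu_i,p}$; here one also uses that $\Pi_v$ is spherical precisely when both $\mu_{i,v}$ are, and that the weights (3.8)--(3.9) reassemble to (3.16).

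In case (iii) the cohomology condition forces $m_{1,v}-m_{2,v}=1$ at every $v\mid\infty$, so $w$ is even, $\lambda$ is an algebraic idele class character, and $\mu$ is cohomological with archimedean parameter $\phi_{(w;2m_{1,v}-1)}$; one then sets $R_p := \rho_{\mu,p}(-1)\oplus\lambda_p^{\Gal}(-1)\oplus\lambda_p^{\Gal}(-2)$. Only the semisimplified compatibility (3.14) can be expected, and this is where Proposition 3.4 carries the essential content: it identifies the semisimple part of the $L$-parameter of every member of the local $A$-packet $\Pi_{\psi_v}$ with that of $\phi_{\psi_v}$, which is computed in (3.11), so that $k_*\rec_v(\Pi_v\otimes|c|_v^{-3/2})^{ss}$ decomposes as in (3.15); feeding this into the local-global compatibility of $\rho_{\mu,p}$ and the class-field-theoretic description of $\lambda_p^{\Gal}$ yields (3.14). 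DeRhamness at $v\mid p$, crystallinity when $\Pi_v$ is spherical, the Hodge-Tate weights (3.16) (using that $\lambda$ contributes weight $w/2$ and the shift (3.12)), and the identity $P^{\cris}_{\Pi,v}(X)=Q_{\Pi,v}(X)$ in the spherical case are then inherited from $\rho_{\mu,p}$ and $\lambda_p^{\Gal}$.

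The substantive inputs are all external --- Arthur's classification (Theorem 2.2), the cohomological $\GSp_4$ case (Theorem 3.1, itself resting on \cite{So}, \cite{ChH}, \cite{A2}, \cite{BLGGT}, \cite{Car1,Car2}, \cite{Ch}), the classical theory of Galois representations attached to Hilbert modular forms, and Proposition 3.4 --- so I do not expect a genuine obstacle inside the argument. The two points requiring care are the trichotomy of parameter types, which relies on cuspidality, the archimedean discrete-series hypothesis, and Soudry's CAP classification to rule out the remaining non-generic parameters, and the bookkeeping of central twists, Tate twists and Hodge-Tate weights in cases (ii) and (iii), which must be done consistently with the normalizations fixed in the Notation section but is otherwise mechanical.
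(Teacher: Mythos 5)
Your proposal follows the paper's own argument essentially verbatim: Theorem 3.6 is stated there as a summary of the preceding discussion, which proceeds by exactly your trichotomy (simple generic via Theorem 3.1, Yoshida type via $R_p=\rho_{\mu_1,p}(-1)\oplus\rho_{\mu_2,p}(-1)$, Saito--Kurokawa type via $R_p=\rho_{\mu,p}(-1)\oplus\lambda_p^{\Gal}(-1)\oplus\lambda_p^{\Gal}(-2)$ together with Proposition 3.4). The case analysis, the definitions of $R_p$, and the weight bookkeeping all coincide with the paper's treatment.
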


\section{The case of holomorphic limit of discrete series}

As in the previous section $\Pi$ is a cuspidal automorphic representation on $\GSp_4(\mathbf{A}_F)$, with central character $\chi$. As before we assume that there is an integer $w$ such that for any archimedean place $v$ of $F$, the local component of $\chi$ is given by $\chi_v: a \rightarrow a^{-w}$. In this section we assume that the local components $\Pi_v$ of $\Pi$ at all $v|\infty$ are given by either holomorphic discrete series or the holomorphic limit of discrete series, belonging to the $L$-parameter $\phi_{(w,m_{1,v},m_{2,v})}$, where $m_{1,v} > m_{2,v} \geq 0 $ and $w +1 \equiv m_{1,v} + m_{2,v} \mod{2}$ for all $v | \infty$. In particular it can be represented (up to twist) as a holomorphic Siegel-Hilbert modular form with Blattner parameter $(k_{1,v},k_{2,v})$ where $k_{1,v}=m_{1,v}+1,k_{2,v}=m_{2,v}+2$, for any $v | \infty$ (see equation (3.5)). For the applications in section 5 we are mainly interested in the case where $\Pi_v$ belongs to the holomorphic limit of discrete series for all $v|\infty$. The goal of this section is to construct the four dimensional $p$-adic Galois representation associated to $\Pi$, using idea similar to \cite{ChH}, and to prove a local-global compatibility statement up to semi-simplification (at primes away from $p$), using the technique of $p$-adic deformation and the results on Galois representation associated to forms of cohomologial type from section 3. 

\begin{rem}
\end{rem}
A usual Cebotarev density and Brauer-Nesbitt argument shows that the semi-simple $p$-adic Galois representation associated to $\Pi$ depends only on the the global packet to which $\Pi$ belongs. In particular the results of this section apply to $\Pi$ whose archimedean components belong to a non-degenerate limit of discrete series (i.e. generic or holomorphic), as long as the packet containing $\Pi$ contains a representation that is a holomorphic limit of discrete series or holomorphic discrete series at all $v | \infty$. This will be the case for example when $\Pi$ is classified by a simple generic parameter (which is the case in section 5).

First we make

\begin{hypothesis}
The representation $\Pi$ has Iwahori fixed vectors at all primes of $F$ above $p$ with respect to the standard Iwahori subgroup $J_v \subset G(\mathcal{O}_{F_v}) = \GSp_4(\mathcal{O}_{F_v})$.
\end{hypothesis}

Here the Iwahori subgroup $J_v$ is defined with respect to the standard Borel subgroup of $\GSp_4$. If $\Pi$ satisfies hypothesis 4.2 then in particular $\Pi$ satisfies the finite slope condition \cite{MT} at all primes of $F$ above $p$.

We need the following result from \cite{MT}, which is a generalization of the result of Kisin-Lai \cite{KL} and A. Jorza [J] to the Siegel-Hilbert case. To state the result we need some notations. For each finite $v$ denote by $\mathcal{H}_v^{\sph} $ the spherical Hecke algebra of $G(F_v)=\GSp_4(F_v)$ with coefficients in $\mathbf{Z}$, defined with respect to the hyperspecial maximal compact subgroups $K_v =\GSp_4(\mathcal{O}_{F_v})$. Choose a local uniformizer $\varpi_v$ of $\mathcal{O}_{F_v}$ at each finite $v$. We also denote by $S_p$ the set of primes of $F$ above $p$, and 
\[
\mathcal{H}^p : =\otimes^{\prime}_{v \notin S_p} \mathcal{H}_v
\]
with $\mathcal{H}_v$ the full Hecke algebra of $\GSp_4(F_v)$ with coefficients in $\mathbf{Q}$.

Denote by $S$ a finite set of primes of $F$ outside of which $\Pi$ is spherical. Put $\mathcal{H}^{\sph,S,p} = \otimes^{\prime}_{v \notin S \cup S_p} \mathcal{H}_v^{\sph}$. We have the natural inclusion $\mathcal{H}^{\sph,S,p} \subset \mathcal{H}^p$ (elements of the former embed into the latter whose components at $v \in S \backslash S_p$ are given by the identity element of $\mathcal{H}_v^{\sph}$). Also as in \cite{T1} ({\it c.f.} p. 316 of {\it loc. cit.}) we have the following elements $T_v,S_v,R_v \in \mathcal{H}_v^{\sph}$ for each finite $v$:

\begin{eqnarray}
T_v & = & K_v \left( \begin{array}{rrrr} 1  & & & \\ & 1 &   & \\ & & \varpi_v & \\ & & & \varpi_v \end{array} \right) K_v  \nonumber \\
S_v & = & K_v \left( \begin{array}{rrrr} \varpi_v  & & & \\ & \varpi_v &   & \\ & & \varpi_v & \\ & & & \varpi_v \end{array} \right) K_v  \\
R_v &=& K_v \left( \begin{array}{rrrr} 1  & & & \\ & \varpi_v &   & \\ & & \varpi_v & \\ & & & \varpi_v^2 \end{array} \right) K_v  + (1-Nv^2) S_v. \nonumber
\end{eqnarray}

We now state the result on $p$-adic deformation. We remark that the method of proof as well as the validity of the conclusion relies crucially on the assumption that $\Pi$ is of holomorphic type instead of being of generic type at the archimedean places.

\begin{theorem} (\cite{MT})
Under hypothesis 4.2 on $\Pi$, there is a one dimensional rigid analytic affinoid disk $D \subset \mathbf{A}^1_{/L}$ defined over some finite extension $L$ of $\mathbf{Q}_p$ centered at the origin $0 \in D$, a ring homomorphism 
\begin{eqnarray}
\beta: \mathcal{H}^{\sph,S,p} \rightarrow \mathcal{O}(D)
\end{eqnarray}
and a Zariski dense subset $Z \subset D(L)$ accumulating at $0 \in D$, with the points $t \in D(L)$ corresponding to integers; identifying $Z$ as a subset of the integers one has $0 \in Z$ and $Z \subset \mathbf{N}$. Furthermore, there exists a positive integer $h_0$, satisfying the following conditions:

\bigskip

1. $\beta(\mathcal{H}^{\sph,S,p}) \subset \mathcal{O}(D)^{\leq 1}.$ Here $\mathcal{O}(D)^{\leq 1} \subset \mathcal{O}(D)$ is the subring of elements with norm bounded by one.

\bigskip

2. For each $t \in Z$, there is a cuspidal automorphic representation $\Pi(t)$ on $\GSp_4(\mathbf{A}_F)$, with $\Pi(0)=\Pi$, such that $\Pi(t)$ is spherical outside $S \cup S_p$ and whose archimedean components $\Pi(t)_v$ at an archimedean place $v$ of $F$ is given by $\pi^{H}_{(w(t) ; m_{1,v}(t),m_{2,v}(t) )}$. Here 
\[
w(t) = w +2(p-1) \cdot h_0 \cdot t
\]
\[
m_{1,v}(t) = m_{1,v} + (p-1) \cdot h_0 \cdot t, \,\ m_{2,v}(t) = m_{2,v} + (p-1) \cdot h_0 \cdot t
\]
the central character of $\Pi(t)$ is given by a character $\chi(t)$, such that for any archimedean place $v$ of $F$, we have $\chi(t)_v : a \rightarrow a^{- w(t)}$. Thus it is a ``parallel weight" deformation.

\bigskip

3. Define the polynomial $\mathcal{Q}_v(X) \in \mathcal{H}_v^{\sph}[X]$:
\begin{eqnarray}
& & \mathcal{Q}_v(X)\\ & = & 1 - T_v X + (Nv R_v + 2Nv^3S_v)X^2 - Nv^3 T_v S_v X^3 + Nv^6S_v^2 X^4. \nonumber
\end{eqnarray}

Then for each $t \in Z$ (in particular including $t=0$ where $\Pi(0)=\Pi$), and each $v \notin S \cup S_p$ we have, denoting by $ev_t : \mathcal{O}(D) \rightarrow L$ the evaluation map at $t$, the following:

\begin{eqnarray}
ev_{t,*} \beta_* \mathcal{Q}_v [X] = \iota_p^{-1} Q_{v,t}[X]
\end{eqnarray}
where $Q_{v,t}[X]$ is the inverse characteristic polynomial of the unramified  Weil-Deligne representation $k_* \rec_v(\Pi(t)_v \otimes |c|_v^{-3/2})$ evaluated at the geometric Frobenius element $\Frob_v$ at $v$ (in other words $Q_{v,t}(Nv^{-(s+3/2)})^{-1}$ is the standard local $L$-factor for $\Pi(t)$ with respect to the standard representation of $\widehat{G}=\GSp_4(\mathbf{C})$ at the spherical place $v$).
\end{theorem}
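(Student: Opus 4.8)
The statement is established in \cite{MT}; the plan is to realize $\Pi$ as a point on an eigenvariety of overconvergent cuspidal holomorphic Siegel--Hilbert modular forms over $F$ --- the $\GSp_4/F$ analogue of the Coleman--Mazur eigencurve --- extending Kisin--Lai \cite{KL} and Jorza \cite{J} to this setting. First I would realize $\Pi$ as a cuspidal holomorphic Siegel--Hilbert eigenform of Blattner weight $(k_{1,v},k_{2,v})=(m_{1,v}+1,m_{2,v}+2)$ for each $v\mid\infty$ (allowing $k_{2,v}=2$, the non-cohomological limit-of-discrete-series case), unramified outside $S\cup S_p$ and of Iwahori level $\prod_{v\mid p}J_v$ at $p$. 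Because $\Pi$ has nonzero $J_v$-fixed vectors for every $v\mid p$, the relevant $U_p$-operator --- the product over $v\mid p$ of the Hecke operators attached to the dominant coweights, built from double cosets such as the one of $\mathrm{diag}(1,\varpi_v,\varpi_v,\varpi_v^2)$ --- acts on the line of $\Pi$ with a nonzero eigenvalue, so one may fix a finite-slope $p$-refinement $\widetilde{\Pi}$. This is the finite-slope condition of \cite{MT} referred to after Hypothesis 4.2.

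Next I would feed the spaces of overconvergent $p$-adic cuspidal Siegel--Hilbert modular forms, with the compact operator $U_p$ and the weight space $\mathcal{W}$ of $\GSp_4/F$, into the eigenvariety machine, obtaining an eigenvariety $\mathcal{E}$ with an action of $\mathcal{H}^{\sph,S,p}$ by bounded functions and with a point $x_0\in\mathcal{E}$ corresponding to $\widetilde{\Pi}$. Since the desired deformation is of parallel weight, I would pull $\mathcal{E}$ back along the one-dimensional parallel-weight line in $\mathcal{W}$ through the weight of $\Pi$ and take a small affinoid subdisk $D$ about $x_0$, parametrized by $t$ so that the integer value $t$ corresponds to the shift $w\mapsto w+2(p-1)h_0t$, $m_{i,v}\mapsto m_{i,v}+(p-1)h_0t$; the factor $(p-1)$ keeps the shifted weights in the accessible part of $\mathcal{W}$, and $h_0$ is fixed below. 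The tautological action of $\mathcal{H}^{\sph,S,p}$ on the sheaf of overconvergent forms over $D$ gives the ring map $\beta\colon\mathcal{H}^{\sph,S,p}\to\mathcal{O}(D)$ of $(4.5)$, and $\beta(\mathcal{H}^{\sph,S,p})\subset\mathcal{O}(D)^{\leq1}$ (point $1$) because the spherical Hecke operators preserve the integral lattice of $p$-adic modular forms.

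The crux is a Coleman-type classicality criterion: an overconvergent cuspidal Siegel--Hilbert eigenform of weight $(k_{1,v},k_{2,v})$ with all $k_{2,v}\geq3$ and with $U_p$-slope small relative to the weight is classical, hence --- being cuspidal of regular weight --- generates a cuspidal automorphic representation of $\GSp_4(\mathbf{A}_F)$ with holomorphic discrete series components at infinity. This is the technical heart of \cite{MT}, and it is here that holomorphy (rather than genericity) of the archimedean type is indispensable. On $D$ the $U_p$-slope is bounded (by continuity and compactness of $U_p$), while the weight grows linearly in $t$; choosing $h_0$ large enough that this growth outpaces the classicality bound produces a Zariski dense set $Z\subset D(L)$ of positive integers on which the eigenform is classical, cuspidal, and of the asserted archimedean type $\pi^{H}_{(w(t);m_{1,v}(t),m_{2,v}(t))}$ and central character $\chi(t)$ (parallel because $D$ lies over the parallel-weight line). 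The set $Z$ accumulates at $0$ by the standard accumulation of eigenvarieties at non-critical classical points, and $0\in Z$ since $\Pi$ itself is already a (limit-weight) classical holomorphic form; this gives point $2$.

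Finally, for $(4.6)$ (point $3$): under the Satake isomorphism the polynomial $\mathcal{Q}_v(X)\in\mathcal{H}_v^{\sph}[X]$ is precisely the reciprocal of the standard degree-four spin Euler factor of $\GSp_4$ at $v$ written through $T_v,S_v,R_v$, so for each $t\in Z$ the specialization $ev_{t,*}\beta_*\mathcal{Q}_v[X]$ is the inverse characteristic polynomial at $\Frob_v$ of the unramified local parameter attached to $\Pi(t)_v$, which equals $\iota_p^{-1}Q_{v,t}[X]$ because $k_*\rec_v$ at an unramified place is the normalized Satake correspondence and the spherical Hecke eigenvalues of $\Pi(t)$ compute its Satake parameters. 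The principal obstacle throughout is the small-slope classicality theorem for overconvergent Siegel--Hilbert forms of cohomological weight, together with checking that its specializations are genuinely cuspidal (neither Eisenstein nor CAP) and holomorphic at infinity; granting that, the density and accumulation of $Z$ and the Hecke-polynomial identity are formal consequences of the eigenvariety formalism and the Satake isomorphism.
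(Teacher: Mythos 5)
The paper offers no proof of this theorem at all: it is imported verbatim from the reference \cite{MT}, whose strategy (an eigenvariety of overconvergent Siegel--Hilbert forms generalizing Kisin--Lai \cite{KL} and Jorza \cite{J}, restriction to the parallel-weight line, a small-slope classicality criterion, and the Satake computation identifying $\mathcal{Q}_v$ with the spin Euler factor) is exactly what you outline. Your sketch is therefore a faithful reconstruction of the intended argument, with the understanding that the genuine technical content --- the classicality theorem and the cuspidality/holomorphy of the classical specializations --- is deferred to \cite{MT} just as the paper itself defers it.
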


We remark that in the context of theorem 4.3, the points $t \in Z$ corresponds in addition to the system of Hecke eigenvalues a choice of {\it refinement data} for $\Pi(t)_v$ at primes $v$ of $F$ dividing $p$. Since this extra data plays no role in the construction of the Galois representation associated to $\Pi$ below, it is suppressed in the statement of the theorem above.

\bigskip

We now apply theorem 4.3 to the construction of the Galois representation associated to $\Pi$ when hypothesis 4.2 is satisfied. We refer to section 1 of \cite{T1} or chapter 1 of \cite{BC1} for the notion and basic property of pseudo-character (a brief discussion follows after theorem 4.4 below).

\begin{theorem}
There is a continuous pseudo-character
\begin{eqnarray}
T : G_{F,S,p} \rightarrow \mathcal{O}(D)^{\leq 1}
\end{eqnarray}
of dimension $4$, with $G_{F,S,p}$ being the Galois group of the maximal extension of $F$ unramified outside $S \cup S_p$, such that for any prime $v \notin S \cup S_p$, we have
\begin{eqnarray}
T(\Frob_v) = \beta(T_v)
\end{eqnarray}
(the dimension of a pseudo-character is simply the value of the pseudo-character at the identity element).
\end{theorem}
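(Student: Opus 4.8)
The plan is to build $T$ by interpolating, over the family $D$, the traces of the Galois representations already available at the classical points via the results of section 3. First I would discard the base point and set $Z' = Z \setminus \{0\}$, which is still Zariski dense in $D$ and still accumulates at $0$. For each $t \in Z'$ the representation $\Pi(t)$ of Theorem 4.3(2) is cuspidal of cohomological type --- at every archimedean $v$ its component is $\pi^{H}_{(w(t);m_{1,v}(t),m_{2,v}(t))}$ with $m_{2,v}(t) = m_{2,v} + (p-1)h_0 t \geq 1$ --- and it is spherical outside $S \cup S_p$, so Theorem 3.6 applies and furnishes a continuous semisimple $R_{p,t} : G_F \to \GL_4(\barQp)$, unramified outside $S \cup S_p$ (hence factoring through $G_{F,S,p}$) and satisfying local--global compatibility. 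I would set $\sigma_t := \tr R_{p,t}$, a continuous $4$-dimensional pseudo-character of $G_{F,S,p}$.

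Next I would pin down the Frobenius traces. For $v \notin S \cup S_p$, local--global compatibility at the unramified place $v$ identifies $\det(1 - \iota_p R_{p,t}(\Frob_v)X)$ with the reciprocal characteristic polynomial $Q_{v,t}(X)$ of geometric Frobenius on $k_{*}\rec_v(\Pi(t)_v \otimes |c|_v^{-3/2})$; comparing this with the polynomial identity of Theorem 4.3(3) and reading off the linear coefficient of $\mathcal{Q}_v$ gives $\sigma_t(\Frob_v) = ev_t(\beta(T_v))$, which lies in $\mathcal{O}_L$ by Theorem 4.3(1). Since $R_{p,t}$ has relatively compact image it is conjugate into $\GL_4(\mathcal{O}_{L'})$ for a finite $L'/\mathbf{Q}_p$, and because the Frobenii are dense (Chebotarev), $\sigma_t$ is continuous, and $\mathcal{O}_L$ is closed, this forces $\sigma_t(G_{F,S,p}) \subseteq \mathcal{O}_L$; applying Newton's identities to $\beta(\mathcal{Q}_v) \in \mathcal{O}(D)^{\leq 1}[X]$ shows likewise that all coefficients of $\det(1 - R_{p,t}(\Frob_v)X)$, for varying $t \in Z'$, are specializations at $t$ of fixed rigid-analytic functions on $D$ with values of norm $\leq 1$.

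It then remains to glue the $\sigma_t$ into a single pseudo-character over $\mathcal{O}(D)^{\leq 1}$, and this is where the real work lies. I would claim there is a unique continuous $4$-dimensional pseudo-character $T : G_{F,S,p} \to \mathcal{O}(D)^{\leq 1}$ with $T(\Frob_v) = \beta(T_v)$ for all $v \notin S \cup S_p$; uniqueness is immediate from Chebotarev and continuity, and once existence is granted one has automatically $ev_t \circ T = \sigma_t$ for every $t \in Z'$ and, evaluating at the identity, $T(1) = 4$. For existence I would run the standard interpolation argument for pseudo-characters in $p$-adic families, as in section 1 of \cite{T1}, chapter 1 of \cite{BC1}, and \cite{Ch}: working in $A := \mathcal{O}(D)^{\leq 1}$, which is $p$-adically separated and complete, and reducing modulo $p^n$, one uses that $F$ has only finitely many extensions of bounded degree unramified outside $S \cup S_p$ (Hermite--Minkowski), so that the mod-$p^n$ data factors through a fixed finite quotient of $G_{F,S,p}$; that over such a finite quotient a $4$-dimensional semisimple representation is determined, through the universal trace identities, by the reciprocal characteristic polynomials of finitely many Frobenii, which are the reductions of the fixed polynomials $\beta(\mathcal{Q}_v)$; that the maps $ev_t$, $t \in Z'$, are jointly injective on $\mathcal{O}(D)$ by the rigid-analytic identity theorem ($Z'$ being an infinite subset of the irreducible disk $D$ accumulating at $0$); and finally the $p$-adic completeness of $A$ to take the limit over $n$. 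The resulting $T(g) \in A$ satisfy the pseudo-character relations because the $\sigma_t$ do and $Z'$ is Zariski dense, and they land in $A = A^{\leq 1}$ precisely because the $\sigma_t$ are uniformly bounded --- which is where the hypothesis $\beta(\mathcal{H}^{\sph,S,p}) \subset \mathcal{O}(D)^{\leq 1}$ of Theorem 4.3(1), rather than merely $\beta$ landing in $\mathcal{O}(D)$, is used.

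The main obstacle is thus the last step: promoting the Frobenius data $v \mapsto \beta(T_v)$, known only along the dense but "thin" set of Frobenius classes, to an honest $\mathcal{O}(D)^{\leq 1}$-valued pseudo-character. The delicate point is that $Z'$ is only Zariski dense, not $p$-adically dense, in $D$, so one cannot simply interpolate the numerical values $\sigma_t(g)$; the remedy is to route everything through the finitely many finite quotients of $G_{F,S,p}$ provided by Hermite--Minkowski, on each of which a semisimple $4$-dimensional representation is recovered from finitely many characteristic polynomials, and then to pass to the $p$-adic limit.
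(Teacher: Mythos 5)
Your proposal is correct and follows exactly the route the paper intends: at the classical points $t\in Z\setminus\{0\}$ one invokes Theorem 3.6 to get the representations $R_{t,p}$, identifies their Frobenius traces with $ev_t(\beta(T_v))$ via Theorem 4.3(3), and glues by the standard pseudo-character interpolation argument. The paper compresses all of this into a citation of \cite{BC1}, Corollary 7.5.4 (together with \cite{T1}); your write-up simply unpacks that citation, correctly identifying where Theorem 4.3(1) and Zariski density of $Z$ enter.
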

\begin{proof}
The proof is a standard application of the theory of pseudo-characters, the results of section 3, together with theorem 4.3.See for example \cite{BC1} Corollary 7.5.4. 
\end{proof}
    
We can also do slightly better. For this we need some notation ({\it c.f.} section 1.2.1-1.2.2 of \cite{BC1}). In general if $T:G \rightarrow A$ is a function on a group $G$ (topological or otherwise) with values in some $\mathbf{Q}$-algebra $A$, which is central (i.e. $T(xy)=T(yx)$ for all $x,y \in G$), define, for each integer $k \geq 0$, the function:
\[
S_k(T) : G^k \rightarrow A
\]
\[
S_k(T)(x) = \sum_{\sigma \in \mathcal{S}_k} \epsilon(\sigma) T^{\sigma}(x)
\]
(with the convention $S_0(T) \equiv 1$). Here $\mathcal{S}_k$ is the symmetric group in $k$ letters, and $\epsilon(\sigma)$ for $\sigma \in \mathcal{S}_k$ is the sign of this permutation. The function $T^{\sigma}:G^k \rightarrow A$ is defined as follows. Let $x=(x_1,\cdots,x_k) \in G^k$. If $\sigma$ is a cycle, say $\sigma = (j_1,\cdots,j_m)$, then set $T^{\sigma}(x) = T(x_{j_1} \cdots x_{j_m})$ (well-defined by the central property). In general if $\sigma=\sigma_1 \cdots \sigma_r$ is the cycle decomposition of $\sigma$ (including the cycles with $1$ element) then we set $T(x) = \prod_{i=1}^r T^{\sigma_i}(x)$ (in particular $S_1(T)=T$). 

Then the identities defining $T$ to be a pseudo-character of dimension $m$ are:
\bigskip

1. $T(xy)=T(yx)$ for all $x,y \in G$.

\bigskip

2. $T(e)=m$, where $e$ is the identity element of $G$.

\bigskip

3. $S_{m+1}(T)(x) \equiv 0$ for all $x \in G^{m+1}$.

\bigskip

We have the following ({\it c.f. loc. cit.}): if $\rho:G \rightarrow \GL_m(A)$ is a representation, then 
\begin{eqnarray}
\frac{1}{k!}S_k(T)(g,\cdots ,g) = \tr(\Lambda^k \rho)(g) \mbox{ for all } g \in G.
\end{eqnarray}

Recall the elements $T_v,S_v,R_v \in \mathcal{H}_v^{\sph}$ as in equation (4.1).

\begin{proposition}
With $T$ the pseudo-character as in theorem 4.4. For any finite prime $v \notin S \cup S_p$ we have the identities:
\begin{eqnarray*}
 \beta(T_v)  & =& S_1(T)(\Frob_v) \\
           \beta          (Nv R_v + 2 Nv^3 S_v   )   &=&  \frac{1}{2}S_2(T)(\Frob_v,\Frob_v)       \\
   \beta     (   Nv^3 T_v S_v          )        &=&  \frac{1}{3!}S_3(T)(\Frob_v,\Frob_v,\Frob_v)   \\
   \beta(   Nv^6 S_v^2        )               &=&   \frac{1}{4!}S_4(T)(\Frob_v,\Frob_v,\Frob_v,\Frob_v).    \\
\end{eqnarray*}
\end{proposition}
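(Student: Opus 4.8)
The plan is to deduce all four identities from Theorem 4.3(3) together with the results of Section 3, by regarding both sides of each identity as elements of $\mathcal{O}(D)$ and checking equality after specializing at the classical points $t \in Z$; this suffices because $Z$ is Zariski dense in $D$.

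First, the identity $\beta(T_v) = S_1(T)(\Frob_v)$ is immediate, since $S_1(T) = T$ by definition and $T(\Frob_v) = \beta(T_v)$ by Theorem 4.4. For the remaining three, I will first observe that both sides lie in $\mathcal{O}(D)$: the right-hand sides $\frac{1}{k!} S_k(T)(\Frob_v, \dots, \Frob_v)$ are, by the very definition of $S_k$, fixed polynomial expressions in the values of $T$ on words in $\Frob_v$, hence lie in $\mathcal{O}(D)$ since $T$ takes values there; and the left-hand sides are $\beta$ applied to elements of $\mathcal{H}^{\sph,S,p}$. Because $Z$ is Zariski dense in the one-dimensional affinoid disk $D$ (and $\mathcal{O}(D)$ is an integral domain), an element of $\mathcal{O}(D)$ vanishing on all of $Z$ must be zero; thus it is enough to prove each identity after applying the evaluation map $ev_t$ for every $t \in Z$.

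Fix $t \in Z$ and write $Q_{v,t}(X) = \prod_{j=1}^{4}(1 - \alpha_j(t) X) = 1 - e_1 X + e_2 X^2 - e_3 X^3 + e_4 X^4$, where $\alpha_1(t), \dots, \alpha_4(t)$ are the reciprocal roots and $e_k = e_k(\alpha(t))$ are the elementary symmetric functions in them. Applying $ev_t$ to the identity $ev_{t,*}\beta_* \mathcal{Q}_v[X] = \iota_p^{-1} Q_{v,t}[X]$ of Theorem 4.3(3) and matching the coefficients of $X, X^2, X^3, X^4$ (the sign patterns of $\mathcal{Q}_v$ and of $Q_{v,t}$ correspond), one reads off $ev_t(\beta(T_v)) = \iota_p^{-1}(e_1)$, $ev_t(\beta(Nv R_v + 2 Nv^3 S_v)) = \iota_p^{-1}(e_2)$, $ev_t(\beta(Nv^3 T_v S_v)) = \iota_p^{-1}(e_3)$, and $ev_t(\beta(Nv^6 S_v^2)) = \iota_p^{-1}(e_4)$. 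On the other side, by the construction of $T$ in Theorem 4.4 the pseudo-character $ev_t \circ T$ coincides, on every Frobenius $\Frob_{v'}$ with $v' \notin S \cup S_p$, with $\tr R_{p,t}$, where $R_{p,t}$ is the Galois representation attached to $\Pi(t)$ by Theorem 3.1 (or Theorem 3.6 when $\Pi(t)$ is not classified by a simple generic parameter): indeed $ev_t(T(\Frob_{v'})) = ev_t(\beta(T_{v'}))$, which by Theorem 4.3(3) is $\iota_p^{-1}$ of the trace of geometric Frobenius on $k_* \rec_{v'}(\Pi(t)_{v'} \otimes |c|_{v'}^{-3/2})$, and this equals $\tr R_{p,t}(\Frob_{v'})$ by local--global compatibility since $R_{p,t}$ is unramified at $v'$. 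As both functions are continuous and such Frobenii are dense by Chebotarev, $ev_t \circ T = \tr(R_{p,t}|_{G_{F,S,p}})$; by the Baire-category remark after Theorem 3.1, $R_{p,t}|_{G_{F,S,p}}$ is a genuine representation valued in $\GL_4$ of a finite extension of $\mathbf{Q}_p$. Hence identity (4.8) applies and gives $\frac{1}{k!} S_k(ev_t \circ T)(\Frob_v, \dots, \Frob_v) = \tr(\Lambda^k R_{p,t})(\Frob_v)$. Since $\Pi(t)$ is spherical at $v \notin S \cup S_p$ and $v \nmid p$, $R_{p,t}$ is unramified at $v$ with Frobenius eigenvalues $\iota_p^{-1}(\alpha_j(t))$, so $\tr(\Lambda^k R_{p,t})(\Frob_v) = \iota_p^{-1}(e_k(\alpha(t)))$. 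Comparing with the formulas just displayed yields the three identities after $ev_t$, and Zariski density promotes them to equalities in $\mathcal{O}(D)$.

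The step needing the most care is the identification $ev_t \circ T = \tr(R_{p,t}|_{G_{F,S,p}})$ at the classical points, together with the reduction of the whole assertion to $Z$ --- this is where the structure of the $p$-adic family (Zariski density of $Z$, integrality of $\mathcal{O}(D)$, continuity of the pseudo-character) genuinely intervenes. Everything else is the standard translation, via $S_k$ and $\Lambda^k$, between the power-sum and elementary-symmetric descriptions of the characteristic polynomial, plus bookkeeping of the normalization of $\mathcal{Q}_v$.
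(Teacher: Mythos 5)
Your argument is correct and is precisely the paper's intended one: the paper's proof of this proposition just says to repeat the argument of Theorem 4.4 using equations (4.4) and (4.7), which is exactly what you have spelled out — specialize at the classical points, identify $ev_t\circ T$ with $\tr R_{p,t}$ via Chebotarev and local--global compatibility from Section 3, pass between the coefficients of $\mathcal{Q}_v$ and the exterior powers $\Lambda^k$ via the $S_k$-identity, and conclude by Zariski density of $Z$ in the integral domain $\mathcal{O}(D)$. One small correction: at $t=0$ the member $\Pi(0)=\Pi$ need not be cohomological (its archimedean components may be limits of discrete series), so Theorems 3.1/3.6 do not yet furnish $R_{p,0}$; you should run the specialization only over $t\in Z$ with $t\geq 1$ (so that $m_{2,v}(t)\geq 1$), which is still Zariski dense — this is the same care the paper takes with $Z\setminus\{0\}$ in the proof of Theorem 4.11. (Also, the identity you cite as (4.8) is (4.7) in the paper's numbering.)
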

\begin{proof}
Same kind of argument used to prove theorem 4.4, by using equation (4.4) and (4.7).
\end{proof}

\begin{theorem}
As before hypothesis 4.2 is in force. There is a continuous semi-simple $p$-adic Galois representation
\begin{eqnarray}
R_p:G_{F,S,p} \rightarrow \GL_4(\barQp)
\end{eqnarray}
such that for any prime $v \notin S \cup S_p$, we have 
\begin{eqnarray}
P_v(X) = \iota_p^{-1} Q_v(X)
\end{eqnarray}
where $Q_v(X)$ is as in part 3 of theorem 4.3 above with $t=0$, i.e. the inverse characteristic polynomial of the unramified  Weil-Deligne representation $k_* \rec_v(\Pi_v \otimes |c|_v^{-3/2})$ evaluated at $\Frob_v$, and $P_v(X)$ is the inverse characteristic polynomial of $R_p(\Frob_v)$.
\end{theorem}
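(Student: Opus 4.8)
The plan is to obtain $R_p$ by specializing the pseudo-character $T$ of Theorem 4.4 at the origin $0 \in D$, reconstructing a genuine semi-simple representation from the specialized pseudo-character, and then reading off the characteristic polynomial of $\Frob_v$ by combining Proposition 4.5 with part 3 of Theorem 4.3. This is essentially a bookkeeping argument; it is formally parallel to the reconstruction step in \cite{BC1} and \cite{So}.

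First I would set $T_0 := ev_0 \circ T$. Since $ev_0 : \mathcal{O}(D)^{\leq 1} \to \mathcal{O}_L$ is a ring homomorphism, $T_0 : G_{F,S,p} \to \mathcal{O}_L \subset \barQp$ is again a continuous pseudo-character, of dimension $T_0(e) = ev_0(4) = 4$. As $\barQp$ has characteristic zero, the standard reconstruction theorem for pseudo-characters over an algebraically closed field (Taylor, Rouquier; see \cite{BC1} Chapter 1) yields a semi-simple representation $R_p : G_{F,S,p} \to \GL_4(\barQp)$, unique up to isomorphism, with $\tr R_p = T_0$; continuity of $R_p$ follows from continuity of $T_0$ together with profiniteness (hence compactness) of $G_{F,S,p}$, exactly as in the proof of Theorem 4.4. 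As remarked after Theorem 3.1, a Baire category argument further shows that $R_p$ takes values in $\GL_4$ of a finite extension of $\mathbf{Q}_p$, but that refinement is not needed here.

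It remains to verify the equality (4.9) for $v \notin S \cup S_p$. Since $R_p$ is $4$-dimensional, $P_v(X) = \det(1 - X R_p(\Frob_v)) = \sum_{k=0}^{4} (-1)^k \tr(\Lambda^k R_p)(\Frob_v)\, X^k$. By equation (4.7) applied to $R_p$ and $T_0$, one has $\tr(\Lambda^k R_p)(\Frob_v) = \tfrac{1}{k!} S_k(T_0)(\Frob_v,\dots,\Frob_v) = ev_0\bigl(\tfrac{1}{k!} S_k(T)(\Frob_v,\dots,\Frob_v)\bigr)$, the last equality because the ring homomorphism $ev_0$ commutes with the formation of the $S_k$. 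Proposition 4.5 identifies $\tfrac{1}{k!} S_k(T)(\Frob_v,\dots,\Frob_v)$, for $k = 1,2,3,4$, with $\beta(T_v)$, $\beta(Nv R_v + 2Nv^3 S_v)$, $\beta(Nv^3 T_v S_v)$ and $\beta(Nv^6 S_v^2)$ respectively. On the other hand, applying $ev_0$ to equation (4.4) of Theorem 4.3 (with $t=0$) and unwinding the definition (4.3) of $\mathcal{Q}_v$ gives $\iota_p^{-1} Q_v(X) = 1 - ev_0\beta(T_v)\, X + ev_0\beta(Nv R_v + 2Nv^3 S_v)\, X^2 - ev_0\beta(Nv^3 T_v S_v)\, X^3 + ev_0\beta(Nv^6 S_v^2)\, X^4$. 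Comparing this with the displayed expression for $P_v(X)$ coefficient by coefficient yields $P_v(X) = \iota_p^{-1} Q_v(X)$.

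The construction requires no new ideas beyond assembling Theorems 4.3 and 4.4 with Proposition 4.5; the only non-formal ingredient, and hence the one step I would state most carefully, is the reconstruction of a semi-simple representation from the specialized pseudo-character $T_0$ together with its continuity. The potential subtlety that $T_0$ might have dimension smaller than $4$ (which would give a representation of the wrong dimension) does not arise, since $T_0(e) = ev_0(T(e)) = 4$.
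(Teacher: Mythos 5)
Your proposal is correct and follows essentially the same route as the paper: specialize the pseudo-character $T$ at $0 \in D$ to get $T_0$, invoke Taylor's reconstruction theorem to produce the continuous semi-simple $R_p$ with $\tr R_p = T_0$, and then identify the coefficients of $P_v(X)$ via the identity $\frac{1}{k!}S_k(T_0)(\Frob_v,\dots,\Frob_v) = \tr(\Lambda^k R_p)(\Frob_v)$ combined with Proposition 4.5 and equation (4.4) at $t=0$. No substantive difference from the paper's argument.
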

\begin{proof}
Denote by $T_0 :=ev_{0,*}T$ the $L$-valued pseudo-character of $G_{F,S,p}$ of dimension 4 obtained by composing $T$ with $ev_0$. By Taylor's theorem (theorem 1 of \cite{T1}) there exists a unique continuous semi-simple representation
\[
R_p:G_{F,S,p} \rightarrow \GL_4(\barQp)
\]
such that $T_0 = \tr R_p$. 

Hence by equation (4.9) for any $g \in G_{F,S,p}$:
\begin{eqnarray}
& & \sum_{k=0}^4 \frac{(-1)^k}{k!} S_k(T_0)(g, \cdots, g) X^k \\ &=& \sum_{k=0}^4 (-1)^k  \tr(\Lambda^k R_p)(g) X^k=\det(I_4 - R_p(g) X  ). \nonumber
\end{eqnarray}
In particular this holds for $g = \Frob_v$. We thus obtain the result by using proposition 4.5 and equation (4.4) (with $t=0$). 
\end{proof}

As usual Cebotarev density and the theorem of Brauer-Nesbitt implies that equation (4.9) characterizes the continuous semi-simple $p$-adic Galois representation $R_p$ associated to $\Pi$. 

\bigskip

We now prove a local-global compatibility statement, up to semi-simplification and at primes away from $p$, for the Galois representation $R_p$ asociated to $\Pi$, which can be regarded as generalization of (4.9) to primes where $\Pi$ is not spherical. The argument is a generalization of Chenevier \cite{Ch} to the setting of $\GSp_4$, which entails replacing the spherical Hecke algebra by the centre of a Bernstein component. Again hypothesis 4.2 is in force.

Thus let $v$ be any finite prime of $F$ not dividing $p$. Let $\mathcal{B}_v$ be the Bernstein component of $G(F_v)= \GSp_4(F_v)$ to which the irreducible admissible representation $\Pi_v$ belongs. The Bernstein component is indexed by $(M,\sigma)$, where $M$ is a Levi subgroup of $G$, and $\sigma$ is a supercuspidal representation of $M(F_v)$ (which we take as a base point of $\mathcal{B}_v$). We can choose a finite extension $E$ of $\mathbf{Q}$ such that the Bernstein component $\mathcal{B}_v$ is defined over $E$. 

Denote by $\mathfrak{z}_v=E[\mathcal{B}_v]$ the affine coordinate ring of $\mathcal{B}_v$ over $E$, the Bernstein centre of $\mathcal{B}_v$. We can identify $\mathfrak{z}_v$ as a commutative subalgebra of $\mathcal{H}_v \otimes_{\mathbf{Q}} E$. For any irreducible $\pi$ belonging to $\mathcal{B}_v$, we denote by $z(\pi)$ the character of $\mathfrak{z}_v$ given by the action of $\mathfrak{z}_v$ on $\pi$ (more precisely on $e^{\mathcal{B}_v} \pi$ where $e^{\mathcal{B}_v} \in \mathcal{H}_v \otimes_{\mathbf{Q}} E$ is the idempotent corresponding to $\mathcal{B}_v$). 

We need the following refinement of theorem 4.3:
\begin{proposition} (\cite{MT})
In the setting of theorem 4.3, suppose that $\Pi_v$ belongs to the Bernstein component $\mathcal{B}_v$. Then we can take $D$, and $Z \subset D(L)$ and $L/\mathbf{Q}_p$ such that $E \hookrightarrow L$, and such that for any $t \in Z$, we can take $\Pi(t)$ such that $\Pi(t)_v$ belongs to the Bernstein component $\mathcal{B}_v$. Furthermore, the ring homomorphism (4.2) (when restricted to $\mathcal{H}^{\sph,S \cup \{ v \},p}$) extends to a ring homomorphism:
\[
\beta: \mathcal{H}^{\sph,S \cup \{v\},p} \otimes_{\mathbf{Z}} \mathfrak{z}_v \rightarrow \mathcal{O}(D)
\]
such that for any $t \in Z$, the map:
\[
ev_{t,*} (\beta|_{\mathfrak{z}_v}) : \mathfrak{z}_v \rightarrow L
\]
is the character $z(\Pi(t))$ giving the action of $\mathfrak{z}_v$ on $\Pi(t)$.
\end{proposition}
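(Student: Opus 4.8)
The plan is to repeat the construction of Theorem 4.3 from \cite{MT}, but with the prime-to-$p$ level structure at $v$ refined so that the Bernstein idempotent $e^{\mathcal{B}_v}$ makes sense on the space of overconvergent forms; this is exactly the device used by Chenevier \cite{Ch} to upgrade the spherical Hecke algebra to the Bernstein centre. First I would enlarge the coefficient field: choose $E$ large enough that $\mathcal{B}_v$ and the associated idempotent $e^{\mathcal{B}_v} \in \mathcal{H}_v \otimes_{\mathbf{Q}} E$ are defined over $E$, and then take $L/\mathbf{Q}_p$ large enough that $E \hookrightarrow L$. In \cite{MT} the $p$-adic family is cut out of a space $M$ of overconvergent Siegel--Hilbert modular forms of some fixed tame level; I would instead take the tame level at $v$ small enough (for instance a pro-$p$ subgroup, or a Bushnell--Kutzko type) so that $e^{\mathcal{B}_v}$ acts on $M$, leaving the structure at all primes outside $v$ --- in particular at the primes above $p$ --- untouched.

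With this modification, $M$ carries commuting actions of the weight/overconvergence data and the $U_p$-operators at primes above $p$ used in \cite{MT}, of $\mathcal{H}^{\sph,S\cup\{v\},p}$, and of the Hecke algebra $e^{\mathcal{B}_v}(\mathcal{H}_v\otimes_{\mathbf{Q}}E)e^{\mathcal{B}_v}$ at $v$, whose centre contains $\mathfrak{z}_v$. Since these actions commute with one another, the same Fredholm/eigenvariety machinery that produces $D$, $Z$ and the representations $\Pi(t)$ in Theorem 4.3 now also produces, after possibly shrinking $D$, a ring homomorphism $\beta\colon \mathcal{H}^{\sph,S\cup\{v\},p}\otimes_{\mathbf{Z}}\mathfrak{z}_v \rightarrow \mathcal{O}(D)$ extending the one already defined on $\mathcal{H}^{\sph,S\cup\{v\},p}$: one only needs that $\mathfrak{z}_v$ is a finitely generated $E$-algebra whose generators act as bounded operators on the relevant Banach modules, which can be arranged directly or absorbed into the boundedness statement of part~1 of Theorem 4.3 by working with an integral structure.

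It then remains to identify the specialisations. Since $\Pi_v \in \mathcal{B}_v$, the form $\Pi = \Pi(0)$ has nonzero image in $e^{\mathcal{B}_v}M$, so $0 \in Z$ survives the refinement, and the density and accumulation of $Z$ at $0$ are inherited from Theorem 4.3 because passing to the $e^{\mathcal{B}_v}$-part only discards Hecke eigensystems whose component at $v$ lies outside $\mathcal{B}_v$ --- none of which is the eigensystem of $\Pi(0)$ or of the deformations constructed from it. For $t \in Z$, the classicality result of \cite{MT} gives a classical cuspidal $\Pi(t)$ that is nonzero in $e^{\mathcal{B}_v}M$; nonvanishing of $e^{\mathcal{B}_v}$ on $\Pi(t)_v$ forces $\Pi(t)_v \in \mathcal{B}_v$, by the defining property of the Bernstein decomposition. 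Finally, $\mathfrak{z}_v$ acts on the relevant $U_p$-eigenline inside $e^{\mathcal{B}_v}\Pi(t)$ through a scalar character, which is by definition $z(\Pi(t))$; since $\beta|_{\mathfrak{z}_v}$ was built so that its specialisation at $t$ records the action of $\mathfrak{z}_v$ on that eigenline, we get $ev_{t,*}(\beta|_{\mathfrak{z}_v}) = z(\Pi(t))$, as claimed.

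The hard part will be checking that the analytic-continuation and small-slope classicality arguments of \cite{MT} are genuinely insensitive to the prime-to-$p$ level at $v$. These arguments are entirely local at $p$ --- they concern overconvergence of forms and the spectral theory of $U_p$ and of the weight operators --- so refining $K_v$ only changes the coefficient module of the same Fredholm operator and does not affect the classicality criterion; granting this, all the remaining steps are formal. In particular one does \emph{not} need to extend the interpolation identity (4.4) to the operators in $\mathfrak{z}_v$ at the central point $t=0$: the statement is asserted only at the classical points $t \in Z$, where it reduces to the tautology that the Hecke eigenvalues read off a classical form are the Hecke eigenvalues of that form.
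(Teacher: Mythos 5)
The paper does not actually prove this proposition: it is imported wholesale from \cite{MT}, so there is no argument here to compare yours against step by step. Your reconstruction is nonetheless the intended one --- it is precisely the device of Chenevier \cite{Ch} (replace the spherical Hecke algebra at $v$ by the Bernstein centre $\mathfrak{z}_v$, acting through the idempotent $e^{\mathcal{B}_v}$ on a space of overconvergent forms of sufficiently deep tame level at $v$), and the paper itself signals this by modelling the proof of theorem 4.11 on \cite{Ch}. The individual steps are sound: $\mathfrak{z}_v$ is a finitely generated commutative $E$-algebra acting on $e^{\mathcal{B}_v}M$ and commuting with the $U_p$-operators and with $\mathcal{H}^{\sph,S\cup\{v\},p}$, so the eigenvariety machine returns the extended $\beta$; at a point $t\in Z$ the specialisation of $\beta|_{\mathfrak{z}_v}$ is by construction the scalar through which $\mathfrak{z}_v$ acts on $e^{\mathcal{B}_v}\Pi(t)_v$, which is $z(\Pi(t))$ by Schur's lemma applied to the centre of the category, and nonvanishing of $e^{\mathcal{B}_v}$ on $\Pi(t)_v$ forces $\Pi(t)_v\in\mathcal{B}_v$; and your observation that nothing beyond this tautology is needed at the classical points (including $t=0$, where $\Pi(0)=\Pi$ is itself classical) is correct. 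The one genuinely load-bearing step you flag but do not discharge is the claim that the overconvergence and small-slope classicality arguments of \cite{MT} go through unchanged at arbitrary (pro-$p$, or type-theoretic) deep tame level at $v$; that is a property internal to the construction of \cite{MT} and cannot be verified from within this paper, which is presumably why the author states the refined proposition as a result of \cite{MT} rather than deducing it here.
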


We first prove:
\begin{theorem}
There exists a pseudo-character (of dimension $4$) 
\[
T^{\mathcal{B}_v} : W_{F_v} \rightarrow \mathfrak{z}_v = E[\mathcal{B}_v]
\]
such that for any irreducible $\pi$ belonging to $\mathcal{B}_v$, we have
\begin{eqnarray}
ev_{z(\pi),*}T^{\mathcal{B}_v}  &=&  \tr (  k_* \rec_v(  \pi  )^{ss} ) 
\end{eqnarray}
here $ev_{z(\pi)}$ is the evaluation map on $\mathfrak{z}_v=E[\mathcal{B}_v]$ at $z(\pi)$.
\end{theorem}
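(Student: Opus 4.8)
The plan is to realize $T^{\mathcal{B}_v}$ as the trace of an algebraic family of Weil--Deligne parameters over the Bernstein variety of $\mathcal{B}_v$, and to read off its specializations from the compatibility of the local Langlands correspondence for $\GSp_4$ with parabolic induction; this is the $\GSp_4$ analogue of the local construction underlying Chenevier's argument in \cite{Ch}. Write $\mathcal{B}_v = \mathcal{B}_v(M,\sigma)$, with $M$ a Levi subgroup of $\GSp_4$ over $F_v$ and $\sigma$ a supercuspidal representation of $M(F_v)$. Let $\mathcal{O}$ be the inertial orbit of $\sigma$, i.e. the torus of unramified twists $\sigma_\nu := \sigma \otimes \nu$ of $\sigma$, and let $W_{\mathcal{O}}$ be its finite stabilizer, so that (after enlarging $E$ if necessary, as already allowed) $\mathfrak{z}_v = E[\mathcal{B}_v] = E[\mathcal{O}]^{W_{\mathcal{O}}} = E[\mathcal{O}/W_{\mathcal{O}}]$. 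The finite surjection $\mathcal{O} \to \mathcal{O}/W_{\mathcal{O}} = \mathrm{Spec}\,\mathfrak{z}_v$ induces an injection $\mathfrak{z}_v \hookrightarrow E[\mathcal{O}]$, and for $\pi$ irreducible in $\mathcal{B}_v$ the character $z(\pi)$ of $\mathfrak{z}_v$ is, by construction, the point of $\mathcal{O}/W_{\mathcal{O}}$ carrying the supercuspidal support of $\pi$.

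The interpolating parameter is built as follows. For $\nu \in \mathcal{O}$, let $\phi^M_\nu := \rec^M_v(\sigma_\nu)$ be the $L$-parameter of $\sigma_\nu$ for $M(F_v)$ (trivial on the Deligne $\SL_2$, since $\sigma_\nu$ is supercuspidal), and let $r_\nu \colon W_{F_v} \to \GL_4(\mathbf{C})$ be the semisimplification of the composite of $\phi^M_\nu$ with $\widehat{M} \hookrightarrow \widehat{G} = \GSp_4(\mathbf{C}) \hookrightarrow \GL_4(\mathbf{C})$. Since $\phi^M_{\sigma \otimes \nu}$ is $\phi^M_\sigma$ twisted by the unramified character of $W_{F_v}$ attached to $\nu$, the assignment $\nu \mapsto r_\nu$ is algebraic: it is the semisimplification of a genuine homomorphism $R^{\mathcal{O}} \colon W_{F_v} \to \GL_4(E[\mathcal{O}])$. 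Two points of $\mathcal{O}$ in the same $W_{\mathcal{O}}$-orbit give $\widehat{G}$-conjugate, hence isomorphic, $r_\nu$, by equivariance of $\rec^M_v$ under the Weyl elements defining $W_{\mathcal{O}}$ together with the fact that the resulting parameters into $\widehat{G}$ differ by $\widehat{G}$-conjugation. Hence, for each fixed $w \in W_{F_v}$, the regular function $\nu \mapsto \tr r_\nu(w)$ on $\mathcal{O}$ is $W_{\mathcal{O}}$-invariant and descends to an element $T^{\mathcal{B}_v}(w) \in E[\mathcal{O}]^{W_{\mathcal{O}}} = \mathfrak{z}_v$. That $w \mapsto T^{\mathcal{B}_v}(w)$ is a continuous pseudo-character of dimension $4$ is then formal: under $\mathfrak{z}_v \hookrightarrow E[\mathcal{O}]$ it becomes $\tr R^{\mathcal{O}}$, so centrality, $T^{\mathcal{B}_v}(e) = 4$, and $S_5(T^{\mathcal{B}_v}) \equiv 0$ hold in $E[\mathcal{O}]$, hence already in $\mathfrak{z}_v$; and, all the $\sigma_\nu$ sharing a common bound on depth, all the $r_\nu$ --- and so $T^{\mathcal{B}_v}$ --- factor through a fixed finite-level quotient of $W_{F_v}$ on inertia.

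It remains to prove the displayed specialization identity of the theorem, which is the heart of the matter. For $\pi$ in $\mathcal{B}_v$ with supercuspidal support the $W_{\mathcal{O}}$-orbit of $\sigma_\nu$, one has $z(\pi) = [\nu]$ and $ev_{z(\pi),*}T^{\mathcal{B}_v}(w) = \tr r_\nu(w)$, so the claim amounts to the identity of semisimple $W_{F_v}$-representations
\[
\big(k_* \rec_v(\pi)\big)^{ss} \;\cong\; r_\nu .
\]
As $\pi$ is a subquotient of $\Ind_P^{G(F_v)}(\sigma_\nu)$ for a parabolic $P$ with Levi $M$, this is exactly the assertion that the local Langlands correspondence $\rec_v$ for $\GSp_4$ --- constructed by Gan--Takeda \cite{GT1,GT2} and identified with Arthur's in Chan--Gan \cite{CG} --- is compatible with parabolic induction at the level of the Frobenius-semisimple part of the parameter, the monodromy from the Langlands-quotient construction being discarded by $(\cdot)^{ss}$, together with the analogous classical fact for $\GL_4$ applied after $k_*$. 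I expect this to be the main obstacle: it requires going through the explicit Gan--Takeda description of $\rec_v$ case by case over the proper Levi subgroups $M$ of $\GSp_4$ --- the maximal torus and the two $\GL_2 \times \GL_1$-type (Siegel and Klingen) Levi subgroups --- the remaining case $M = G$ being the supercuspidal one, where no parabolic induction is involved and the identity holds by the very definition of $r_\nu$.
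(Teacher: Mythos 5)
Your proposal is correct and follows essentially the same route as the paper: the paper likewise defines $T^{\mathcal{B}_v}$ by interpolating the trace of the semisimplified parameter attached to the unramified twists of the supercuspidal support (composed with $\widehat{M}\hookrightarrow\widehat{G}\hookrightarrow\GL_4(\mathbf{C})$), checks invariance under the stabilizer group of the inertial orbit, and establishes the specialization identity by enumerating the irreducible subquotients and their parameters from Gan--Takeda's tables for each proper Levi of $\GSp_4$ (with the case $M=G$ being immediate). The only caveat is that the case-by-case verification you defer as ``the main obstacle'' is precisely the body of the paper's proof, so as written your argument is a correct reduction to that check rather than a complete execution of it.
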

\begin{proof}
We need to use the description by Gan-Takeda \cite{GT1,GT2} of the local Langlands correspondence $\rec_v^{\GT}$ for $\GSp_4(F_v)$, which by the result of Chan-Gan \cite{CG} is the same as the local Langlands correspondence $\rec_v$ for $\GSp_4(F_v)$ constructed by Arthur as in section 2.2. For clarity we will use $\rec_v^{\GT}$ to emphasize the dependence on \cite{GT1,GT2} in this proof. 

First consider the simple case where $M=G$, thus $\sigma$ is a supercuspidal representation of $G(F_v)$. Then we have $\mathcal{B}_v = \mathbf{G}_m$ (defined over $E$), identified as the group of unramified characters of $F_v^{\times}$, hence of $G(F_v)$ via the similitude character $c$. Any $\pi$ belonging to $\mathcal{B}_v$ is of the form $\sigma \otimes \xi \circ c$ (for an unramified character $\xi$ of $F_v^{\times}$). Hence we can define for any $g \in W_{F_v}$
\begin{eqnarray}
T^{\mathcal{B}_v}(g)(\xi) & :=& \tr (k_{*}\rec^{\GT}_v(\sigma  )^{ss}(g)) \cdot \xi(\art^{-1}_v(g)) \\
&=& \tr(     k_{*} \rec^{\GT}_v(  \sigma \otimes \xi \circ c )(g)    ) \nonumber
\end{eqnarray}
(up to replacing $\sigma$ by an unramified twist and enlarging $E$ we can assume without loss of generality that $k_{*}\rec_v (\sigma)$ is defined over $E$). Then $T^{\mathcal{B}_v}(g) \in E[\mathcal{B}_v]$ for any $g \in W_{F_v}$, and $T^{\mathcal{B}_v}$ satisfies equation (4.13).

We next consider the case where $M=M_Q$ is the Levi component of the Klingen parabolic subgroup $Q$ of $G$. In this case we have $M_Q \cong \GL_{1/F} \times \GL_{2/F}$, where
\begin{eqnarray*}
(a,A) \in \GL_{1/F} \times \GL_{2/F} \mapsto \left( \begin{array}{rrr} a  & &  \\ & A    & \\ & &   a^{-1}\det A \\ \end{array} \right) \in M_Q.
\end{eqnarray*}
Take $Y=\mathbf{G}_m \times \mathbf{G}_m$, identified as the group of unramified characters of $M(F_v)$ via the identity on the factor $\GL_1(F_v)$ and the determinant on the factor $\GL_2(F_v)$. We can write the supercuspidal representation $\sigma$ on $M_Q(F_v)$ as $\chi \boxtimes \tau $, where $\chi$ is a character of $F_v^{\times}$, and $\tau$ is a supercuspidal representation on $\GL_2(F_v)$. The Weyl group $\Delta \subset Y \rtimes W_{M_Q}$ of this Bernstein component (here $W_{M_Q}$ is the Weyl group of $M_Q$ relative to $G$, which is of order two) can be defined as follows: we have the following action of $W_{M_Q}$ on $Y$, where the non-trivial element $w_Q$ of $W_{M_Q}$ acts via
\[
w_Q \cdot (\xi_1,\xi_2) = (\xi_1^{-1},\xi_1 \xi_2) \mbox{ for } (\xi_1,\xi_2) \in Y
\]
and hence we have the following action of $Y \rtimes W_{M_Q}$ on $Y$ by extending the multiplication action of $Y$ on itself, i.e.:
\[
((\nu_1,\nu_2),w_Q) \cdot (\xi_1,\xi_2) := (\nu_1 \xi_1^{-1},\nu_2 \xi_1 \xi_2).
\]
Then $\Delta$ is the subgroup of elements $((\nu_1,\nu_2),t)$ of $Y \rtimes W_{M_Q}$ such that 
\begin{eqnarray*}
\tau \otimes \nu_1 \cong \tau, \,\ \nu_2 = 1 
\end{eqnarray*}
when $t = e$, and
\begin{eqnarray*}
\tau \otimes \chi \cong \tau \otimes v_2, \,\ \chi^2 \nu_1 \equiv 1
\end{eqnarray*}
when $t=w_Q$. Put $\widetilde{\mathcal{B}}_v = \Delta \backslash Y.$ Then from the theory of Bernstein components we have a surjective map $\mathcal{B}_v \rightarrow \widetilde{\mathcal{B}}_v$ with finite fibres. The Bernstein centre $E[\mathcal{B}_v]$ is then given by the affine coordinate ring $E[\widetilde{\mathcal{B}}_v]=E[Y]^{\Delta}$ of $\widetilde{\mathcal{B}}_v$.

Now from Table 1 of section 14 of \cite{GT1} (the classification in this case follows from the work of Sally-Tadic \cite{ST}), we see that if $\pi$ belongs to this Bernstein component then there are three possibilities (in the notation of {\it loc. cit.}), here below $(\xi_1,\xi_2) \in Y$:

\bigskip

1. $\pi =J_Q( \chi \xi_1, \tau \otimes \xi_2) $ with $\chi \xi_1 \neq 1$.

\bigskip

2. $\pi$ is an irreducible subrepresentation of $I_Q(1,\tau \otimes \xi_2)$ (i.e. $\chi \xi_1$ is trivial; in this case $I_Q(1,\tau \otimes \xi_2)$ is semi-simple).

\bigskip

3. $\pi = \St(\chi^{\prime}_0,\tau^{\prime}_0)$, where $\tau^{\prime}_0 = \tau \otimes \xi_2 | \det|_v^{1/2}$, while $\chi^{\prime}_0:=\chi \xi_1 \cdot |\cdot|_v^{-1}$ is non-trivial and satisfies $\tau^{\prime}_0 \otimes \chi^{\prime}_0 \cong \tau^{\prime}_0$ and $(\chi^{\prime}_0)^2=1$ (see Lemma 5.1 of \cite{GT1}).

\bigskip

One sees from {\it loc. cit.}, together with the recipe for $L$-parameter in section 13 of \cite{GT1}, that in all the above cases, the semi-simple part of the $L$-parameter $k_{*} \rec^{\GT}_v( \pi)$ has the same semi-simple part as: 
\begin{eqnarray}
\mathcal{L}_v( \tau \otimes \chi \xi_1  \xi_2) \oplus \mathcal{L}_v(\tau \otimes  \xi_2) .
\end{eqnarray}

In fact in case 1 and 2, the $L$-parameter is given by (4.13), while in case 3, the $L$-parameter is given by $\mathcal{L}_v( \tau^{\prime}_0 ) \boxtimes v(2)$, where $v(2)$ is the two-dimensional representation algebraic representation for $\SL_2(\mathbf{C})$ (here this is the $\SL_2(\mathbf{C})$ of $L_{F_v} = W_{F_v} \times \SL_2(\mathbf{C})$). One sees immediately using (2.1) and the condition in case 3 that it has the same semi-simple part as the parameter (4.13).

Hence in this case we can define $T^{\mathcal{B}_v}(g)$ for any $g \in W_{F_v}$ by the rule: for any $(\xi_1,\xi_2) \in Y$:
\[
T^{\mathcal{B}_v}(g)(\xi_1,\xi_2) = \tr \mathcal{L}_v( \tau \otimes \chi )(g) \cdot \xi_1 \xi_2(\art_v^{-1}(g) ) + \tr \mathcal{L}_v(\tau )(g) \cdot \xi_2(\art_v^{-1}(g)) 
\]
(again by replacing $\tau,\chi$ be unramified twist and enlarging $E$ we may assume that $\mathcal{L}_v(\tau)$ and $\mathcal{L}_v(\tau \otimes \chi)$ are defined over $E$) which is easily seen to be (as a function on $Y$) to be in $E[Y]^{\Delta} = E[\mathcal{B}_v]$. It is a pseudo-character because for any $z \in \mathcal{B}_v$, we have $ev_{z,*} T^{\mathcal{B}_v}$ is the trace of a representation. 

Next we consider the case where $M=M_P$ is the Levi component of the Siegel parabolic subgroup.
One has $M_P \cong \GL_{2/F} \times \GL_{1/F}$, where
\begin{eqnarray*}
(A,a) \in \GL_{2/F} \times \GL_{1/F} \mapsto \left( \begin{array}{rr} A  &  \\ & a \cdot (A^t)^{-1}    \\ \end{array} \right) \in M_P.
\end{eqnarray*}

Again take $Y=\mathbf{G}_m \times \mathbf{G}_m$, identified as the group of unramified characters of $M_P(F_v)$ via the determinant on the factor $\GL_2(F_v)$ and the identity on the factor $\GL_1(F_v)$. We can write the supercuspidal representation $\sigma$ on $M_P(F_v)$ as $ \tau \boxtimes \chi $, where $\tau$ is a supercuspidal representation on $\GL_2(F_v)$ and $\chi$ is a character of $F_v^{\times}$. The Weyl group $\Delta \subset Y \rtimes W_{M_P}$ of this Bernstein component (here $W_{M_P}$ is the Weyl group of $M_P$ relative to $G$ which is again of order two) can be defined similar to the Klingen case: define the action of $W_{M_P}$ on $Y$ where the non-trivial element $w_P$ of $W_{M_P}$ acts via
\[
w_P \cdot (\xi_1,\xi_2) = (\xi_1^{-1},\xi_1^2 \xi_2) \mbox{ for } (\xi_1,\xi_2) \in Y
\]
and as before extend this to an action of $Y \rtimes W_{M_P}$ on $Y$.

Then $\Delta$ is the subgroup of elements $((\nu_1,\nu_2),t)$ of $Y \rtimes W_{M_P}$ such that 
\begin{eqnarray*}
\tau \otimes \nu_1 \cong \tau , \,\   \nu_2 =1
\end{eqnarray*}
when $t = e$, and
\begin{eqnarray*}
\nu_2 = \omega_{\tau}, \,\  \tau \otimes \omega_{\tau} \nu_1 \cong \tau 
\end{eqnarray*}
(which implies in particular that $(\omega_{\tau} \nu_1)^2=1$) when $t=w_P$. We have a surjective map from $\mathcal{B}_v$ to $\widetilde{\mathcal{B}}_v := \Delta \backslash Y$ with finite fibres, and as in the previous case $E[\mathcal{B}_v]=E[\widetilde{\mathcal{B}}_v]=E[Y]^{\Delta}$.

From Table 1 of section 14 of \cite{GT1} (again follows from the work of \cite{ST}), we see that if $\pi$ belongs to this Bernstein component then there are two possibilities (in the notation of {\it loc. cit.}), here again $(\xi_1,\xi_2) \in Y$:

\bigskip

1. $\pi =J_P( \tau \otimes \xi_1, \chi  \xi_2) $.

\bigskip

2. $\pi = \St(\tau^{\prime}_0, \chi_0^{\prime})$, where $\tau^{\prime}_0 = \tau \otimes \xi_1 | \det|_v^{-1/2}$, while $\chi^{\prime}_0:=\chi \xi_2 \cdot |\cdot|_v^{1/2}$, such that $\tau_0^{\prime}$ has trivial central character (see Lemma 5.2 of \cite{GT1}).

\bigskip

In case 1 the $L$-parameter $k_* \rec_v^{\GT}(\pi)$ is given by {\it loc. cit.}
\begin{eqnarray}
(  \omega_{\tau} \chi  \xi_1^2 \xi_2 )\circ \art_v^{-1} \oplus \mathcal{L}_v(\tau \otimes \chi \xi_1 \xi_2) \oplus (\chi \xi_2) \circ \art_v^{-1}
\end{eqnarray}
while in case 2 the $L$-parameter is given by
\[
\mathcal{L}_v(\tau_0^{\prime} \otimes \chi_0^{\prime} ) \oplus \chi_0^{\prime} \boxtimes v(2)
\]
which again has the same semi-simple part as (4.14).

Hence in this case we can define $T^{\mathcal{B}_v}(g)$ for any $g \in W_{F_v}$ by the rule: for any $(\xi_1,\xi_2) \in Y$:
\begin{eqnarray}
& & T^{\mathcal{B}_v}(g)(\xi_1,\xi_2) \\ &=& \omega_{\tau} \chi( \art_v^{-1}(g)) \cdot \xi_1^2 \xi_2 (\art_v^{-1}(g))+ \tr \mathcal{L}_v(\tau \otimes \chi)(g) \cdot \xi_1 \xi_2 (\art_v^{-1}(g)) \nonumber \\ & & \,\ \,\ + \chi(\art_v^{-1}(g)) \cdot \xi_2(\art_v^{-1}(g)) \nonumber
\end{eqnarray}
which is in $E[Y]^{\Delta} = E[\mathcal{B}_v]$ as is easily seen from the rule giving the action of $\Delta$. Again this defines a pseudo-character.

Finally we consider the case where $M$ is the maximal torus $T$ of $G$ (i.e. the parabolic in this case is the Borel subgroup $B$ of $G$). We have $T \cong \GL_{1/F} \times \GL_{1/F} \times \GL_{1/F}$, where
\begin{eqnarray*}
 (a,b; t) &\in &    \GL_{1/F} \times \GL_{1/F} \times \GL_{1/F} \\
& \mapsto & \left( \begin{array}{rrrr} a  & & & \\ & b &   & \\ & & t b^{-1} & \\ & & & t a^{-1} \end{array} \right) \in T.
\end{eqnarray*}
The supercuspidal representation $\sigma$ on $M(F_v)=T(F_v)$ is thus given by a character $\chi_1 \boxtimes \chi_2 \boxtimes \chi$, where $\chi_1,\chi_2$ and $\chi$ are characters on $\GL_1(F_v)$.

In this case one has $Y=\mathbf{G}_m \times \mathbf{G}_m \times \mathbf{G}_m$, and we can define similarly define the Weyl group $\Delta \subset Y \rtimes W$ of this Bernstein component (here $W=W_T$ is the usual Weyl group of $G$). 

From Table 1 of section 14 of \cite{GT1} we see that if $\pi$ belongs to this Bernstein component, then there are 5 possibilities (here below $(\xi_1,\xi_2;\xi) \in Y$):

\bigskip

1.$\pi = J_B(\chi_1 \xi_1,\chi_2 \xi_2;\chi \xi)$.

\bigskip

2. $\pi = J_Q(\widetilde{\chi},\tau)$, with $\widetilde{\chi} \neq 1$, and $\tau$ is a twisted Steinberg representation of $\GL_2(F_v)$ of the form $\tau = \St_{\GL_2} \otimes \chi^0$. Here $\widetilde{\chi}$ and $\chi^0$ are characters of $\GL_1(F_v)$, satisfying: $\chi_1 \xi_1 = \widetilde{\chi}$, $\chi_2 \xi_2 = |\cdot|_v^{-1}$, $\chi \xi = \chi^0 |\cdot|_v^{1/2}$.

\bigskip

3. $\pi$ is an irreducible subrepresentation of the semi-simple induced representation $I_Q(1,\tau)$, with $\tau$ twisted Steinberg representation $\tau = \St_{\GL_2} \otimes \chi^0$, such that $\chi_1 \xi_1 = 1$, $\chi_2 \xi_2 = |\cdot|_v^{-1}$, $\chi \xi = \chi^0 |\cdot|_v^{1/2}$.

\bigskip

4. $\pi = J_P(\tau,\chi \xi)$ with $\tau$ twisted Steinberg representation of $\GL_2(F_v)$ given by $\tau= \St_{\GL_2} \otimes \chi^0$. Here $\chi^0$ is a character of $\GL_1(F_v)$ such that $\chi_1 \xi_1 = \chi^0 |\cdot|_v^{-1/2} $, and $\chi_2 \xi_2 = \chi^0 |\cdot|_v^{1/2}$.

\bigskip

5. $\pi = \St_{PGSp_4} \otimes \chi$, twisted Steinberg representation of $\GSp_4(F_v)$ ({\it c.f.} Lemma 5.1 of \cite{GT1}). 

\bigskip

In each of the above cases one sees that $\pi$ belongs to the Bernstein component $(T,\chi_1 \boxtimes \chi_2 \boxtimes \chi)$ using induction by stages. Furthermore  from {\it loc. cit.} one checks directly that in each of these cases, the semi-simple part of the $L$-parameter is given by
\[
((1 \oplus \chi_1 \xi_1 \oplus \chi_2 \xi_2 \oplus \chi_1 \chi_2 \xi_1 \xi_2) \cdot \chi \xi )\circ \art_v^{-1}.
\]
(In case 5 above one has $\chi_1 \xi_1 = |\cdot|_v^2$, $ \chi_2 \xi_2=|\cdot|_v$, $\xi=|\cdot|_v^{-3/2}$.)

Hence we can take, for $g \in W_{F_v}$, and $(\xi_1,\xi_2;\xi) \in Y$:
\begin{eqnarray}
& & T^{\mathcal{B}_v}(g)(\xi_1,\xi_2;\xi) \\
& =& ((1  + \chi_1 \xi_1 +\chi_2 \xi_2 + \chi_1 \chi_2 \xi_1 \xi_2) \cdot \chi \xi )\circ \art_v^{-1}. \nonumber
\end{eqnarray}

We have $T^{\mathcal{B}_v}(g) \in E[Y]$, and one needs to show that it lies in $E[\mathcal{B}_v]$, i.e. that (4.16) is invariant under the action of $\Delta$. Since this involves no new ideas, we leave the verification to the reader.
\end{proof}

\begin{rem}
\end{rem}
The proof of theorem 4.8, together with proposition 3.4 and proposition 4.10, is the only place where we are using the explicit description of $L$-parameters for $\GSp_4$. It would be of interest to have proof of these results without case by case enumeration.

\bigskip

For later use we also draw one corollary from the work of Gan-Takeda:

\begin{proposition}
Suppose that $\phi$ is an $L$-parameter for $\GSp_4(F_v)$ whose semi-simple part is unramified. Then there is a representation in the $L$-packet classified by $\phi$ that has Iwahori fixed vectors.
\end{proposition}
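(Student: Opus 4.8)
The plan is to combine Borel's classification of Iwahori-spherical representations with the explicit Gan--Takeda description of the local Langlands correspondence for $\GSp_4(F_v)$ already used in the proof of theorem 4.8, together with the compatibility of $\rec_v=\rec_v^{\GT}$ with parabolic induction. Recall Borel's criterion: an irreducible smooth representation $\pi$ of $G(F_v)=\GSp_4(F_v)$ has a nonzero $J_v$-fixed vector if and only if $\pi$ is a subquotient of $\Ind_B^G(\lambda)$ for some unramified character $\lambda$ of the maximal torus $T(F_v)$, equivalently if and only if the Bernstein component of $\pi$ is the principal one, indexed by $(T,\lambda_0)$ with $\lambda_0$ an (equivalently, any) unramified character of $T(F_v)$. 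So it suffices to show that the $L$-packet $\Pi_\phi$ meets this principal Bernstein component.

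Write $\phi: W_{F_v}\times\SL_2(\mathbf{C})\to\widehat{G}=\GSp_4(\mathbf{C})$ and let $\phi^{ss}: W_{F_v}\to\GSp_4(\mathbf{C})$ be its semisimple part as in (2.1). By hypothesis $\phi^{ss}$ is unramified, so it factors through $W_{F_v}\to W_{F_v}/I_{F_v}$ and its image is the cyclic group generated by the single semisimple element $s=\phi^{ss}(\Frob_v)$; conjugating $s$ into the dual torus shows that $\phi^{ss}$ is $\widehat{G}$-conjugate to the unramified parameter attached to some unramified character of $T(F_v)$. In particular $\phi^{ss}$ is a sum of four unramified characters of $W_{F_v}$ and contains no irreducible constituent of dimension $>1$. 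I would then run through the four families of Bernstein components for $\GSp_4(F_v)$ exactly as enumerated in the proof of theorem 4.8 (keyed to $M\in\{G, M_Q, M_P, T\}$) and read off $\rec_v^{\GT}(\pi)^{ss}$ from the $L$-parameter recipe of Gan--Takeda in each case: for $M=G$ with supercuspidal $\sigma$ the semisimple $W_{F_v}$-representation $k_*\rec_v^{\GT}(\sigma)^{ss}$ is a sum of irreducible constituents of dimension $\geq 2$ (as $\sigma$ has discrete parameter); for the Klingen Levi $M_Q$ with supercuspidal datum $\chi\boxtimes\tau$ and for the Siegel Levi $M_P$ with datum $\tau\boxtimes\chi$ it contains the two-dimensional irreducible $\mathcal{L}_v(\tau)$ up to twist; and only for $M=T$ is $\rec_v^{\GT}(\pi)^{ss}$ a sum of characters, and those characters are unramified precisely when the cuspidal support $\chi_1\boxtimes\chi_2\boxtimes\chi$ consists of unramified characters (the ambient $\xi_i$ of $Y=\mathbf{G}_m^3$ being unramified by construction), i.e. precisely when the Bernstein component is the principal one. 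Since every $\pi\in\Pi_\phi$ has $k_*\rec_v^{\GT}(\pi)^{ss}=\phi^{ss}$ unramified, it must lie in the principal Bernstein component, hence has Iwahori-fixed vectors by Borel's criterion; this in fact shows that every member of $\Pi_\phi$ is Iwahori-spherical, which is more than claimed.

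The only real work is the case-by-case verification, from the Gan--Takeda tables, that $\rec_v^{\GT}(\pi)^{ss}$ detects which of the four Bernstein families $\pi$ belongs to (cases 1--3 for $M_Q$, cases 1--2 for $M_P$, cases 1--5 for $T$): one simply notes in each case whether an irreducible $W_{F_v}$-summand of dimension $>1$ occurs, or, in the torus case, whether the characters $\chi_1,\chi_2,\chi$ are ramified. This is the same bookkeeping already carried out for theorem 4.8, so no new idea is required. I expect the only mild obstacle is to keep the twist conventions straight among $\phi^{ss}$, $k_*\phi$ and $\rec_v^{\GT}(\pi)$ (the half-integral powers of $|\cdot|_v$ and the normalization by $|c|_v^{-3/2}$); alternatively, if one wants only the stated existence assertion and prefers to avoid the torus-case enumeration, it suffices to exhibit a single member of $\Pi_\phi$ through the Langlands-quotient construction from unramified inducing data on a Levi, which is a subquotient of an unramified principal series by induction in stages and hence Iwahori-spherical.
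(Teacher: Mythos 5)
Your overall strategy (Borel's criterion plus the Gan--Takeda tables) is the same as the paper's, but your argument contains a genuine error at the $M=G$ step, and the stronger conclusion you draw (``every member of $\Pi_\phi$ is Iwahori-spherical'') is false. You claim that a supercuspidal $\sigma$ of $\GSp_4(F_v)$ must have $k_*\rec_v^{\GT}(\sigma)^{ss}$ built from irreducible $W_{F_v}$-constituents of dimension $\geq 2$, so that an unramified $\phi^{ss}$ forces every member of $\Pi_\phi$ into the principal Bernstein component. This fails for the discrete parameter
\[
\phi = \lambda_1 \cdot v(2) \oplus \lambda_2 \cdot v(2),
\]
with $\lambda_1 \neq \lambda_2$ unramified characters satisfying $\lambda_1^2 = \lambda_2^2$: here $v(2)$ is the two-dimensional representation of the monodromy $\SL_2(\mathbf{C})$, the parameter is discrete (each summand is symplectic and they are distinct), its semi-simple part $\phi^{ss}$ is the sum of the four unramified characters $\lambda_i|\cdot|_v^{\pm 1/2}$, and yet the corresponding $L$-packet contains a \emph{non-generic supercuspidal} representation. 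That member lies in a Bernstein component of type $(G,\sigma)$, has no Iwahori-fixed vectors, and is a direct counterexample to your dichotomy ``sum of characters $\Rightarrow$ principal component.'' A parameter being ``discrete'' constrains its centralizer, not the dimensions of the irreducible $W_{F_v}$-summands of its semi-simple part, because the $\SL_2$-factor can supply the ellipticity.

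The proposition itself is still true, and the repair is exactly what the paper does: one checks from Table 1 of section 14 of \cite{GT1} that the exceptional case above is the \emph{only} one where a member of the packet fails to be Iwahori-spherical, and in that case the packet also contains the generic representation $J_Q(\lambda_1\lambda_2^{-1},\St_{\GL_2}) \otimes \lambda_2$, which is a subquotient of an unramified principal series (by induction in stages) and hence has Iwahori-fixed vectors. Your closing ``alternative'' remark points in this direction, but as stated it is not a proof: for a discrete parameter the packet members are not produced by the Langlands-quotient construction from proper parabolic data in any automatic way, so you must actually locate an Iwahori-spherical member in each case, which brings you back to the case-by-case check you were trying to avoid.
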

\begin{proof}
This again follows from the description of the $L$-parameters for $\GSp_4$ given in Table 1 of section 14 of \cite{GT1}, using the fact that $\pi$ has Iwahori fixed vectors if and only if it is a subquotient of the parabolic induction from the Borel subgroup $B$ with unramified induction data. Furthermore, we see from ${\it loc. cit.}$ that in fact if the semi-simple part of $\phi$ is unramified, then all the representations in the packet of $\phi$ has Iwahori-fixed vectors, except in the case where $\phi$ has the form
\[
\phi = \lambda_1 \cdot v(2) \oplus \lambda_2 \cdot v(2)
\]
where $\lambda_1,\lambda_2$ are {\it distinct} unramified characters of $\GL_1(F_v)$ such that $\lambda_1^2=\lambda_2^2$, in which case the $L$-packet corresponding to $\phi$ contains a supercuspidal non-generic representation, while the generic representation in the packet is given by (notation as in the proof of theorem 4.8) $J_Q(\lambda_1\lambda_2^{-1},\St_{\GL_2}) \otimes \lambda_2$, which has Iwahori fixed vectors. 
\end{proof}

\bigskip

With proposition 4.7 and theorem 4.8, we can now prove:

\begin{theorem}
Let $R_p$ be the Galois representation associated to $\Pi$ as in theorem 4.6. Assume hypothesis 4.2. Then for any finite prime $v$ of $F$ not dividing $p$, we have local-global compatibility up to semi-simplification:
\begin{eqnarray}
\iota_p \WD (R_p|_{G_{F_v}})^{ss} \cong k_{*} \rec_v(\Pi_v \otimes |c|_v^{-3/2} )^{ss}.
\end{eqnarray}
Furthermore, if we identify the set of embeddings of $F$ into $\barQp$ with the set of archimedean places of $F$ via $\iota_p: \barQp \cong \mathbf{C}$, then the Hodge-Tate-Sen weights of $R_p$ at the embedding corresponding to $v | \infty$ is given by the same formula as (3.6), i.e. 
\[
\{    \delta_v,\delta_v + m_{2,v},\delta_v + m_{1,v},\delta_v + m_{1,v} + m_{2,v}            \}
\]
with 
\[
\delta_v = \frac{1}{2}(w+3 -m_{1,v}-m_{2,v}).
\]
\end{theorem}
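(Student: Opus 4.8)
The plan is to interpolate local-global compatibility at $v$ along the $p$-adic family of Theorem 4.3: at the classical points $t\in Z\setminus\{0\}$ the representation $\Pi(t)$ is cohomological, so full local-global compatibility (Theorem 3.5) is available, and it then suffices to exhibit both sides of (4.17) as specializations at $t=0$ of objects analytic over the disk $D$. Fix a finite prime $v\nmid p$. If $\Pi_v$ is spherical the claim is already contained in (4.9), since an unramified semi-simple representation of $W_{F_v}$ is pinned down by its Frobenius characteristic polynomial; so assume $\Pi_v$ ramified, and let $\mathcal{B}_v$ be its Bernstein component. Applying Proposition 4.7 to $v$ produces $D$, the dense set $Z\ni 0$, a finite extension $L/\mathbf{Q}_p$ with $E\hookrightarrow L$, and a ring homomorphism $\beta\colon\mathcal{H}^{\sph,S\cup\{v\},p}\otimes_{\mathbf{Z}}\mathfrak{z}_v\to\mathcal{O}(D)$ with $ev_{t,*}(\beta|_{\mathfrak{z}_v})=z(\Pi(t)_v)$ and $\Pi(t)_v\in\mathcal{B}_v$ for all $t\in Z$, together with the pseudo-character $T\colon G_{F,S,p}\to\mathcal{O}(D)^{\leq 1}$ of Theorem 4.4 satisfying $ev_{0,*}T=\tr R_p$. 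Exactly as in the proof of Theorem 4.6 (matching Satake parameters at the spherical primes $w\notin S\cup S_p$ via Theorem 4.3(3), Cebotarev, and Brauer--Nesbitt), and observing that for $t\in Z\setminus\{0\}$ one has $m_{2,v}(t)=m_{2,v}+(p-1)h_0 t\geq 1$, so $\Pi(t)$ is holomorphic discrete series at all archimedean places and Theorem 3.5 attaches to it a Galois representation $R_p(t)$, one gets $ev_{t,*}T=\tr R_p(t)$ for every $t\in Z$.

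Next I would build the automorphic-side family pseudo-character. Let $\theta\colon\mathcal{B}_v\to\mathcal{B}_v$ be the translation ``twist by the unramified character $|c|_v^{-3/2}$'', which preserves $\mathcal{B}_v$ because $|c|_v^{-3/2}\circ c$ restricts to an unramified character of every Levi, and let $\theta^*$ be the induced automorphism of $\mathfrak{z}_v$; set $\mathcal{T}^{\mathrm{aut}}:=(\beta|_{\mathfrak{z}_v}\circ\theta^*)\circ T^{\mathcal{B}_v}$, with $T^{\mathcal{B}_v}$ the pseudo-character of Theorem 4.8. Combining (4.13) with $ev_{t,*}(\beta|_{\mathfrak{z}_v})=z(\Pi(t)_v)$ and the functoriality $z(\pi\otimes|c|_v^{-3/2})=\theta(z(\pi))$ of the Bernstein decomposition under twisting gives, for $t\in Z$ and $g\in W_{F_v}$,
\[
ev_{t,*}\mathcal{T}^{\mathrm{aut}}(g)=\iota_p^{-1}\,\tr\big(k_{*}\rec_v(\Pi(t)_v\otimes|c|_v^{-3/2})^{ss}(g)\big),
\]
in particular at $t=0$ the right side is $\iota_p^{-1}\tr(k_{*}\rec_v(\Pi_v\otimes|c|_v^{-3/2})^{ss}(g))$, which is the automorphic side of (4.17).

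For the Galois side I would simply take $\mathcal{T}^{\mathrm{Gal}}:=T|_{W_{F_v}}$, the restriction of $T$ along $W_{F_v}\subset G_{F_v}\to G_{F,S,p}$, so $ev_{t,*}\mathcal{T}^{\mathrm{Gal}}(g)=\tr(R_p(t)|_{W_{F_v}}(g))$ for $t\in Z$. The key point that makes this work is the elementary trace identity: for any continuous $\rho\colon G_{F_v}\to\GL_n(\barQp)$ with $v\nmid p$ one has $\tr\rho(g)=\tr(\WD(\rho)^{ss}(g))$ for all $g\in W_{F_v}$. Indeed, by Grothendieck's monodromy theorem $\rho$ is unipotent on an open subgroup of inertia, and writing $\WD(\rho)=(r,N)$, for $u$ in inertia $\rho(u)=r(u)\exp(t(u)N)$ is the Jordan decomposition ($r(u)$ has finite order and commutes with $N$, as $|u|_v=1$), while for $g\in W_{F_v}$ with $|g|_v\neq 1$ one has $\mathrm{Ad}(r(g))N=|g|_v N$, hence $\tr(r(g)N^k)=0$ for all $k\geq1$; therefore $\tr\rho(g)=\tr r(g)=\tr(\WD(\rho)^{ss}(g))$. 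Applying this to $R_p(t)|_{G_{F_v}}$ yields $ev_{t,*}\mathcal{T}^{\mathrm{Gal}}(g)=\tr(\WD(R_p(t)|_{G_{F_v}})^{ss}(g))$ for all $t\in Z$; at $t=0$ this is $\tr(\WD(R_p|_{G_{F_v}})^{ss}(g))$.

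Now for $t\in Z\setminus\{0\}$, Theorem 3.5 gives $\iota_p\WD(R_p(t)|_{G_{F_v}})^{ss}\cong k_{*}\rec_v(\Pi(t)_v\otimes|c|_v^{-3/2})^{ss}$, so $ev_{t,*}\mathcal{T}^{\mathrm{Gal}}=ev_{t,*}\mathcal{T}^{\mathrm{aut}}$ on $W_{F_v}$ for every such $t$. Since $Z\setminus\{0\}$ accumulates at $0$ it is Zariski dense in the irreducible affinoid curve $D$, and $\mathcal{O}(D)$ is reduced, so $\mathcal{T}^{\mathrm{Gal}}(g)=\mathcal{T}^{\mathrm{aut}}(g)$ in $\mathcal{O}(D)$ for all $g$; specializing at $0$ gives $\tr(\iota_p\WD(R_p|_{G_{F_v}})^{ss}(g))=\tr(k_{*}\rec_v(\Pi_v\otimes|c|_v^{-3/2})^{ss}(g))$ for all $g\in W_{F_v}$, and Brauer--Nesbitt (both sides being traces of semi-simple representations of $W_{F_v}$) yields (4.17). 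For the Hodge--Tate--Sen weights at a place $u\mid p$, I would interpolate the characteristic polynomial of the Sen operator of $R_p(t)|_{G_{F_u}}$ to an element of $\mathcal{O}(D)[X]$ (Sen theory in families, as in the machinery underlying section 3); for $t\in Z\setminus\{0\}$ the representation $R_p(t)$ is deRham with weights $\{\delta_v(t),\delta_v(t)+m_{2,v}(t),\delta_v(t)+m_{1,v}(t),\delta_v(t)+m_{1,v}(t)+m_{2,v}(t)\}$ at the embedding corresponding to $v\mid\infty$, and since $w(t)-m_{1,v}(t)-m_{2,v}(t)=w-m_{1,v}-m_{2,v}$ the quantity $\delta_v(t)=\tfrac12(w(t)+3-m_{1,v}(t)-m_{2,v}(t))$ equals $\delta_v$; letting $t\to 0$ along $Z$ (again using reducedness of $\mathcal{O}(D)$) gives the stated weights. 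The genuinely substantive inputs are all prior results — Theorem 4.8, Proposition 4.7, and full local-global compatibility at the classical points — so beyond the bookkeeping with the unramified twist through the Bernstein centre and the verification $ev_{t,*}T=\tr R_p(t)$, the only real ``trick'' is the trace identity, which replaces ``$\tr\circ\WD(\cdot)^{ss}$'' by ``$\tr$ of the Weil-group restriction'' and thereby makes the Galois-side interpolation automatic.
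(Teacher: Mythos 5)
Your argument for the semi\hyphenation{simplified} local--global compatibility (4.17) is correct, but it takes a genuinely different route from the paper. The paper does not compare pseudo-characters directly over $D$: it first invokes Lemma 7.8.11 of \cite{BC1} to produce a proper surjective $f:\mathcal{Y}\rightarrow D$ carrying a locally free rank-four $\mathcal{O}_{\mathcal{Y}}$-module $M$ with $G_F$-action whose trace pulls back $T$, then applies Lemma 7.8.14 of \cite{BC1} to get a Weil--Deligne representation $\WD(M|_{G_{F_v}})$ \emph{in the family}, and compares $\tr \WD(M|_{G_{F_v}})^{ss}$ with the pullback of the automorphic pseudo-character over $\mathcal{Y}$. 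You replace all of this by the elementary identity $\tr\rho(g)=\tr\bigl(\WD(\rho)^{ss}(g)\bigr)$ for $g\in W_{F_v}$, $v\nmid p$, applied pointwise at each classical $t$ and at $t=0$; your verification of that identity (Jordan decomposition on inertia, $\tr(r(g)N^k)=0$ for $k\geq 1$ when $|g|_v\neq 1$) is sound, and it lets you work with $T|_{W_{F_v}}$ directly over the reduced affinoid $D$. Since the theorem only asserts compatibility up to semi-simplification, this shortcut is legitimate and arguably cleaner; what the paper's heavier machinery buys is a Weil--Deligne representation interpolated over the whole family, which your trace identity cannot see (it is blind to $N$), but that extra information is not used in the statement. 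Your reformulation of $\widetilde{T}^{\mathcal{B}_v}$ via the twist automorphism $\theta^*$ of $\mathfrak{z}_v$ agrees with the paper's multiplication by $|g|_v^{-3/2}$, and your observation that $m_{2,v}(t)\geq 1$ for $t\in Z\setminus\{0\}$ correctly justifies invoking theorem 3.5 at the classical points.

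The Hodge--Tate--Sen part, however, has a gap. You propose to ``interpolate the characteristic polynomial of the Sen operator to an element of $\mathcal{O}(D)[X]$ (Sen theory in families, as in the machinery underlying section 3)'', but no such machinery appears in section 3, and Sen's theory in families requires an actual locally free module with continuous Galois action over the base --- a pseudo-character over $\mathcal{O}(D)$ is not enough to define a Sen operator, and there is no elementary trace identity that substitutes for it here. This is precisely the point at which the paper cannot avoid Lemma 7.8.11 of \cite{BC1}: one must pass to the cover $\mathcal{Y}$ and the module $M$, apply Sen's theory \cite{Sen} to $M|_{G_{F_{\mathfrak{p}}}}$ to get $P_v(X)\in\mathcal{O}(\mathcal{Y})[X]$, compare it with $(f^{\#})_*\widetilde{P}_v(X)$ on the Zariski-dense set $Z'$, and evaluate at a point $y_0$ over $0$ (noting that the Sen polynomial of $M_{y_0}$ equals that of $M_{y_0}^{ss}\cong R_p$). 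So for this half of the theorem you must reintroduce exactly the construction you avoided in the first half; once you do, your computation that $\delta_v(t)=\delta_v$ is constant and the specialization at $t=0$ of $\widetilde{P}_v$ gives the stated weights is correct.
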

\begin{proof}
We follow the arguments of \cite{Ch}. As above denote by $\mathcal{B}_v$ the Bernstein component of $\GSp_4(F_v)$ to which $\Pi_v$ belongs, and assume that $\mathcal{B}_v$ is defined over some finite extension $E$ of $\mathbf{Q}$. And we are in the setting where theorem 4.3 and proposition 4.7 apply. In particular the finite extension $L/\mathbf{Q}_p$ is chosen large enough so that $E \hookrightarrow L$. Denote by
\[
\widetilde{T}^{\mathcal{B}_v}: W_{F_v} \rightarrow \mathcal{O}(D)
\] 
the pseudo-character defined by the rule: for any $g \in W_{F_v}$: 
\[
\widetilde{T}^{\mathcal{B}_v}(g) := \beta(  T^{\mathcal{B}_v}(g)   ) \cdot |g|_v^{-3/2}
\]
where $T^{\mathcal{B}_v}: W_{F_v} \rightarrow \mathfrak{z}_v$ is the pseudo-character as in theorem 4.8. Then by construction, for any $t \in Z \subset D(L)$, we have
\[
ev_{t,*} \widetilde{T}^{\mathcal{B}_v} = \iota_p^{-1} \tr(k_* \rec_v(  \Pi(t)_v \otimes |c|_v^{-3/2}  )^{ss}).
\]

On the other hand recall the psuedocharacter $T:G_{F,S,p} \rightarrow \mathcal{O}(D)$ of theorem 4.4. We continue to denote by $T$ the pseudo-character obtained by pre-composing $T$ with the map $G_F \rightarrow G_{F,S,p}$. By Lemma 7.8.11 of \cite{BC1}, there exists a reduced quasi-compact separated rigid analytic space $\mathcal{Y}$ and a morphism $f :\mathcal{Y} \rightarrow D$ satisfying the following properties:

\bigskip

1. $f$ is proper and surjective.

\bigskip

2. There exists a admissible open $U \subset D$ that is Zariski dense and such that $f^{-1}(U) \rightarrow U$ is finite etale.

\bigskip

3. There exists a locally free $\mathcal{O}_{\mathcal{Y}}$-module $M$ of rank four, with a continuous linear action of $G_{F}$, whose trace is given by the pull-back of $T$ by $f$. For any $y \in \mathcal{Y}$, we denote by $M_y$ the evaluation of $M$ at $y$ (a four dimensional representation of $G_F$ over the residue field $L(y)$ of $\mathcal{O}_{\mathcal{Y}}$ at $y$).

\bigskip

4. For any $y \in f^{-1}(U)$, the representation $M_y$ is semi-simple.

\bigskip

In particular $Z^{\prime} : =f^{-1}(U \cap (Z \backslash \{0\}))$ is Zariski dense in $\mathcal{Y}$. We also see by condtion 3 and 4 for any $z \in Z^{\prime}$ with $t=f(z)$, we have $M_z \cong R_{t,p}$ as representation of $G_F$ (recall that $R_{t,p}$ is the Galois representation associated to $\Pi(t)$), and if $y_0 \in \mathcal{Y}$ with $f(y)= 0 \in Z \subset D$, then $M_{y_0}^{ss} \cong R_p$. 

Now by Lemma 7.8.14 of \cite{BC1}, the action of $G_{F_v} \hookrightarrow G_F$ on $M$ admits a Weil-Deligne representation $\WD(M|_{G_{F_v}})$. By functoriality of the construction of Weil-Deligne representation we have $\WD(M|_{G_{F_v}})_y \cong \WD(M_y|_{G_{F_v}})$ for any $y \in \mathcal{Y}$. 

Now denote $T^{\WD_v}:W_{F_v} \rightarrow \mathcal{O}(\mathcal{Y})$ the $\mathcal{O}(\mathcal{Y})$-valued pseudo-character given by $T^{\WD_v} = \tr \WD(M|_{G_{F_v}})^{ss}$. We claim that 
\[
 T^{\WD_v} = (f^{\#})_{*} \widetilde{T}^{\mathcal{B}_v}
\]
where $f^{\#} : \mathcal{O}(D)\rightarrow \mathcal{O}(\mathcal{Y})$ is the map of induced by $f$. Indeed, by Zariski density it suffices to prove this equality evaluated at any point $z \in Z^{\prime}$. But if $z \in Z^{\prime}$ with $t=f(z) \in Z \backslash \{0\}$, then 
\begin{eqnarray*}
ev_{z,*} T^{\WD_v} = \tr( \WD(M_z |_{G_{F_v}})^{ss})=\tr(\WD(R_{t,p}|_{G_{F_v}})^{ss}).
\end{eqnarray*}
Since $\Pi(t)$ is cohomological, we have by theorem 3.5 that
\[
\tr(\WD(R_{t,p}|_{G_{F_v}})^{ss}) = \iota_p^{-1} \tr( \rec_v(   \Pi(t)_v \otimes |c|_v^{-3/2}    ) ^{ss}         )
\]
which is equal to $ev_{t,*}  \widetilde{T}^{\mathcal{B}_v}$ by above, which is exactly what we want to prove. 

In particular choose $y_0 \in \mathcal{Y}$ such that $f(y)=0 \in Z \subset D$. Then evaluating the equality $T^{\WD_v} = (f^{\#})_{*} \widetilde{T}^{\mathcal{B}_v}$ at $y_0$ we obtain
\begin{eqnarray*}
& & \tr( \WD(R_p|_{G_{F_v} }  )^{ss}) = \tr( \WD(M_{y_0}^{ss}|_{G_{F_v} }  )^{ss}) \\
& = & \tr( \WD(M_{y_0}|_{G_{F_v} }  )^{ss}) = ev_{y_0,*} T^{\WD_v} = ev_{y_0,*} (f^{\#})_{*} \widetilde{T}^{\mathcal{B}_v} = ev_{0,*} \widetilde{T}^{\mathcal{B}_v} \\ & = & \iota_p^{-1} \tr( k_{*} \rec_v(\Pi_v \otimes |c|_v^{-3/2}  )^{ss}  ).
\end{eqnarray*}
We hence conclude (4.17) by Brauer-Nesbitt.

We now prove the assertion on Hodge-Tate-Sen weights. Thus identify the set of embeddings of $F$ into $\barQp$ with the archimedean places of $F$ via $\iota_p : \barQp \cong \mathbf{C}$. Let $v$ be an archimedean place of $F$, and denote by $\mathfrak{p}$ the prime of $F$ above $p$ that is associated to the embedding to $F$ into $\barQp$ corresponding to $v$. We assume without loss of generality that $L$ is chosen large enough so that $L$ contains all the Galois conjugates of $F_{\mathfrak{p}}$ over $\mathbf{Q}_p$. 

For each $y \in \mathcal{Y}$, denote by $P^{\sen}_v(X,y) \in L(y)[X]$ the Sen polynomial at $v$ (in the indeterminate $X$) of the representation $M_y$ restricted to $G_{F_{\mathfrak{p}}}$, whose roots are the Hodge-Tate-Sen weights of $M_y|_{G_{F_{\mathfrak{p}}}}$ at the embedding $F \hookrightarrow \barQp$ corresponding to $v$. By Sen's theory \cite{Sen}, there exists a polynomial
\[
P_v(X) \in \mathcal{O}(\mathcal{Y})[X]
\]
such that for any $y \in \mathcal{Y}$, we have $ev_{y,*} P_v(X) = P^{\sen}_v(X,y)$. 

On the other hand, consider the polynomial $\widetilde{P}_v(X) \in \mathcal{O}(D)[X]$ defined by:
\[
\widetilde{P}_v(X) :=\prod_{i=1}^4(X-  \kappa_{i,v}(t))
\]
where $\kappa_{i,v}(t) \in \mathcal{O}(D)$ are given by:
\begin{eqnarray*}
& & \{\kappa_{1,v},\kappa_{2,v},\kappa_{3,v},\kappa_{4,v}\} \\ &=& \{\delta_v(t),\delta_v(t) +m_{2,v}(t), \delta_v(t) +m_{1,v}(t) , \delta_v(t) + m_{1,v}(t) + m_{2,v}(t)   \} \\
& & \delta_v(t) = \frac{1}{2}( w(t)+3 -m_{1,v}(t) -m_{2,v}(t)  )
\end{eqnarray*}
with $w(t), m_{1,v}(t), m_{2,v}(t)$ as in the statement of theorem 4.3.

A similar argument as above shows that (using the assertion on Hodge-Tate weights in theorem 3.5 about the representations $R_{t,p}$ for $t \in Z \backslash \{0\}$):
\[
P_v(X) = (f^{\#})_* \widetilde{P}_v(X).
\]
Hence we obtain the result again by evaluating at $y=y_0$.
\end{proof}

We now remove the hypothesis 4.2 in theorem 4.6 and theorem 4.11, using the argument of \cite{ChH}. We need the following result from \cite{ChH} which we also use several times implicitly in section 5.

\begin{proposition} (Lemma 3.2.1 of \cite{ChH})
Let $S$ be a finite set of primes of $F$, and $w \notin S$ another finite prime, and $M/F$ be any finite extension. Let $L_v/F_v$ be a finite Galois extension of $F_v$ for every $v \in S$. Then there exists infinitely many totally real real solvable Galois extension $F^{\prime}/F$ in which $w$ splits completely, with $F^{\prime}$ linearly disjoint from $M$ over $F$, and such that for every prime $v^{\prime}$ of $F^{\prime}$ above a prime $v \in S$, the extension $F^{\prime}_{v^{\prime}}/F_v$ is isomorphic to $L_v/F_v$. Furthermore, there is a constant $\mu(\{L_v\}_{v \in S})$ independent of $w$, such that the degree $[F^{\prime}:F]$ can be assumed to be less than $\mu(\{L_v\}_{v \in S})$.
\end{proposition}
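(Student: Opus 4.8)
The plan is to obtain $F^{\prime}$ by an iterated Grunwald--Wang construction, peeling off one ``chief layer'' of the local extensions $L_v/F_v$ at a time. The only structural input is that for every finite $v$ the local Galois group $\Gal(L_v/F_v)$ is solvable (the absolute Galois group of a $p$-adic field is an extension of $\widehat{\mathbf{Z}}$ by a pro-cyclic prime-to-$p$ group by a pro-$p$ group, hence pro-solvable). Fixing a chief series of each $\Gal(L_v/F_v)$ we obtain a chain $F_v = L_v^{(0)} \subset L_v^{(1)} \subset \cdots \subset L_v^{(r_v)} = L_v$ in which every $L_v^{(j)}/F_v$ is Galois and every $L_v^{(j)}/L_v^{(j-1)}$ is elementary abelian of some prime exponent. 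Put $r = \sum_{v \in S,\, v \nmid \infty} r_v$ and $n_0 = \prod_{v \in S,\, v \nmid \infty} |\Gal(L_v/F_v)|$.

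Next I would build $F^{\prime}$ as a tower $F = K_0 \subset K_1 \subset \cdots \subset K_r = F^{\prime}$ in which each $K_i/F$ is Galois with solvable Galois group and each $K_i/K_{i-1}$ is elementary abelian of prime exponent. Since each $K_{i-1}/F$ is Galois, all places of $K_{i-1}$ above a fixed $v$ have isomorphic completions, so the relevant ``state'' of the construction is just how much of each $L_v/F_v$ has already been realized. At the $i$-th step one chooses a prime $\ell$, calls a finite place $v \in S$ \emph{active} if the next layer of $L_v/F_v$ still to be adjoined has exponent $\ell$, and solves: find $K_i$ with $K_i/F$ Galois solvable and $K_i/K_{i-1}$ elementary abelian of exponent $\ell$ such that (i) at each active $v$ the completion of $K_i$ realizes the next layer $L_v^{(j)}/L_v^{(j-1)}$; (ii) every non-active finite place of $S$, and the prime $w$, split completely in $K_i/K_{i-1}$; (iii) every archimedean place splits, so $K_i$ stays totally real; and (iv) $K_i$ is linearly disjoint from $M$ over $F$. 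This amounts to an embedding problem $1 \to A \to \Gal(K_i/F) \to \Gal(K_{i-1}/F) \to 1$ with $A$ an elementary abelian $\ell$-group (one takes $A$ to be a suitable induced module so as to meet all active places at once) together with prescribed local behaviour; it is solvable by combining the solvability of embedding problems for number fields with abelian kernel, allowing auxiliary ramification outside $S \cup \{w\}$, with Grunwald--Wang applied to the kernel $A$. Crucially, since $A$ has exponent $\ell$, never divisible by $8$, the classical Grunwald--Wang special case does not intervene. After $r$ steps every layer of every $L_v/F_v$ has been realized, so $F^{\prime} = K_r$ is a totally real solvable Galois extension of $F$ in which $w$ splits completely, linearly disjoint from $M$, and with $F^{\prime}_{v^{\prime}}/F_v \cong L_v/F_v$ for one --- hence, by conjugacy of decomposition groups in a Galois extension, for every --- prime $v^{\prime}$ of $F^{\prime}$ above $v \in S$.

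Two routine points finish the argument. For the degree bound, the tower has $r$ steps and $[K_i : K_{i-1}]$ divides a power of $\ell$ bounded by $n_0$, so $[F^{\prime}:F] \le n_0^{\,r}$; since $r$ and $n_0$ depend only on the multiset $\{\Gal(L_v/F_v)\}_{v \in S}$, hence only on $\{L_v\}_{v \in S}$, and not on $w$ or $M$, this yields the constant $\mu(\{L_v\}_{v \in S})$. For the ``infinitely many'' clause, after completing the tower one may adjoin one further cyclic extension of $F^{\prime}$ of prime degree, unramified at $S \cup \{w\}$, in which all places above $S \cup \{w\}$ split, totally real, and disjoint from $M$; infinitely many such exist, corresponding to infinitely many index-$\ell$ subgroups of ray class groups of growing conductor, of which only finitely many can fail to be linearly disjoint from the fixed field $M$.

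The main obstacle is the per-step problem in the second paragraph: producing a global Galois extension of $F$ that simultaneously extends the Galois tower built so far, has the prescribed elementary abelian prime-exponent structure, and has \emph{exactly} the demanded local extension at every place of $S$, together with the forced splitting at $w$ and the archimedean places --- i.e.\ a Grunwald-type problem for a solvable group with full control of decomposition behaviour. The reduction to elementary abelian layers of prime exponent is precisely what makes this feasible: it replaces a single solvable Grunwald problem, where the Wang obstruction genuinely occurs, by a bounded sequence of abelian Grunwald problems of prime exponent, for which the obstruction is absent and all the required local conditions are freely imposable. The remaining technical content is the bookkeeping needed to keep the tower Galois over $F$ at each stage and to track the module structures at the active places.
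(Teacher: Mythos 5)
The paper does not actually prove this proposition: it is quoted verbatim as Lemma 3.2.1 of \cite{ChH} and used as a black box, so there is no argument in the text to compare yours against. Your reconstruction is the standard one and, as far as I can tell, the same in spirit as what underlies the cited lemma: filter each solvable local group by a chief series, realize one elementary abelian layer at a time by a Grunwald--Wang-type theorem, keep the tower Galois over $F$ throughout, impose splitting at $w$, at the archimedean places, and at the inactive places of $S$, and observe that the degree of the output depends only on the local data. The reduction to prime-exponent abelian kernels to sidestep the special case of Grunwald--Wang, and the use of conjugacy of decomposition groups to pass from one place above $v$ to all of them, are both correct.

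Two places in your write-up carry essentially all of the content and are asserted rather than proved. First, the per-step statement you need is not plain Grunwald--Wang over $K_{i-1}$ but a Grunwald--Wang theorem for a \emph{split} embedding problem with abelian kernel and prescribed decomposition groups (so that the completion at an active place is exactly $L_v^{(j)}$, neither smaller nor larger, and the inactive places and $w$ split). You should say explicitly that taking $A$ to be an induced module makes the group extension split --- this matters, since non-split embedding problems with abelian kernel can be obstructed --- and cite the precise result (e.g.\ Neukirch's theorem on split embedding problems with prescribed local solutions, Neukirch--Schmidt--Wingberg, Ch.~IX). Second, two small repairs: for the ``infinitely many'' clause the auxiliary cyclic extension should be taken of the form $F'\cdot N$ with $N/F$ cyclic (split at $S\cup\{w\}$ and at infinity), otherwise $F''/F$ need not be Galois; and the explicit bound $n_0^{\,r}$ is too small if the kernels are induced modules, whose order grows with $[K_{i-1}:F]$ --- an iterated bound depending only on $\{L_v\}_{v\in S}$ still falls out, which is all the proposition claims, but the formula as written is not justified.
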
 

\begin{proposition}
Let $\Pi$ be a cuspidal automorphic representation of $\GSp_4(\mathbf{A}_F)$ that belongs to a global packet corresponding to a simple generic parameter $\mu$. Assume that $\Pi$ is essentially tempered at all the archimedean places of $F$. Let $S$ be a finite set of primes of $F$, and $w \notin S$ be another finite prime. Let $M/F$ be any finite extension of $F$. Then there is a constant $\mu(\Pi)$ indepedent of $w$, and a totally real solvable Galois extension $F^{\prime}/F$ with $[F^{\prime}:F] \leq \mu(\Pi)$ in which $w$ splits completely, such that the Arthur-Clozel base change $\mu^{\prime}$ of $\mu$ to $\GL_4(\mathbf{A}_{F^{\prime}})$ defines a global simple generic parameter of $\GSp_4(\mathbf{A}_{F^{\prime}})$, and such that there exists a cuspidal automorphic representation $\Pi_{F^{\prime}}$ in the packet defined by $\mu^{\prime}$, with the property that $\Pi_{F^{\prime}}$ has Iwahori-fixed vectors at all primes of $F^{\prime}$ above $p$. If $\Pi$ belongs to the holomorphic limit of discrete series (or holomorphic discrete series) at the archimedean places of $F$ (up to twists), then we may also assume the case for $\Pi_{F^{\prime}}$.
\end{proposition}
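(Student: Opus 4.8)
The plan is to produce $F'$ from the solvable descent lemma of \cite{ChH} recalled above (Proposition 4.12), to verify that the Arthur--Clozel base change $\mu'$ of $\mu$ to $\GL_4(\mathbf{A}_{F'})$ stays cuspidal, and then to construct $\Pi_{F'}$ \emph{not} by base-changing $\Pi$ but directly as an element of the global packet $\Pi_{\psi'}$ attached to $\mu'$ by Arthur. The crucial structural fact is that for a \emph{simple} generic parameter $\psi'$ the component group $\mathcal{S}_{\psi'}$ and the sign character $\epsilon_{\psi'}$ are both trivial, so by Theorem 2.2 \emph{every} element of $\Pi_{\psi'}$ occurs in $L^2_{\disc}(\GSp_4(F')\backslash\GSp_4(\mathbf{A}_{F'}),\chi')$; this means I am free to prescribe the local components of $\Pi_{F'}$ as I please (subject only to almost all of them being spherical). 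Granting essential temperedness at the archimedean places, Wallach's Theorem 2.3 will then upgrade ``discrete'' to ``cuspidal''.

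To build $F'$, first enlarge $S$ to contain the set $S_p$ of primes of $F$ above $p$; thus $w\notin S\supseteq S_p$, in particular $w\nmid p$ (the only case needed). For each $\mathfrak p\in S_p$, the semi-simple part of the local Langlands parameter $\mathcal{L}_{\mathfrak p}(\mu_{\mathfrak p})$ has finite image on inertia (smoothness of the underlying Weil representation), so there is a finite Galois extension $L_{\mathfrak p}/F_{\mathfrak p}$ over which it becomes unramified. Let $K/F$ be a finite extension large enough to contain the fixed fields of all self-twist characters of $\mu$ (there are only finitely many, since $\mu$ is cuspidal), chosen so that linear disjointness of $F'$ from $K$ below will force all intermediate cyclic base changes of $\mu$ along $F'/F$ to be cuspidal --- a routine application of \cite{AC}, exactly as in \cite{HST,ChH}. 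Now apply Proposition 4.12 to the set $S$, the prime $w$, the field $M$ enlarged to $M\cdot K$, and the prescribed local extensions $L_{\mathfrak p}/F_{\mathfrak p}$ for $\mathfrak p\in S_p$ (and, say, the trivial extension at places of $S\setminus S_p$). This produces a totally real solvable Galois extension $F'/F$ in which $w$ splits completely, linearly disjoint over $F$ from $M$ and from $K$, with $F'_{v'}/F_v\cong L_v/F_v$ for every $v'$ lying over a $v\in S$, and with $[F':F]$ bounded by a constant that depends only on $\{L_v\}$ --- hence only on $\Pi$ (and on $S$ and $p$) and not on $w$; this is $\mu(\Pi)$.

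It follows that $\mu':=\BC_{F'/F}(\mu)$ is cuspidal, and by Proposition 2.4 it defines a simple generic parameter $\psi'\in\Psi_{2,F'}(G,\chi')$ with $\chi'=\chi\circ N_{F'/F}$. I then set $\Pi_{F'}=\otimes^{\prime}_{v'}\pi_{v'}$, choosing the local components as follows. If $v'\mid\mathfrak p$ with $\mathfrak p\mid p$, then the $\GSp_4$-parameter $\psi'_{v'}$ satisfies $k_{*}\psi'_{v'}=\mathcal{L}_{v'}(\mu'_{v'})=\mathcal{L}_{\mathfrak p}(\mu_{\mathfrak p})|_{W_{F'_{v'}}}$, whose semi-simple part is unramified by the choice of $L_{\mathfrak p}$; hence $\psi'_{v'}$ has unramified semi-simple part and, by Proposition 4.10, its $L$-packet (which, being generic, coincides with its $A$-packet) contains a representation $\pi_{v'}$ with Iwahori-fixed vectors, which I take. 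If $v'$ is archimedean, then $F'_{v'}=\mathbf{R}$ lies over a real place $v$ of $F$ and $\psi'_{v'}$ is the same archimedean parameter as at $v$; I take $\pi_{v'}$ to be the holomorphic member $\pi^{H}$ of the (essentially tempered) two-element archimedean packet in the holomorphic case, and any member of it otherwise. At the remaining places I take $\pi_{v'}$ to be the spherical member. Because $\mathcal{S}_{\psi'}$ and $\epsilon_{\psi'}$ are trivial, $\Pi_{F'}\in\Pi_{\psi'}$ automatically lies in $L^2_{\disc}$ by Theorem 2.2, and --- being tempered, up to the twist $|c|^{w/2}$, at all real places --- it is cuspidal by Theorem 2.3. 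By construction $\Pi_{F'}$ has Iwahori-fixed vectors at every prime above $p$, and holomorphic (limit of) discrete series components at all real places in the holomorphic case.

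The part requiring the most care is the treatment of the primes above $p$: one must (i) arrange, via the auxiliary extensions $L_{\mathfrak p}$ and a continuity argument, that after base change the local $\GSp_4$-parameters acquire unramified semi-simple part, and (ii) be entitled to substitute, for the naive ``base change of $\Pi_{\mathfrak p}$'', the Iwahori-spherical member of the local $L$-packet furnished by Proposition 4.10 --- which is legitimate precisely because a simple generic global parameter imposes no sign or component-group constraint, so \emph{any} choice of local members is realized cuspidally. A subsidiary point is the uniformity of the degree bound $\mu(\Pi)$ in $w$, which is built into Proposition 4.12. By comparison, the cuspidality of $\mu'$ and the compatibility of $\GL_n$ base change with the local Langlands correspondence are standard inputs here.
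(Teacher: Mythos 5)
Your proof is correct and follows essentially the same route as the paper: Proposition 4.12 supplies the solvable extension $F'$ with prescribed local behaviour and a degree bound uniform in $w$, Proposition 2.4 (with Remark 2.5) gives cuspidality of $\mu'$, Proposition 4.10 gives the Iwahori-fixed vector in the local packet at primes above $p$, and the triviality of $\mathcal{S}_{\psi'}$ and $\epsilon_{\psi'}$ for a simple generic parameter together with Wallach's theorem lets you realize any choice of local packet members as a cuspidal representation. Your explicit enlargement of $S$ to contain $S_p$ (forcing $w\nmid p$) is a small clarification that the paper makes only implicitly by taking $S=S_p$ in the subsequent application.
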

\begin{proof}
For each prime $v \in S$, choose a finite extension $L_v/F_v$ such that, if $\phi_v$ is the $L$-parameter classifying $\Pi_v$, then the restriction of the semi-simple part $\phi_v^{ss}$ to $W_{L_v}$ is unramified. We choose the constant $\mu(\Pi) = \mu(\{L_v\}_{v \in S})$ as in proposition 4.12 above, which is independent of $w$. Then by this proposition, we can choose one (in fact infinitely many) finite solvable totally real Galois extension $F^{\prime}$ of $F$, with $[F^{\prime}:F] \leq \mu(\Pi)$ in which $w$ splits completely, such that for any prime $v^{\prime}$ of $F^{\prime}$ above $v \in S$, the extension $F^{\prime}_{v^{\prime}}/F_v$ is isomorphic to $L_v/F_v$. By proposition 2.4 (and remark 2.5), we can choose such an $F^{\prime}$ such that the Arthur-Clozel base change $\mu^{\prime}$ of $\mu$ to $\GL_4(\mathbf{A}_{F^{\prime}})$ (defined by successive application of cyclic base change) is cuspidal and defines a global simple generic parameter of $\GSp_4(\mathbf{A}_{F^{\prime}})$. For any representation $\Pi_{F^{\prime}}$ in this packet defined by $\mu^{\prime}$, we have for any prime $v^{\prime}$ of $F^{\prime}$ above $v \in S$:
\begin{eqnarray}
& & k_{*} \rec_{v^{\prime}} (  (\Pi_{F^{\prime}})_{v^{\prime}}  ) = \mathcal{L}_{v^{\prime}}(\mu^{\prime}_{v^{\prime}}   ) \\ &= &\mathcal{L}_v(\mu_v  )|_{W_{F_{v^{\prime}}^{\prime} }} = \phi_v|_{W_{F_{v^{\prime}}^{\prime} }} = \phi_v|_{W_{L_v}}. \nonumber
\end{eqnarray}
In the above we make use of the fact that Arthur-Clozel base change is compatible with local Langlands correspondence (for $\GL_4$). By construction it follows that the semi-simple part of $\rec_{v^{\prime}} (  (\Pi_{F^{\prime}})_{v^{\prime}}  )$ is unramified, and hence by proposition 4.10 the local $L$-packet containing $(\Pi_{F^{\prime}})_{v^{\prime}}$ contains a representation that has Iwahori fixed vectors. Since the parameter $\mu^{\prime}$ is simple generic, it follows that all the representations in the global packet defined by $\mu^{\prime}$ occur in the discrete spectrum of $\GSp_4(\mathbf{A}_{F^{\prime}})$. 

Now by assumption the archimedean components of $\Pi$ are essentially tempered. This implies that the archimedean components of $\mu$, and hence that of $\mu^{\prime}$, are essentially tempered. In turn this implies that all the representations of $\GSp_4(\mathbf{A}_{F^{\prime}})$ in the global packet defined by $\mu^{\prime}$, have essentially tempered archimedean components, thus by theorem 2.3 they are cuspidal. In particular we can choose a representation that occurs in the cuspidal spectrum and has Iwahori fixed vectors at all primes of $F^{\prime}$ dividing $p$. The remark about limit of holomorphic discrete series (or discrete series) follows from similar reasoning.  
\end{proof}

It follows from Lemma 2 of \cite{So} that the collection $\mathcal{I}$ of extensions $F^{\prime}$ of $F$ as in proposition 4.13 is $S$-general and have uniformly bounded heights ({\it c.f.} p. 649 and 654 of \cite{So} for these notions. We now take $S=S_p$ to be set of primes of $F$ above $p$. This is enough to achieve our goal:

\begin{theorem} 
Let $\Pi$ be a cuspidal automorphic representation on $\GSp_4(\mathbf{A}_F)$ satisfying the conditions in the beginning of section 4, and belongs to a global packet defined by a simple generic parameter. For each prime $p$, there exists an unique continuous semi-simple $p$-adic Galois representation
\[
R_p:G_F \rightarrow \GL_4(\barQp)
\]
which is unramified outside the set of primes of $F$ dividing $p$ and where $\Pi$ is not spherical, and such that for any finite prime $v$ of $F$, we have
\begin{eqnarray}
\iota_p \WD(R_p|_{G_{F_v}})^{ss} \cong k_* \rec_v(  \Pi_v \otimes |c|_v^{-3/2} )^{ss}.
\end{eqnarray}
Furthermore the Hodge-Tate-Sen weights of $R_p$ are given as in theorem 4.11.
\end{theorem}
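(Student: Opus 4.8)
The plan is to deduce the general statement from Theorem 4.11 --- which already treats the case where Hypothesis 4.2 holds --- by combining solvable base change with the patching lemma of Sorensen \cite{So}, in the same spirit in which Galois representations for Hilbert modular forms are built from those for elliptic modular forms. Note first that the hypotheses on $\Pi$ imply (by the discussion in Section 3.1) that $\Pi$ is essentially tempered at every archimedean place, so Proposition 4.13 is applicable. I would assemble the collection $\mathcal{I}$ of extensions it furnishes: for each finite prime $w \nmid p$, Proposition 4.13 (with $S = S_p$) produces infinitely many totally real solvable Galois $F^{\prime}/F$ with $[F^{\prime}:F] \leq \mu(\Pi)$, in which $w$ splits completely, such that the Arthur--Clozel base change $\mu^{\prime}$ of $\mu$ to $\GL_4(\mathbf{A}_{F^{\prime}})$ is cuspidal and defines a simple generic parameter of $\GSp_4(\mathbf{A}_{F^{\prime}})$, whose global packet contains a cuspidal $\Pi_{F^{\prime}}$ with Iwahori-fixed vectors at all primes of $F^{\prime}$ above $p$ and holomorphic (limit of) discrete series components at every archimedean place. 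Taking $\mathcal{I}$ to be the union of all these $F^{\prime}$ over all $w \notin S_p$, the remark following Proposition 4.13 (via Lemma 2 of \cite{So}) shows $\mathcal{I}$ is $S_p$-general and of uniformly bounded heights, hence admissible input for the patching lemma.

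For each $F^{\prime} \in \mathcal{I}$, Hypothesis 4.2 holds for $\Pi_{F^{\prime}}$, so Theorem 4.11 gives a continuous semisimple $R_{p,F^{\prime}} : G_{F^{\prime}} \to \GL_4(\barQp)$, unramified away from $p$ and the ramification of $\Pi_{F^{\prime}}$, satisfying local-global compatibility up to semi-simplification at primes of $F^{\prime}$ not dividing $p$ and carrying the prescribed Hodge--Tate--Sen weights. These form a compatible family: at a prime of $F^{\prime}$ unramified over $F$ and outside $S_p$, the trace of $R_{p,F^{\prime}}$ on a geometric Frobenius is read off the Satake parameters of $\mu^{\prime}$, hence off those of $\mu$ through base change, so the associated pseudo-characters agree on common subfields. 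Feeding this into Sorensen's patching lemma produces a continuous semisimple $R_p : G_F \to \GL_4(\barQp)$ (with image in $\GL_4(L)$ for some finite $L/\mathbf{Q}_p$ by the usual Baire-category argument) with $R_p|_{G_{F^{\prime}}} \cong R_{p,F^{\prime}}$ for every $F^{\prime} \in \mathcal{I}$. Uniqueness then follows from Cebotarev density and Brauer--Nesbitt, the trace of $R_p$ on Frobenii outside $S_p$ and the ramification of $\Pi$ being pinned down by the Satake parameters of $\Pi$.

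It remains to verify the local assertions. Unramifiedness of $R_p$ outside $S_p$ and the primes where $\Pi$ is not spherical follows by choosing, for each such prime $v$, an $F^{\prime} \in \mathcal{I}$ in which $v$ splits completely and invoking Theorem 4.11 for $R_{p,F^{\prime}}$, the spherical datum being preserved under split base change. For a finite prime $v \nmid p$, pick $F^{\prime} \in \mathcal{I}$ in which $v$ splits completely and a place $v^{\prime} \mid v$; then $F^{\prime}_{v^{\prime}} = F_v$, the component $(\Pi_{F^{\prime}})_{v^{\prime}}$ lies in the same local $L$-packet as $\Pi_v$ (local base change being trivial at a split prime), so $k_{*}\rec_{v^{\prime}}((\Pi_{F^{\prime}})_{v^{\prime}} \otimes |c|_{v^{\prime}}^{-3/2})^{ss} = k_{*}\rec_v(\Pi_v \otimes |c|_v^{-3/2})^{ss}$, and
\[
\iota_p \WD(R_p|_{G_{F_v}})^{ss} \;\cong\; \iota_p \WD(R_{p,F^{\prime}}|_{G_{F^{\prime}_{v^{\prime}}}})^{ss} \;\cong\; k_{*}\rec_v(\Pi_v \otimes |c|_v^{-3/2})^{ss}
\]
by the corresponding part of Theorem 4.11. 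For the Hodge--Tate--Sen weights at an embedding $F \hookrightarrow \barQp$ with archimedean place $v$ and $p$-adic place $\mathfrak{p}$, restriction of $R_p$ to the open subgroup $G_{F^{\prime}_{\mathfrak{p}^{\prime}}}$ (some $\mathfrak{p}^{\prime} \mid \mathfrak{p}$) preserves Hodge--Tate--Sen weights, and since $F^{\prime}/F$ is totally real the archimedean parameter of $\Pi_{F^{\prime}}$ above $v$ is again of type $(w;m_{1,v},m_{2,v})$; Theorem 4.11 for $R_{p,F^{\prime}}$ then yields exactly the weights of (3.6).

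The only genuinely non-formal ingredient is Proposition 4.13, which is already in hand; granting it, the remaining work --- checking that $\mathcal{I}$ meets the $S_p$-generality and bounded-height hypotheses of the patching lemma, and that the family $\{R_{p,F^{\prime}}\}$ is compatible under further base change --- is bookkeeping. I therefore expect no real obstacle here beyond carefully assembling the pieces; the difficulty of the section lies upstream, in Theorems 4.3, 4.8 and 4.11.
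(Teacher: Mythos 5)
Your proposal follows essentially the same route as the paper: apply Theorem 4.11 to the representations $\Pi_{F^{\prime}}$ furnished by Proposition 4.13, patch via Theorem 7 of \cite{So} using that $\mathcal{I}$ is $S_p$-general of bounded height, and read off local-global compatibility and Hodge--Tate--Sen weights at split primes. The only place the paper is more careful than you is in explicitly stating and verifying the two hypotheses of the patching lemma ($R_{K,p}^{\theta}\cong R_{K,p}$ for $\theta\in\Gal(K/F)$, and agreement of $R_{K_1,p}$ and $R_{K_2,p}$ on the compositum), both deduced from $\mu_K^{\theta}=\mu_K$ and Cebotarev plus Brauer--Nesbitt; your sketch of this via Satake parameters of $\mu$ is the right idea.
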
 
\begin{proof}
For each of the extension $K \in \mathcal{I}$ of $F$ above we choose a cuspidal automorphic representation $\Pi_{K}$ on $\GSp_4(\mathbf{A}_{K})$ as in the statement of proposition 4.13. By theorem 4.11 applied to $\Pi_{K}$ (which applies because by construction $\Pi_{K}$ has Iwahori fixed vectors at all primes of $K$ above $p$), we have a continuous $p$-adic Galois representation:
\begin{eqnarray}
R_{K,p}:G_{K} \rightarrow \GL_4(\barQp)
\end{eqnarray}
such that for any prime $v^{\prime}$ of $K$ not dividing $p$, we have
\begin{eqnarray}
\iota_p \WD(R_{K,p}  |_{G_{K_{v^{\prime}}}}   )^{ss} \cong k_{*} \rec_{v^{\prime}}(  (\Pi_{K} )_{v^{\prime}} \otimes |c|^{-3/2}_{v^{\prime}}  )^{ss}.
\end{eqnarray}

By the patching lemma of \cite{So} (Theorem 7 of {\it loc. cit.}), since the collection $\mathcal{I}$ of extensions $K$ of $F$ is $S$-general and has uniformly bounded heights, there exists a unique continuous semi-simple $p$-adic Galois representation $R_p:G_F \rightarrow \GL_4(\barQp)$ satisfying:
\begin{eqnarray}
R_p|_{G_K}\cong R_{K,p}
\end{eqnarray}
for all extension $K \in \mathcal{I}$, {\it provided} the following conditions are satisfied:

\bigskip

1. $R_{K,p}^{\theta} \cong R_{K,p}$ for all $K \in \mathcal{I}$ and all $\theta \in \Gal(K/F)$. 

\bigskip

2. $R_{K_1,p}|_{G_{K^{\prime}}} \cong R_{K_2,p}|_{G_{K^{\prime}}} $ for all $K_1,K_2 \in \mathcal{I}$, and $K^{\prime} := K_1 \cdot K_2$.

\bigskip

Both are standard arguments. As in the proof proposition 4.13 denote by $\mu_K$ the global simple generic parameter classifying $\Pi_K$, which is the base change of the global simple generic parameter $\mu$ classifying $\Pi$. Now we have

\begin{eqnarray*}
\iota_p \WD( R_{K,p}|_{G_{K_w}} )^{ss} &\cong &  k_{*} \rec_w( (\Pi_K^{\theta})_w \otimes |c|_w^{-3/2} )^{ss} \\
& = & \mathcal{L}_w( (\mu_K)_w  \otimes |\det|_w^{-3/2}     )^{ss}.
\end{eqnarray*}
In particular take any finite prime $w$ of $K$ not dividing $p$ such that $\Pi_K$ is spherical. Then $R_{K,p}$ is unramified at $w$. By considering Satake parameters, we see that for any $\theta \in \Gal(K/F)$, we have the isomorphism of unramified Weil-Deligne representation:
\[
\iota_p \WD(R_{K,p}^{\theta}|_{G_{K_w}})^{F-ss} \cong \mathcal{L}_w(  (\mu^{\theta}_K)_w  \otimes |\det|_w^{-3/2} ).
\]
But since $\mu_K$ comes from base change from $\GL_4(\mathbf{A}_F)$, we have $\mu_K^{\theta} = \mu_K$ \cite{AC}. Hence for such $w$ we have
\[
\WD(R_{K,p}^{\theta}|_{G_{K_w}} )^{F-ss} \cong \WD(R_{K,p}|_{G_{K_w}} )^{F-ss}
\]
and we obtain condition 1 by Cebotarev density and Brauer-Nesbitt theorem.

For condition 2, let $w$ be any finite prime of $K^{\prime} =K_1 \cdot K_2$ not dividing $p$, and let $w_i$ be the prime of $K_i$ below $w$ for $i=1,2$, with $v$ being the common prime of $F$ below $w_1,w_2$. Then by the compatibility of base change with the local Langlands correspondence for $\GL_4$ we have:
\begin{eqnarray*}
& & \iota_p \WD(R_{K_i,p} |_{G_{K^{\prime}_w}})^{ss} = \iota_p \WD(R_{K_i,p}|_{G_{K_{i,w_i}}})^{ss}|_{W_{K^{\prime}_w}} \\
&\cong & \mathcal{L}_{w_i}( (\mu_{K_i})_{w_i} \otimes |\det|_{w_i}^{-3/2}       )^{ss}|_{W_{K^{\prime}_w}} \\
& \cong & \mathcal{L}_v(  \mu_v \otimes |\det|_v^{-3/2} )^{ss} |_{W_{K_{i,w_i}}  } |_{W_{K^{\prime}_w}} \\
& \cong &  \mathcal{L}_v(  \mu_v \otimes |\det|_v^{-3/2} )^{ss}  |_{W_{K^{\prime}_w}}
\end{eqnarray*}
which does not depend on $i$. Hence again for such $w$ we have
\[
\WD(R_{K_1,p} |_{G_{K^{\prime}_w}})^{ss}  \cong \WD(R_{K_2,p} |_{G_{K^{\prime}_w}})^{ss} 
\]
so we again conclude by Ceboatrev density and Brauer-Nesbitt.

To deduce (4.19), choose $K \in \mathcal{I}$ as above. Then since $v$ does not divide $p$, we have by construction $v$ splits in $K$. Let $v^{\prime}$ be a prime of $K$ above $v$. Then since $R_{K,p} \cong R_{p}|_{G_K}$, we have by (4.21):

\begin{eqnarray*}
& & \iota_p \WD(R_{p}|_{G_{F_v}})^{ss} \cong \iota_p \WD(  R_{K,p}|_{G_{K_{v^{\prime}}}}      )^{ss} \\
& \cong & k_{*} \rec_{v^{\prime}}( (\Pi_K)_{v^{\prime}} \otimes |c|_{v^{\prime}}^{-3/2}  )^{ss} = \mathcal{L}_{v^{\prime}}( ( \mu_K)_{v^{\prime}} \otimes |\det|^{-3/2}_{v^{\prime}}    )^{ss}\\
&\cong & \mathcal{L}_v(   \mu_v \otimes |\det|_v^{-3/2}      )^{ss} =k_* \rec_v( \Pi_v \otimes |c|_v^{-3/2} )^{ss}.
\end{eqnarray*}

Finally for the assertion on Hodge-Tate-Sen weights, again choose any $K \in \mathcal{I}$ as above. Then $R_p|_{G_K} \cong R_{p,K}$, and by theorem 4.11, $R_{p,K}$ has Hodge-Tate-Sen weights given as in the statement of theorem 4.11. By ``invariance of Hodge-Tate-Sen weights under finite extension" (Remarque on p.31 of \cite{F}), we conclude the same for $R_p$.

\end{proof}

\begin{rem}
\end{rem}
For our applications in section 5, we will only need the case where $\Pi$ is classfied by a global simple generic parameter. The argument of section 3 shows that if the parameter classifying $\Pi$ is not simple generic, then it reduces to the properties of Galois representations associated to Hilbert modular forms. 

\bigskip

To end this section we state the following result, that can be proved exactly as in \cite{J}, using the generalization by K.Nakamura \cite{N} and F.C. Tan \cite{Tan} on Kisin's results on analytic continuation of crystalline periods and combining with base change arguments.

\begin{proposition}
Let $\Pi$ be as before satisfying the conditions in the beginning of section 4. Assume that $\Pi$ is classified by a simple generic parameter. Suppose that $\Pi$ is spherical at a prime $v|p$. Denote by $Q_{\Pi,v}(X)$ the inverse characteristic polynomial of the geometric Frobenius on the unramified Weil-Deligne representation $k_{*} \rec_v(\Pi_v \otimes |c|_v^{-3/2})$. Assume that the (inverse) roots of $Q_{\Pi,v}(X)$ are all distinct. Then $R_p$ is crystalline at $v$. 
\end{proposition}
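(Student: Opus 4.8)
The plan is to reduce first, by solvable base change, to the case where Hypothesis~4.2 holds while the local representation at $v$ is left unchanged, and then to combine the $p$-adic family of Theorem~4.3 (whose classical members are crystalline at $v$ by Theorem~3.5) with the theory of analytic continuation of crystalline periods, using the distinctness of the inverse roots of $Q_{\Pi,v}$ as the substitute for regularity of the Hodge--Tate weights.

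For the reduction: crystallinity of $R_p$ at $v$ depends only on $R_p|_{G_{F_v}}$. In the construction of $R_p$ in Theorem~4.14 one fixes, for each prime $u$ of $F$ above $p$, a finite extension $L_u/F_u$ over which the semisimple part of the local parameter of $\Pi_u$ becomes unramified; since $\Pi_v$ is spherical we may take $L_v = F_v$, and then every field $K$ in the collection $\mathcal{I}$ furnished by Proposition~4.13 has $v$ split completely. Fixing such a $K$ and a prime $v'$ of $K$ above $v$, we get $K_{v'} = F_v$, the component $(\Pi_K)_{v'} = \Pi_v$ is spherical with the same Satake parameters (so $Q_{\Pi_K,v'} = Q_{\Pi,v}$ still has distinct inverse roots), and $R_p|_{G_{F_v}} = R_{K,p}|_{G_{K_{v'}}}$ because $R_p|_{G_K} \cong R_{K,p}$. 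It therefore suffices to prove the statement for $\Pi_K$, and we may assume from now on that $\Pi$ satisfies Hypothesis~4.2.

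Under this hypothesis, apply Theorem~4.3 to obtain the disk $D$, the Zariski-dense accumulating set $Z$, and the cuspidal cohomological representations $\Pi(t)$, $t \in Z$, with $\Pi(0) = \Pi$; as in \cite{J} the $\Pi(t)$ are again spherical at $v$ (carrying a choice of refinement there, as noted after Theorem~4.3). As in the proof of Theorem~4.11 pass to the reduced rigid space $f : \mathcal{Y} \to D$ carrying a locally free rank-four family $M$ of $G_F$-representations, with $M_{y_0}^{ss} \cong R_p$ for $f(y_0) = 0$ and $M_z \cong R_{t,p}$ for $z$ in a Zariski-dense subset $Z' \subset \mathcal{Y}$ over $t = f(z) \in Z \setminus \{0\}$. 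For $t \in Z$, Theorem~3.5 (applicable since $\Pi(t)$ is cohomological and spherical at $v$) shows that $R_{t,p}|_{G_{F_v}}$ is crystalline, with Hodge--Tate weights the interpolated quadruple of Theorem~4.11 and with inverse characteristic polynomial of the crystalline Frobenius equal to $Q_{\Pi(t),v}(X)$, whose coefficients are --- by part~3 of Theorem~4.3, pulled back along $f$ --- specialisations of rigid analytic functions on $\mathcal{Y}$ that at $y_0$ give the coefficients of $Q_{\Pi,v}(X)$.

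The heart of the argument is then the following. The eigenvariety construction of \cite{MT} equips the family $M|_{G_{F_v}}$, on an affinoid neighbourhood of $y_0$ in $\mathcal{Y}$, with a rank-four triangulation of the associated $(\varphi,\Gamma)$-module over the Robba ring of $F_v$: its ordered rank-one parameters $\delta_1,\dots,\delta_4$ vary analytically, $\delta_i$ restricts on $\mathcal{O}_{F_v}^{\times}$ to the algebraic character attached to the $i$-th Hodge--Tate weight, and $\delta_i(\varpi_v)$ interpolates over $t \in Z$ the $i$-th inverse root of $Q_{\Pi(t),v}$, this triangulation being at the points of $Z'$ the one arising from the Frobenius-stable filtration of the crystalline Dieudonné module. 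At $y_0$ the hypothesis is exactly that the scalars $\delta_i(\varpi_v)$ are pairwise distinct --- the normalising twist by $|c|_v^{-3/2}$ rescales all four by $Nv^{3/2}$ and so preserves distinctness. Invoking the results of \cite{N} and \cite{Tan} on analytic continuation of crystalline periods over an arbitrary $p$-adic field (the generalisation of Kisin's theorem needed here, since in general $F_v \neq \mathbf{Q}_p$), this distinctness of the $\varphi$-parts of the triangulation parameters at $y_0$ forces every crystalline period present along $Z'$ to extend to $y_0$, so $D_{\cris}(M_{y_0}|_{G_{F_v}})$ attains the maximal value $4$ and $M_{y_0}|_{G_{F_v}}$, hence its semisimplification $R_p|_{G_{F_v}}$, is crystalline. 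Note that when $m_{2,v} = 0$ the Hodge--Tate weights coincide in two pairs, so regularity of the weights is unavailable and it is precisely the distinctness of the inverse roots of $Q_{\Pi,v}$ that is being used. The main obstacle is this last step: both the analytic continuation of crystalline periods for the base field $F_v$ (supplied by \cite{N}, \cite{Tan}) and the verification that the $p$-adic family of \cite{MT} carries the required analytically varying triangulation at $v \mid p$ interpolating the crystalline refinements at the classical points (as in \cite{J}); granting these, the reduction and the interpolation bookkeeping of the earlier steps are routine.
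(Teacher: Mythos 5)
Your proposal follows exactly the route the paper indicates: the paper's proof is a one-line reference to the argument of \cite{J}, namely a solvable base change reduction to the Iwahori-level case followed by analytic continuation of crystalline periods (Kisin's method as generalized to arbitrary $p$-adic base fields by \cite{N} and \cite{Tan}) along the family of Theorem 4.3, with the distinctness of the inverse roots of $Q_{\Pi,v}$ playing the role your write-up assigns to it. Your version supplies the details the paper omits and correctly identifies the two technical inputs being delegated to the cited references, so it is the same proof, just spelled out.
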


\section{Main Theorems}

We now return to the setting of the introduction, where we denote by $\pi$ a cuspidal automorphic representation on $\GL_2(\mathbf{A}_E)$, with $E$ a $\CM$ extension of the totally real field $F$. Denote by $\omega$ the central character of $\pi$. We make the following hypotheses on $\pi$: there is an integer $\mu_0$ such that for any archimedean place $w$ of $E$, the local component at $w$ of $\omega$ is given by $z \mapsto |z|^{-\mu_0}_{\mathbf{C}} = (z \overline{z})^{-\mu_0}$, and for such a place $w$, the local component $\pi_w$ is an irreducible admissible representation of $\GL_2(\mathbf{C})$ corresponding to the archimedean $L$-parameter $\phi_{\mu_0,n_w}$ of $L_{E_w}$, with $n_w \geq 1$, and such that $n_w \equiv \mu_0 +1 \mod{2} $ for all $w | \infty$. We also denote this value as $n_v$ where $v$ is the place of $F$ below $w$.

In this section we prove theorem 1.1 and 1.2 of the Introduction. Thus for the rest of the paper we assume that the central character $\omega$ satisfies the condition (\textbf{Char}) as in the Introduction section.

First we notice that there are two cases for which theorem 1.1 and 1.2 follows readily:

\bigskip

\noindent 1. $\pi \otimes \delta \cong \pi$ for a non-trivial quadratic idele class character $\delta$ of $\mathbf{A}_E^{\times}$.

\bigskip

\noindent 2. $(\pi \otimes \beta)^{\tau} \cong \pi \otimes \beta$ for an algebraic idele class character $\beta$ of $\mathbf{A}_E^{\times}$.

\bigskip

If case 1 occurs then $\pi = \AI^{E}_{L} (\chi \otimes |\det|_{\mathbf{A}_L}^{1/2})$ where $L$ is the quadratic extenson of $E$ cut out by $\delta$, and $\chi$ an algebraic idele class character of $\mathbf{A}_L^{\times}$ with $\chi \neq \chi^{\tau}$, and $\AI^{E}_{L}$ is the automorphic induction from $\GL_1(\mathbf{A}_L)$ to $\GL_2(\mathbf{A}_E)$. In this case we have $\rho_p := \Ind_L^E \chi^{\Gal}_p$ where $\chi^{\Gal}_p : G_L \rightarrow \overline{\mathbf{Q}}_p^{\times}$ is the $p$-adic Galois character corresponding to $\chi$. The assertions in theorem 1.1 and 1.2 is then the standard consequence of the dictionary between algebraic idele class character and compatible systems of $p$-adic Galois characters, together with the fact that automorphic induction is compatible with local Langlands correspondence of $\GL_2$. 

In case 2, the twist $\pi \otimes \beta$ would then arise from (Arthur-Clozel) base change of a cuspidal automorphic representation on $\GL_2(\mathbf{A}_F)$ of cohomological type, and in this case both theorems reduce to known assertions about Galois representations associated to Hilbert modular forms (in any case is a special case of the result of \cite{ChH}), together with the compatibility of base change with local Langlands correspondence.

Henceforth we assume until the end of section 5.2 that $\pi$ is neither in case 1 or 2.

\subsection{Lifting to $\GSp_4(\mathbf{A}_F)$}

Under the assumption (\textbf{Char}) on the central character $\omega$ there are exactly two algebraic idele class characters $\widetilde{\omega}_1, \widetilde{\omega}_2$ such that $\omega = \widetilde{\omega}_i \circ N_{E/F}$. We have $\widetilde{\omega}_1/\widetilde{\omega}_2$ being equal to the quadratic idele class character $\epsilon$ of $\mathbf{A}_F^{\times}$ corresponding to the quadratic extension $E/F$. Hence there will be exactly one  of the two $\widetilde{\omega}_i$, that we will denote as $\widetilde{\omega}$, for which $\widetilde{\omega}_v(-1) = (-1)^{\mu_0}$ for all archimedean places $v$ of $F$; equivalently for each archimedean $v$ the local component $\widetilde{\omega}_v$ is given by $a \mapsto a^{-\mu_0}$.

We denote $\Pi^{\prime}$ the following representation:
\begin{eqnarray}
\Pi^{\prime} := \AI_E^F (\pi \otimes |\det|_{\mathbf{A}_E}) 
\end{eqnarray}
where $\AI_E^F $ is the automorphic induction from $\GL_2(\mathbf{A}_E)$ to $\GL_4(\mathbf{A}_F)$ in the sense of Arthur-Clozel \cite{AC}. We are assuming that $\pi$ (and hence $\pi \otimes |\det|_{\mathbf{A}_E}$) does not arise as base change from $\GL_2(\mathbf{A}_F)$, so $\Pi^{\prime}$ is cuspidal automorphic by {\it loc. cit.} 

To ease notation put $\pi^{\prime} := \pi \otimes |\det|_{\mathbf{A}_E}, \omega^{\prime} = \omega \cdot |\cdot|_{\mathbf{A}_E}^2, \widetilde{\omega}^{\prime} = \widetilde{\omega} \cdot | \cdot|_{\mathbf{A}_F}^2  $. Then since $\pi^{\prime}$ is on $\GL_2(\mathbf{A}_E)$, we have
\begin{eqnarray}
(\pi^{\prime})^{*} \otimes \omega^{\prime} \cong \pi^{\prime}
\end{eqnarray}
which implies readily that
\begin{eqnarray}
(\Pi^{\prime})^* \otimes \widetilde{\omega}^{\prime} \cong \Pi^{\prime}
\end{eqnarray}
(for example using strong multiplicity one and the relation between the Satake parameters of $\pi^{\prime}$ and $\AI^F_E(\pi^{\prime})$. Thus $\Pi^{\prime}$ defines a simple generic parameter $\Psi_2(4)$ which is self-dual with respect to $\widetilde{\omega}^{\prime}$. It is in fact of symplectic type, i.e. $\Pi^{\prime}$ defines a simple generic parameter of $\Psi_2(G,\widetilde{\omega}^{\prime})$, which is demonstrated as follows (the author is grateful to Wee Teck Gan for pointing the identity (5.4) below):

\begin{proposition}
$\Pi^{\prime}$ is of symplectic type with respect to $\widetilde{\omega}^{\prime}$, i.e. the twisted exterior square $L$-function $L(s,\Pi^{\prime},\Lambda^2 \otimes (\widetilde{\omega}^{\prime})^{-1})$ has a pole at $s=1$.
\end{proposition}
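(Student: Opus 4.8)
The plan is to evaluate $L(s,\Pi^{\prime},\Lambda^2\otimes(\widetilde{\omega}^{\prime})^{-1})$ using the decomposition of the exterior square of an automorphic induction, which reduces the matter to the behaviour at $s=1$ of $\zeta_F$ and of twisted Asai $L$-functions of $\pi^{\prime}$. With the notation $\pi^{\prime}=\pi\otimes|\det|_{\mathbf{A}_E}$, $\omega^{\prime}=\omega_{\pi^{\prime}}=\widetilde{\omega}^{\prime}\circ N_{E/F}$ of the excerpt, and $\epsilon$ the quadratic idele class character attached to $E/F$, the identity (5.4) (due to Gan) reads, as an equality of partial $L$-functions --- equivalently, an isomorphism of the associated Weil--Deligne parameters at almost all finite places, which is all we use ---
\[
\Lambda^2\Pi^{\prime}\ \cong\ \AI_E^F(\omega^{\prime})\ \boxplus\ \big(\Asai_{E/F}(\pi^{\prime})\otimes\epsilon\big),
\]
where $\Asai_{E/F}$ is the Asai (tensor-induction) transfer. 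One checks it at an unramified place $v$ of $F$ (with $w\mid v$ in $E$) from the Mackey-type identity $\Lambda^2\big(\Ind_{W_{E_w}}^{W_{F_v}}r\big)\cong\Ind_{W_{E_w}}^{W_{F_v}}(\det r)\oplus\big(\Asai(r)\otimes\epsilon_v\big)$ for a two-dimensional representation $r$ of $W_{E_w}$, using that $\AI_E^F$ is compatible with the local Langlands correspondence.

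Granting (5.4), factor
\[
L(s,\Pi^{\prime},\Lambda^2\otimes(\widetilde{\omega}^{\prime})^{-1})\ =\ L\big(s,\AI_E^F(\omega^{\prime})\otimes(\widetilde{\omega}^{\prime})^{-1}\big)\cdot L\big(s,\pi^{\prime},\Asai_{E/F}\otimes(\epsilon(\widetilde{\omega}^{\prime})^{-1})\big).
\]
By the projection formula, $\AI_E^F(\omega^{\prime})=\AI_E^F(\widetilde{\omega}^{\prime}\circ N_{E/F})=\widetilde{\omega}^{\prime}\boxplus\widetilde{\omega}^{\prime}\epsilon$, so the first factor is $\zeta_F(s)\,L(s,\epsilon)$, which has a simple pole at $s=1$; moreover $L(1,\epsilon)\neq0$ since $\zeta_E(s)=\zeta_F(s)L(s,\epsilon)$ with $\zeta_E$ and $\zeta_F$ each having a simple pole at $s=1$.

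It remains to show the second factor is holomorphic and non-zero at $s=1$. For holomorphy: a twisted Asai $L$-function of a cuspidal representation of $\GL_2(\mathbf{A}_E)$ has at most a simple pole at $s=1$, and a pole occurs only if the representation is distinguished by the relevant character; for each of the two twists $(\widetilde{\omega}^{\prime})^{-1}$ and $\epsilon(\widetilde{\omega}^{\prime})^{-1}$ the necessary condition $(\pi^{\prime})^{\tau}\cong(\pi^{\prime})^{*}\otimes(\widetilde{\omega}^{\prime}\circ N_{E/F})$ unwinds --- via $\epsilon\circ N_{E/F}=1$, $(\pi^{\prime})^{*}=\pi^{\prime}\otimes\omega_{\pi^{\prime}}^{-1}$ and $\omega_{\pi^{\prime}}=\widetilde{\omega}^{\prime}\circ N_{E/F}$ --- to $(\pi^{\prime})^{\tau}\cong\pi^{\prime}$, i.e.\ $\pi^{\tau}\cong\pi$, which is case $2$ with trivial twist and has been excluded; so both twisted Asai $L$-functions are holomorphic at $s=1$. (Holomorphy of the second factor also follows directly from the factorization, since $\Pi^{\prime}$ is cuspidal and hence the left-hand side can have at most a simple pole at $s=1$, already accounted for by the $\zeta_F$-factor.) For non-vanishing: by the analogue $\Asai_{E/F}(r)\oplus(\Asai_{E/F}(r)\otimes\epsilon)\cong\Ind_{W_E}^{W_F}(r\otimes r^{\tau})$ and the projection formula, the product of the two twisted Asai $L$-functions equals the Rankin--Selberg $L$-function $L(s,(\pi^{\prime})^{*}\times(\pi^{\prime})^{\tau})$ over $E$; since $\pi^{\prime}\not\cong(\pi^{\prime})^{\tau}$ this is holomorphic and non-zero at $s=1$ (Jacquet--Shalika, Shahidi), whence --- both factors being holomorphic at $s=1$ --- the second factor is non-zero there. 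Putting the pieces together, $L(s,\Pi^{\prime},\Lambda^2\otimes(\widetilde{\omega}^{\prime})^{-1})$ has precisely a simple pole at $s=1$, contributed by $\zeta_F$, which proves the proposition.

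The main obstacle is establishing identity (5.4) with the correct twist --- in particular verifying that the second summand is $\Asai_{E/F}(\pi^{\prime})\otimes\epsilon$ rather than $\Asai_{E/F}(\pi^{\prime})$ --- and carrying out the unramified local matching consistently with the chosen normalizations of automorphic induction and of the Asai transfer; the analytic ingredients (meromorphic continuation with at most simple poles at $s=1$, characterization of Asai poles by distinction, non-vanishing of Rankin--Selberg $L$-functions at $s=1$) are all standard.
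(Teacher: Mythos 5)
Your proof is correct, and its backbone coincides with the paper's: the factorization you write is exactly the paper's identity (5.4), namely $L(s,\Pi^{\prime},\Lambda^2\otimes(\widetilde{\omega}^{\prime})^{-1})=\zeta_F(s)\,L(s,\epsilon)\,L(s,\pi^{\prime},\Asai^-_{E/F}\otimes(\widetilde{\omega}^{\prime})^{-1})$ (your $\Asai_{E/F}(\pi^{\prime})\otimes\epsilon$ is the paper's minus Asai), verified at unramified places by Satake parameters, with the pole coming from $\zeta_F$ and $L(1,\epsilon)\neq 0$. Where you genuinely diverge is the non-vanishing of the Asai factor at $s=1$. The paper untwists: it chooses an idele class character $\chi$ of $\mathbf{A}_E^{\times}$ with $\chi|_{\mathbf{A}_F^{\times}}=\widetilde{\omega}^{\prime}$, so the twisted Asai $L$-function becomes the standard Asai $L$-function of $\pi^{\prime}\otimes\chi^{-1}$, and then quotes Shahidi's non-vanishing theorem ([S1], Theorem 5.1) directly; it needs no holomorphy statement for the Asai factor, since a pole there would only reinforce the conclusion. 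You instead multiply the two Asai twists to produce the Rankin--Selberg $L$-function $L(s,(\pi^{\prime})^{*}\times(\pi^{\prime})^{\tau})$ over $E$, invoke its non-vanishing (and, via $\pi^{\tau}\not\cong\pi$, its holomorphy) at $s=1$, and then must supply holomorphy of each individual Asai factor, which you correctly extract from the distinction criterion together with the exclusion of case 2. Your route trades Shahidi's Asai non-vanishing for the more classical $\GL_2\times\GL_2$ non-vanishing, at the cost of the extra holomorphy input and the hypothesis $\pi^{\tau}\not\cong\pi$ (harmless here, since case 2 has been excluded before section 5.1); the paper's route is shorter and does not use that hypothesis at this step. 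Both arguments are sound.
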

\begin{proof}
We have the following factorization formula of $L$-functions:
\begin{eqnarray}
& & L(s,\Pi^{\prime},\Lambda^2 \otimes (\widetilde{\omega}^{\prime})^{-1}) \\ & =&  \zeta_F(s) \cdot L(s,\epsilon) \cdot L(s,\pi^{\prime},\Asai_{E/F}^- \otimes (\widetilde{\omega}^{\prime})^{-1}). \nonumber
\end{eqnarray}
Here $\zeta_F(s)$ is the usual Dedekind zeta function of $F$ which contributes a pole at $s=1$, while $\epsilon$ is as before the quadratic idele class character of $\mathbf{A}^{\times}_F$ corresponding to $E/F$, and $L(s,\epsilon)$ is the usual Hecke $L$-function of $\epsilon$, which does not vanish at $s=1$. The term $  L(s,\pi^{\prime},\Asai_{E/F}^- \otimes (\widetilde{\omega}^{\prime})^{-1})$ is the twisted Asai $L$-function of $\pi^{\prime}$ (to be precise there are two Asai $L$-functions, the (+) and the (-) Asai $L$-functions and here we are using the (-) one). The non-vanishing of $ L(s,\pi^{\prime},\Asai_{E/F}^- \otimes (\widetilde{\omega}^{\prime})^{-1})$ at $s=1$ follows from Shahidi's theorem \cite{S1}. See p. 302 of \cite{R} for the discussion. Indeed, we can choose an idele class character $\chi$ of $\mathbf{A}_E^{\times}$ such that $\chi|_{\mathbf{A}_F^{\times}} = \widetilde{\omega}^{\prime}$. One then has $L(s,\pi^{\prime},\Asai_{E/F}^- \otimes (\widetilde{\omega}^{\prime})^{-1}) = L(s,\pi^{\prime} \otimes \chi^{-1},\Asai_{E/F}^- )$ which reduces to the standard Asai $L$-function to which the non-vanishing result of \cite{S1}, theorem 5.1, applies. 

Identity (5.4) can immediately be verified directly by computation with Satake parameters. Hence we conclude the result.
\end{proof}

Thus in particular by Arthur's global classification theorem $\Pi^{\prime}$ and the similitude character $\widetilde{\omega}^{\prime}$ define a (global ) simple generic parameter in $\Psi_2(G,\widetilde{\omega}^{\prime})$ hence corresponds to a global packet of cuspidal automorphic representations of $\GSp_4(\mathbf{A}_F)$. For any representation $\Pi$ in this packet, the central character of $\Pi$ is given by $\widetilde{\omega}^{\prime}$, whose local component at any archimedean $v$ of $F$ is given by $\widetilde{\omega}^{\prime}_v: a \rightarrow a^{-\mu_0+2}$. 

\begin{proposition}
For any archimedean place $v$ of $F$, the local component $\Pi_v$ belongs to the archimedean $L$-packet defined by the $L$-parameter $\phi_{(\mu_0 -2;n_v,0)}$ (notation as in section 3.1). Thus it is a limit of discrete series.
\end{proposition}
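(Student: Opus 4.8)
The plan is to compute the archimedean $L$-parameter of $\Pi_v$ by tracking the parameter through automorphic induction and the symplectic lifting, using the characterization of the archimedean packets of $\GSp_4(\mathbf{R})$ from Section 3.1. First I would observe that since $\Pi$ lies in the global packet attached to the simple generic parameter defined by $\Pi' = \AI_E^F(\pi \otimes |\det|_{\mathbf{A}_E})$, the local component $(\Pi')_v$ at an archimedean place $v$ of $F$ is the local automorphic induction from $\GL_2(E_w) = \GL_2(\mathbf{C})$ to $\GL_4(F_v) = \GL_4(\mathbf{R})$ of $\pi_w \otimes |\det|_w$, where $w$ is the unique place of $E$ above $v$ (recall $E/F$ is CM, so $E_w = \mathbf{C}$, $F_v = \mathbf{R}$, and $w/v$ is the ramified quadratic extension $\mathbf{C}/\mathbf{R}$). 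By hypothesis the $L$-parameter of $\pi_w$ is $\phi_{\mu_0, n_w} : W_{\mathbf{C}} \to \GL_2(\mathbf{C})$, and twisting by $|\det|_w$ multiplies by $|z|_{\mathbf{C}} = |z|^2$, i.e. shifts the weight $w$-exponent by $-2$: the $L$-parameter of $\pi_w \otimes |\det|_w$ is $\phi_{\mu_0 - 2, n_w}$. Here I am using that $|\det|_{\mathbf{A}_E}$ localizes to the normalized absolute value $|\cdot|_{\mathbf{C}}$ on $\GL_2(\mathbf{C})$.

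Next I would compute the induced parameter. Under the local Langlands correspondence, automorphic induction from $\GL_2(\mathbf{C})$ to $\GL_4(\mathbf{R})$ corresponds to $\Ind_{W_{\mathbf{C}}}^{W_{\mathbf{R}}}$ of the Weil-group parameter. Since $W_{\mathbf{C}} = \mathbf{C}^\times$ is index two in $W_{\mathbf{R}}$, the restriction of $\phi_{\mu_0-2,n_w}$ to $W_{\mathbf{C}}$ already is $\phi_{\mu_0-2,n_w}$ (it is defined on $W_{\mathbf{C}}$), so I must induce the $2$-dimensional representation $z \mapsto |z|^{-(\mu_0-2)}\mathrm{diag}((z/\bar z)^{n_w/2}, (z/\bar z)^{-n_w/2})$ up to $W_{\mathbf{R}}$. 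The induced $4$-dimensional representation decomposes, when restricted back to $\mathbf{C}^\times$, as $|z|^{-(\mu_0-2)}$ times the sum of the four characters $(z/\bar z)^{\pm n_w/2}$, each appearing twice — so I should check the induced parameter is $\mathrm{Ind}_{W_{\mathbf{C}}}^{W_{\mathbf{R}}}(|z|^{-(\mu_0-2)}(z/\bar z)^{n_w/2}) \oplus \mathrm{Ind}_{W_{\mathbf{C}}}^{W_{\mathbf{R}}}(|z|^{-(\mu_0-2)}(z/\bar z)^{-n_w/2})$, and these two induced summands are actually isomorphic $2$-dimensional representations. Comparing with the definition of $\phi_{(w;m_1,m_2)}$ restricted to $\mathbf{C}^\times$ in equation (3.3)–(3.4), I would match $w = \mu_0 - 2$ and read off $(m_1+m_2)/2 = n_w/2$, $(m_1-m_2)/2 = n_w/2$, giving $m_1 = n_w$, $m_2 = 0$. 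The $j$-action and the similitude/central character must be checked to lie in $\GSp_4(\mathbf{C})$ with the right sign: the central character is $a \mapsto a^{-\mu_0 + 2}$, matching $\widetilde\omega'_v$, so the parameter $k_*\rec_v(\Pi_v)$ agrees with $k_* \phi_{(\mu_0-2; n_w, 0)}$ after the appropriate normalization, and since $\rec_v = \rec_v^{\GT}$ identifies $\Phi_v(G)$ with the $\widehat\alpha$-stable symplectic parameters in $\Phi_v(4)$ (as recalled in Section 2.1), this determines $\rec_v(\Pi_v) = \phi_{(\mu_0-2;n_w,0)}$, i.e. $\Pi_v$ lies in the stated archimedean packet. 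Finally, $m_2 = 0$ together with $n_w \ge 1$ places this in the non-degenerate limit of discrete series by the discussion after (3.5).

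The main obstacle I expect is bookkeeping with the normalizations: keeping track of the $|\det|$-twist versus the $|c|^{-3/2}$-shift that appears elsewhere (the statement is about $\Pi_v$ itself, not $\Pi_v \otimes |c|_v^{-3/2}$), making sure the automorphic induction is the Arthur–Clozel one with its compatibility with local Langlands at the archimedean place (which for $\mathbf{C}/\mathbf{R}$ is elementary but must be stated), and verifying that the two inductions of the $(z/\bar z)^{\pm n_w/2}$-twisted characters genuinely coincide and assemble into the matrix form (3.3) rather than some Weyl-conjugate. The condition $n_w \equiv \mu_0 + 1 \pmod 2$ is exactly what guarantees $m_1 + m_2 = n_w \equiv (\mu_0-2)+1 \pmod 2$, so the parity constraint needed to even write down $\phi_{(\mu_0-2;n_w,0)}$ is satisfied; I would note this explicitly. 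None of these steps is deep, so the proof should be short once the normalizations are pinned down.
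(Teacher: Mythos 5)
Your proposal is correct and follows essentially the same route as the paper: the paper's proof likewise notes that $\pi'_w = (\pi\otimes|\det|_{\mathbf{A}_E})_w$ has parameter $\phi_{(\mu_0-2,n_w)}$, writes out the induction of this parameter from $W_{E_w}=\mathbf{C}^\times$ to $W_{F_v}=W_{\mathbf{R}}$ explicitly (including the $j$-action), and identifies the result with $\phi_{(\mu_0-2;n_v,0)}$ using the parity condition $n_w\equiv\mu_0+1\bmod 2$. The only difference is presentational: you decompose the induced representation into two isomorphic $2$-dimensional summands before matching exponents, whereas the paper simply displays the $4\times 4$ matrix and reads off the equivalence.
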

\begin{proof}
For each archimedean place $w$ of $E$, the local component $\pi^{\prime}_w = (\pi \otimes |\det|_{\mathbf{A}_E})_w$ has $L$-parameter given by $\phi_{(\mu_0 - 2,n_w)}$. The $L$-parameter of $\Pi^{\prime}_v = (\AI^F_E(\pi^{\prime}))_v$, for a place $v$ of $F$ below $w$, is thus given by the induction of $\phi_{(\mu_0- 2,n_w)}$ from $W_{E_w}$ to $W_{F_v}$, i.e. given by
\begin{eqnarray*}
z \mapsto |z|^{-\mu_0 +2} \cdot \left( \begin{array}{rrrr}    (z/\overline{z})^{n_w/2}   &  &  &   \\     &   (z/\overline{z})^{-n_w/2}   &  &     \\   &  & (z/\overline{z})^{-n_w/2}  &     \\  &  &  & (z/\overline{z})^{n_w/2}        \end{array} \right) 
\end{eqnarray*}
for  $z \in W_{E_w } = \mathbf{C}^{\times}$, and
\begin{eqnarray}
j \mapsto  \left( \begin{array}{rrrr}    & &  +1 &   \\ & &  & +1  \\    (-1)^{n_w} & & & \\ &  (-1)^{n_w}    & &   \end{array} \right). \nonumber
\end{eqnarray}
This is thus the $L$-parameter of the irreducible admissible $\Pi_v$ of $\GSp_4(F_v)$. Recall that $n_w \equiv \mu_0+1 \mod{2}$, and we set $n_w = n_v$. Hence it is easy to see that this is equivalent to the parameter $\phi_{(\mu_0-2;n_v,0)}$.
\end{proof}

\begin{rem}
\end{rem}
As mentioned in the introduction the above lifting result can also be obtained using the method of theta correspondence, with the non-vanishing of the theta lifting being completed by S. Takeda \cite{Ta}. Under some local conditions on $\pi$ the result is also obtained by P.S. Chan \cite{C}.

\bigskip

In particular we can pick a representation $\Pi$ in this packet whose archimedean component at all archimedean places of $F$ is given by holomorphic limit of discrete series. This is the representation to which we will apply the results of section 4.

\subsection{Quadratic Twists}
Thus once again fix a prime $p$. The hypotheses for theorem 4.14 are all satisfied for $\Pi$. Thus we see that attached to $\Pi$ is a continuous semi-simple four dimensional Galois representation 
\[
R_p: G_F \rightarrow \GL_4(\barQp)
\] 
unramified outside the places of $F$ dividing $p$ and the places where $\Pi$ is not spherical, and such that for any place $v$ of $F$ not dividing $p$, we have
\begin{eqnarray}
\iota_p \WD(R_p |_{G_{F_v}})^{ss} \cong k_{*} \rec_v(\Pi_v \otimes |c|^{-3/2}_v  )^{ss}.
\end{eqnarray}

Recall that $\Pi$ is classified by the simple generic parameter $\Pi^{\prime}$ of $\GL_4(\mathbf{A}_F)$. So (tautologically) we have 
\[
k_{*} \rec_v(\Pi_v \otimes |c|^{-3/2}_v  ) \cong \mathcal{L}_v(\Pi^{\prime}_v \otimes |\det|_v^{-3/2} ).
\]
Hence
\begin{eqnarray}
\iota_p \WD(R_p |_{G_{F_v}})^{ss} &\cong& \mathcal{L}_v(\Pi^{\prime}_v \otimes |\det|_v^{-3/2} )^{ss} \\ &\cong & \mathcal{L}_v( (\AI_E^F \pi)_v \otimes |\det|_v^{-1/2} )^{ss}. \nonumber
\end{eqnarray} 
(recall that $\Pi^{\prime} = \AI^F_E(\pi^{\prime}) = \AI^F_E(\pi \otimes |\det|_{\mathbf{A}_E})$). If we take $S_{\pi}$ to be the set of finite primes $w$ of $E$ where $\pi_w$, $\pi^{\tau}_w$ or $E_w/F_v$ ramifies (here $v$ is the place of $F$ below $w$), and $S_{\pi,F}$ the set of places of $F$ lying below $S_{\pi}$, then $\Pi$ is spherical outside $S_{\pi,F}$, and hence $R_p$ is unramified outside the set of places dividing $p$ and $S_{\pi,F}$. Note that by (5.6), together with Cebotarev denisty and the Brauer-Nesbitt theorem, the Galois representation $R_p$ depends only on $\pi$ and not on the choice of the cuspidal automorphic representation $\Pi$ belonging to the global packet defined by the parameter $\Pi^{\prime}$.  

Now since $\Pi^{\prime} = \AI^F_E(\pi^{\prime})$ it satisifes:
\begin{eqnarray}
\Pi^{\prime} \cong \Pi^{\prime} \otimes (\epsilon \circ \det)
\end{eqnarray}
where as before $\epsilon$ is the quadratic idele class character of $\mathbf{A}_F^{\times}$ corresponding to $E/F$. Equation (5.6) thus implies 
\begin{eqnarray}
 \WD((R_p \otimes \epsilon) |_{G_{F_v}})^{ss}  \cong  \WD(R_p |_{G_{F_v}})^{ss} 
\end{eqnarray}
(here we abuse notation and we regard $\epsilon$ as a Galois character of $G_F$ by class field theory). By Cebotarev density and the Brauer-Nesbitt theorem, we thus have
\begin{eqnarray}
R_p \otimes \epsilon \cong R_p.
\end{eqnarray}

\begin{proposition}
There exists a continuous semi-simple two-dimensional representation
\begin{eqnarray}
\rho_p : G_E \rightarrow \GL_2(\barQp) 
\end{eqnarray}
such that
\begin{eqnarray}
R_p = \Ind_E^F \rho_p.
\end{eqnarray}
Hence
\begin{eqnarray}
R_p|_{G_E} = \rho_p \oplus \rho_p^{\tau}.
\end{eqnarray}
In particular $\rho_p$ is unramified at $w$ if $w$ does not divide $p$ and $w \notin S_{\pi}$
\end{proposition}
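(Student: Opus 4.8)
The plan is to deduce everything formally from the relation $R_p\otimes\epsilon\cong R_p$ of (5.9), together with the semisimplicity of $R_p$, by applying Mackey--Clifford theory to the normal subgroup $G_E$ of index two in $G_F$; this is the descent step used in \cite{T2} (with $E$ imaginary quadratic), transposed to our $\CM$ field. The only representation-theoretic input needed is the standard fact that, for an irreducible representation $V$ of $G_F$, the restriction $V|_{G_E}$ is irreducible if and only if $V\not\cong V\otimes\epsilon$, and that when $V\cong V\otimes\epsilon$ one has $V\cong\Ind_E^F W$ for some irreducible representation $W$ of $G_E$ with $W\not\cong W^{\tau}$ (indeed $W\cong W^{\tau}$ would make $\Ind_E^F W$ reducible, while $V$, being an irreducible constituent of $\Ind_E^F W$ of the same dimension, would have to equal it).

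First I would decompose $R_p$ into its irreducible constituents and group the distinct isomorphism classes that occur into orbits under $V\mapsto V\otimes\epsilon$. By (5.9) the two members of a size-two orbit $\{V,V\otimes\epsilon\}$ occur in $R_p$ with the same multiplicity $m$, and $m(V\oplus V\otimes\epsilon)\cong\Ind_E^F\bigl(m\cdot V|_{G_E}\bigr)$. A size-one orbit consists of a constituent $V$ with $V\cong V\otimes\epsilon$, occurring with some multiplicity $m$; by the fact recalled above $V\cong\Ind_E^F W$ with $W$ irreducible of dimension $\tfrac12\dim V$, so this contribution is $\Ind_E^F(mW)$. Summing over all orbits produces a semisimple representation $\rho_p$ of $G_E$ with $R_p\cong\Ind_E^F\rho_p$, and $\dim\rho_p=\tfrac12\dim R_p=2$, which is (5.11). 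Then (5.12) is immediate from Mackey's formula, since $G_E$ is normal in $G_F$ with quotient generated by $\tau$: $(\Ind_E^F\rho_p)|_{G_E}\cong\rho_p\oplus\rho_p^{\tau}$.

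For the last assertion, note that since $(\pi^{\tau})_w\cong\pi_{w^{\tau}}$ and $E_w/F_v\cong E_{w^{\tau}}/F_v$, the condition defining $S_{\pi}$ at $w$ is symmetric under $w\leftrightarrow w^{\tau}$, so $S_{\pi}$ is stable under $\tau$; hence a finite prime $w\notin S_{\pi}$ with $w\nmid p$ lies over a prime $v$ of $F$ with $v\notin S_{\pi,F}$ and $v\nmid p$, and so $R_p$ is unramified at $v$ (as recalled after (5.6)), whence $R_p|_{G_{E_w}}$ is unramified. Since $\rho_p|_{G_{E_w}}$ is a direct summand of $R_p|_{G_{E_w}}$ by (5.12), it is unramified as well, proving the claim.

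I do not expect a genuine obstacle in this proposition: the argument is entirely formal once (5.9) is available, and the two special configurations excluded at the start of the section (that $\pi$ is not a twist-equivalent of $\pi^{\tau}$, nor an automorphic induction) have already been used upstream, to ensure that $\AI_E^F(\pi')$ is cuspidal and defines a simple generic parameter so that $R_p$ exists at all. The only point requiring a little care is the Clifford-theoretic bookkeeping above -- in particular, verifying that no irreducible constituent $V$ of $R_p$ can satisfy $V\cong V\otimes\epsilon$ with $V|_{G_E}$ irreducible, which as noted is automatic. One should also keep in mind that $\rho_p$ is determined by $R_p$ only up to replacing it by $\rho_p^{\tau}$; its identification with the representation ``attached to $\pi$'' and its local--global compatibility will be pinned down later by the quadratic-twist argument.
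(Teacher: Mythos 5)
Your proof is correct and is essentially the argument the paper uses: both are the standard Clifford-theory descent for the index-two normal subgroup $G_E \subset G_F$, starting from $R_p\otimes\epsilon\cong R_p$ and semisimplicity. The paper treats the irreducible case explicitly (Schur plus Frobenius reciprocity) and refers to Lemma 4.1 of \cite{BH} for the reducible cases, whereas your orbit-by-orbit bookkeeping handles all cases uniformly; the content is the same, and your closing remarks about the $\rho_p\leftrightarrow\rho_p^{\tau}$ ambiguity and the $\tau$-stability of $S_{\pi}$ are also as in the paper.
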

\begin{proof}
The proof is exactly the same as in Lemma 4.1 of \cite{BH}, so we just recall the gist of the argument. Suppose first that $R_p$ is irreducible. Now by (5.9), we have $\Hom_{G_F}(\epsilon,\End R_p) \neq 0$. Since $\epsilon$ is non-trivial but the restriction of $\epsilon$ to $G_E$ is trivial, we see that $\dim_{\barQp} \Hom_{G_E}(1,R_p|_{G_E}) \geq 2$. By Schur's lemma, we see that $R_p|_{G_E}$ is reducible. Let $\rho_p$ be a subrepresentation of $R_p|_{G_E}$ of minimal dimension. Then By Frobenius reciprocity $\Ind_E^F \rho_p$ is a subrepresentation of $R_p$, hence by the irreducibility of $R_p$ we must have $R_p = \Ind_E^F \rho_p$ and hence $\rho_p$ is of dimension $2$.

The cases where $R_p$ is reducible can be treated exactly as in Lemma 4.1 of \cite{BH}. 
\end{proof}

\begin{rem}
\end{rem}
In fact $\rho_p$ is always irreducible (see proposition 5.9 below), and under our assumption that $\pi$ does not arise as base change from $\GL_2(\mathbf{A}_F)$, we will see that we have $\rho_p \ncong \rho_p^{\tau}$ (proposition 5.10 below), so in fact $R_p = \Ind_E^F \rho_p$ is irreducible.

\begin{rem}
\end{rem}
Note that (5.11) does not specify $\rho_p$ uniquely as we can replace $\rho_p$ by $\rho_p^{\tau}$. For the moment we choose either one for $\rho_p$. We will pin down this choice more precisely later.

\bigskip

\begin{proposition}
For any finite place $w$ of $E$ not dividing $p$, we have:
\begin{eqnarray}
& & \iota_p \WD \big( \rho_p \big|_{G_{E_w}} \big)^{ss} \oplus \iota_p \WD \big( \rho_p^{\tau}  \big|_{G_{E_w}}  \big)^{ss}  \\ & & \cong \mathcal{L}_w(\pi_w \otimes |\det|_w^{-1/2})^{ss} \oplus \mathcal{L}_w( (\pi ^{\tau})_w \otimes |\det|_w^{-1/2})^{ss}. \nonumber
\end{eqnarray}
\end{proposition}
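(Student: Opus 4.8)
The plan is to deduce this from equation (5.6), which gives the semisimplified local-global compatibility for $R_p$ in terms of the automorphic induction $\AI_E^F\pi$, combined with the decomposition (5.13), $R_p|_{G_E} \cong \rho_p \oplus \rho_p^\tau$. The key input is the compatibility of automorphic induction with the local Langlands correspondence, together with Mackey theory describing the restriction to $G_{E_w}$ of an induced representation from $G_F$.

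First I would restrict equation (5.6) (or rather (5.6) in the form $\iota_p \WD(R_p|_{G_{F_v}})^{ss} \cong \mathcal{L}_v((\AI_E^F\pi)_v \otimes |\det|_v^{-1/2})^{ss}$) to $G_{E_w}$ for $w \nmid p$ a finite place of $E$ lying above $v$. On the Galois side, $\WD(R_p|_{G_{F_v}})^{ss}|_{W_{E_w}} \cong \WD(R_p|_{G_{E_w}})^{ss}$, and by (5.13) this is $\iota_p\WD(\rho_p|_{G_{E_w}})^{ss} \oplus \iota_p\WD(\rho_p^\tau|_{G_{E_w}})^{ss}$ — except one must be slightly careful: when $v$ splits in $E$, there are two places of $E$ above $v$, and $\rho_p^\tau|_{G_{E_w}}$ is really $\rho_p|_{G_{E_{w'}}}$ transported via $\tau$, where $w' = w^\tau$; when $v$ is inert or ramified, $\tau$ induces an automorphism of $E_w/F_v$ and both summands genuinely appear. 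Either way, the left-hand side of (5.15) is exactly $\iota_p$ times the restriction $\WD(R_p|_{G_{F_v}})^{ss}|_{W_{E_w}}$ after matching Weil-Deligne representations with $L$-parameters as in Remark 3.2. On the automorphic side, the local component $(\AI_E^F\pi)_v$ has $L$-parameter $\mathcal{L}_v((\AI_E^F\pi)_v) = \Ind_{W_{E_w}}^{W_{F_v}}\mathcal{L}_w(\pi_w)$ when $v$ is inert/ramified (by compatibility of automorphic induction with local Langlands for $\GL_2$, and here $\AI$ is the single-place induction), and restricting this induced parameter back to $W_{E_w}$ gives $\mathcal{L}_w(\pi_w) \oplus \mathcal{L}_w(\pi_w)^{\tau} = \mathcal{L}_w(\pi_w) \oplus \mathcal{L}_w(\pi_w^\tau)$ by Mackey's formula (noting $\mathcal{L}_w(\pi_w)^\tau = \mathcal{L}_w(\pi_w^\tau)$ as in the Notation section). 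When $v$ splits, $(\AI_E^F\pi)_v$ is reducible, $\cong \pi_w \boxtimes \pi_{w^\tau}$ on $\GL_2(F_v) \cong \GL_2(E_w) \times \GL_2(E_{w^\tau})$, and restriction to the diagonal-ish $W_{E_w}$ again yields $\mathcal{L}_w(\pi_w) \oplus \mathcal{L}_{w^\tau}(\pi_{w^\tau})$, which matches $\mathcal{L}_w(\pi_w) \oplus \mathcal{L}_w(\pi_w^\tau)$ under the identification $E_w \cong E_{w^\tau}$ via $\tau$. Tracking the normalizing twist $|\det|_v^{-1/2}$ through the restriction gives the $|\det|_w^{-1/2}$ on both factors, since the norm on $E_w$ restricts compatibly (for split $v$) or the absolute value transports correctly (for inert/ramified $v$).

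Assembling these two computations, both sides of (5.15) equal $\iota_p\WD(R_p|_{G_{E_w}})^{ss}$, and the identity follows. I expect the main obstacle to be purely bookkeeping: carefully handling the three cases ($v$ split, inert, ramified in $E/F$) uniformly, and in particular making sure the two summands $\mathcal{L}_w(\pi_w\otimes|\det|_w^{-1/2})^{ss}$ and $\mathcal{L}_w(\pi_w^\tau\otimes|\det|_w^{-1/2})^{ss}$ line up correctly with $\WD(\rho_p|_{G_{E_w}})^{ss}$ and $\WD(\rho_p^\tau|_{G_{E_w}})^{ss}$ respectively (rather than getting swapped or mismatched), using Mackey's decomposition of $(\Ind_E^F\rho_p)|_{G_E}$ and the transitivity/compatibility of automorphic induction $\AI_E^F = \AI$ with local Langlands. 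Note that this proposition does not yet pin down which of $\rho_p$, $\rho_p^\tau$ matches $\pi_w$ at a given $w$ — it only gives the identity of the unordered pairs — which is consistent with Remark 5.7 that the choice of $\rho_p$ versus $\rho_p^\tau$ has not been fixed; the matching of individual factors will be addressed subsequently.
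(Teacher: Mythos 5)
Your proposal is correct and follows essentially the same route as the paper: restrict (5.6) to $W_{E_w}$, use $R_p|_{G_E}=\rho_p\oplus\rho_p^{\tau}$ on the Galois side, and on the automorphic side identify the restriction of the parameter of $(\AI_E^F\pi)_v$ with $\mathcal{L}_w(\pi_w)\oplus\mathcal{L}_w((\pi^{\tau})_w)$. The only cosmetic difference is that the paper packages the latter step as compatibility of local Langlands with base change via the global identity $\BC_F^E(\AI_E^F\pi)=\pi\boxplus\pi^{\tau}$, whereas you carry out the equivalent local Mackey computation case by case.
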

\begin{proof}
This is a direct computation. Let $v$ be the prime of $F$ below $w$. Then
\begin{eqnarray}
& & \iota_p \WD(R_p|_{G_{F_v}})^{ss}|_{W_{E_w}} = \iota_p \WD(R_p|_{G_{E_w}})^{ss} \\ & = & \iota_p  \WD \big( \rho_p \big|_{G_{E_w}} \big)^{ss} \oplus  \iota_p \WD \big( \rho_p^{\tau}  \big|_{G_{E_w}}  \big)^{ss}. \nonumber
\end{eqnarray}

On the other hand by (5.6) and the compatibility of the local Langlands correspondence with base change, we see that the left hand side of (5.14) is isomorphic to
\begin{eqnarray}
& & \mathcal{L}_v((\AI^F_E \pi)_v \otimes |\det|_v^{-1/2})^{ss}|_{G_{E_w}} \\ & =&  \mathcal{L}_w( (\BC_F^E (\AI_E^F \pi))_w \otimes |\det|_w^{-1/2}   )^{ss} \nonumber \\
&=&  \mathcal{L}_w( (\pi \boxplus \pi^{\tau} )_w \otimes |\det|_w^{-1/2}   )^{ss} \nonumber \\
&=& \mathcal{L}_w(\pi_w \otimes |\det|_w^{-1/2})^{ss} \oplus \mathcal{L}_w( (\pi ^{\tau})_w \otimes |\det|_w^{-1/2})^{ss}. \nonumber
\end{eqnarray}
(In the above $\pi \boxplus \pi^{\tau}$ is the isobaric sum of $\pi$ and $\pi^{\tau}$ as an automorphic representation of $\GL_4(\mathbf{A}_F)$).
\end{proof}

In order to separate the contributions $\rho_p$ and $\rho_p^{\tau}$ in (5.13), we follow \cite{BH} by considering twisting of $\pi$ by quadratic characters. Denote by $\mathcal{M}$ the set of quadratic idele class characters of $\mathbf{A}_E^{\times}$ (we allow $\eta$ to be trivial). Note that for any $\eta \in \mathcal{M}$, the cuspidal automorphic representation $\pi \otimes (\eta \circ \det)$ satisfies the same hypotheses as those for $\pi$ in the beginning of section 5. Hence we may apply the same constructions above to $\pi \otimes (\eta \circ \det)$: denote by $\rho_p^{\eta}$ and $R_p^{\eta} = \Ind_E^F \rho_p^{\eta}$ the corresponding $2$-dimensional Galois representation of $G_E$ and $4$-dimensional representation of $G_F$ respectively, given by the construction above with $\pi$ being replaced by $\pi \otimes (\eta \circ \det)$ (here $\rho_p^{\eta}$ is not to be confused with $\rho_p^{\tau}$ which is the conjugate of $\rho_p$ by $\tau$). 

Proposition 5.7 applied to $\pi \otimes (\eta \circ \det)$ gives the following result (to ease notation we have written $\pi \otimes \eta$ for $\pi \otimes (\eta \circ \det)$).

\begin{proposition}
For any finite place $w$ of $E$ not dividing $p$, we have:
\begin{eqnarray}
& & \iota_p \WD \big( \rho_p^{\eta} \big|_{G_{E_w}} \big)^{ss} \oplus \iota_p \WD \big( (\rho_p^{\eta})^{\tau}  \big|_{G_{E_w}}  \big)^{ss}  \\ & & \cong \mathcal{L}_w(\pi_w \otimes \eta_w \cdot |\cdot|_w^{-1/2})^{ss} \oplus \mathcal{L}_w( \pi^{\tau}_w \otimes \eta^{\tau}_w \cdot |\cdot|_w^{-1/2})^{ss} . \nonumber
\end{eqnarray}
\end{proposition}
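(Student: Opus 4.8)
The plan is to deduce this as a formal corollary of Proposition 5.7 applied with $\pi$ replaced by $\pi \otimes (\eta \circ \det)$, so the first step is simply to confirm that $\pi \otimes (\eta \circ \det)$ inherits all the running hypotheses. Since $\eta$ is quadratic and $E_w = \mathbf{C}$ for every archimedean place $w$ of $E$, the local component $\eta_w$ is trivial at each $w \mid \infty$; hence $\pi \otimes (\eta\circ\det)$ has the same archimedean $L$-parameters $\phi_{\mu_0,n_w}$ as $\pi$, and its central character is $\omega\cdot\eta^2 = \omega$, which still satisfies (\textbf{Char}) with the same $\widetilde{\omega}$. Moreover $\pi\otimes(\eta\circ\det)$ lies in neither exceptional case: if $(\pi\otimes\eta)\otimes\delta \cong \pi\otimes\eta$ for a nontrivial quadratic $\delta$ then $\pi\otimes\delta\cong\pi$, contradicting that $\pi$ is not in case 1; and if $((\pi\otimes\eta)\otimes\beta)^{\tau}\cong(\pi\otimes\eta)\otimes\beta$ for an algebraic $\beta$ then, since $\eta\beta$ is again algebraic, $\pi$ would be in case 2, again a contradiction. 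Therefore the whole construction of Sections 5.1--5.2 applies verbatim to $\pi\otimes(\eta\circ\det)$ and produces exactly the $\rho_p^{\eta}$ and $R_p^{\eta} = \Ind_E^F\rho_p^{\eta}$ introduced above.

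With these preliminaries, Proposition 5.7 applied to $\pi\otimes(\eta\circ\det)$ gives, for every finite place $w$ of $E$ not dividing $p$,
\[
\iota_p \WD\big(\rho_p^{\eta}\big|_{G_{E_w}}\big)^{ss}\oplus\iota_p\WD\big((\rho_p^{\eta})^{\tau}\big|_{G_{E_w}}\big)^{ss}\cong \mathcal{L}_w\big((\pi\otimes\eta)_w\otimes|\det|_w^{-1/2}\big)^{ss}\oplus\mathcal{L}_w\big(((\pi\otimes\eta)^{\tau})_w\otimes|\det|_w^{-1/2}\big)^{ss}.
\]
To conclude I would unwind the right-hand side: by definition $(\pi\otimes\eta)^{\tau} = \pi^{\tau}\otimes\eta^{\tau}$, so the local components are $(\pi\otimes\eta)_w = \pi_w\otimes\eta_w$ and $((\pi\otimes\eta)^{\tau})_w = \pi^{\tau}_w\otimes\eta^{\tau}_w$; and the local Langlands correspondence for $\GL_2(E_w)$ is compatible with twisting by characters, i.e. $\mathcal{L}_w(\sigma\otimes\chi)\cong\mathcal{L}_w(\sigma)\otimes(\chi\circ\art_w^{-1})$. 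Applying this turns the two summands into $\mathcal{L}_w(\pi_w\otimes\eta_w\cdot|\cdot|_w^{-1/2})^{ss}$ and $\mathcal{L}_w(\pi^{\tau}_w\otimes\eta^{\tau}_w\cdot|\cdot|_w^{-1/2})^{ss}$, which is precisely (5.17).

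Since this is a direct specialization of Proposition 5.7, there is essentially no obstacle; the only points requiring care are the verification that $\pi\otimes(\eta\circ\det)$ genuinely inherits every hypothesis on $\pi$ (in particular that it stays outside cases 1 and 2, so that $\AI_E^F(\pi\otimes\eta\otimes|\det|_{\mathbf{A}_E})$ is cuspidal and the construction of $\rho_p^{\eta}$ goes through), and keeping the conjugation-by-$\tau$ and the twist-by-$\eta$ operations notationally distinct. Finally, as with $\rho_p$ itself, Proposition 5.6 pins $\rho_p^{\eta}$ down only up to interchange with $(\rho_p^{\eta})^{\tau}$, but the displayed formula is symmetric under this interchange, so no further choice need be made at this stage.
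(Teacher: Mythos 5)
Your proposal is correct and follows exactly the route the paper takes: Proposition 5.8 is obtained simply by applying Proposition 5.7 to $\pi \otimes (\eta \circ \det)$ and unwinding the twist via compatibility of $\mathcal{L}_w$ with character twists. Your extra verification that $\pi \otimes (\eta \circ \det)$ inherits the hypotheses (trivial archimedean components of $\eta$, unchanged central character, and exclusion of cases 1 and 2) is exactly what the paper asserts without detail, so the argument matches.
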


Our goal is to show that we can choose $\rho$ and the $\rho^{\eta}$ compatibly so that in fact $\rho^{\eta} = \rho \otimes \eta$ for all $\eta \in \mathcal{M}$ (here we have abused notation and regard $\eta$ as a Galois character via class field theory). To do this we need some preparations, following the lines of arguments in \cite{BH}. 

First we need the following key result:
\begin{proposition}
The representation $\rho_p^{\eta}|_{G_L}$ is irreducible for any quadratic extension $L$ of $E$ (in particular $\rho_p^{\eta}$ is irreducible).
\end{proposition}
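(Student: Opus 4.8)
The plan is to adapt the argument of \cite{BH}: I will assume $\rho_p^\eta|_{G_L}$ splits and derive from this either the non-cuspidality of $\Pi^{\prime}$ or the fact that $\pi$ is a nontrivial quadratic self-twist, both of which are excluded. First I would reduce to $\eta=1$. Because $\eta$ is quadratic, $\eta_w$ is trivial at every archimedean place $w$ of $E$ and $\eta^2=1$, so $\pi\otimes(\eta\circ\det)$ satisfies the hypotheses of the beginning of Section 5, has central character $\omega$ satisfying (\textbf{Char}), and is neither of type 1 nor of type 2 (a self-twist of either kind of $\pi\otimes\eta$ produces one of $\pi$). As $\rho_p^\eta$ and $R_p^\eta=\Ind_E^F\rho_p^\eta$ are obtained from $\pi\otimes\eta$ exactly as $\rho_p,R_p$ are from $\pi$, it suffices to show: if $\pi$ is cuspidal, satisfies the running hypotheses of Section 5 and (\textbf{Char}), and is of neither type, then $\rho_p|_{G_L}$ is irreducible for every quadratic extension $L/E$. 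Write $R:=R_p=\Ind_E^F\rho_p$; it is continuous semisimple, satisfies $R|_{G_E}=\rho_p\oplus\rho_p^{\tau}$, and matches the cuspidal $\Pi^{\prime}=\AI_E^F(\pi\otimes|\det|_{\mathbf{A}_E})$ of $\GL_4(\mathbf{A}_F)$ at almost all finite places.

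Suppose $\rho_p|_{G_L}$ is reducible for some quadratic $L/E$. By Clifford theory it is semisimple, so $\rho_p|_{G_L}=\psi_1\oplus\psi_2$ with continuous characters $\psi_i:G_L\to\barQp^{\times}$. By Theorem 4.14 the Hodge-Tate-Sen weights of $R$ are integers, hence so are those of $\rho_p$ and of each $\psi_i$; a character with integral Sen weights is Hodge-Tate, hence locally algebraic, so $\psi_i$ comes from an algebraic idele class character of $\mathbf{A}_L^{\times}$. Let $\sigma$ generate $\Gal(L/E)$; $G_E$ permutes $\{\psi_1,\psi_2\}$ by conjugation. \emph{If $\psi_1^\sigma=\psi_1$}: the $G_L$-isotypic decomposition of $\rho_p|_{G_L}$ is $G_E$-stable (and $\rho_p(G_E)$ is abelian if moreover $\psi_1=\psi_2$), so $\rho_p\cong\chi_1\oplus\chi_2$ over $G_E$ with algebraic $\chi_i$; then $R\cong\Ind_E^F\chi_1\oplus\Ind_E^F\chi_2$ matches, at almost all finite places, an isobaric sum $\AI_E^F(\nu_1)\boxplus\AI_E^F(\nu_2)$ of two automorphic representations of $\GL_2(\mathbf{A}_F)$ (with $\nu_i$ algebraic Hecke characters of $\mathbf{A}_E^{\times}$), so by strong multiplicity one $\Pi^{\prime}$ equals this sum --- contradicting cuspidality. \emph{If $\psi_1^\sigma=\psi_2\neq\psi_1$}: then $\rho_p$ is irreducible and $\rho_p\cong\Ind_L^E\psi_1$, so by transitivity of automorphic induction along $F\subset E\subset L$ the representation $R\cong\Ind_L^F\psi_1$ matches an automorphic induction $\AI_L^F(\lambda)=\AI_E^F(\mu)$, $\mu:=\AI_L^E(\lambda)$, with $\lambda$ an algebraic Hecke character of $\mathbf{A}_L^{\times}$ (a power-of-norm twist of the character attached to $\psi_1$); here $\mu$ is cuspidal on $\GL_2(\mathbf{A}_E)$ since $\psi_1\neq\psi_1^\sigma$. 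By strong multiplicity one $\Pi^{\prime}\cong\AI_E^F(\mu)$, and applying the cyclic base change $\BC_F^E$ gives
\[
(\pi\otimes|\det|_{\mathbf{A}_E})\boxplus(\pi^{\tau}\otimes|\det|_{\mathbf{A}_E})\;\cong\;\mu\boxplus\mu^{\tau}
\]
as isobaric representations; uniqueness of isobaric decomposition forces $\pi\otimes|\det|_{\mathbf{A}_E}$ to be $\mu$ or $\mu^{\tau}$, so $\pi$ is automorphically induced from a quadratic extension of $E$, i.e.\ of type 1 --- again a contradiction. Hence $\rho_p|_{G_L}$ is irreducible for every quadratic $L/E$; taking any such $L$ shows $\rho_p$ is irreducible.

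The supporting facts used are standard: transitivity of automorphic induction, the base-change formulas $\BC_F^E\AI_E^F(\mu)=\mu\boxplus\mu^{\tau}$ and $\BC_F^E\AI_E^F(\pi\otimes|\det|)=(\pi\otimes|\det|)\boxplus(\pi^{\tau}\otimes|\det|)$, compatibility of $\AI$ and $\BC$ with the local Langlands correspondence, and strong multiplicity one for isobaric automorphic representations. The point requiring the most care is the second case: one must unwind $\BC_F^E\circ\AI_L^F$ correctly through induction in stages, confirm $\mu$ and $\mu^{\tau}$ are cuspidal, and verify that both $\pi\otimes|\det|_{\mathbf{A}_E}\cong\mu$ and $\pi\otimes|\det|_{\mathbf{A}_E}\cong\mu^{\tau}$ do exhibit $\pi$ as an automorphic induction from a quadratic extension of $E$. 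I expect this bookkeeping --- together with tracking the harmless normalization twists --- rather than any conceptual difficulty, to be the main obstacle; the only other thing to note is that having $R_p$ only as a semisimple representation that matches $\Pi^{\prime}$ at almost all finite places still suffices to invoke strong multiplicity one, since the objects produced on the automorphic side are honest isobaric automorphic representations.
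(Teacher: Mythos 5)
Your argument is correct, but it is not the route the paper takes. The paper's proof of Proposition 5.9 is a citation: it runs the argument of Lemma 5.1 of \cite{BH}, which in turn is the argument of Section 3 of \cite{T2}. That argument uses no $p$-adic Hodge theory: it pins down the putative characters $\psi_1,\psi_2$ by means of the Jacquet--Shalika bounds on the Satake parameters of the cuspidal $\pi$ (so that $|\psi_i(\Frob_w)|$ is controlled to within a factor $Nw^{\pm 1/2}$ under every embedding), and then invokes a characterization of idele class characters with bounded archimedean absolute values to force $\psi_1/\psi_2$ (or its $\sigma$-conjugate) to be a finite-order character times a power of the norm; this exhibits $\pi$ as a nontrivial quadratic self-twist, i.e.\ case 1. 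Your proof instead feeds the integrality of the Hodge--Tate--Sen weights of $R_p$ (Theorem 4.14) into Tate's theorem that a character with integral Sen weights is Hodge--Tate hence locally algebraic, and Serre's dictionary with algebraic Hecke characters, and then concludes via strong multiplicity one and the standard formulas for $\AI$ and $\BC$. Both routes work here. Yours is cleaner and more algebraic, but it consumes both standing exclusions (cuspidality of $\Pi^{\prime}$, i.e.\ the exclusion of case 2, is what kills your first case), whereas the \cite{T2} argument needs only the exclusion of case 1 and survives even when no Hodge--Tate information about $\rho_p$ is available --- which is precisely why Taylor argued that way for representations built by $p$-adic limits. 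Two small points to tighten: the semisimplicity of $\rho_p^{\eta}|_{G_L}$ follows from $\rho_p^{\eta}$ being semisimple by construction (Proposition 5.4) together with $G_L$ being open normal of finite index, not from Clifford theory (which presupposes irreducibility); and in your first case you can dispense with cuspidality of $\Pi^{\prime}$ altogether by restricting to $G_E$ and comparing with $\pi^{\prime}\boxplus(\pi^{\prime})^{\tau}$ as in (5.15), which makes the cuspidal $\pi^{\prime}$ itself an isobaric sum of Hecke characters --- a direct contradiction.
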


\begin{proof}
This can be proved exactly as in Lemma 5.1 of \cite{BH} using the argument in section 3 of \cite{T2}. This is where we need to use the assumption that we are not in case 1 discussed in the beginning of section 5.
\end{proof}

Now for any $\eta \in \mathcal{M}$ put $\delta_{\eta}:=\eta \cdot \eta^{\tau}$. Denote by $L_{\delta_{\eta}}$ the quadratic extension of $E$ cut out by $\delta$. Twisting equation (5.16) via $ \eta_w^{-1} = \eta_w$, and denoting $\widehat{\rho}_p^{\eta} := \rho_p^{\eta} \otimes \eta^{-1}$, we obtain, for any finite place of $E$ not dividing $p$:
\begin{eqnarray}
& & \iota_p \WD \big( \widehat{\rho}_p^{\eta} \big|_{G_{E_w}} \big)^{ss} \oplus \iota_p \WD \big( (\widehat{\rho}_p^{\eta})^{\tau} \otimes \delta_{\eta} \big) \big|_{G_{E_w}}  \big)^{ss}   \\ & & \cong \mathcal{L}_w(\pi_w \otimes  |\cdot|_w^{-1/2})^{ss} \oplus \mathcal{L}_w( (\pi^{\tau}_w \otimes (\delta_{\eta})_w \cdot |\cdot|_w^{-1/2})^{ss} . \nonumber
\end{eqnarray}

Now by proposition 5.9 the restriction of $\widehat{\rho}_p^{\eta}$ to $G_{L_{\delta_{\eta}}}$ is irreducible, hence semi-simple (in any case being the restriction of a semi-simple representation of $G_E$ to an open normal subgroup is semi-simple). Since by definition the restriction of $\delta_{\eta}$ to $G_{L_{\delta_{\eta}}}$ is trivial, we see that  by using the restriction of equation (5.13) and (5.17) to the various decompositions groups of $L_{\eta}$ at primes not dividing $p$, together with Cebotarev denisty and Brauer-Nesbitt theorem, one has 

\begin{eqnarray}
\widehat{\rho}_p^{\eta}\big|_{G_{L_{\delta_{\eta}}}} \oplus (\widehat{\rho}^{\eta}_p)^{\tau} \big|_{G_{L_{\delta_{\eta}}}} \cong  \rho_p \big|_{G_{L_{\delta_{\eta}}}} \oplus \rho_p^{\tau} \big|_{G_{L_{\delta_{\eta}}}}.
\end{eqnarray}

By proposition 5.9 again each summand of (5.18) is irreducible. Hence we either have $\widehat{\rho}_p^{\eta}\big|_{G_{L_{\delta_{\eta}}}} \cong \rho_p \big|_{G_{L_{\delta_{\eta}}}}$, or  $\widehat{\rho}_p^{\eta}\big|_{G_{L_{\delta_{\eta}}}} \cong  \rho_p^{\tau} \big|_{G_{L_{\delta_{\eta}}}}$. If the latter case occur, we replace $\rho_p^{\eta}$ by $(\rho_p^{\eta})^{\tau}$ (which is legitimate; note that $\eta|_{G_{L_{\delta_{\eta}}}} = \eta^{\tau}|_{G_{L_{\delta_{\eta}}}}$). With this choice, we then have for all $\eta \in \mathcal{M}$:
\begin{eqnarray}
\widehat{\rho}_p^{\eta} \big|_{G_{L_{\delta_{\eta}}}}  \cong \rho_p \big|_{G_{L_{\delta_{\eta}}}}.
\end{eqnarray}

Again by irreducibility of $\rho_p$ and $\rho_p^{\eta}$, equation (5.19) implies that
\begin{eqnarray}
\widehat{\rho}_p^{\eta} \cong \rho_p \otimes \psi_{\eta}
\end{eqnarray}
for a quadratic character $\psi_{\eta}$ of $\Gal(L_{\delta_{\eta}}/E )$, and hence $\psi_{\eta}$ is either trivial or equal to $\delta_{\eta}$. In other words, $\rho_p^{\eta} \cong \rho_p \otimes \eta$ or $\rho_p^{\eta} \cong \rho_p \otimes \eta^{\tau}$.

\begin{proposition}
Let $\eta \in \mathcal{M}$, and $\delta$ be a quadratic character of $G_E$. Then we have

\bigskip

\noindent 1. $\rho_p^{\eta} \otimes \delta \ncong \rho_p^{\eta}$ for $\delta$ non-trivial.

\bigskip

\noindent 2. $\rho_p^{\eta} \otimes \delta \ncong (\rho_p^{\eta})^{\tau}$ in all cases.
\end{proposition}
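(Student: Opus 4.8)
The plan is to follow the argument of \cite{BH} (this is the analogue, over a general $\CM$ field, of the lemma used there to separate quadratic twists), feeding in Proposition 5.9, the local-global compatibility established above, the standing hypothesis that $\pi$ lies in neither of the cases 1 and 2, and standard facts about automorphic induction and Rankin--Selberg $L$-functions. Part 1 is immediate: if $\rho_p^{\eta}\otimes\delta\cong\rho_p^{\eta}$ for a non-trivial quadratic character $\delta$ of $G_E$, and $L/E$ is the quadratic extension cut out by $\delta$, then since $\rho_p^{\eta}$ is irreducible of dimension $2$ the usual Clifford-theory argument gives $\rho_p^{\eta}\cong\Ind_L^E\chi$ for a character $\chi$ of $G_L$, whence $\rho_p^{\eta}|_{G_L}$ is a sum of two characters, in particular reducible, contradicting Proposition 5.9.

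For part 2, suppose first that $\rho_p^{\eta}\otimes\delta\cong(\rho_p^{\eta})^{\tau}$ with $\delta\neq1$. Conjugating by $\tau$ and using $((\rho_p^{\eta})^{\tau})^{\tau}\cong\rho_p^{\eta}$ gives $(\rho_p^{\eta})^{\tau}\cong\rho_p^{\eta}\otimes\delta^{\tau}$, hence $\rho_p^{\eta}\cong\rho_p^{\eta}\otimes\delta\delta^{\tau}$; by part 1 this forces $\delta^{\tau}=\delta$, so $\delta$ is of the form $\widetilde{\delta}\circ\Norm_{E/F}$ for a quadratic idele class character $\widetilde{\delta}$ of $\mathbf{A}_F^{\times}$. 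Applying $\Ind_E^F$ to the isomorphism $\rho_p^{\eta}\otimes\delta\cong(\rho_p^{\eta})^{\tau}$, and using the projection formula together with $\Ind_E^F(\rho_p^{\eta})^{\tau}\cong\Ind_E^F\rho_p^{\eta}=R_p^{\eta}$, yields $R_p^{\eta}\otimes\widetilde{\delta}\cong R_p^{\eta}$. But $R_p^{\eta}$ is, up to the normalizing twist, the Galois representation attached to $\Pi^{\prime}_{\eta}:=\AI_E^F(\pi\otimes\eta\otimes|\det|_{\mathbf{A}_E})$, which is cuspidal on $\GL_4(\mathbf{A}_F)$ because $(\pi\otimes\eta)^{\tau}\ncong\pi\otimes\eta$ (as $\pi$ is not in case 2, taking $\beta=\eta$); comparing Satake parameters at the unramified places and using strong multiplicity one for $\GL_4(\mathbf{A}_F)$ gives $\Pi^{\prime}_{\eta}\cong\Pi^{\prime}_{\eta}\otimes(\widetilde{\delta}\circ\det)$, i.e.\ $\AI_E^F(\pi\otimes\eta\otimes|\det|_{\mathbf{A}_E})\cong\AI_E^F(\pi\otimes\eta\otimes|\det|_{\mathbf{A}_E}\otimes\delta)$. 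Since both sides are cuspidal, the description of the fibres of automorphic induction forces $\pi\otimes\eta\otimes\delta$ to be isomorphic either to $\pi\otimes\eta$ or to $(\pi\otimes\eta)^{\tau}=\pi^{\tau}\otimes\eta^{\tau}$. The first alternative gives $\pi\otimes\delta\cong\pi$ with $\delta$ non-trivial quadratic, i.e.\ case 1. In the second alternative $\pi^{\tau}\cong\pi\otimes\nu$ with $\nu:=\eta\eta^{\tau}\delta$, a quadratic idele class character satisfying $\nu^{\tau}=\nu=\nu^{-1}$; choosing an idele class character $\beta$ of $\mathbf{A}_E^{\times}$ of finite order (hence algebraic) with $\beta/\beta^{\tau}=\nu$, which is possible since $\nu$ is anti-invariant under $\tau$ and the relevant cohomology of $\mathbf{A}_E^{\times}/E^{\times}$ vanishes, one checks $(\pi\otimes\beta)^{\tau}\cong\pi^{\tau}\otimes\beta^{\tau}\cong\pi\otimes\nu\otimes\beta^{\tau}\cong\pi\otimes\beta$ (using $\beta^{\tau}=\beta\nu^{-1}$ and $\nu^2=1$), i.e.\ case 2. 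Either way the standing hypothesis is contradicted.

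It remains to rule out $(\rho_p^{\eta})^{\tau}\cong\rho_p^{\eta}$. In that case Mackey's formula gives $\dim_{\barQp}\End_{G_F}(R_p^{\eta})=\dim_{\barQp}\End_{G_E}(\rho_p^{\eta})+\dim_{\barQp}\Hom_{G_E}(\rho_p^{\eta},(\rho_p^{\eta})^{\tau})=2$, so the trivial representation is a constituent of the semisimple representation $R_p^{\eta}\otimes(R_p^{\eta})^{*}$ with multiplicity $2$. However $R_p^{\eta}\otimes(R_p^{\eta})^{*}$ has, at almost all places, the same Frobenius eigenvalues as the Satake parameters of the isobaric representation $\Pi^{\prime}_{\eta}\times(\Pi^{\prime}_{\eta})^{*}$, in which the trivial representation occurs with multiplicity exactly one because $\Pi^{\prime}_{\eta}$ is cuspidal (equivalently $L(s,\Pi^{\prime}_{\eta}\times(\Pi^{\prime}_{\eta})^{*})$ has a simple pole at $s=1$); strong multiplicity one for isobaric automorphic representations then yields a contradiction. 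This finishes part 2.

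The step I expect to be the main obstacle is the bookkeeping in part 2: one must keep track simultaneously of the two quadratic twists $\eta$ and $\delta$ and of the Galois conjugation $\tau$ (the auxiliary quadratic extensions of $E$ need not be Galois over $F$, which is precisely why one must first reduce to the case $\delta^{\tau}=\delta$), while contending with the fact that the available local-global compatibility only controls the direct sum $\rho_p^{\eta}\oplus(\rho_p^{\eta})^{\tau}$ rather than $\rho_p^{\eta}$ itself. This is what forces the argument to route through $R_p^{\eta}=\Ind_E^F\rho_p^{\eta}$, the cuspidality of $\Pi^{\prime}_{\eta}$, strong multiplicity one, and the solvability of $\beta/\beta^{\tau}=\nu$ inside the algebraic idele class characters; each of these auxiliary statements is routine individually but must be put together carefully.
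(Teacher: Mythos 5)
Parts 1 and the non-trivial-$\delta$ half of part 2 of your argument are essentially correct and run close to the paper's own proof: part 1 is the same Schur/Clifford argument against Proposition 5.9, and for $\delta\neq 1$ you make the same reduction to $\delta^{\tau}=\delta$ and reach the same dichotomy (case 1 versus case 2 via solving $\beta/\beta^{\tau}=\nu$, which is the argument from p.~529 of [H]); the only difference is that you compare cuspidal representations on $\GL_4(\mathbf{A}_F)$ and use the fibres of $\AI_E^F$, where the paper twists the local identity (5.16) and applies Jacquet--Shalika strong multiplicity one to the isobaric sums $(\pi\otimes\eta)\boxplus(\pi\otimes\eta)^{\tau}$ over $E$. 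These are interchangeable.

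The genuine gap is in your treatment of the remaining case $\rho_p^{\eta}\cong(\rho_p^{\eta})^{\tau}$. Your Mackey computation correctly gives $\dim\End_{G_F}(R_p^{\eta})=2$, i.e.\ the trivial representation occurs with multiplicity $2$ in $R_p^{\eta}\otimes(R_p^{\eta})^{*}$. But the object $\Pi'_{\eta}\times(\Pi'_{\eta})^{*}$ to which you then appeal is a Rankin--Selberg product on $\GL_4\times\GL_4$; it is not known to be an isobaric automorphic representation of $\GL_{16}(\mathbf{A}_F)$, so ``strong multiplicity one for isobaric representations'' cannot be applied to it, and ``the trivial representation occurs in it with multiplicity one'' is not a meaningful statement about an isobaric decomposition. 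What is true is that $L(s,\Pi'_{\eta}\times(\Pi'_{\eta})^{\vee})$ has a simple pole at $s=1$; but to contradict the Galois-side multiplicity $2$ you would need to know that the complementary summand of $1^{\oplus 2}$ inside $R_p^{\eta}\otimes(R_p^{\eta})^{*}$ (which, writing $R_p^{\eta}\cong\sigma\oplus(\sigma\otimes\epsilon)$, contains two copies of the adjoint of the hypothetical descent $\sigma$ and its $\epsilon$-twist) has partial $L$-function non-vanishing at $s=1$ --- and $\sigma$ is not known to be automorphic, nor do you have Ramanujan-type bounds and a positivity/Tauberian argument to substitute. This is precisely the circularity the paper avoids: it disposes of the trivial-$\delta$ case by showing that $\rho_p^{\eta}\cong(\rho_p^{\eta})^{\tau}$, combined with the previously established relations $\rho_p^{\eta''}\cong\rho_p\otimes\eta''$ or $\rho_p\otimes(\eta'')^{\tau}$, forces $(\rho_p^{\eta\eta'})^{\tau}\cong\rho_p^{\eta\eta'}\otimes\delta_{\eta'}$ for every $\eta'\in\mathcal{M}$, and then chooses $\eta'$ with $\delta_{\eta'}=\eta'(\eta')^{\tau}$ non-trivial to contradict the non-trivial-$\delta$ case already proved (applied to the twist $\eta\eta'$). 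You should replace your last paragraph's Rankin--Selberg argument by this reduction.
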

\begin{proof}
This again can be proved exactly as in Lemma 5.2 of \cite{BH}. However we would like to give some details to clarify some part of the argument in {\it loc. cit.}

For part 1, if $\rho_p^{\eta} \otimes \delta \cong \rho_p^{\eta}$ for some non-trivial quadratic $\delta$, then $\Hom_{G_E}(\delta,\End \rho_p^{\eta}) \neq 0$. Denoting by $L$ the quadratic extension of $E$ cut out by $\delta$, one then has $\dim_{\barQp} \Hom_{G_L}(1,\End (\rho_p^{\eta} |_{G_L})) \geq 2$. Schur's lemma implies that $\rho_p|_{G_L}$ is reducible, contradicting proposition 5.9.

For part 2, first consider the case where $\delta$ is non-trivial. Thus assume that $\rho_p^{\eta} \otimes \delta \cong (\rho_p^{\eta})^{\tau}$. Applying $\tau$ to this isomorphism we obtain $(\rho_p^{\eta})^{\tau} \otimes \delta^{\tau} \cong \rho_p^{\eta}$, and hence we have $\rho_p^{\eta} \otimes \delta^{-1} \delta^{\tau}\cong \rho_p^{\eta}$. By part 1 just proved above, then implies $\delta^{\tau} = \delta$. So twisting equation (5.16) by $\delta_w = \delta^{\tau}_w$ we obtain:

\begin{eqnarray*}
& & \iota_p \WD \big( (\rho_p^{\eta} \otimes \delta) \big|_{G_{E_w}} \big)^{ss} \oplus \iota_p \WD \big( (\rho_p^{\eta} \otimes \delta)^{\tau} \big) \big|_{G_{E_w}}  \big)^{ss}  \\ & & \cong \mathcal{L}_w(\pi_w \otimes (\eta \cdot \delta)_w \cdot |\cdot|_w^{-1/2})^{ss} \oplus \mathcal{L}_w( \pi^{\tau}_w \otimes (\eta^{\tau} \cdot \delta^{\tau})_w \cdot |\cdot|_w^{-1/2})^{ss} . \nonumber
\end{eqnarray*}

But by assumption $\rho_p^{\eta} \otimes \delta \cong (\rho_p^{\eta})^{\tau}$ so we obtain
\begin{eqnarray*}
& & \iota_p \WD \big( \rho_p^{\eta}  \big|_{G_{E_w}} \big)^{ss} \oplus \iota_p \WD \big( (\rho_p^{\eta})^{\tau}  \big) \big|_{G_{E_w}}  \big)^{ss}  \\ & & \cong \mathcal{L}_w(\pi_w \otimes (\eta \cdot \delta)_w \cdot |\cdot|_w^{-1/2})^{ss} \oplus \mathcal{L}_w( \pi^{\tau}_w \otimes (\eta^{\tau} \cdot \delta^{\tau})_w \cdot |\cdot|_w^{-1/2})^{ss} . \nonumber
\end{eqnarray*}

Combining with equation (5.16) we thus obtain
\begin{eqnarray*}
& &   \mathcal{L}_w( (  (\pi \otimes \eta) \boxplus (\pi \otimes \eta)^{\tau})_w \otimes |\cdot|_w^{-1/2})       )^{ss}    \\
& \cong &   \mathcal{L}_w( ((\pi \otimes \eta \cdot \delta) \boxplus (\pi  \otimes  \eta \cdot \delta)^{\tau} )_w \otimes  |\cdot|_w^{-1/2}       )^{ss}. \nonumber
\end{eqnarray*}

Hence by strong multiplicity one for isobaric automorphic representations (theorem of Jacquet-Shalika \cite{JS}) we have
\[
(\pi \otimes \eta) \boxplus (\pi \otimes \eta)^{\tau} = (\pi \otimes \eta \cdot \delta) \boxplus  (\pi \otimes \eta \cdot \delta)^{\tau}.
\]

So we have either $\pi \otimes \eta = \pi \otimes \eta \cdot \delta$, i.e. $\pi \otimes \delta = \pi$, or $\pi \otimes \eta = (\pi \otimes \eta \cdot \delta)^{\tau}$. The former case is already excluded as the case 1 at the beginning of section 5, because $\delta$ is non-trivial. So we have $\pi \otimes \eta = (\pi \otimes \eta \cdot \delta)^{\tau}$.

Now recall that $\delta$ satisfies $\delta^{\tau} = \delta$. Hence $\delta$ (as a quadratic idele class character of $\mathbf{A}_E^{\times}$) can be written as $\delta = \widetilde{\delta} \circ N_{E/F}$ for some quadratic idele class character $\widetilde{\delta}$ of $\mathbf{A}_F^{\times}$. In particular $\delta|_{\mathbf{A}_F^{\times}}$ is trivial. The argument on p. 529 of \cite{H} shows that we can write $\delta = \nu/\nu^{\tau}$ for some finite order idele class character $\nu$ of $\mathbf{A}_E^{\times}$. Thus we have
\[
\pi \otimes \eta \cdot \nu = (\pi \otimes \eta \cdot \nu)^{\tau}
\]
Thus $\pi \otimes \eta \cdot \nu$ arises as base change from $\GL_2(\mathbf{A}_F)$. But this is the case 2 that we excluded in the beginning of section 5.

Finally we treat the case where $\delta$ is trivial, i.e. we assume that $\rho_p^{\eta} \cong (\rho_p^{\eta})^{\tau}$. Recall that for any $\eta^{\prime} \in \mathcal{M}$ (including this particular $\eta$ that we are considering) we have either $\rho_p^{\eta^{\prime}} \cong \rho_p \otimes \eta^{\prime}$ or $\rho_p^{\eta^{\prime}} \cong \rho_p \otimes (\eta^{\prime})^{\tau}$. It follows that for any $\eta^{\prime} \in \mathcal{M}$ there are at most four possibilities: $\rho_p^{\eta \eta^{\prime}} \cong \rho_p^{\eta} \otimes \eta^{\prime}$, $\rho_p^{\eta \eta^{\prime}} \cong \rho_p^{\eta} \otimes \delta_{\eta} \eta^{\prime}$, $\rho_p^{\eta \eta^{\prime}} \cong \rho_p^{\eta} \otimes \delta_{\eta} (\eta^{\prime})^{\tau}$ or $\rho_p^{\eta \eta^{\prime}} \cong \rho_p^{\eta} \otimes (\eta^{\prime})^{\tau}$. A computation shows that assuming $\rho_p^{\eta} \cong (\rho_p^{\eta})^{\tau}$ we would have (in all possible cases)
\[
(\rho_p^{\eta \eta^{\prime}} )^{\tau} \cong \rho_p^{\eta \eta^{\prime}} \otimes \delta_{\eta^{\prime}}
\]
for all $\eta^{\prime} \in \mathcal{M}$. In particular if we choose $\eta^{\prime}$ so that $\delta_{\eta^{\prime}} $ is non-trivial, then we obtain a contradiction to the case just proved above.

\end{proof}

\begin{proposition}
We have either $\rho_p^{\eta} \cong \rho_p \otimes \eta$ for all $\eta \in \mathcal{M}$, or $\rho_p^{\eta} \cong \rho_p \otimes \eta^{\tau}$ for all $\eta \in \mathcal{M}$. 
\end{proposition}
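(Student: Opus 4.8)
The plan is to reorganize the statement group-theoretically, following the lines of \cite{BH}. Put $\mathcal{A} = \{\eta \in \mathcal{M} : \rho_p^{\eta} \cong \rho_p \otimes \eta\}$ and $\mathcal{B} = \{\eta \in \mathcal{M} : \rho_p^{\eta} \cong \rho_p \otimes \eta^{\tau}\}$. Equation (5.20) and the sentence following it already give $\mathcal{M} = \mathcal{A} \cup \mathcal{B}$, the trivial character lies in $\mathcal{A}\cap\mathcal{B}$, and (since every $\eta$ is its own inverse) both sets are stable under inversion. Since a group is never the union of two proper subgroups, it then suffices to prove that $\mathcal{A}$ and $\mathcal{B}$ are closed under multiplication; one of them must then equal $\mathcal{M}$, which is exactly the claimed dichotomy. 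So I would reduce to showing, say, that $\eta_1,\eta_2\in\mathcal{A}$ implies $\eta_1\eta_2\in\mathcal{A}$, the case of $\mathcal{B}$ being identical.

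The main tool is a change of base point. For any $\eta\in\mathcal{M}$ the representation $\pi\otimes\eta$ again satisfies the hypotheses imposed on $\pi$ at the beginning of section 5 and still avoids cases 1 and 2, so the entire construction of section 5.2 applies to it. In particular $\rho_p^{\eta_1\eta_2}$, which is built from $\pi\otimes\eta_1\eta_2 = (\pi\otimes\eta_1)\otimes\eta_2$ and depends only on the isomorphism class of the latter by Cebotarev and Brauer--Nesbitt, may be produced either from the base point $\pi$ (giving $\rho_p^{\eta_1\eta_2}\cong\rho_p\otimes\eta_1\eta_2$ or $\rho_p\otimes(\eta_1\eta_2)^{\tau}$), or by rerunning the argument of (5.16)--(5.20) with base point $\pi\otimes\eta_1$ and twist $\eta_2$, or with base point $\pi\otimes\eta_2$ and twist $\eta_1$. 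In the second case that argument, after possibly replacing $\rho_p^{\eta_1\eta_2}$ by its $\tau$-conjugate, identifies it with $\rho_p^{\eta_1}\otimes\eta_2$ or $\rho_p^{\eta_1}\otimes\eta_2^{\tau}$; substituting $\rho_p^{\eta_1}\cong\rho_p\otimes\eta_1$ (as $\eta_1\in\mathcal{A}$), discarding the two options involving $\rho_p^{\tau}$ via the rigidity statement $\rho_p^{\tau}\ncong\rho_p\otimes\chi$ for quadratic $\chi$ (part 2 of the previous proposition with $\eta=1$, together with the irreducibility of the two-dimensional pieces), and using $\rho_p\otimes\chi\cong\rho_p\otimes\chi'\Rightarrow\chi=\chi'$ for quadratic characters (part 1), one gets $\rho_p^{\eta_1\eta_2}\cong\rho_p\otimes\eta_1\eta_2$ or $\rho_p\otimes\eta_1\eta_2^{\tau}$; the third base point gives symmetrically $\rho_p\otimes\eta_1\eta_2$ or $\rho_p\otimes\eta_1^{\tau}\eta_2$. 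If $\eta_1\eta_2$ is $\tau$-invariant then $\rho_p\otimes\eta_1\eta_2\cong\rho_p\otimes(\eta_1\eta_2)^{\tau}$ and the base-point-$\pi$ alternative already places $\eta_1\eta_2$ in $\mathcal{A}$; if $\eta_1\eta_2$ is not $\tau$-invariant, then $\eta_1$ and $\eta_2$ have distinct images under $\eta\mapsto\eta\eta^{\tau}$, and intersecting the three lists of possibilities (reading them, via part 1 of the previous proposition, as constraints on a single quadratic character) leaves only $\rho_p^{\eta_1\eta_2}\cong\rho_p\otimes\eta_1\eta_2$, i.e. $\eta_1\eta_2\in\mathcal{A}$.

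The step I expect to require the most care is the change of base point itself: one must check that the argument of (5.16)--(5.20) goes through verbatim with $\pi$ replaced by $\pi\otimes\eta_1$, and keep careful track of the fact that its output is a priori only well-defined up to $\tau$-conjugation, so that the precise membership statement emerges only after feeding in the rigidity inputs of the previous proposition and the irreducibility statement proved above. None of this is deep, but the bookkeeping — in particular the case split according to the $\tau$-invariance of $\eta_1$, $\eta_2$ and $\eta_1\eta_2$ — is where the argument of \cite{BH} must be followed closely. Once $\mathcal{A}$ and $\mathcal{B}$ are known to be subgroups of $\mathcal{M}$, the elementary remark on unions of subgroups finishes the proof.
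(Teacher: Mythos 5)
Your proposal is correct, but it is organized quite differently from the paper's proof, so a comparison is worth making. The paper never changes the base point $\pi$: it fixes one $\eta_0$ with $\delta_{\eta_0}=\eta_0\eta_0^{\tau}$ nontrivial and $\psi_{\eta_0}=\delta_{\eta_0}$, restricts (5.17) for $\eta$ and for $\eta_0$ to the quadratic extension $L_{\eta,\eta_0}$ cut out by $\delta_{\eta}\delta_{\eta_0}$, and (after ruling out the cross term by part 2 of Proposition 5.10) obtains $\widehat{\rho}_p^{\eta}\cong\widehat{\rho}_p^{\eta_0}\otimes\psi_{\eta,\eta_0}$ together with the multiplicativity relation $\psi_{\eta,\eta_0}=\psi_{\eta}\psi_{\eta_0}$ of (5.22); this relation forces $\psi_{\eta}=\delta_{\eta}$ for every $\eta$ once it holds for one $\eta_0$ with $\delta_{\eta_0}\neq 1$. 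Your argument replaces this by the statement that $\mathcal{A}$ and $\mathcal{B}$ are subgroups of $\mathcal{M}$, proved by rerunning (5.16)--(5.20) from the base points $\pi\otimes\eta_1$ and $\pi\otimes\eta_2$ and intersecting the three resulting two-element lists, and then invokes the fact that a group is not the union of two proper subgroups. The two routes use the same rigidity inputs (Proposition 5.9 and both parts of Proposition 5.10) and the same mechanism of comparison over an auxiliary quadratic extension, and the relation (5.22) is precisely the paper's substitute for your closure under multiplication. What your version buys is a cleaner, more symmetric packaging of the dichotomy; what it costs is the additional (true, but not free) verification that the whole construction and Propositions 5.8--5.10 apply verbatim with $\pi$ replaced by $\pi\otimes\eta_1$, and the careful bookkeeping you flag around the fact that the output of the rerun is a priori only pinned down up to $\tau$-conjugation --- which you resolve correctly by playing the base-point-$\pi$ alternative against part 2 of Proposition 5.10. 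The paper avoids both issues by staying with the single base point throughout.
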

\begin{proof}
We already know that for each $\eta \in \mathcal{M}$, we have either $\rho_p^{\eta} \cong \rho_p \otimes \eta$ or $\rho_p^{\eta} \cong \rho_p \otimes \eta^{\tau}$. Suppose that we have $\rho_p^{\eta_0} \cong \rho_p \otimes \eta_0^{\tau}$ for some $\eta_0 \in \mathcal{M}$ with $\eta_0 \neq \eta_0^{\tau}$, i.e. $\delta_{\eta_0}$ not trivial. Equivalently in (5.20) we have $\psi_{\eta_0} = \delta_{\eta_0}$ non-trivial. We are going to show that $\rho_p^{\eta} \cong \eta_p \otimes \eta^{\tau}$ for all $\eta \in \mathcal{M}$.

Denote by $L_{\eta,\eta_0}$ the quadratic extension of $E$ cut out by $\delta_{\eta} \cdot \delta_{\eta_0}$. Thus $\delta_{\eta} |_{G_{L_{\eta,\eta_0}}} =\delta_{\eta_0} |_{G_{L_{\eta,\eta_0}}}$.  By considering the restriction of (5.17) for both $\eta$ and $\eta_0$ to the various decomposition groups of $G_{L_{\eta,\eta_0}}$ at primes not dividing $p$, we see that (again by Cebotarev density and Brauer-Nesbitt):
\[
\widehat{\rho}_p^{\eta} \big|_{G_{L_{\eta,\eta_0}}} \oplus ((\widehat{\rho}_p^{\eta})^{\tau} \otimes \delta_{\eta}) \big|_{G_{L_{\eta,\eta_0}}} \cong \widehat{\rho}_p^{\eta_0} \big|_{G_{L_{\eta,\eta_0}}} \oplus ((\widehat{\rho}_p^{\eta_0})^{\tau} \otimes \delta_{\eta_0}) \big|_{G_{L_{\eta,\eta_0}}}.
\]
This implies that we have either $\widehat{\rho}_p^{\eta} \big|_{G_{L_{\eta,\eta_0}}} \cong \widehat{\rho}_p^{\eta_0} \big|_{G_{L_{\eta,\eta_0}}}$, or we have $\widehat{\rho}_p^{\eta} \big|_{G_{L_{\eta,\eta_0}}} \cong (\widehat{\rho}_p^{\eta_0})^{\tau} \otimes \delta_{\eta_0} \big|_{G_{L_{\eta,\eta_0}}} $. If the latter case were to occur, we would have $\widehat{\rho}_p^{\eta} \otimes \psi \cong (\widehat{\rho}_p^{\eta_0})^{\tau} \otimes \delta_{\eta_0}$ for some quadratic character $\psi$, which would in turn imply by (5.20) that $\rho_p \otimes \psi^{\prime} \cong \rho_p^{\tau} $ for some quadratic character $\psi^{\prime}$, contradicting part 2 of proposition 5.10. Thus we have $\widehat{\rho}_p^{\eta} \big|_{G_{L_{\eta,\eta_0}}} \cong \widehat{\rho_0}_p^{\eta_0} \big|_{G_{L_{\eta,\eta_0}}}$, hence 
\begin{eqnarray}
\widehat{\rho}_p^{\eta}   \cong \widehat{\rho}_p^{\eta_0} \otimes \psi_{\eta,\eta_0}
\end{eqnarray}
for some quadratic character $\psi_{\eta,\eta_0}$ of $\Gal(L_{\eta,\eta_0}/E)$. Thus $\psi_{\eta,\eta_0}$ is either trivial or equal to $\delta_{\eta} \cdot \delta_{\eta_0}$.

We claim that 
\begin{eqnarray}
\psi_{\eta,\eta_0} = \psi_{\eta} \cdot \psi_{\eta_0}
\end{eqnarray}
(where $\psi_{\eta}, \psi_{\eta_0}$ are as in $(5.20)$). Indeed combining (5.21) and (5.20) (for both $\eta$ and $\eta_0$) we have
\[
\rho_p \cong \rho_p \otimes (\psi_{\eta,\eta_0} \psi_{\eta}^{-1} \psi_{\eta_0})
\]
whence the claim by part 1 of proposition 5.10. 

Recall that we are assuming that $\psi_{\eta_0} = \delta_{\eta_0}$ is non-trivial. By (5.22) we see that $\psi_{\eta,\eta_0}$ and $\psi_{\eta}$ cannot be both trivial, i.e. we have $\psi_{\eta,\eta_0} = \delta_{\eta} \cdot \delta_{\eta_0}$ or $\psi_{\eta} = \delta_{\eta}$. By (5.22) again these two are equivalent, so we have $\psi_{\eta} = \delta_{\eta}$ in all cases, i.e. $\rho_p^{\eta} \cong \rho_p \otimes \eta^{\tau}$ as required.
\end{proof}

Thus, if the latter case of proposition 5.11 occurs, then we just replace $\rho_p$ by $\rho_p^{\tau}$, and $\rho_p^{\eta}$ by $(\rho_p^{\eta})^{\tau}$ for all $\eta \in \mathcal{M}$. With this choices we then have $\rho_p^{\eta} \cong \rho_p \otimes \eta$ for all $\eta \in \mathcal{M}$.

\begin{corollary}
For any $\eta \in \mathcal{M}$, and any finite place $w$ of $E$ not dividing $p$, we have:
\begin{eqnarray}
& & \iota_p \WD \big( \rho_p \otimes \eta \big|_{G_{E_w}} \big)^{ss} \oplus \iota_p \WD \big( (\rho_p \otimes \eta)^{\tau} \big) \big|_{G_{E_w}}  \big)^{ss}  \\ & & \cong \mathcal{L}_w(\pi_w \otimes \eta_w \cdot |\cdot|_w^{-1/2})^{ss} \oplus \mathcal{L}_w( \pi^{\tau}_w \otimes \eta^{\tau}_w \cdot |\cdot|_w^{-1/2})^{ss} . \nonumber
\end{eqnarray}
\end{corollary}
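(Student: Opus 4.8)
The plan is to read this off directly from Proposition 5.8 together with the normalization of the $\rho_p^{\eta}$ achieved in Proposition 5.11. Proposition 5.8, applied with $\pi \otimes (\eta \circ \det)$ in place of $\pi$, is precisely the assertion (5.16): for every finite place $w$ of $E$ not dividing $p$,
\[
\iota_p \WD\big(\rho_p^{\eta}\big|_{G_{E_w}}\big)^{ss} \oplus \iota_p \WD\big((\rho_p^{\eta})^{\tau}\big|_{G_{E_w}}\big)^{ss} \cong \mathcal{L}_w(\pi_w \otimes \eta_w \cdot |\cdot|_w^{-1/2})^{ss} \oplus \mathcal{L}_w(\pi^{\tau}_w \otimes \eta^{\tau}_w \cdot |\cdot|_w^{-1/2})^{ss}.
\]
So it suffices to rewrite the left-hand side in terms of $\rho_p$ itself.

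By the discussion immediately following Proposition 5.11, the choices of $\rho_p$ and of the family $\{\rho_p^{\eta}\}_{\eta \in \mathcal{M}}$ have been fixed once and for all, using the freedom to replace each of them by its $\tau$-conjugate, so that $\rho_p^{\eta} \cong \rho_p \otimes \eta$ for every $\eta \in \mathcal{M}$, where $\eta$ is regarded as a character of $G_E$ by class field theory. Conjugating this isomorphism by $\tau$ gives $(\rho_p^{\eta})^{\tau} \cong (\rho_p \otimes \eta)^{\tau} = \rho_p^{\tau} \otimes \eta^{\tau}$. Restricting both isomorphisms to $G_{E_w}$, applying the Weil--Deligne functor and semi-simplifying --- operations which are insensitive to isomorphism of Galois representations --- I obtain $\WD(\rho_p^{\eta}|_{G_{E_w}})^{ss} \cong \WD((\rho_p \otimes \eta)|_{G_{E_w}})^{ss}$ and $\WD((\rho_p^{\eta})^{\tau}|_{G_{E_w}})^{ss} \cong \WD((\rho_p \otimes \eta)^{\tau}|_{G_{E_w}})^{ss}$. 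Substituting these two identities into the displayed isomorphism above yields exactly the claim.

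Since all the real content is already contained in Propositions 5.8 and 5.11, there is no genuine obstacle in this step. The only point requiring care is bookkeeping: one must make sure the single coordinated choice of $\rho_p$ and of $\{\rho_p^{\eta}\}_{\eta \in \mathcal{M}}$ fixed after Proposition 5.11 is the one in force throughout, so that $\rho_p^{\eta} \cong \rho_p \otimes \eta$ --- and not $\rho_p \otimes \eta^{\tau}$ --- is available uniformly in $\eta$; in particular the corollary is to be read relative to that normalization of $\rho_p$.
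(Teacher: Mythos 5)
Your argument is exactly the paper's: the corollary is obtained by substituting the normalization $\rho_p^{\eta} \cong \rho_p \otimes \eta$ (fixed after Proposition 5.11) into the isomorphism of Proposition 5.8, and the paper's proof is literally the one-line instruction ``Use proposition 5.8.'' Your write-up just makes the bookkeeping explicit; it is correct and nothing further is needed.
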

\begin{proof}
Use proposition 5.8.
\end{proof}

\subsection{Proof of main theorems}

With corollary 5.12 in our hand we can now prove the main theorems 1.1 and 1.2 in the introduction. 

First we set up some notation. For any finite prime $w$ of $E$ not dividing $p$, and element $\sigma \in W_{E_w}$, denote by $P_{\pi,w}(X,\sigma)$ the inverse characteristic polynomial (in the variable $X$) of the semisimple part of the Weil-Deligne representation $\iota_p \WD(\rho_p|_{G_{E_w}})^{F-ss}$ evaluated at the element $\sigma$. Similarly denote by $Q_{\pi,w}(X,\sigma)$ the inverse characteristic polynomial of the semi-simple part of the Weil-Deligne representation $\mathcal{L}_w(\pi_w \otimes |\cdot|_w^{-1/2})$ evaluated at $\sigma$. Same notation of course apply with $\pi$ replaced by $\pi^{\tau}$ or $\pi \otimes \eta$.

\begin{proposition}
Let $w$ be a place of $E$ not dividing $p$, and splits over $F$. Then for any element $\sigma \in W_{E_w}$ of odd valuation, we have the identity:
\begin{eqnarray}
P_{\pi,w}(X,\sigma)=Q_{\pi,w}(X,\sigma)
\end{eqnarray}
\end{proposition}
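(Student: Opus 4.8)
The plan is to run the quadratic-twist argument of \cite{BH} at the split place $w$. First I would set up coordinates: since $w$ splits over $F$, the completion $E_w$ is $F_v$, and so is $E_{w^{\tau}}$; under these identifications $\tau$ acts as the identity on $F_v$ (it merely interchanges the two factors of $E\otimes_F F_v\cong F_v\times F_v$), so for any idele class character $\eta$ of $\mathbf{A}_E^{\times}$ one has $(\eta^{\tau})_w=\eta_{w^{\tau}}$ as characters of $F_v^{\times}$, and for any continuous representation $\rho$ of $G_E$ one has $\rho^{\tau}|_{G_{E_w}}\cong\rho|_{G_{E_{w^{\tau}}}}$ as representations of $G_{F_v}$. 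Writing
\[
r:=\iota_p\WD(\rho_p|_{G_{E_w}})^{ss},\ r^{\tau}:=\iota_p\WD(\rho_p|_{G_{E_{w^{\tau}}}})^{ss},\ s:=\mathcal{L}_w(\pi_w\otimes|\cdot|_w^{-1/2})^{ss},\ s':=\mathcal{L}_{w^{\tau}}(\pi_{w^{\tau}}\otimes|\cdot|_{w^{\tau}}^{-1/2})^{ss},
\]
all $2$-dimensional representations of $W_{F_v}$, Corollary 5.12 reads, for every $\eta\in\mathcal{M}$,
\[
r\otimes\eta_w\;\oplus\;r^{\tau}\otimes\eta_{w^{\tau}}\;\cong\;s\otimes\eta_w\;\oplus\;s'\otimes\eta_{w^{\tau}},
\]
and by definition $P_{\pi,w}(X,\sigma)=\det(1-X\,r(\sigma))$, $Q_{\pi,w}(X,\sigma)=\det(1-X\,s(\sigma))$.

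Next I would choose the twist carefully. By class field theory (a Chebotarev / Grunwald--Wang argument) there is $\eta_0\in\mathcal{M}$ with trivial component at $w^{\tau}$ and component at $w$ equal to the unramified quadratic character $\mu$ of $F_v^{\times}$ --- for instance $\eta_0$ cut out by a quadratic extension of $E$ in which $w^{\tau}$ splits and $w$ is inert. Now fix $\sigma\in W_{E_w}=W_{F_v}$ of odd valuation, so that $\mu(\sigma)=-1$, hence $\det(1-X(V\otimes\mu)(\sigma))=\det(1+X\,V(\sigma))$ for every $W_{F_v}$-representation $V$. Evaluating inverse characteristic polynomials in the displayed isomorphism, once for $\eta=\eta_0$ and once for $\eta=1$, gives two identities in $\barQp[X]$:
\[
P_r(X)\,P_{r^{\tau}}(X)=P_s(X)\,P_{s'}(X),\qquad P_r(-X)\,P_{r^{\tau}}(X)=P_s(-X)\,P_{s'}(X),
\]
where $P_V(X):=\det(1-X\,V(\sigma))$.

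Since $P_{r^{\tau}}$ and $P_{s'}$ have constant term $1$, hence are nonzero in the domain $\barQp[X]$, I would divide one identity by the other to obtain $P_r(X)\,P_s(-X)=P_r(-X)\,P_s(X)$. Writing $P_r(X)=1-\tr(r(\sigma))X+\det(r(\sigma))X^2$ and $P_s(X)=1-\tr(s(\sigma))X+\det(s(\sigma))X^2$ and comparing the coefficient of $X$ then forces $\tr(r(\sigma))=\tr(s(\sigma))$. The coefficient of $X^2$, i.e. $\det(r(\sigma))=\det(s(\sigma))$, I would get separately: $\det\rho_p$ is the $p$-adic Galois character attached to the algebraic Hecke character $\omega\cdot|\cdot|_{\mathbf{A}_E}^{-1}$ (this is built into the construction of $\rho_p$ out of $R_p$; equivalently it agrees with the determinant of $\mathcal{L}_w(\pi_w\otimes|\cdot|_w^{-1/2})$ at every finite place), so in fact $\det r=\det s$ as characters of $W_{F_v}$. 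Combining the two coefficients gives $P_r(X)=P_s(X)$, which is the asserted identity (5.25).

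The point I expect to be the real crux --- or rather, the reason the statement has the shape it does --- is that only quadratic twists of $\pi$ are available, and a quadratic character of $F_v^{\times}$ has bounded conductor, so one cannot make the twist ``dominate'' $r^{\tau}$ and read off $r\cong s$ outright; the unramified quadratic twist only detects $r$ on elements of odd valuation, which is exactly why the conclusion is confined to such $\sigma$. The one input that genuinely comes from outside the twisting mechanism is the precise value of $\det\rho_p$, but that is elementary. A secondary point requiring care is the opening reduction: checking that at a split place conjugation by $\tau$ is the trivial identification $E_w\cong F_v\cong E_{w^{\tau}}$, so that the $\tau$-twisted terms of Corollary 5.12 are genuinely the $w^{\tau}$-terms and the $w$- and $w^{\tau}$-contributions can be disentangled.
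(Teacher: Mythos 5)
Your construction is the same as the paper's: the paper's proof also twists Corollary 5.12 by a character in $\mathcal{M}$ that is unramified quadratic at exactly one of $w,w^{\tau}$ and trivial at the other (it puts the sign at $w^{\tau}$ where you put it at $w$, which changes nothing) and exploits $\mu(\sigma)=-1$ for $\sigma$ of odd valuation; your extraction of the trace identity $\tr r(\sigma)=\tr s(\sigma)$ by dividing the two quartic identities is correct.

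The gap is the determinant step. The assertion that $\det\rho_p$ is the $p$-adic character attached to $\omega\cdot|\cdot|_{\mathbf{A}_E}^{-1}$ is not ``built into the construction'': Proposition 5.4 produces $\rho_p$ only through $R_p\cong\Ind_E^F\rho_p$, which determines the product $\det\rho_p\cdot(\det\rho_p)^{\tau}$ but not $\det\rho_p$ itself, and the normalization of $\rho_p$ carried out in Propositions 5.9--5.11 only fixes its behaviour under quadratic twists, not its determinant. Nothing established before Proposition 5.13 identifies $\det\rho_p$; the natural way to identify it is to first prove (5.30) at split unramified primes and then apply Cebotarev, which would be circular here. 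So in the degenerate case $\tr r(\sigma)=0$ your argument does not close (the $X^3$-coefficient relation $ad=bc$ you could also have extracted becomes vacuous there). The repair stays inside the data you already have: compare the $X^2$- and $X^4$-coefficients of the two quartic identities $P_r(X)P_{r^{\tau}}(X)=P_s(X)P_{s'}(X)$ and $P_r(-X)P_{r^{\tau}}(X)=P_s(-X)P_{s'}(X)$ as well. Adding the two $X^2$-relations gives $\det r(\sigma)+\det r^{\tau}(\sigma)=\det s(\sigma)+\det s'(\sigma)$, and the $X^4$-coefficient gives $\det r(\sigma)\det r^{\tau}(\sigma)=\det s(\sigma)\det s'(\sigma)$. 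Since $w$ splits over $F$, condition (\textbf{Char}) forces $\omega_{\pi_w}=\widetilde{\omega}_v=\omega_{\pi_{w^{\tau}}}$, hence $\det s(\sigma)=\det s'(\sigma)=:d$; the two relations then say $\det r(\sigma)$ and $\det r^{\tau}(\sigma)$ have sum $2d$ and product $d^{2}$, so both equal $d$. Combined with the trace identity this yields $P_{\pi,w}(X,\sigma)=Q_{\pi,w}(X,\sigma)$ in all cases. (This coefficient-by-coefficient comparison is what the paper's appeal to \cite{BH} is standing in for; the paper's own text records only the linear coefficients.)
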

\begin{proof}
The argument is similar to section 5 of \cite{BH}. Choose an $\eta^{\prime} \in \mathcal{M}$ which is unramified at $w$ and $w^{\tau}$, such that $\eta^{\prime}(w)=1$ and $\eta^{\prime}(w^{\tau}) = -1$ (here $\eta^{\prime}(w)$ is the value of $\eta$ at a uniformizer of $E_w$ and similarly for $\eta^{\prime}(w^{\tau})$). Denote by $\{\alpha,\beta\}$ and $\{ \alpha^{\prime},\beta^{\prime} \}$ the inverse roots of $P_{\pi,w}(X,\sigma)$ and $P_{\pi^{\tau},w}(X,\sigma)$ respectively. Similarly denote by $\{ \gamma, \delta \}$ and $\{ \gamma^{\prime},\delta^{\prime} \}$ the inverse roots of $Q_{\pi,w}(X,\sigma)$ and $Q_{\pi^{\tau},w}(X,\sigma)$ respectively. 

Equation (5.23) applied to the case where $\eta$ is trivial gives the following identity of multi-sets (i.e. sets with multiplicities):
\begin{eqnarray}
\{\alpha,\beta,\alpha^{\prime},\beta^{\prime} \} = \{\gamma,\delta,\gamma^{\prime},\delta^{\prime} \}
\end{eqnarray}
and hence
\begin{eqnarray}
\alpha + \beta + \alpha^{\prime} + \beta^{\prime} = \gamma + \delta + \gamma^{\prime} + \delta^{\prime}.
\end{eqnarray}
Similarly (5.23) applied to the case where $\eta=\eta^{\prime}$ gives (this is the place where we need to assume that $\sigma$ has odd valuation):
\begin{eqnarray}
\{\alpha,\beta,-\alpha^{\prime},-\beta^{\prime} \} = \{\gamma,\delta,-\gamma^{\prime},-\delta^{\prime} \}
\end{eqnarray}
and hence
\begin{eqnarray}
\alpha + \beta -\alpha^{\prime} - \beta^{\prime} = \gamma + \delta - \gamma^{\prime} - \delta^{\prime}.
\end{eqnarray}
Once can then deduce (5.24) from (5.25)-(5.28) as in {\it loc. cit.}
\end{proof}

As before let $S_{\pi}$ be the set of finite places $w$ of $E$ such that $\pi_w$, $(\pi^{\tau})_w$ or $E_w/F_v$ ramifies ($v$ being the place of $F$ below $w$), and put $S_{\pi,p}$ to be the union of $S$ and the set of places dividing $p$. Then $\rho_p$ is unramified outside $S_{\pi,p}$. In particular we obtain, for $w \notin S_{\pi,p}$, with $w$ splits over $F$, the identity
\begin{eqnarray}
P_{\pi,w}(X,\Frob_w)=Q_{\pi,w}(X,\Frob_w).
\end{eqnarray} 
Here $\Frob_w$ denotes a lift of the geometric Frobenius element of $W_{E_w}$. 

When $w$ is inert over $F$, then in general one has no relation between $\pi_w$ and $(\pi^{\tau})_w$. However, when $\pi$ is spherical at $w$ then we do have $\pi_w \cong (\pi^{\tau})_w$ as is immediately seen from the classification of spherical representation. Similarly if $\rho_p$ is unramified at $w$ then we have $\rho_p^{\tau}|_{G_{E_{w}}} \cong \rho_p|_{G_{E_w}}$. Hence equation (5.23) (with $\eta$ being trivial) implies that for $w$ inert over $F$, $w \in S_p$, identity (5.29) also holds. Thus to conclude

\begin{corollary}
For $w \notin S_{\pi,p}$ we have
\begin{eqnarray}
P_{\pi,w}(X,\Frob_w)=Q_{\pi,w}(X,\Frob_w).
\end{eqnarray} 
\end{corollary}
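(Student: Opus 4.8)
The plan is to combine the two cases already handled in the text. For primes $w \notin S_{\pi,p}$ that split over $F$, identity (5.30) is exactly (5.29), which was established right after Proposition 5.13 by specializing (5.24) to $\sigma = \Frob_w$ (noting that a geometric Frobenius at a prime of absolute residue degree one has valuation $1$, hence odd, so Proposition 5.13 applies). So the only thing left is the case where $w$ is inert over $F$. First I would record the two symmetry inputs available at such a $w$: since $w \notin S_{\pi}$, the representation $\pi_w$ is spherical, and the classification of unramified principal series for $\GL_2$ over a local field shows that a spherical representation of $\GL_2(E_w)$ is determined by its Satake parameters, which are visibly stable under the automorphism $\tau$ of $E_w$; hence $(\pi^{\tau})_w \cong \pi_w$. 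Dually, since $w \notin S_{\pi,p}$ the Galois representation $\rho_p$ is unramified at $w$, and an unramified two-dimensional representation of $G_{E_w}$ is likewise determined by $\rho_p(\Frob_w)$ up to semisimplification; because $\tau$ acts on $G_{E_w}$ and fixes the conjugacy class of $\Frob_w$ up to the unramified quotient, one gets $\rho_p^{\tau}|_{G_{E_w}} \cong \rho_p|_{G_{E_w}}$ (after semisimplification, which is all that matters here).

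Next I would feed these into Corollary 5.12 with $\eta$ trivial: equation (5.23) reads
\[
\iota_p \WD(\rho_p|_{G_{E_w}})^{ss} \oplus \iota_p \WD(\rho_p^{\tau}|_{G_{E_w}})^{ss} \cong \mathcal{L}_w(\pi_w \otimes |\cdot|_w^{-1/2})^{ss} \oplus \mathcal{L}_w((\pi^{\tau})_w \otimes |\cdot|_w^{-1/2})^{ss}.
\]
Using the two isomorphisms just established, the left side is $\iota_p \WD(\rho_p|_{G_{E_w}})^{ss}$ counted with multiplicity two, and the right side is $\mathcal{L}_w(\pi_w \otimes |\cdot|_w^{-1/2})^{ss}$ counted with multiplicity two. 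Since we are comparing semisimple Weil-Deligne representations of $W_{E_w}$, we may cancel the common multiplicity (both sides are genuine semisimple representations, so equality of the doubled representations forces equality of the representations themselves), obtaining $\iota_p \WD(\rho_p|_{G_{E_w}})^{ss} \cong \mathcal{L}_w(\pi_w \otimes |\cdot|_w^{-1/2})^{ss}$. Evaluating inverse characteristic polynomials at $\Frob_w$ then yields $P_{\pi,w}(X,\Frob_w) = Q_{\pi,w}(X,\Frob_w)$, which is (5.30) in the inert case.

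There is no real obstacle here; the corollary is a bookkeeping consequence of Proposition 5.13 together with Corollary 5.12. The only point that requires a moment's care is the cancellation of multiplicities on the two sides of the displayed isomorphism — this is legitimate precisely because both sides are already semisimplified, so the Krull-Schmidt / Brauer-Nesbitt principle lets us pass from $V \oplus V \cong W \oplus W$ to $V \cong W$ for semisimple $V, W$. Assembling the split case (already done in the text) with the inert case just treated covers all $w \notin S_{\pi,p}$, completing the proof.
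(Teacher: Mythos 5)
Your proof is correct and follows the paper's own argument essentially verbatim: the split case is (5.29), obtained by evaluating Proposition 5.13 at $\sigma = \Frob_w$ (valuation $1$, hence odd), and the inert case uses $\pi_w \cong (\pi^{\tau})_w$ for spherical $\pi_w$ together with $\rho_p^{\tau}|_{G_{E_w}} \cong \rho_p|_{G_{E_w}}$ for unramified $\rho_p$, fed into (5.23) with $\eta$ trivial. The cancellation of the doubled semisimple representations is exactly the point the paper leaves implicit, and your justification of it is sound.
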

As usual (5.30) characterizes the Galois representation $\rho_p$ by Cebotarev density and Brauer-Nesbitt. This completes the construction of the Galois representation $\rho_p$ associated to $\pi$.

To complete the cases of proposition 5.13 where $\sigma$ is of even valuation or when $w$ does not split over $F$, we use base change arguments. For any solvable $\CM$ extension $E^{\prime}$ of $E$, let $\pi^{\prime} = \BC_E^{E^{\prime}}(\pi)$ be the Arthur-Clozel base change of $\pi$ to $E^{\prime}$. Assume that $\pi^{\prime}$ is cuspidal. Then $\pi^{\prime}$ satisifies all the hypotheses at the beginning of section 5 (of course if $\pi^{\prime}$ were in case 1 or case 2 discussed at the beginning of section 5, then theorem 1.1 and 1.2 are known for $\pi^{\prime}$ anyway). Denote by $\rho_{\pi,p}$ and $\rho_{\pi^{\prime},p}$ the Galois representations associated to $\pi$ and $\pi^{\prime}$ respectively. Using (5.30) applied to $\pi$ and $\pi^{\prime}$, a standard argument, using Cebotarev denisty and the relation between the Satake parameters of $\pi$ and $\pi^{\prime}$, shows that 
\begin{eqnarray}
\rho_{\pi^{\prime},p} \cong \rho_{\pi,p}|_{G_{E^{\prime}}}.
\end{eqnarray}

\begin{proposition}
For any finite prime $w$ of $E$ not dividing $p$ and splits over $F$, and $\sigma \in W_{E_w}$, we have
\begin{eqnarray}
P_{\pi,w}(X,\sigma)=Q_{\pi,w}(X,\sigma).
\end{eqnarray}
\end{proposition}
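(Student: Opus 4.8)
The plan is to reduce to the odd‑valuation case settled in Proposition~5.13 by a cyclic base change at $w$, and then to dispose of the residual case of valuation $0$ by a formal $2\times 2$ argument.

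First I would handle $\sigma$ of even nonzero valuation. Let $v$ be the place of $F$ below $w$; since $w$ splits over $F$ we have $E_w=F_v$ and $v$ splits in $E$ into $w,w^{\tau}$. Set $a=\mathrm{ord}_2(\mathrm{val}_{E_w}(\sigma))\geq 1$, and choose a totally real cyclic extension $F'/F$ of degree $2^{a}$ in which $v$ is inert (such an $F'$ exists by approximation, the local condition at $v$ being unramified and hence unobstructed). Put $E'=EF'$: a $\CM$ field with maximal totally real subfield $F'$, in which $v$ still splits, so a place $w'$ of $E'$ over $w$ splits over $F'$ and $E'_{w'}=F'_{v'}$ is unramified of degree $2^a$ over $E_w$. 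Because $\pi$ is not of case~$1$ it is not isomorphic to any of its nontrivial quadratic twists, so the Arthur--Clozel base change $\pi':=\BC_E^{E'}(\pi)$ along the cyclic extension $E'/E$ is cuspidal, and $\pi'$ satisfies the hypotheses at the start of Section~5 — in particular condition~(\textbf{Char}) with $\widetilde{\omega}':=\widetilde{\omega}\circ N_{F'/F}$, whose archimedean components carry the prescribed sign. Now $\sigma\in W_{E'_{w'}}\subseteq W_{E_w}$, and its valuation relative to $E'_{w'}$ is $\mathrm{val}_{E_w}(\sigma)/2^a$, which is odd; so Proposition~5.13 applied to $\pi'$ at $w'$ gives $P_{\pi',w'}(X,\sigma)=Q_{\pi',w'}(X,\sigma)$. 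By $(5.31)$ the Weil--Deligne representation $\WD(\rho_{\pi',p}|_{G_{E'_{w'}}})$ is the restriction of $\WD(\rho_{\pi,p}|_{G_{E_w}})$ to $W_{E'_{w'}}$, so $P_{\pi',w'}(X,\sigma)=P_{\pi,w}(X,\sigma)$; and by compatibility of base change with the local Langlands correspondence for $\GL_2$, $\mathcal{L}_{w'}(\pi'_{w'}\otimes|\det|_{w'}^{-1/2})$ is the restriction of $\mathcal{L}_w(\pi_w\otimes|\det|_w^{-1/2})$, so $Q_{\pi',w'}(X,\sigma)=Q_{\pi,w}(X,\sigma)$. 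Combining these yields the identity for such $\sigma$.

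It remains to treat $\sigma\in I_{E_w}$, which no base change of this kind can turn into an element of odd valuation. Here I would argue formally with $r:=\iota_p\WD(\rho_p|_{G_{E_w}})^{ss}$ and $s:=\mathcal{L}_w(\pi_w\otimes|\det|_w^{-1/2})^{ss}$, two semisimple $2$‑dimensional representations of $W_{E_w}$ which, by the odd and even‑nonzero cases, have the same characteristic polynomial at every element of nonzero valuation. Fixing a Frobenius lift $\phi$, for $\sigma\in I_{E_w}$ the elements $\sigma\phi$, $\phi$, $\phi^{-1}$, $\sigma\phi^{2}$ all have nonzero valuation; comparing constant terms of the characteristic polynomials at $\sigma\phi$ and $\phi$ gives $\det r(\sigma)=\det s(\sigma)$, and the $2\times 2$ identity $\tr(AB)=\tr(A)\tr(B)-\det(B)\tr(AB^{-1})$, applied with $A=r(\sigma\phi)$, $B=r(\phi^{-1})$ and likewise for $s$, expresses $\tr r(\sigma)$ through traces and determinants at nonzero‑valuation elements, forcing $\tr r(\sigma)=\tr s(\sigma)$. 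Hence $P_{\pi,w}(X,\sigma)=Q_{\pi,w}(X,\sigma)$ for every $\sigma\in W_{E_w}$.

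The main obstacle is the base‑change step: one must produce a $\CM$ extension cyclic over $E$ (so that cuspidality of $\BC_E^{E'}(\pi)$ is automatic from $\pi$ not being of case~$1$, with no delicate twist‑avoidance), with the prescribed $2$‑power unramified residual degree over $w$, still split over its totally real subfield, and inheriting condition~(\textbf{Char}) so that Proposition~5.13 genuinely applies to $\pi'$. Once this is in place, the restriction‑of‑Weil--Deligne‑representations bookkeeping and the valuation‑$0$ trace identity are routine, and applying Cebotarev density and Brauer--Nesbitt then upgrades the pointwise identity to the isomorphism $\iota_p\WD(\rho_p|_{G_{E_w}})^{ss}\cong\mathcal{L}_w(\pi_w\otimes|\det|_w^{-1/2})^{ss}$.
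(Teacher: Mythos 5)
Your proof follows the same route as the paper's: reduce to the odd-valuation case of Proposition 5.13 by passing to a totally real extension $F'$ whose compositum $E'=EF'$ realizes the unramified extension of $E_w$ of degree $2^a$, and then recover the inertia elements from the nonzero-valuation ones. Two remarks. First, for the valuation-zero step the paper simply cites an argument of T.~Saito (part 1 of Lemma 1 of \cite{Sa}); your explicit $2\times 2$ computation --- $\det r(\sigma)=\det r(\sigma\phi)\det r(\phi)^{-1}$ together with $\tr(AB)=\tr(A)\tr(B)-\det(B)\tr(AB^{-1})$ applied to $A=r(\sigma\phi)$, $B=r(\phi^{-1})$ --- is exactly that argument made self-contained, and it is correct.

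Second, there is a genuine (though repairable) flaw in your construction of $F'$: you assert that a totally real \emph{cyclic} extension $F'/F$ of degree $2^a$ in which $v$ is inert exists because "the local condition at $v$ being unramified is unobstructed." This is precisely the Grunwald--Wang special case: for example, there is no cyclic extension of $\mathbf{Q}$ of degree $8$ in which $2$ is inert, so your $F'$ need not exist when $v$ lies above $2$ and $a\geq 3$. The paper avoids this by taking $F'/F$ merely \emph{solvable} (an iterated tower of quadratic extensions, produced by Proposition 4.12), which always realizes the prescribed unramified local extension of degree $2^a$. The price is that cuspidality of $\BC_E^{E'}(\pi)$ is no longer "automatic from $\pi$ not being of case 1" in one step --- excluding quadratic self-twists of $\pi$ does rule out self-twists by all characters of $2$-power order of a single cyclic layer, but through a non-cyclic tower one must arrange cuspidality at each quadratic stage (using the freedom of infinitely many choices in Proposition 4.12, as in the proof of Proposition 4.13). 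With that substitution the rest of your argument, including the restriction bookkeeping via (5.31) and the compatibility of base change with the local Langlands correspondence, goes through as written.
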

\begin{proof}
First consider the case where $\sigma$ has nonzero valuation, i.e. $\sigma \notin I_{E_w}$. Write the valuation $m$ of $\sigma$ in the form $m=2^a u$, where $u$ is odd. 

Now choose a solvable totally real extension $F^{\prime}$ of $F$ such that for the solvable $\CM$ extension $E^{\prime} = E \cdot E^{\prime}$ of $E$ we have

\bigskip

 1. $\pi^{\prime} = \BC^{E^{\prime}}_E(\pi)$ is cuspidal.

\bigskip

2. There is a place $w^{\prime}$ of $E^{\prime}$ above $w$, such that the extension $E^{\prime}_{w^{\prime}}/E_w$ is the unramified extension of $E_w$ of degree $2^a$.

\bigskip
With this choice we see that $\sigma \in W_{E^{\prime}_{w^{\prime}}}$ and has odd valuation with respect to $W_{E^{\prime}_{w^{\prime}}}$. Note also that $w^{\prime}$ splits over $F^{\prime}$ because $w$ splits over $F$. Hence by (5.24) applied to $\pi^{\prime}$ and $w^{\prime}$, we obtain
\begin{eqnarray}
P_{\pi^{\prime},w^{\prime}}(X,\sigma) =Q_{\pi^{\prime},w^{\prime}}(X,\sigma). 
\end{eqnarray}
Now by (5.31) we have $P_{\pi^{\prime},w^{\prime}}(X,\sigma) = P_{\pi,w}(X,\sigma)$, and similarly by the compatibility of base change with local Langlands correspondence we have $Q_{\pi^{\prime},w^{\prime}}(X,\sigma) = Q_{\pi,w}(X,\sigma)$. Hence we obtain (5.32) from (5.33).

Thus (5.32) holds when $\sigma$ has nonzero valuation. In particular, denoting by $r^{\Gal}_{\pi,w}$ and $r^{\Aut}_{\pi,w}$ the semi-simple part of the Weil-Deligne representations of $\iota_p \WD(\rho_p|_{G_{E_w}})^{F-ss}$ and $\mathcal{L}_w(\pi_w \otimes |\cdot|_w^{-1/2})$ respectively, we have 
\begin{eqnarray}
\tr r_{\pi,w}^{\Gal}(\sigma) = \tr r_{\pi,w}^{\Aut}(\sigma)
\end{eqnarray}
for all $\sigma \in W_{E_w}$ with nonzero valuation. An argument of T.Saito (argument in part 1 of Lemma 1 of \cite{Sa}) then shows that this implies in fact (5.34) holds for all $\sigma \in W_{E_w}$. Thus by Brauer-Nesbitt we have $r_{\pi,w}^{\Gal} \cong r_{\pi,w}^{\Aut}$. In particular (5.32) holds for all $\sigma$.
\end{proof}

\begin{proposition}
For any finite place $w$ of $E$ not dividing $p$, and any $\sigma \in W_{E_w}$ we have
\begin{eqnarray}
P_{\pi,w}(X,\sigma)=Q_{\pi,w}(X,\sigma).
\end{eqnarray}
\end{proposition}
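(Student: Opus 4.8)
The plan is to remove the two remaining restrictions in proposition 5.15 --- namely that $w$ splits over $F$ --- by a base change argument entirely parallel to the one used to pass from odd valuation to all $\sigma$ in the split case. So let $w$ be a finite place of $E$ not dividing $p$, lying above the place $v$ of $F$, and suppose $w$ is inert over $F$ (the split case being proposition 5.15). First I would choose, using proposition 4.12 (the Chenevier--Harris lemma, as in section 5.3), a solvable totally real Galois extension $F'/F$, linearly disjoint from $E$ over $F$, with $v$ splitting completely in $F'$, and such that the base change $\pi' := \BC_E^{E'}(\pi)$ to the $\CM$ field $E' = E \cdot F'$ remains cuspidal. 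Since $v$ splits completely in $F'$ and $w/v$ is inert in $E/F$, any place $w'$ of $E'$ above $w$ has completion $E'_{w'} = E_w$, and $w'$ lies above a place $v'$ of $F'$ with $F'_{v'} = F_v$; but $F'$ is linearly disjoint from $E$, so in fact $E'_{w'}/F'_{v'}$ can be arranged to split --- more precisely, one arranges $F'$ so that the decomposition behaviour at $v$ makes $w'$ split over $F'$. The cuspidality of $\pi'$ requires checking the Arthur--Clozel nonvanishing criterion $\pi \otimes (\eta \circ \det_E) \ncong \pi$ for $\eta$ a character of $\Gal(E'/E)$; since the $F'$ we pick can be taken from an infinite family (proposition 4.12 gives infinitely many), and since $\pi$ is not of the special form in cases 1 and 2 of section 5, this can be secured.

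Once $\pi'$ is cuspidal and $w'$ splits over $F'$, proposition 5.15 applies to $\pi'$ and $w'$, giving
\[
P_{\pi',w'}(X,\sigma) = Q_{\pi',w'}(X,\sigma) \quad \text{for all } \sigma \in W_{E'_{w'}} = W_{E_w}.
\]
By (5.31), $\rho_{\pi',p} \cong \rho_{\pi,p}|_{G_{E'}}$, so $P_{\pi',w'}(X,\sigma) = P_{\pi,w}(X,\sigma)$; and by the compatibility of Arthur--Clozel base change with the local Langlands correspondence for $\GL_2$, $Q_{\pi',w'}(X,\sigma) = Q_{\pi,w}(X,\sigma)$. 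Combining these three identities yields (5.36) for all $\sigma \in W_{E_w}$, completing the inert case and hence the proposition.

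The main obstacle I anticipate is the construction of the auxiliary extension $F'$ with the precise local behaviour needed: one needs $v$ (inert datum on the $E/F$ side) to be split --- or at least to acquire the needed local structure --- in the base-changed tower $E'/F'$, while simultaneously keeping $\pi'$ cuspidal and keeping $w$ outside the ramification locus. This is exactly the kind of juggling proposition 4.12 is designed for (it allows one to prescribe the local extensions $E'_{w'}/E_w$ at finitely many places while forcing a chosen place to split and staying disjoint from a given finite extension), so the argument should go through, but one must be careful that the local condition one imposes at $v$ is compatible with $F'$ being totally real and solvable, and that the finitely many congruence conditions ensuring cuspidality of $\pi'$ (avoiding the self-twist equalities) can be met simultaneously; since the family of admissible $F'$ is infinite this is a finiteness-versus-infiniteness count that causes no real difficulty, but it is the one genuinely nontrivial point.
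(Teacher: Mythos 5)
Your overall strategy --- reduce to the split case of proposition 5.15 by a solvable base change, using proposition 4.12 to control the local behaviour at $v$ and the cuspidality of $\pi'$ --- is the paper's, but the local condition you impose at $v$ is the wrong one, and the argument breaks exactly at the point you yourself flag as the one genuinely nontrivial step. You ask that $v$ split completely in $F'$ and that $F'$ be linearly disjoint from $E$ over $F$. But then for any place $v'$ of $F'$ above $v$ one has $F'_{v'}=F_v$, hence $E'_{w'}=E_w\cdot F'_{v'}=E_w$ and $[E'_{w'}:F'_{v'}]=[E_w:F_v]=2$: the place $w'$ is still inert (or ramified) over $F'$, so proposition 5.15 does not apply to $(\pi',w')$. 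Splitting $v$ completely in the totally real layer does nothing to split $w$ in the CM layer; with $F'$ linearly disjoint from $E$, the two requirements ``$v$ splits completely in $F'$'' and ``$w'$ splits over $F'$'' are incompatible, and your hedge that one can nonetheless ``arrange the decomposition behaviour so that $w'$ splits'' has no content under the hypotheses you chose.

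The correct condition, which is what the paper imposes, is the opposite one: apply proposition 4.12 with $L_v=E_w$ (a quadratic, hence Galois, extension of $F_v$ --- the totally real constraint on $F'$ only concerns archimedean places, so this is permissible) to get $F'$ with $F'_u\cong E_w$ over $F_v$ for every $u\mid v$. Since $E_w/F_v$ is Galois, $F'_u$ and $E_w$ coincide as subfields of an algebraic closure of $F_v$, so $E'_{w'}=E_w\cdot F'_u=E_w=F'_u$; thus $w'$ splits over $F'$ while $E'_{w'}=E_w$, so your $\sigma$ still lies in $W_{E'_{w'}}$ and proposition 5.15 applies. The remainder of your argument --- cuspidality of $\pi'$ secured from the infinitude of admissible $F'$ together with the exclusion of cases 1 and 2, the identity $P_{\pi',w'}(X,\sigma)=P_{\pi,w}(X,\sigma)$ from (5.31), and $Q_{\pi',w'}(X,\sigma)=Q_{\pi,w}(X,\sigma)$ from compatibility of base change with the local Langlands correspondence --- is fine once this is corrected. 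Note also that the proposition covers places $w$ where $E_w/F_v$ is ramified, not just inert; the corrected local condition handles both cases uniformly.
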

\begin{proof}
Let $v$ be the prime of $F$ below $w$. Choose a solvable totally real extension $F^{\prime}$ of $F$, such that the solvable $\CM$ extension $E^{\prime} = E \cdot F^{\prime}$ satisfies:

\bigskip

1. For any place $u$ of $F^{\prime}$ above the place $v$ of $F$ we have $E_{w} = F^{\prime}_u$.

\bigskip

2. $\pi^{\prime} = \BC_E^{E^{\prime}}(\pi)$ is cuspidal.

\bigskip

By condition 1 we see that we can choose a place $w^{\prime}$ of $E^{\prime}$ above the prime $w$ of $E$ such that $E^{\prime}_{w^{\prime}} = E_w$ and such that $w^{\prime}$ splits over $F^{\prime}$. In particular $\sigma \in W_{E^{\prime}_{w^{\prime}}}$. Hence proposition 5.15 applied to $\pi^{\prime}$ and $w^{\prime}$ gives
\[
P_{\pi^{\prime},w^{\prime}}(X,\sigma) = Q_{\pi^{\prime},w^{\prime}}(X,\sigma).
\] 
As in the proof of proposition 5.15 we have $P_{\pi^{\prime},w^{\prime}}(X,\sigma)= P_{\pi,w}(X,\sigma)$ and $Q_{\pi^{\prime},w^{\prime}}(X,\sigma)= Q_{\pi,w}(X,\sigma)$, hence we conclude the result.
\end{proof}

\bigskip

\noindent {\it Proof of theorem 1.1}

\bigskip

With the proof of proposition 5.16 we see that the statement of local-global compatibility up to semi-simplification at primes of $E$ not dividing $p$ follows from (5.35) together with Brauer-Nesbitt. The assertion on the full local-global compatibility, in the case when $\pi_w$ is not of the form $\St_{E_w} \otimes \chi$, where $\St_{E_w}$ is the Steinberg representation of $\GL_2(E_w)$ and $\chi$ a character of $E_w^{\times}$, follows from the statement already proved. Indeed, from the classification of irreducible admissible representations of $\GL_2(E_w)$ we see that the monodromy operator of $\mathcal{L}_w(\pi_w \otimes |\det|_w^{-1/2})$ has to be trivial and that $\mathcal{L}_w(\pi_w \otimes |\det|_w^{-1/2})^{ss}$ is not a twist of the semi-simple part of the special Weil-Deligne representation of $W_{E_w}$. Since we know that $\iota_p \WD(\rho_p |_{G_{E_w}})^{ss} \cong \mathcal{L}_w(\pi_w \otimes |\det|_w^{-1/2})^{ss} $, it follows that the monodromy operator of $\iota_p \WD(\rho|_{G_{E_w}})^{F-ss}$ has to be trivial also (by the classification of two-dimensional Frobenius semi-simple Weil-Deligne representations).

\bigskip
\bigskip

We now prove theorem 1.2, which we state more precisely as theorem 5.17 and 5.18 below. Recall that the central character $\omega$ of $\pi$ has local component $\omega_w : a \rightarrow |a|_{\mathbf{C}}^{-\mu_0} = (a \overline{a})^{-\mu_0}$ for $a \in E_w^{\times}$ for all archimdean place $w$, and that the archimdean $L$-parameter of $\pi_w$ is given by $\phi_{\mu_0,n_w}$ where $n_w \geq 1$ and $\mu_0+1 \equiv n_w \mod{2}$.

Identify the embeddings of $E$ into $\mathbf{C}$ and the embeddings of $E$ into $\barQp$ via $\iota_p :\barQp \cong \mathbf{C}$. Under this identification a conjugate pair of embeddings of $E$ into $\barQp$ (under $\tau$) can be identified with an archimdean place of $E$.

\begin{theorem}
The Galois representation $\rho_p$ associated to $\pi$ is Hodge-Tate at primes of $E$ above $p$. The Hodge-Tate weights of $\rho_p$ at the conjugate pair of embeddings of $E$ into $\barQp$ corresponding to the archimdean place $w$ of $E$ is given by
\begin{eqnarray}
\{\delta_w, \delta_w + n_w \}
\end{eqnarray}
where $\delta_w := \frac{1}{2}( 1+\mu_0 - n_w)$.
\end{theorem}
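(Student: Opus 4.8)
The plan is to deduce the Hodge--Tate property and the precise weights of $\rho_p$ from the corresponding deRham (in particular Hodge--Tate) statement for the four-dimensional representation $R_p$ attached to $\Pi$, which was established in theorem 4.14 (Hodge--Tate--Sen weights) together with the relation $R_p|_{G_E} \cong \rho_p \oplus \rho_p^\tau$ from proposition 5.6. Recall from section 5.1 that $\Pi^\prime = \AI_E^F(\pi \otimes |\det|_{\mathbf{A}_E})$, and that $\Pi$ belongs to the global packet attached to $\Pi^\prime$, whose archimedean components $\Pi_v$ were computed in proposition 5.3 to lie in the $L$-packet with parameter $\phi_{(\mu_0-2;n_v,0)}$. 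In the notation of section 3, for the archimedean place $v$ of $F$ below $w$ we thus have $w_{\mathrm{GSp}} = \mu_0 - 2$, $m_{1,v} = n_v = n_w$, $m_{2,v} = 0$, and by theorem 4.14 the Hodge--Tate--Sen weights of $R_p$ at the embedding(s) of $F$ lying over $v$ are $\{\delta_v, \delta_v, \delta_v + n_w, \delta_v + n_w\}$ where $\delta_v = \frac12(w_{\mathrm{GSp}} + 3 - m_{1,v} - m_{2,v}) = \frac12(\mu_0 + 1 - n_w)$; note the doubling of each weight, which will be explained by the induction.

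Next I would unwind the induction. The archimedean place $w$ of $E$ corresponds to a conjugate pair of embeddings $E \hookrightarrow \barQp$ restricting to the single embedding $F \hookrightarrow \barQp$ associated to $v$; since $v$ is below $w$ and $E/F$ is a $\CM$ extension, each such place $v$ of $F$ is either inert or split in $E$, but in all cases the two embeddings $E \hookrightarrow \barQp$ over $v$ give the Hodge--Tate decomposition of $R_p|_{G_{F_{\mathfrak p}}}$ (where $\mathfrak p \mid p$ is the prime of $F$ attached to $v$) as the direct sum of the Hodge--Tate decompositions of $\rho_p$ and $\rho_p^\tau$ at the respective primes of $E$. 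Concretely: $R_p|_{G_{F_{\mathfrak p}}} = \Ind_{E_{\mathfrak w}}^{F_{\mathfrak p}} \rho_p|_{G_{E_{\mathfrak w}}}$ for a prime $\mathfrak w \mid p$ of $E$ over $\mathfrak p$, and induction from an open subgroup of Galois groups of $p$-adic fields carries Hodge--Tate (resp. deRham, resp. Sen weights at each embedding) to Hodge--Tate, the multiset of Hodge--Tate weights of the induced representation at the embedding $F \hookrightarrow \barQp$ being the union, over embeddings $E \hookrightarrow \barQp$ extending it, of the Hodge--Tate weights of the inducing representation. Since the embeddings of $E$ over $v$ split into the pair $\{$ the one used for $\rho_p$, its $\tau$-conjugate for $\rho_p^\tau\}$, and since $\rho_p^\tau$ has the same Hodge--Tate weights at the $\tau$-conjugate embedding as $\rho_p$ has at the original one, the four weights $\{\delta_v,\delta_v,\delta_v+n_w,\delta_v+n_w\}$ of $R_p$ must come from two weights of $\rho_p$ each appearing twice; hence $\rho_p$ has Hodge--Tate weights exactly $\{\delta_v, \delta_v + n_w\} = \{\delta_w, \delta_w + n_w\}$ with $\delta_w = \delta_v = \frac12(\mu_0 + 1 - n_w)$, which is the claimed formula.

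To make the deduction rigorous I would argue as follows. First, since $R_p|_{G_E} \cong \rho_p \oplus \rho_p^\tau$ and $R_p|_{G_{F_{\mathfrak p}}}$ is Hodge--Tate (indeed deRham) by theorem 4.14, its restriction to $G_{E_{\mathfrak w}}$ is Hodge--Tate, hence so is the direct summand $\rho_p|_{G_{E_{\mathfrak w}}}$, for each prime $\mathfrak w$ of $E$ over $p$. Second, to extract the weights I use the standard compatibility of Hodge--Tate weights with restriction of scalars (equivalently, with induction of $p$-adic representations): if $E_{\mathfrak w}/F_{\mathfrak p}$ is a finite extension and $\iota: E_{\mathfrak w} \hookrightarrow \barQp$ is an embedding, then the Hodge--Tate weights of $\Ind_{G_{E_{\mathfrak w}}}^{G_{F_{\mathfrak p}}} V$ with respect to an embedding $\iota_0 : F_{\mathfrak p} \hookrightarrow \barQp$ are the disjoint union over embeddings $\iota$ extending $\iota_0$ of the $\iota$-Hodge--Tate weights of $V$. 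Applying this with $V = \rho_p|_{G_{E_{\mathfrak w}}}$ and matching against the known four-element multiset for $R_p$ from theorem 4.14, together with the observation that $\rho_p$ and $\rho_p^\tau$ contribute the same pair of weights (at $\tau$-conjugate embeddings), forces $\rho_p$ to have the two Hodge--Tate weights $\delta_w$ and $\delta_w + n_w$ at the embeddings in the conjugate pair attached to $w$. The main obstacle, and the only genuinely delicate point, is bookkeeping the embeddings: one must check that the "doubling" in the Hodge--Tate multiset of $R_p$ recorded in theorem 4.14 is precisely the doubling produced by induction along $E/F$ (so that no weight of $R_p$ is split off into two distinct weights of $\rho_p$), which follows from proposition 5.3 giving $m_{2,v} = 0$, i.e. the archimedean parameter of $\Pi$ has a repeated pair — exactly what one expects for an automorphic induction from $\GL_2/E$. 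Once this matching is in place the conclusion, including the explicit formula $\delta_w = \frac12(1 + \mu_0 - n_w)$, is immediate; I would not belabor the routine verification that $\delta_v$ of section 3 equals this $\delta_w$.
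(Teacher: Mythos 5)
There are two genuine gaps. First, you assert that ``$R_p|_{G_{F_{\mathfrak p}}}$ is Hodge--Tate (indeed deRham) by theorem 4.14.'' Theorem 4.14 gives no such thing: it only computes the \emph{Hodge--Tate--Sen weights} of $R_p$, i.e.\ the roots of the Sen polynomial. Since $\Pi$ here is a holomorphic \emph{limit} of discrete series at infinity, $R_p$ is built by a $p$-adic deformation/pseudo-character argument, and (as the introduction stresses) passage to the limit does not preserve the deRham or Hodge--Tate property; this is precisely why the paper can only prove $\rho_p$ is Hodge--Tate and not deRham. Consequently the Hodge--Tate property of $\rho_p$ cannot be inherited from $R_p$; in the paper it is deduced at the very end, \emph{after} the weights $\{\delta_w,\delta_w+n_w\}$ have been identified, from the fact that a two-dimensional representation whose Sen weights are two \emph{distinct} integers has semisimple Sen operator and is therefore Hodge--Tate. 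Your argument omits this step entirely, and for $R_p$ itself (whose Sen multiset $\{a,a,b,b\}$ has repeated entries) no such conclusion is available.

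Second, the distribution of the four Sen weights $\{a,a,b,b\}=\{\delta_v,\delta_v,\delta_v+n_w,\delta_v+n_w\}$ between $\rho_p$ and $\rho_p^{\tau}$ is not forced by the symmetry you invoke. The identity ``the weights of $\rho_p^{\tau}$ at the $\tau$-conjugate embedding equal those of $\rho_p$ at the original embedding'' is a tautology and does not exclude the split in which $\rho_p$ contributes $\{a,a\}$ and $\rho_p^{\tau}$ contributes $\{b,b\}$ (equivalently, in which the two conjugate embeddings see different pairs of weights). The paper rules this out with a determinant argument: if $\{x,y\}$ and $\{x',y'\}$ are the weights of $\rho_p$ and $\rho_p^{\tau}$, then $x+y$ and $x'+y'$ are the Sen weights of $\det\rho_p$ and $\det\rho_p^{\tau}$, and since $\omega=\omega^{\tau}$ (condition (\textbf{Char})) one has $\det\rho_p=\det\rho_p^{\tau}$, hence $x+y=x'+y'$, which together with $a\neq b$ forces $\{x,y\}=\{x',y'\}=\{a,b\}$. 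You need to supply this (or an equivalent) argument; without it the claimed formula for the weights of $\rho_p$ does not follow.
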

\begin{proof}
Recall that as in proposition 5.4 we have
\[
R_p = \Ind^F_E \rho_p
\]
where $R_p$ is the $p$-adic Galois representation associated to the cuspidal automorphic representation $\Pi$ of $\GSp_4(\mathbf{A}_F)$, that belong to the global packet classified by the simple generic parameter $\Pi^{\prime} = \AI^F_E(\pi \otimes |\det|_{\mathbf{A}_E})$ together with the similitude character $\widetilde{\omega}^{\prime}$ as in the discussion in the beginning of section 5.1 ($\widetilde{\omega}^{\prime}$ is thus the central charcter of $\Pi$). Recall that we have chosen $\widetilde{\omega}^{\prime}$ so that for any archimedean prime $v$ of $F$ we have $\widetilde{\omega}^{\prime}_v$ is given by $ a\rightarrow a^{-\mu_0+2}$, and that the local component $\Pi_v$ at $v$ is classified by the archimdean $L$-parameter $\phi_{(\mu_0-2,n_v,0)}$ (proposition 5.2; recall that we have denoted $n_v=n_w$ for the place $v$ of $F$ below $w$). 

By theorem 4.14, the Hodge-Tate-Sen weights of $R_p$ at the embedding of $F$ into $\barQp$ that corresponds to the archimdean place $v$ of $F$ is given by
\begin{eqnarray}
\{ \delta_v,\delta_v,\delta_v + n_v,\delta_v+n_v \}
\end{eqnarray}
where 
\[
\delta_v = \frac{1}{2}(  (\mu_0- 2) + 3 -n_v)=\frac{1}{2}(1+\mu_0-n_w).
\]
Now since $R_p|_{G_E} = \rho_p \oplus \rho_p^{\tau}$, it follows that the Hodge-Tate-Sen weights of $\rho_p \oplus \rho_p^{\tau}$ at the conjugate pair of embedings of $E$ into $\barQp$ corresponding to the archimdean place $w$ of $E$ is also given by (5.37). 

In any case (5.37) is a muti-set of the form $\{a,a,b,b\}$ with $a,b$ being unequal integers. Now we note that if $\{x,y\}$ (respectively $\{x^{\prime},y^{\prime} \}$) is the Hodge-Tate-Sen weights of $\rho_p$ (respectively $\rho_p^{\tau}$) at the pair of embeddings corresponding to $w$, then $x+y$ (respectively $x^{\prime}+y^{\prime}$) is the Hodge-Tate-Sen weight of $\det \rho_p$ (respectively $\det \rho_p^{\tau}$) at the same pair of embeddings. But we have $\det \rho_p = \det \rho_p^{\tau}$ since $\omega = \omega^{\tau}$. Hence we have $x+y=x^{\prime}+y^{\prime}$. It follows that the Hodge-Tate-Sen weights of $\rho_p$ at this pair of embeddings must be $\{a,b\}$, i.e. given by (5.36). Finally since the Hodge-Tate-Sen weights are distinct integers, it follows that the Sen operator is semi-simple and so $\rho_p$ is Hodge-Tate.

\end{proof}

\begin{theorem}
 Suppose that $w$ is a place of $E$ above $p$, inert over $F$, and such that $\pi_w$ is spherical, with distinct Satake parameters $\alpha_w \neq \beta_w$. Then $\rho_p$ is crystalline at $w$.

Suppose that $w$ is a prime above $p$ that splits over $F$, with conjugate prime $w^{\tau}$, such that $\pi_w$ and $\pi_{w^{\tau}}$ are spherical, with Satake parameters $\alpha_w,\beta_w$ for $\pi_w$, and $\alpha_{w^{\tau}},\beta_{w^{\tau}}$ for $\pi_{w^{\tau}}$ respectively. Suppose that the elements $\{\alpha_w,\beta_w,\alpha_{w^{\tau}} ,\beta_{w^{\tau}}\}$ are all distinct. Then $\rho_p$ is crystalline at w (and also at $w^{\tau}$).
\end{theorem}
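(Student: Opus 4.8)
The plan is to deduce the crystalline property of $\rho_p$ at $w$ from that of the four-dimensional representation $R_p = \Ind_E^F \rho_p$ at the place $v$ of $F$ below $w$, via Proposition 4.16. Let $v$ be this place. In both cases of the theorem $v$ lies above $p$, and since $E/F$ is quadratic with $v$ split or inert, the local extension $E_w/F_v$ is either trivial or the unramified quadratic extension. First I would observe that $\Pi_v$ is spherical: indeed $\Pi^{\prime} = \AI_E^F(\pi \otimes |\det|_{\mathbf{A}_E})$, and at a place of $F$ that is split or inert in $E$, automorphic induction of a representation spherical at the place(s) of $E$ above $v$ is unramified; equivalently $k_{*}\rec_v(\Pi_v)$ is the $W_{F_v}$-induction of an unramified parameter from $W_{E_w}$, hence unramified since $E_w/F_v$ is unramified. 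Thus $\Pi$ satisfies the hypothesis of Proposition 4.16 at $v$ as soon as one checks that the inverse roots of $Q_{\Pi,v}(X)$ are distinct. One should also record that Proposition 4.16 does apply to our $\Pi$: its archimedean components are holomorphic limits of discrete series (Proposition 5.2) and it is classified by the simple generic parameter attached to $\Pi^{\prime}$ (Proposition 5.1).

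The second step is the distinctness computation. In the split case $\rec_v(\Pi_v)$ is, as an $L$-parameter, $\mathcal{L}_v(\pi^{\prime}_w) \oplus \mathcal{L}_v(\pi^{\prime}_{w^{\tau}})$, so the inverse roots of $Q_{\Pi,v}$ are $\{\alpha_w, \beta_w, \alpha_{w^{\tau}}, \beta_{w^{\tau}}\}$ up to one common nonzero scalar arising from the normalizing twists, and these are distinct by hypothesis. In the inert case, $\Ind_{W_{E_w}}^{W_{F_v}}$ of an unramified character sending geometric Frobenius to $a$ is the sum of the unramified character with value $\sqrt{a}$ and its twist by $\epsilon_v$ (the unramified quadratic character of $W_{F_v}$ cutting out $E_w$), so the inverse roots of $Q_{\Pi,v}$ are $\{\sqrt{\alpha_w}, -\sqrt{\alpha_w}, \sqrt{\beta_w}, -\sqrt{\beta_w}\}$ up to a common nonzero scalar; since $\alpha_w \neq \beta_w$ and both are nonzero these four values are again distinct. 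In either case Proposition 4.16 gives that $R_p$ is crystalline at $v$.

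The final step is descent to $\rho_p$. Since $E_w/F_v$ is a finite (unramified) extension, the restriction $R_p|_{G_{E_w}}$ of the crystalline representation $R_p|_{G_{F_v}}$ is again crystalline. By Proposition 5.4 we have $R_p|_{G_E} = \rho_p \oplus \rho_p^{\tau}$, hence $R_p|_{G_{E_w}} = \rho_p|_{G_{E_w}} \oplus (\rho_p^{\tau})|_{G_{E_w}}$, and a direct summand of a crystalline representation is crystalline (crystalline representations are closed under subquotients); therefore $\rho_p$ is crystalline at $w$. In the split case $\tau$ carries $G_{E_w}$ to $G_{E_{w^{\tau}}}$, so applying the same argument to the summand $\rho_p^{\tau}$ shows that $\rho_p$ is crystalline at $w^{\tau}$ as well.

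I expect the main obstacle to be the bookkeeping in the second step: correctly identifying the Satake parameters of $\Pi_v$ in terms of those of $\pi_w$ and $\pi_{w^{\tau}}$ through the explicit local description of automorphic induction — in particular the appearance of the square roots of the parameters in the inert case — and verifying that the various normalizing twists ($|\det|_{\mathbf{A}_E}$, $|c|_v^{-3/2}$, $|\det|_v^{-1/2}$) contribute only an overall scalar, so that distinctness of the Satake parameters of $\pi$ is exactly what is needed for distinctness of the inverse roots of $Q_{\Pi,v}$. An alternative, should Proposition 4.16 be awkward to invoke, would be to run the Jorza-type argument (via the results of Nakamura and Tan on analytic continuation of crystalline periods) directly for $\rho_p$, but going through $R_p$ seems the most economical route.
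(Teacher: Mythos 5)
Your proposal is correct and follows essentially the same route as the paper, whose proof of this theorem consists of a single sentence citing Proposition 4.16 and Jorza's argument: apply Proposition 4.16 to $\Pi$ at the place $v$ below $w$ (the distinctness of the inverse roots of $Q_{\Pi,v}$ reducing to $\{\alpha_w,\beta_w,\alpha_{w^\tau},\beta_{w^\tau}\}$ in the split case and to $\{\pm\sqrt{\alpha_w},\pm\sqrt{\beta_w}\}$ in the inert case, exactly as you compute), then descend via $R_p|_{G_E}=\rho_p\oplus\rho_p^\tau$ and the stability of the crystalline property under unramified restriction and direct summands. One small point to tighten: unramifiedness of $k_*\rec_v(\Pi_v)$ only guarantees that the local packet at $v$ contains a spherical member, not that the particular $\Pi$ chosen in section 5 is itself spherical at $v$; this is harmless because $R_p$ depends only on the global packet (as noted after (5.6)), and for a simple generic parameter every choice of local members, in particular the one spherical at $v$ and holomorphic at infinity, occurs in the cuspidal spectrum.
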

This follows from proposition 4.16, and is the same as the proof of theorem 5.3.1 in \cite{J}.

\begin{rem}
\end{rem}
Even though we do not know that the compatible system $\{ \rho_p \}_p$ is pure (in the sense of Galois representations), theorem 5.17 suggests that the motivic weight of the compatible system $\{\rho_p \}_p$ should be $1+\mu_0$. In the case considered in \cite{T2,BH}, we have $\mu_0 = k-2$, corresponding to modular forms of weight $k$ over $E$ (with $E$ being imaginary quadratic in \cite{T2,BH}).

\section{Appendix}
In this appendix we prove proposition 3.4. Recall the statement:

\bigskip

\noindent {\bf Proposition 3.4.}
{\it Suppose $\psi$ is a global formal parameter of Saito-Kurokawa type. For each finite prime $v$, let $\psi_v$ be the local A-parameter of $\GSp_4(F_v)$ given by the localization of $\psi$ at $v$. Denote as before by $\Pi_{\psi_v}$ the local A-packet classified by $\psi_v$. Then the semi-simple part of the $L$-parameters of the representations in the $A$-packet $\Pi_{\psi_v}$ have the same semi-simple part as that of $\phi_{\psi_v}$.} 

\bigskip

Thus let $\psi = \mu \boxplus (\lambda \boxtimes v(2))$ be a global formal parameter of Saito-Kurokawa type, where $\mu$ is a cuspidal automorphic representation of $\GL_2(\mathbf{A}_F)$, and $\lambda$ is an idele class character of $\mathbf{A}_F^{\times}$, satisfying $\omega_{\mu}=\lambda^2$ (with $\omega_{\mu}$ being the central character of $\mu$). In this case the local and global $A$-packets corresponding to $\psi$ are constructed by R. Schmidt \cite{Sch}, using the method of theta correspondence (strictly speaking in \cite{Sch} the results are stated only for the group $\PGSp_4$, but the arguments in {\it loc. cit.} applies verbatim to the case of $\GSp_4$). See also Proposition 5.5 and 5.6 of \cite{G}. To state the results we first set up some notation: denote by $\GSO(2,2)$ and $\GSO(4,0)$ the identity component groups of $\GO(2,2)$ and $\GO(4,0)$ respectively (notation as in \cite{GT1,GT2}). For any (finite) prime $v$, one has:

\begin{eqnarray*}
\GSO(2,2)(F_v) & \cong & (\GL_2(F_v) \times \GL_2(F_v))/\{ (z,z^{-1}): z \in F_v^{\times}      \} \\
\GSO(4,0)(F_v) & \cong & (D_v^{\times} \times D_v^{\times})/\{ (z,z^{-1}): z \in F_v^{\times}      \}
\end{eqnarray*}
where $D_v$ is the quaternion algebra over $F_v$. We denote by $\theta$, respectively $\theta^{+}$, the local theta lifting from $\GSO(2,2)(F_v)$, respectively from $\GSO(4,0)(F_v)$, to $\GSp_4(F_v)$. Then according to \cite{Sch}, the local $A$-packets of $\GSp_4(F_v)$ associated to the local $A$-parameter $\psi_v$ are given as follows: if $\mu_v$ is not a discrete series representation of $\GL_2(F_v)$, then the $A$-packet corresponding to $\psi_v$ is a singleton $\{\theta( \mu_v \boxtimes \lambda_v   )  \}$; here to ease notation we have written $\lambda_v$ instead of $\lambda_v \circ \det$ for the corresponding one-dimensional representation of $\GL_2(F_v)$. On the other hand, if $\mu_v$ is a discrete series representation of $\GL_2(F_v)$, denote by $\mu_v^{\JL}$ the irreducible admissible representation of $D_v^{\times}$ that corresponds to $\GL_2(F_v)$ under the Jacquet-Langlands correspondence. Then in this case the $A$-packet corresponding to $\psi_v$ is given by $\{ \theta(\mu_v \boxtimes \lambda_v), \theta^+( \mu_v^{\JL}  \boxtimes \lambda_v     ) \}$ (here again for $\theta^+(\mu_v^{\JL} \boxtimes \lambda_v)$ we have ease notation by writing $\lambda_v$ for $\lambda_v \circ \Nrd_{D_v/F_v}$, where $\Nrd_{D_v/F_v}$ is the reduced norm from $D_v$ to $F_v$). 

Using the results of \cite{GT1} we can compute the $L$-parameters of the representations in these $A$-packets to verify proposition 3.4. There is nothing to prove when $\mu_v$ is not a discrete series representation, since the corresponding $A$-packet is a singleton, and is equal to the singleton $L$-packet corresponding to $\phi_{\psi_v}$. In any case the $L$-parameter can be computed from \cite{GT1} as follows. In the notation of section 14 of \cite{GT1}, since $\mu_v$ is not a discrete series representation, we have $\mu_v = J(\pi(\chi_{v}^{\prime},\chi_{v}))$ where $\chi_v,\chi_v^{\prime}$ are characters of $F_v^{\times}$, while we have $\lambda_v \circ \det = J(\pi(\lambda_v |\cdot|_v^{1/2} , \lambda_v |\cdot|_v^{-1/2} ) )$. Hence by table 2, row {\it f} of {\it loc. cit.}, the representation $\theta(\mu_v \boxtimes \lambda_v)$ is given by $J_B(\lambda_v |\cdot|_v^{1/2} \chi_v^{-1},\lambda_v |\cdot|_v^{-1/2} \chi_v^{-1}; \chi_v )$. By table 1, row {\it e} for N.D.S. of {\it loc. cit.} (N.D.S. for non dsicrete series) the representation $J_B(\lambda_v |\cdot|_v^{1/2} \chi_v^{-1},\lambda_v |\cdot|_v^{-1/2} \chi_v^{-1}; \chi_v )$ has $L$-parameter given by:
\begin{eqnarray*}
& & (\chi_v \cdot (1 \oplus \lambda_v |\cdot|_v^{1/2} \chi_v^{-1} \oplus \lambda_v |\cdot|_v^{-1/2} \chi_v^{-1} \oplus \lambda_V^2 \chi_v^{-2})) \circ \art_v^{-1} \\
&=& (\chi_v \oplus \chi_v^{\prime} \oplus \lambda_v |\cdot|_v^{1/2} \oplus \lambda_v |\cdot|_v^{-1/2}) \circ \art_v^{-1}
\end{eqnarray*}
which is the $L$-parameter $\phi_{\psi_v}$ (to see the last equality use the fact that the condition $\omega_{\mu_v} =\lambda_v^2$ in this case is $\chi_v \chi_v^{\prime} = \lambda_v^2$).

Next suppose that $\mu_v$ is either supercuspidal or a twisted Steinberg representation $\St_{\GL_2} \otimes \chi_v$. Then by Table 2, row {\it e} of {\it loc. cit.} the representation $\theta(\mu_v \boxtimes \lambda_v)$ is equal to $J_{P}(\mu_v \otimes \lambda_v^{-1} |\cdot|_v^{1/2} ,    \lambda_v |\cdot|_v^{-1/2})$. Denoting by $\tau_v$ the representation $\mu_v \otimes \lambda_v^{-1} |\cdot|_v^{1/2}$, table 1 of {\it loc. cit.} shows that the $L$-parameter of the representation $J_P(\tau_v,\lambda_v |\cdot|_v^{-1/2})$ is given by:
\begin{eqnarray}
& & (\omega_{\tau_v} \lambda_v |\cdot|_v^{-1/2}) \circ \art_v^{-1}   \oplus \mathcal{L}_v( \tau_v \otimes \lambda_v  |\cdot|_v^{-1/2} )   \oplus (\lambda_v |\cdot|_v^{-1/2}) \circ \art_v^{-1} \\
&= & (\lambda_v |\cdot|_v^{1/2}) \circ \art_v^{-1} \oplus \mathcal{L}_v(\mu_v ) \oplus (\lambda_v |\cdot|_v^{-1/2}) \circ \art_v^{-1} \nonumber
\end{eqnarray} 
(here we are using $\omega_{\mu_v} = \lambda_v^2$). Next we consider the representation $\theta^+(\mu_v^{\JL} \boxtimes \lambda_v)$. We first make the assumption that $\mu_v$ is not of the form $\St_{\GL_2} \otimes \lambda_v$. Then by table 3, row {\it b} of {\it loc. cit.} the representation $\theta^+(\mu_v^{\JL} \boxtimes \lambda_v)$ is a non-generic supercuspidal representation, and by table 1, row {\it c} for S.C. of {\it loc. cit.} the $L$-parameter of $\theta^+(\mu_v^{\JL} \boxtimes \lambda_v)$ is given by
\begin{eqnarray}
\mathcal{L}_v(\mu_v) \oplus \mathcal{L}_v( \St_{\GL_2} \otimes \lambda_v).
\end{eqnarray}
It is clear that (6.1) and (6.2) have the same semi-simple part. Furthermore, if $\mu_v$ is supercuspidal, one sees from table 1, row {\it b} of D.S. the $L$-packet corresponding to the $L$-parameter (6.2) contains the other representation $\St(\mu_v \otimes \lambda_v^{-1},\lambda_v)$. If $\mu_v = \St_{\GL_2} \otimes \chi_v$ with $\chi_v \neq \lambda_v$, then from table 1, row {\it a} for N.D.S. of {\it loc. cit.} the $L$-packet corresponding to the $L$-parameter (6.2) contains the other representation $J_Q(\chi_v \lambda_v^{-1},\St_{\GL_2}) \otimes \lambda_v$.

It remains to treat the case where $\mu_v = \St_{\GL_2} \otimes \lambda_v$. Then by table 3, row {\it a} of {\it loc. cit.} the representation $\theta^+(\mu_v^{\JL} \boxtimes \lambda_v) = \theta^+(\lambda_v \boxtimes \lambda_v)$ is (in the notation of {\it loc. cit.}) given by $\pi_{ng}(\mu_v) = \pi_{ng}( \St_{\GL_2} \otimes \lambda_v )$ (non-generic representation), hence by table 1, row {\it c} for N.D.S. of {\it loc. cit.} has $L$-parameter given by 
\begin{eqnarray}
\mathcal{L}_v(\St_{\GL_2} \otimes \lambda_v) \oplus \mathcal{L}_v(\St_{\GL_2} \otimes \lambda_v)
\end{eqnarray}
i.e. the same as (6.2) with $\mu_v = \St_{\GL_2} \otimes \lambda_v$, and hence has the same semi-simple part as (6.1). One also see from table 1, row {\it b} for N.D.S. of {\it loc. cit.} that the $L$-packet corresponding to the $L$-parameter (6.3) contains the other representation $\pi_{gen}(\St_{\GL_2} \otimes \lambda_v)$ which is generic.


\begin{thebibliography}{99}


\bibitem[A1]{A1} J.Arthur, \textit{Automorphic representations of $\GSp (4)$.} Contributions to automorphic forms, geometry, and number theory, 65--81, Johns Hopkins Univ. Press, Baltimore, MD, 2004.

\bibitem[A2]{A2} J.Arthur, \textit{The endoscopic classification of representations: orthogonal and symplectic groups.} Colloquium Publication Series, AMS. To appear. Available at \url{http://www.claymath.org/cw/arthur/}

\bibitem[AC]{AC} J.Arthur, L.Clozel \textit{Simple algebras, base change, and the advanced theory of the trace formula.} Annals of Mathematics Studies, 120. Princeton University Press, Princeton, NJ, 1989.

\bibitem[AS]{AS} M.Asgari, F.Shahidi, \textit{Generic transfer from $\GSp (4)$ to $\GL (4)$.} 
Compos. Math. 142 (2006), no. 3, 541–-550.

\bibitem[BC1]{BC1} J.Bella\"iche, G.Chenevier, \textit{Families of Galois representations and Selmer groups.} Astérisque No. 324 (2009).

\bibitem[BC2]{BC2} J.Bella\"iche, G.Chenevier, \textit{The sign of Galois representations attached to automorphic forms for unitary groups.} Compos. Math. 147 (2011), no. 5, 1337–-1352.

\bibitem[BH]{BH} T.Berger, G.Harcos, \textit{$l$-adic representations associated to modular forms over imaginary quadratic fields.}
Int. Math. Res. Not. IMRN 2007, no. 23.

\bibitem[BLGGT]{BLGGT} T.Barnet-Lamb, T.Gee, D.Geraghty, R.Taylor, \textit{Local-global compatibility for $l=p$. I} Ann. de Math. de Toulouse 21 (2012), 57--92; \textit{II.} To appear in Ann. Sci. de l'ENS.

\bibitem[CaG]{CaG} F.Calegari, T.Gee, \textit{Irreducibility of automorphic Galois representations of $\GL(n)$, $n$ at most $5$.} To appear in Annales de l'Institut Fourier.

\bibitem[Car1]{Car1} A.Caraiani, \textit{Local-global compatibility and the action of monodromy on nearby cycles.} Duke Math. J. 161 (2012), no. 12, 2311–-2413.

\bibitem[Car2]{Car2} A.Caraiani, \textit{Monodromy and local-global compatibility for $l=p$}. Available at arXiv:1202.4683

\bibitem[C]{C} P.S.Chan, \textit{Invariant representations of $\GSp(2)$ under tensor product with a quadratic character.} Mem. Amer. Math. Soc. 204 (2010), no. 957.

\bibitem[Ch]{Ch} G.Chenevier, \textit{Une application des vari\'et\'es de Hecke des groupes unitaires.} Preprint. Available at 
\url{http://www.math.polytechnique.fr/~chenevier/pub.html}

\bibitem[ChH]{ChH} G.Chenevier, M.Harris, \textit{Construction of Automorphic Galois Representations II.} Cambridge Math. Journal 1, 53--73 (2013).


\bibitem[CG]{CG} P.S.Chan, W.T.Gan, \textit{The local Langlands conjecture for $\GSp (4)$ III: stability and twisted endoscopy.} To appear in Journal of Number Theory, Rallis Memorial Volume. Available at \url{http://www.math.nus.edu.sg/~matgwt/}

\bibitem[F]{F} J.-M.Fontaine, \textit{Arithm\'etique des repr\'esentations galoisiennes $p$-adiques.} Ast\'erisque No. 295 (2004), 1–-115. 

\bibitem[G]{G} W.T.Gan, \textit{The Saito-Kurokawa space of $\PGSp_4$ and its transfer to inner forms.} Eisenstein series and applications, 87–123, 
Progr. Math., 258, Birkhäuser Boston, Boston, MA, 2008. 

\bibitem[GRS]{GRS} D.Ginzburg, S.Rallis, D.Soudry, \textit{Periods, poles of $L$-functions and symplectic-orthogonal theta lifts.} J. Reine Angew. Math. 487 (1997), 85-–114.

\bibitem[GT1]{GT1} W.T.Gan, S.Takeda, \textit{Theta Correspondence for $\GSp(4)$.} Represent. Theory 15 (2011), 670--718. 

\bibitem[GT2]{GT2} W.T.Gan, S.Takeda, \textit{The local Langlands conjecture for $\GSp (4)$.} Annals of Math. Vol. 173 (2011), no. 3, 1841--1882.

\bibitem[HLTT]{HLTT} M.Harris, K.W.Lan, R.Taylor, J.Thorne, \textit{On the rigid cohomology of certain
Shimura varieties}. Preprint (2013). 

\bibitem[HST]{HST} M.Harris, D.Soudry, R.Taylor, \textit{$l$-adic representations associated to modular forms over imaginary quadratic fields. I. Lifting to $\GSp_4(\mathbf{Q})$.} 
Invent. Math. 112 (1993), no. 2, 377–-411. 

\bibitem[H]{H} H.Hida, \textit{Anticyclotomic Main Conjectures.} Doc. Math. 2006, Extra Vol., 465-–532.

\bibitem[JS]{JS} H.Jacquet, J.Shalika, \textit{On Euler products and the classification of automorphic forms, II.} Amer. J. Math. 103(4): 777--815 (1981).

\bibitem[J]{J} A.Jorza, \textit{Crystalline Representations for $\GL (2)$ over Quadratic Imaginary Fields.} Princeton thesis (2010). 

\bibitem[KL]{KL} M.Kisin, K.F.Lai, \textit{Overconvergent Hilbert modular forms.} Amer. J. Math. 127 (2005), no. 4, 735–-783. 


\bibitem[L]{L} G.Laumon, \textit{Fonctions zetas des vari\'et\'es de Siegel de dimension trois.} Ast\'erisque 302 (2005): 1–-66.



\bibitem[MT]{MT} C.P.Mok, F.C.Tan, \textit{Overconvergent family of Siegel-Hilbert modular forms.} Preprint (2012). Available at arXiv:1208.1093

\bibitem[N]{N} K.Nakamura, \textit{Zariski density of crystalline representations for any $p$-adic field.} Preprint (2011). Available at arXiv:1104.1760v1

\bibitem[R]{R} B.Roberts, \textit{Global $L$-packets for $\GSp(2)$ and theta lifts.} Documenta Mathematica, 6, (2001), 247--314.

\bibitem[Sa]{Sa} T.Saito, \textit{Modular forms and $p$-adic Hodge theory.} Invent. Math. 129 (1997), no. 3, 607–-620. 

\bibitem[Sch]{Sch} R.Schmidt, \textit{The Saito-Kurokawa lifting and functoriality.} Amer. J. Math. 127 (2005), no. 1, 209–-240. 

\bibitem[Scho]{Scho} P.Scholze, \textit{On torsion in the cohomology of locally symmetric varieties.} Preprint (2013). Available at arXiv:1306.2070

\bibitem[Sen]{Sen} S.Sen, \textit{An infinite dimensional Hodge-Tate theory.} Bull. Soc. Math. France 121 (1993), 13--34.

\bibitem[S1]{S1} F.Shahidi, \textit{On certain $L$-functions.} Amer. J. Math., 103 (1981), 297–-355.

\bibitem[S2]{S2} F.Shahidi, \textit{On non-vanishing of twisted symmetric and exterior square $L$-functions for $\GL (n)$.} 
Olga Taussky-Todd: in memoriam. 
Pacific J. Math. 1997, Special Issue, 311-–322. 



\bibitem[Sh]{Sh} S.W.Shin, \textit{Galois representations arising from some compact Shimura varieties.} Annals of Math. Vol. 173 (2011), no. 3, 1645--1741.

\bibitem[So]{So} C.Sorensen, \textit{Galois representations attached to Hilbert-Siegel modular forms.}
Doc. Math. 15 (2010), 623–-670. 

\bibitem[Sou1]{Sou1} D.Soudry, \textit{Automorphic forms on $\GSp (4)$.} Festschrift in honor of I. I. Piatetski-Shapiro, Part II (Ramat Aviv, 1989), 291–-303, Israel Math. Conf. Proc., 3, Weizmann, Jerusalem, 1990. 

\bibitem[Sou2]{Sou2} D.Soudry, \textit{On Langlands functoriality from classical groups to $\GL_n$.} Automorphic forms. I. 
Ast\'erisque No. 298 (2005), 335-–390. 

\bibitem[ST]{ST} P.Sally, M.Tadic, \textit{Induced representations and classifications for $\GSp(2,F)$ and $\Sp(2,F)$.} Memoirs Soc. Math. France (N.S.) No. 52 (1993), 75--133.

\bibitem[Ta]{Ta} S.Takeda, \textit{Some local-global non-vanishing results of theta lifts for symplectic-orthogonal dual pairs.} J. Reine Angew. Math. 657 (2011), 81–-111. 

\bibitem[Tan]{Tan} F.C.Tan, \textit{Families of $p$-adic Galois Representations.} MIT thesis (2011).

\bibitem[T1]{T1} R.Taylor, \textit{Galois representations associated to Siegel modular forms of low weight.} 
Duke Math. J. 63 (1991), no. 2, 281–-332. 

\bibitem[T2]{T2} R.Taylor, \textit{$l$-adic representations associated to modular forms over imaginary quadratic fields. II.}
Invent. Math. 116 (1994), no. 1-3, 619–-643.

\bibitem[Wal]{Wal} N. Wallach, \textit{On the constant term of a square integrable automorphic form.} Operator algebras and group representations, Vol. II (Neptun, 1980), 227–-237, 
Monogr. Stud. Math., 18, Pitman, Boston, MA, 1984. 

\bibitem[W]{W} R.Weissauer, \textit{Four dimensional Galois representations.} Ast\'erisque 302 (2005): 67–-150.









\end{thebibliography}
\end{document}